\numberwithin{equation}{section}
\newtheorem{theorem}{Theorem}[section]
\newtheorem{proposition}[theorem]{Proposition}
\newtheorem{corollary}[theorem]{Corollary}
\newtheorem{lemma}[theorem]{Lemma}
\newtheorem{conjecture}[theorem]{Conjecture}
\theoremstyle{definition}
\newtheorem{definition}[theorem]{Definition}
\newcommand{\CC}{\mathbb{C}}
\newcommand{\GG}{\mathbb{G}}
\newcommand{\HH}{\mathbb{H}}
\newcommand{\PP}{\mathbb{P}}
\newcommand{\QQ}{\mathbb{Q}}
\newcommand{\RR}{\mathbb{R}}
\newcommand{\ZZ}{\mathbb{Z}}
\newcommand{\bA}{\mathbf{A}}
\newcommand{\bfB}{\mathbf{B}}
\newcommand{\bD}{\mathbf{D}}
\newcommand{\bM}{\mathbf{M}}
\newcommand{\bN}{\mathbf{N}}
\newcommand{\bR}{\mathbf{R}}
\newcommand{\bK}{\mathbf{K}}
\newcommand{\bg}{\mathbf{g}}
\newcommand{\cA}{\mathcal{A}}
\newcommand{\cB}{\mathcal{B}}
\newcommand{\cC}{\mathcal{C}}
\newcommand{\cG}{\mathcal{G}}
\newcommand{\cH}{\mathcal{H}}
\newcommand{\cL}{\mathcal{L}}
\newcommand{\cM}{\mathcal{M}}
\newcommand{\cN}{\mathcal{N}}
\newcommand{\cO}{\mathcal{O}}
\newcommand{\tcC}{\tilde{\mathcal{C}}}
\newcommand{\tq}{\tilde{q}}
\newcommand{\tC}{\tilde{C}}
\newcommand{\ttC}{\tilde{\tilde{C}}}
\newcommand{\tU}{\tilde{U}}
\newcommand{\tx}{\tilde{x}}
\newcommand{\tQ}{\tilde{Q}}
\newcommand{\tal}{\tilde{\alpha}}
\newcommand{\tbe}{\tilde{\beta}}
\newcommand{\ovg}{\overline{g}}
\newcommand{\ovh}{\overline{h}}
\newcommand{\ovq}{\overline{q}}
\newcommand{\dss}{\displaystyle}
\newcommand{\B}{\mathrm{B}}
\newcommand{\dv}{\mathrm{div}}
\newcommand{\Dol}{\mathrm{Dol}}
\newcommand{\diag}{\mathrm{diag}}
\newcommand{\Hom}{\mathrm{Hom}}
\newcommand{\Isom}{\mathrm{Isom}}
\newcommand{\Lie}{\mathrm{Lie}}
\newcommand{\End}{\mathrm{End}}
\newcommand{\Sig}{\Sigma}
\newcommand{\om}{\omega}
\def\ic{\mathbf{IC}}
\def\coker{\mathrm{coker} }
\def\Aut{\mathrm{Aut} }
\def\id{\mathrm{id} }
\def\im{\mathrm{im} }
\def\rk{\mathrm{rk}\,}
\def\GL{\mathrm{GL}}
\def\PGL{\mathrm{PGL}}
\def\EPGL{\mathrm{EPGL}}
\def\SL{\mathrm{SL}}
\def\SU{\mathrm{SU}}
\def\Sl{\mathrm{Sl}}
\def\E{\mathrm{E}}
\def\ESL{\mathrm{ESL}}
\def\SO{\mathrm{SO}}
\def\ESO{\mathrm{ESO}}
\def\BSO{\mathrm{BSO}}
\def\O{\mathrm{O}}
\def\EO{\mathrm{EO}}
\def\Pic{\mathrm{Pic}}
\def\BSL{\mathrm{BSL}}
\def\st{\mathrm{Stab}}
\def\spec{\mathrm{Spec}}
\def\bproj{\mathbf{Proj}}
\def\Tr{\mathrm{Tr}}
\def\proj{\mathrm{Proj}}
\newcommand{\Om}{\Omega}
\begin{document}

\title[A blowing-up formula for Intersection cohomology of moduli of Higgs]{A blowing-up formula for the Intersection cohomology of the moduli of rank $2$ Higgs bundles over a curve with trivial determinant}

\author{Sang-Bum Yoo}
\address{Sang-Bum Yoo \\ Department of Mathematics Education \\ Gongju National University of Education \\ Gongju-si, Chungcheongnam-do, 32553, Republic of Korea}
\email{sbyoo@gjue.ac.kr}

\keywords{Higgs bundles, Kirwan's algorithm, intersection cohomology, blowing-up formula}

\subjclass[2000]{14D20, 14D22, 14E15, 14F08, 14F43}

\begin{abstract}
We prove that a blowing-up formula for the intersection cohomology of the moduli space of rank $2$ Higgs bundles over a curve with trivial determinant holds. As an application, we derive the Poincar\'{e} polynomial of the intersection cohomology of the moduli space under a technical assumption.
\end{abstract}

\maketitle

\section{Introduction}

Let $X$ be a smooth complex projective curve of genus $g\ge2$ and let $G$ be $\GL(r,\CC)$ or $\SL(r,\CC)$. Let $\bM_{\Dol}^{d}(G)$ be the moduli space of $G$-Higgs bundles $(E,\phi)$ of rank $r$ and degree $d$ on $X$ (with fixed $\det E$ and traceless $\phi$ in the case $G=\SL(r,\CC)$) and let $\bM_{\B}^{d}(G)$ be the character variety for $G$ defined by
$$\bM_{\B}^{d}(G)=\{(A_{1},B_{1},\cdots,A_{g},B_{g})\in G\,|\,[A_{1},B_{1}]\cdots[A_{g},B_{g}]=e^{2\pi\sqrt{-1}d/r}I_{r}\}/\!/G.$$
By the theory of harmonic bundles (\cite{Co88}, \cite{Simp92}), we have a homeomorphism $\bM_{\Dol}^{d}(G)\cong\bM_{\B}^{d}(G)$ as a part of the nonabelian Hodge theory. If $r$ and $d$ are relatively prime, these moduli spaces are smooth and their underlying differentiable manifold is hyperk\"{a}hler. But the complex structures do not coincide under this homeomorphism.

Under the assumption that $r$ and $d$ are relatively prime, motivated by this fact, there have been several works calculating invariants of these moduli spaces on both sides over the last 30 years. The Poincar\'{e} polynomial of the ordinary cohomology is calculated, for $\bM_{\Dol}^{d}(\SL(2,\CC))$ by N. Hitchin in \cite{Hit87}, and for $\bM_{\Dol}^{d}(\SL(3,\CC))$ by P. Gothen in \cite{Go94}. The compactly supported Hodge polynomial and the compactly supported Poincar\'{e} polynomial for $\bM_{\Dol}^{d}(\GL(4,\CC))$ can be obtained by the motivic calculation in \cite{GHS14}. By counting the number of points of these moduli spaces over finite fields with large characteristics, the compactly supported Poincar\'{e} polynomials for $\bM_{\Dol}^{d}(\GL(r,\CC))$ and $\bM_{\B}^{d}(\GL(r,\CC))$ are obtained in \cite{Sch16}. By using arithmetic methods, T. Hausel and F. Rodriguez-Villegas expresses the E-polynomial of $\bM_{\B}^{d}(\GL(r,\CC))$ in terms of a simple generating function in \cite{HR08}. By the same way, M. Mereb calculates the E-polynomial of $\bM_{\B}^{d}(\SL(2,\CC))$ and expresses the E-polynomial of $\bM_{\B}^{d}(\SL(r,\CC))$ in terms of a generating function in \cite{Me15}.

Without the assumption that $r$ and $d$ are relatively prime, there have been also some works calculating invariants of $\bM_{\B}^{d}(\SL(r,\CC))$. For $g=1,2$ and any $d$, explicit formulas for the E-polynomials of $\bM_{\B}^{d}(\SL(2,\CC))$ are obtained by a geometric technique in \cite{LMN13}. The E-polynomial of $\bM_{\B}^{d}(\SL(2,\CC))$ is calculated, for $g=3$ and any $d$ by a further geometric technique in \cite{MM16-1}, and for any $g$ and $d$ in \cite{MM16-2}.

When we deal with a singular variety $\bM_{\Dol}^{d}(G)$ under the condition that $r$ and $d$ are not relatively prime, the intersection cohomology is a natural invariant. Our interest is focused on the intersection cohomology of $\bM:=\bM_{\Dol}^0(\SL(2,\CC))$.

For a quasi-projective variety $V$, $IH^{i}(V)$ and $\ic^{\bullet}(V)$ denote the $i$-th intersection cohomology of $V$ of the middle perversity and the complex of sheaves on $V$ whose hypercohomology is $IH^{*}(V)$ respectively. $IP_{t}(V)$ denotes the Poincar\'{e} polynomial of $IH^{*}(V)$ defined by
$$IP_{t}(V)=\sum_{i}\dim IH^{i}(V).$$

Recently, $IP_{t}(\bM)$ was calculated in \cite{Ma21} by using ways different from ours. First of all, the $E$-polynomial of the compactly supported intersection cohomology of $\bM$ was calculated in \cite{Fel18} over a smooth curve of genus $2$ and in \cite{Ma21} over a smooth curve of genus $g\ge 2$. Then the author of \cite{Ma21} proved the purity of $IH^{*}(\bM)$ from the observation of the semiprojectivity of $\bM$. He used the purity of $IH^{*}(\bM)$ and the Poincar\'{e} duality for the intersection cohomology (Theorem \ref{GPD}) to calculate $IP_{t}(\bM)$.

From now on, $\GL(n,\CC)$, $\SL(n,\CC)$, $\PGL(n,\CC)$, $\O(n,\CC)$ and $\SO(n,\CC)$ will be denoted by $\GL(n)$, $\SL(n)$, $\PGL(n)$, $\O(n)$ and $\SO(n)$ respectively for the simplicities of notations.

\subsection{Main result}

In this paper, we prove that a blowing-up formula for $IH^{*}(\bM)$ holds.

It is known that $\bM$ is a good quotient $\bR/\!/\SL(2)$ for some quasi-projective variety $\bR$ (Theorem \ref{M is a good quotient}, Theorem \ref{R and M are quasi-projective}). $\bM$ is decomposed into
$$\bM^{s}\bigsqcup(T^{*}J/\ZZ_{2}-\ZZ_{2}^{2g})\bigsqcup\ZZ_{2}^{2g},$$
where $\bM^{s}$ denotes the stable locus of $\bM$ and $J:=\Pic^0(X)$. The singularity along the locus $\ZZ_{2}^{2g}$ is the quotient $\Upsilon^{-1}(0)/\!/\SL(2)$, where $\Upsilon^{-1}(0)$ is the affine cone over a reduced irreducible complete intersection of three quadrics in $\PP(\CC^{2g}\otimes sl(2))$ and $\SL(2)$ acts on $\CC^{2g}\otimes sl(2)$ as the adjoint representation. The singularity along the locus $T^{*}J/\ZZ_{2}-\ZZ_{2}^{2g}$ is $\Psi^{-1}(0)/\!/\CC^{*}$, where $\Psi^{-1}(0)$ is the affine cone over a smooth quadric in $\PP((\CC^{g-1})^{4})$ and $\CC^{*}$ acts on $(\CC^{g-1})^{4}$ with weights $-2,2,2$ and $-2$. Let us consider the Kirwan's algorithm consisting of three blowing-ups $\bK:=\bR_{3}^{s}/\SL(2)\to\bR_{2}^{s}/\SL(2)\to\bR_{1}^{ss}/\!/\SL(2)\to\bR/\!/\SL(2)=\bM$ induced from the composition of blowing-ups $\pi_{\bR_{1}}:\bR_{1}\to\bR$ along the locus $\ZZ_{2}^{2g}$ with the exceptional divisor $E_{1}$, $\pi_{\bR_{2}}:\bR_{2}\to\bR_{1}^{ss}$ along the strict transform $\Sigma$ of the locus of $\bR_{1}^{ss}$ over $T^{*}J/\ZZ_{2}-\ZZ_{2}^{2g}$ with the exceptional divisor $E_{2}$ and $\bR_{3}\to\bR_{2}^{s}$ along the locus of points with stabilizers larger than the center $\ZZ_{2}$ in $\SL(2)$ (Section \ref{Kirwan desing}).

We also have local pictures of the Kirwan's algorithm mentioned above. For any $x\in\ZZ_{2}^{2g}$, we have $\pi_{\bR_{1}}^{-1}(x)=\PP\Upsilon^{-1}(0)$ which is a subset of $\PP\Hom(sl(2),\CC^{2g})$ and $\pi_{\bR_{1}}^{-1}(x)\cap\Sigma$ is the locus of rank $1$ matrices (Section \ref{local picture}). Thus the strict transform of $\PP\Upsilon^{-1}(0)^{ss}/\!/\PGL(2)$ in the second blowing-up of the Kirwan's algorithm is just the blowing-up $$Bl_{\PP\Hom_{1}}\PP\Upsilon^{-1}(0)^{ss}/\!/\PGL(2)\to\PP\Upsilon^{-1}(0)^{ss}/\!/\PGL(2)$$
along the image of the locus of rank $1$ matrices in $\PP\Upsilon^{-1}(0)^{ss}/\!/\PGL(2)$.

In this setup, we have the following main result.

\begin{theorem}[Theorem \ref{computable intersection blowing-up formula}]\label{main theorem}\label{thm1}
Let $I_{2g-3}$ be the incidence variety given by
$$I_{2g-3}=\{(p,H)\in\PP^{2g-3}\times\breve{\PP}^{2g-3}|p\in H\}.$$
Then we have
\begin{enumerate}
\item $\dim IH^{i}(\bR_{1}^{ss}/\!/\SL(2))=\dim IH^{i}(\bM)$
$$+2^{2g}\dim IH^{i}(\PP\Upsilon^{-1}(0)^{ss}/\!/\PGL(2))-2^{2g}\dim IH^{i}(\Upsilon^{-1}(0)/\!/\PGL(2))$$
for all $i\ge0$.

\item $\dim H^{i}(\bR_{2}^{s}/\SL(2))=\dim IH^{i}(\bR_{1}^{ss}/\!/\SL(2))$
$$+\sum_{p+q=i}\dim[H^{p}(\widetilde{T^{*}J})\otimes H^{t(q)}(I_{2g-3})]^{\ZZ_{2}}$$
for all $i\ge0$, where $t(q)=q-2$ for $q\le\dim I_{2g-3}=4g-7$ and $t(q)=q$ otherwise, where $\alpha:\widetilde{T^{*}J}\to T^{*}J$ is the blowing-up along $\ZZ_{2}^{2g}$.

\item $\dim IH^{i}(Bl_{\PP\Hom_{1}}\PP\Upsilon^{-1}(0)^{ss}/\!/\SL(2))=\dim IH^{i}(\PP\Upsilon^{-1}(0)^{ss}/\!/\SL(2))$
$$+\sum_{p+q=i}\dim[H^{p}(\PP^{2g-1})\otimes H^{t(q)}(I_{2g-3})]^{\ZZ_{2}}$$
for all $i\ge0$, where $t(q)=q-2$ for $q\le\dim I_{2g-3}=4g-7$ and $t(q)=q$ otherwise.
\end{enumerate}
\end{theorem}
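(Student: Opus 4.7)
The plan is to prove each of the three parts by applying the Beilinson--Bernstein--Deligne--Gabber decomposition theorem to the corresponding Kirwan blowing-up, and then evaluating the supplementary direct summand via the étale local models recalled in Section~\ref{local picture}. In every case the blowing-up map $\pi:\tilde{Y}\to Y$ in question is proper and birational, so the decomposition theorem supplies an isomorphism
$$R\pi_{*}\ic^{\bullet}_{\tilde{Y}}\cong\ic^{\bullet}_{Y}\oplus\cL,$$
with $\cL$ a self-dual complex supported on the discriminant locus of $\pi$. Passing to hypercohomology reduces each formula to the computation of $\dim H^{i}(\cL)$, which by locality is determined by the transverse affine cone to the center.

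\emph{Part (1).} The map $\pi_{\bR_{1}}/\!/\SL(2)$ is an isomorphism outside $\ZZ_{2}^{2g}$. By the étale local model, $\bM$ near each of the $2^{2g}$ special points is the affine cone $\Upsilon^{-1}(0)/\!/\SL(2)$, and the blowing-up at its apex has exceptional fiber $\PP\Upsilon^{-1}(0)/\!/\PGL(2)$ (the $\SL$- and $\PGL$-quotients of $\Upsilon^{-1}(0)$ coincide because the center $\ZZ_{2}\subset\SL(2)$ acts trivially on $\Upsilon^{-1}(0)$). The stalk of $\cL$ at each such point therefore equals the reduced intersection cohomology of the exceptional fiber, contributing exactly $\dim IH^{i}(\PP\Upsilon^{-1}(0)/\!/\PGL(2))-\dim IH^{i}(\Upsilon^{-1}(0)/\!/\PGL(2))$. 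Summing over the $2^{2g}$ points yields (1).

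\emph{Part (3).} The map $Bl_{\PP\Hom_{1}}\PP\Upsilon^{-1}(0)^{ss}/\!/\SL(2)\to\PP\Upsilon^{-1}(0)/\!/\SL(2)$ blows up the image of the rank-one locus $\PP\Hom_{1}$, which in the GIT quotient is identified with $\PP^{2g-1}$ modulo a residual $\ZZ_{2}$-action coming from the Weyl group $N(T)/T$ of a maximal torus of $\PGL(2)$. Transverse to this locus the projectivized slice is the incidence variety $I_{2g-3}$, so the exceptional divisor is a fiber bundle over $\PP^{2g-1}$ with fiber $I_{2g-3}$. Combining the Leray spectral sequence (which degenerates by purity) with the cone-reduction shift $t(q)$ and then taking $\ZZ_{2}$-invariants produces the correction term of (3).

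\emph{Part (2).} Here the two previous arguments combine. The center of $\pi_{\bR_{2}}$ is the strict transform $\Sigma$ of the locus over $T^{*}J/\ZZ_{2}-\ZZ_{2}^{2g}$; its normalization is $\widetilde{T^{*}J}/\ZZ_{2}$, where $\widetilde{T^{*}J}$ denotes the blowing-up of $T^{*}J$ along $\ZZ_{2}^{2g}$ and the $\ZZ_{2}$ acts by the Jacobian involution $L\mapsto L^{-1}$. The transverse slice is the cone $\Psi^{-1}(0)/\!/\CC^{*}$ over the smooth quadric in $\PP((\CC^{g-1})^{4})$, whose Kirwan blow-up is a small resolution whose exceptional fiber is $I_{2g-3}$. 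Because $\bR_{2}^{s}/\SL(2)$ is smooth along the exceptional divisor, ordinary and intersection cohomology agree there, and the classical blowing-up formula combined with the Künneth formula and invariance under the diagonal $\ZZ_{2}$-action delivers (2). The principal obstacle will be in this step: one must verify that the GIT quotient of the smooth quadric in $\PP((\CC^{g-1})^{4})$ by $\CC^{*}$ with weights $(-2,2,2,-2)$ is resolved by $I_{2g-3}$, and must identify the $\ZZ_{2}$-action on the product $\widetilde{T^{*}J}\times I_{2g-3}$ as the product of the Jacobian involution with the natural $(p,H)$-swap involution on the incidence variety. Once these two identifications are in place, the decomposition theorem produces the stated correction term without further work.
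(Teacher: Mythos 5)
Your overall architecture (decomposition theorem plus analytic cone models at the three centers, then fibrations with incidence-variety fibers and $\ZZ_{2}$-invariants) matches the paper's, but as written the evaluation of the correction terms has genuine gaps. In part (1), the summand $\cL$ is supported on the $2^{2g}$ points, and its contribution is $\dim IH^{i}(\tU_{1})-\dim IH^{i}(U_{1})$, where $\tU_{1}$ is the preimage of a small cone neighborhood, i.e.\ (up to a finite quotient) the blown-up cone $Bl_{0}\Upsilon^{-1}(0)/\!/\PGL(2)$. Calling this the ``reduced intersection cohomology of the exceptional fiber'' and replacing $\dim IH^{i}(\tU_{1})$ by $\dim IH^{i}(\PP\Upsilon^{-1}(0)/\!/\PGL(2))$ is not automatic: intersection cohomology is not a homotopy invariant, so you cannot retract onto the exceptional fiber. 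The equality $IH^{*}(Bl_{0}\Upsilon^{-1}(0)/\!/\PGL(2))=IH^{*}(\PP\Upsilon^{-1}(0)/\!/\PGL(2))$ is precisely Lemma \ref{int coh of affine cone of git quotient}-(1), whose proof requires the identification of the affine cone $C_{\cL}(\PP\Upsilon^{-1}(0)/\!/\SL(2))$ with a finite quotient of $\Upsilon^{-1}(0)/\!/\SL(2)$, the triviality of that finite group's action on $IH^{*}$ (resting on the Kirwan-map surjectivity of Lemma \ref{Kir-Map}), and a Leray argument for the associated line bundle; none of this appears in your sketch.

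Part (2) is where the proposed route would actually fail. The blow-up at the vertex of the transverse cone is \emph{not} a small resolution: its exceptional locus is the divisor $\PP\Psi^{-1}(0)/\!/\CC^{*}\cong I_{2g-3}$, of dimension $4g-7$ over a point of a $(4g-6)$-dimensional cone, so smallness fails for every $g\ge2$; and if the map were small the fiberwise correction would vanish, contradicting the very formula you are proving. Likewise the ``classical blowing-up formula'' does not apply, since $\bR_{1}^{ss}/\!/\SL(2)$ is singular along the center and the exceptional fibers are incidence varieties rather than projective spaces; the truncation $t(q)$ must come from the cone formula and hard Lefschetz, i.e.\ Lemmas \ref{int coh of affine cone of git quotient}--\ref{suffices-to-show-on-projectivized-git}. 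Moreover, to obtain the K\"unneth shape $\sum_{p+q=i}\dim[H^{p}(\widetilde{T^{*}J})\otimes H^{t(q)}(I_{2g-3})]^{\ZZ_{2}}$ you need the direct images $R^{q}g_{*}\ic^{\bullet}$ and $R^{q}h_{*}\ic^{\bullet}$ of the cone bundle and its blow-up over $\widetilde{T^{*}J}$ to be \emph{constant} sheaves; $\widetilde{T^{*}J}$ is far from simply connected, so this monodromy statement (Lemma \ref{second cone gives a constant sheaf}) is a real step, and the degeneration of the spectral sequence on the cone side does not follow from ``purity'' but from the embedding $IH^{q}(\widehat{I_{2g-3}})\hookrightarrow IH^{q}(I_{2g-3})$ together with degeneration on the blown-up side. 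By contrast, the two verifications you single out as the principal obstacle are comparatively minor: $\PP\Psi^{-1}(0)/\!/\CC^{*}\cong I_{2g-3}$ is Lemma \ref{isomorphic to incidence variety}, and the $\ZZ_{2}$-action is read off from the explicit $\O(2)$-description in Section \ref{local picture} and Lemma \ref{geometric descriptions of second cones}.
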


It is an essential process to apply this blowing-up formula to calculate $IP_{t}(\bM)$.

\subsection{Method of proof of Theorem \ref{main theorem}}

We follow the same steps as in the proof of \cite[Proposition 2.1]{K86}, but we give a proof in each step because our setup is different from that of \cite{K86}. We start with the following formulas coming from the decomposition theorem (Proposition \ref{3 consequences}-(1)) and the same argument as in the proof of \cite[Lemma 2.8]{K86} :
$$\dim IH^{i}(\bR_{1}^{ss}/\!/\SL(2))=\dim IH^{i}(\bM)+\dim IH^{i}(\tU_{1})-\dim IH^{i}(U_{1}),$$
$$\dim IH^{i}(\bR_{2}^{s}/\SL(2))=\dim IH^{i}(\bR_{1}^{ss}/\!/\SL(2))+\dim IH^{i}(\tU_{2})-\dim IH^{i}(U_{2})$$
and
$$\dim IH^{i}(Bl_{\PP\Hom_{1}}\PP\Upsilon^{-1}(0)^{ss}/\!/\SL(2))=\dim IH^{i}(\PP\Upsilon^{-1}(0)^{ss}/\!/\SL(2))+\dim IH^{i}(\tU)-\dim IH^{i}(U),$$
where $U_{1}$ is a disjoint union of sufficiently small open neighborhoods of each point of $\ZZ_{2}^{2g}$ in $\bM$, $\tU_{1}$ is the inverse image of the first blowing-up, $U_{2}$ is an open neighborhood of the strict transform of $T^{*}J/\ZZ_{2}$ in $\bR_{1}^{ss}/\!/\SL(2)$, $\tU_{2}$ is the inverse image of the second blowing-up, $U$ is an open neighborhood of the locus of rank $1$ matrices in $\PP\Upsilon^{-1}(0)^{ss}/\!/\PGL(2)$ and $\tU$ is the inverse image of the blowing-up map $Bl_{\PP\Hom_{1}}\PP\Upsilon^{-1}(0)^{ss}/\!/\PGL(2)\to\PP\Upsilon^{-1}(0)^{ss}/\!/\PGL(2)$. By Proposition \ref{normal slice is the quadratic cone}, we can see that $U_{1},\tU_{1},U_{2},\tU_{2},U$ and $\tU$ are analytically isomorphic to relevant normal cones respectively. By Section \ref{Kirwan desing} and Lemma \ref{geometric descriptions of second cones}, these relevant normal cones are described as free $\ZZ_{2}$-quotients of nice fibrations with concrete expressions of bases and fibers. By the calculations of the intersection cohomologies of fibers (Lemma \ref{int coh of affine cone of git quotient} and Lemma \ref{suffices-to-show-on-projectivized-git}) and applying the Leray spectral sequences (Proposition \ref{3 consequences}-(2)) of intersection cohomologies associated to these fibrations, we complete the proof.

\subsection{Towards a formula for the Poincar\'{e} polynomial of $IH^{*}(\bM)$}
For a topological space $W$ on which a reductive group $G$ acts, $H_{G}^{i}(W)$ and $P_{t}^{G}(W)$ denote the $i$-th equivariant cohomology of $W$ and the Poincar\'{e} series of $H_{G}^{*}(W)$ defined by
$$P_{t}^{G}(W)=\sum_{i}\dim H_{G}^{i}(W).$$
We start with the formula of $P_{t}^{\SL(2)}(\bR)$ that comes from that of \cite{DWW11}. Then we use the following conjectural formulas.
\begin{conjecture}[Conjecture \ref{equivariant blowing up formula conjecture}, Proposition \ref{equivariant blowing up formula on the 2nd blowing-up}]
\begin{enumerate}
\item $P_{t}^{\SL(2)}(\bR_{1}^{ss})=P_{t}^{\SL(2)}(\bR)+2^{2g}(P_{t}^{\SL(2)}(\PP\Upsilon^{-1}(0)^{ss})-P_{t}(\BSL(2)))$.

\item $P_{t}^{\SL(2)}(\bR_{2}^{s})=P_{t}^{\SL(2)}(\bR_{1}^{ss})+P_{t}^{\SL(2)}(E_2^{ss})-P_{t}^{\SL(2)}(\Sig)$ under some technical assumptions, where $E_{2}^{ss}=E_{2}\cap\bR_{2}^{ss}$.
\end{enumerate}
\end{conjecture}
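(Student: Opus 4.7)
The plan is to derive each part of the Conjecture from the corresponding part of the Lemma by imposing the semistability or stability restriction, then computing the resulting correction via the equivariantly perfect Morse stratification of the norm square of the moment map, in the sense of Atiyah--Bott and Kirwan.

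For part (1), I would first observe that $\bR$ consists entirely of semistable points, since the paper writes $\bM$ as the good quotient $\bR/\!/\SL(2)$; consequently the blow-up $\pi_{\bR_{1}}:\bR_{1}\to\bR$, being an isomorphism outside $\ZZ_{2}^{2g}$, can only introduce non-semistable points inside the exceptional divisor $E_{1}=\bigsqcup_{x\in\ZZ_{2}^{2g}}\PP\Upsilon^{-1}(0)$. By Proposition \ref{normal slice is the quadratic cone} the $\SL(2)$-linearization on $\bR_{1}$ agrees on each exceptional component with the linearization on $\PP\Upsilon^{-1}(0)$ induced from the adjoint action on $\CC^{2g}\otimes sl(2)$, so
\begin{equation*}
\bR_{1}\setminus\bR_{1}^{ss}=\bigsqcup_{x\in\ZZ_{2}^{2g}}\bigl(\PP\Upsilon^{-1}(0)\setminus\PP\Upsilon^{-1}(0)^{ss}\bigr).
\end{equation*}
Applying equivariant perfection to both Morse stratifications then yields
\begin{equation*}
P_{t}^{\SL(2)}(\bR_{1})-P_{t}^{\SL(2)}(\bR_{1}^{ss})=2^{2g}\bigl(P_{t}^{\SL(2)}(\PP\Upsilon^{-1}(0))-P_{t}^{\SL(2)}(\PP\Upsilon^{-1}(0)^{ss})\bigr),
\end{equation*}
and subtracting this identity from Lemma \ref{prototype of equivariant cohomology blowing-up formula}(1) produces Conjecture (1).

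For part (2), the same strategy applies to the second blow-up $\pi_{\bR_{2}}:\bR_{2}\to\bR_{1}^{ss}$ along the smooth center $\Sigma$. The standard equivariant blow-up identity (a restatement of Lemma \ref{prototype of equivariant cohomology blowing-up formula}(2) over $\bR_{1}^{ss}$, which is the actual domain of the second blow-up and which is disjoint from the unstable locus of $\bR_{1}$) reads $P_{t}^{\SL(2)}(\bR_{2})=P_{t}^{\SL(2)}(\bR_{1}^{ss})+P_{t}^{\SL(2)}(E_{2})-P_{t}^{\SL(2)}(\Sigma)$. A second slice argument then shows that $\bR_{2}\setminus\bR_{2}^{s}$ is concentrated in $E_{2}$ and that its Morse stratification matches fibrewise that of $E_{2}\setminus E_{2}^{ss}$, whence equivariant perfection gives
\begin{equation*}
P_{t}^{\SL(2)}(\bR_{2})-P_{t}^{\SL(2)}(\bR_{2}^{s})=P_{t}^{\SL(2)}(E_{2})-P_{t}^{\SL(2)}(E_{2}^{ss}),
\end{equation*}
and substitution produces Conjecture (2).

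The main obstacle will be to justify the two slice-compatibility statements above, particularly the part (2) assertion that every non-stable point of $\bR_{2}$ lies in the exceptional divisor $E_{2}$. This is essentially the content of Kirwan's partial desingularization theorem, but it must be verified in our specific setting by combining the normal cone description of Proposition \ref{normal slice is the quadratic cone} with the stabilizer stratification of $\bR_{1}^{ss}$ developed in Section \ref{Kirwan desing}, and in particular by checking that blowing up $\Sigma$ eliminates the $\CC^{*}$-stabilizers off the exceptional divisor. A secondary technical point is to confirm that the relevant Morse stratifications on $\PP\Upsilon^{-1}(0)$ and on the projective bundle over $\Sigma$ are equivariantly perfect; in both cases this reduces to verifying Kirwan's cohomological criterion for the linear $\SL(2)$-actions on $\CC^{2g}\otimes sl(2)$ and on the normal bundle to $\Sigma$.
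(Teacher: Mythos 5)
You are attempting to prove a statement that the paper itself leaves as Conjecture \ref{equivariant blowing up formula conjecture}: no proof is given there, and immediately after stating it the paper records the obstruction, namely that $\bR_{1}$ and $\bR_{2}$ are neither smooth nor projective, so Kirwan's Morse-theoretic machinery of \cite{K85-1}, \cite{K85-2} cannot be applied directly. Your proposal is essentially to run that machinery anyway, and it breaks at two concrete points. First, your set-theoretic input is false: you claim that $\bR_{1}\setminus\bR_{1}^{ss}$ is contained in the exceptional divisor $E_{1}$ (and likewise that $\bR_{2}\setminus\bR_{2}^{s}$ is concentrated in $E_{2}$) because the blow-up is an isomorphism off the centre and every point of $\bR$ is semistable. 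But semistability on the blow-up is taken with respect to a perturbed linearization, and the whole point of Kirwan's partial desingularization is that points of $\bR=\bR^{ss}$ whose orbit closures meet the centre $\ZZ_{2}^{2g}$ (for instance non-closed orbits whose associated graded object is $(L,0)\oplus(L,0)$ with $L\cong L^{-1}$) become unstable upstairs even though their transforms lie off $E_{1}$. This is reflected in the very lemmas the paper quotes in its proof of Proposition \ref{equivariant cohomology local blowing-up formula}: by \cite[Lemma 7.6]{K85-2} the centres $Z_{\beta}^{ss}$ of the unstable strata lie in the exceptional divisor, but by \cite[Lemma 7.11]{K85-2} the strata $S_{\beta}$ themselves do not. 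So the displayed identification of $\bR_{1}\setminus\bR_{1}^{ss}$ with $2^{2g}$ copies of $\PP\Upsilon^{-1}(0)\setminus\PP\Upsilon^{-1}(0)^{ss}$ is wrong, and the analogous step in part (2) is wrong for the same reason.

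Second, even after weakening those claims to the correct statement that the unstable strata retract equivariantly onto subsets of the exceptional divisors, the identities you actually need, $P_{t}^{\SL(2)}(\bR_{1})-P_{t}^{\SL(2)}(\bR_{1}^{ss})=2^{2g}\bigl(P_{t}^{\SL(2)}(\PP\Upsilon^{-1}(0))-P_{t}^{\SL(2)}(\PP\Upsilon^{-1}(0)^{ss})\bigr)$ and its analogue for $\bR_{2}$, require the equivariant perfection of the instability stratifications of $\bR_{1}$ and $\bR_{2}$ themselves, not merely of $\PP\Upsilon^{-1}(0)$ or of the normal bundle to $\Sigma$, which is what you relegate to a ``secondary technical point.'' Kirwan's perfection argument uses smoothness of the ambient variety (the Atiyah--Bott Euler-class/self-intersection argument) together with projectivity, and here $\bR$ is singular precisely along the loci being blown up (Proposition \ref{normal slice is the quadratic cone} exhibits quadric-cone normal slices), while $\bR_{1}$ and $\bR_{2}$ are singular quasi-projective varieties. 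This is exactly why the paper can prove the local analogue, Proposition \ref{equivariant cohomology local blowing-up formula}, where $Bl_{\PP\Hom_{1}}\PP\Upsilon^{-1}(0)^{ss}$ is smooth by Proposition \ref{smoothness of 2nd local blowing-up}, but must leave the global statement as a conjecture. As written, your argument supplies no substitute for perfection on the singular, non-projective spaces $\bR_{1}$ and $\bR_{2}$, so the essential gap that makes this a conjecture remains open.
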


We use this conjectural blowing-up formula for the equivariant cohomology to get $P_{t}^{\SL(2)}(\bR_{2}^{s})$ from $P_{t}^{\SL(2)}(\bR)$. Since $\bR_{2}^{s}/\SL(2)$ has at worst orbifold singularities (Section \ref{Kirwan desing}), $P_{t}^{\SL(2)}(\bR_{2}^{s})=P_{t}(\bR_{2}^{s}/\SL(2))$ (Section \ref{strategy}). Now we use the blowing-up formula for the intersection cohomology (Theorem \ref{thm1}) to get $IP_{t}(\bM)$ from $P_{t}(\bR_{2}^{s}/\SL(2))$.

\begin{theorem}[Theorem \ref{A formula for IP(M)}]
Assume that Conjecture \ref{conjecture}, Conjecture \ref{equivariant blowing up formula conjecture} and Conjecture \ref{assumption 9.5} hold. Then
$$IP_{t}(\bM)=\frac{(1+t^3)^{2g}-(1+t)^{2g}t^{2g+2}}{(1-t^2)(1-t^4)}$$
$$-t^{4g-4}+\frac{t^{2g+2}(1+t)^{2g}}{(1-t^2)(1-t^4)}+\frac{(1-t)^{2g}t^{4g-4}}{4(1+t^2)}$$
$$+\frac{(1+t)^{2g}t^{4g-4}}{2(1-t^2)}(\frac{2g}{t+1}+\frac{1}{t^2-1}-\frac{1}{2}+(3-2g))$$
$$+\frac{1}{2}(2^{2g}-1)t^{4g-4}((1+t)^{2g-2}+(1-t)^{2g-2}-2)$$
$$+2^{2g}\big[\big(\frac{1-t^{12}}{1-t^2}-\frac{1-t^6}{1-t^2}+(\frac{1-t^6}{1-t^2})^{2}\big)\frac{(1-t^{4g-8})(1-t^{4g-4})(1-t^{4g})}{(1-t^2)(1-t^4)(1-t^6)}$$
$$-\frac{1-t^6}{1-t^2}\frac{(1-t^{4g-4})(1-t^{4g})}{(1-t^2)(1-t^4)}\frac{t^2(1-t^{2(2g-5)})}{1-t^2}$$
$$-\frac{(1-t^{4g-4})^{2}}{(1-t^2)(1-t^4)}\cdot\frac{1-t^{4g}}{1-t^2}+\frac{1}{1-t^4}\frac{1-t^{4g}}{1-t^2}\big]-\frac{2^{2g}}{1-t^4}$$
$$+(\frac{1}{2}((1+t)^{2g}+(1-t)^{2g})+2^{2g}(\frac{1-t^{4g}}{1-t^2}-1))\frac{(1-t^{4g-4})^{2}}{(1-t^2)(1-t^4)}$$
$$+\frac{1}{2}((1+t)^{2g}-(1-t)^{2g})\frac{t^2(1-t^{4g-4})(1-t^{4g-8})}{(1-t^2)(1-t^4)}$$
$$-\frac{1}{(1-t^4)}(\frac{1}{2}((1+t)^{2g}+(1-t)^{2g})+2^{2g}(\frac{1-t^{4g}}{1-t^2}-1))$$
$$-\frac{t^2}{(1-t^4)}\frac{1}{2}((1+t)^{2g}-(1-t)^{2g})$$
$$-\frac1 2(
(1+t)^{2g}+(1-t)^{2g})+2^{2g}(\frac{1-t^{4g}}{1-t^2}-1))\frac{t^2(1-t^{4g-4})(1-t^{4g-6})}{(1-t^2)(1-t^4)}$$
$$-\frac1 2(
(1+t)^{2g}-(1-t)^{2g})
(\frac{t^4(1-t^{4g-4})(1-t^{4g-10})}{(1-t^2)(1-t^4)}+t^{4g-6})$$
$$-2^{2g}\big[\frac{(1-t^{8g-8})(1-t^{4g})}{(1-t^{2})(1-t^{4})}-\frac{1-t^{4g}}{1-t^{4}}\big]$$
which is a polynomial with degree $6g-6$.
\end{theorem}

This conjectural formula for $IP_{t}(\bM)$ coincides with that of \cite{Ma21}.

\subsection*{Notations}
Throughout this paper, $X$ denotes a smooth complex projective curve of genus $g\ge2$ and $K_X$ the canonical bundle of $X$.

\section{Higgs bundles}

In this section, we introduce two kinds of constructions of the moduli space of Higgs bundles on $X$. For details, see \cite{Hit87}, \cite{Simp94I} and \cite{Simp94II}.

\subsection{Simpson's construction}

An \textbf{$\SL(2)$-Higgs bundle} on $X$ is a pair of a rank $2$ vector bundle $E$ with trivial determinant on $X$ and a section $\phi\in H^0(X,\End_{0}(E)\otimes K_{X})$, where $\End(E)$ denotes the bundle of endomorphisms of $E$ and $\End_0(E)$ the subbundle of traceless endomorphisms of $\End(E)$. We must impose a notion of stability to construct a separated moduli space.

\begin{definition}[\cite{Hit87}, \cite{Simp94I}]
An $\SL(2)$-Higgs bundle $(E,\phi)$ on $X$ is \textbf{stable} (respectively, \textbf{semistable}) if for any $\phi$-invariant line subbundle $F$ of $E$, we have
$$\deg(F)<0\text{ (respectively, }\le\text{)}.$$
\end{definition}

Let $N$ be a sufficiently large integer and $p=2N+2(1-g)$. We list C.T. Simpson's results to construct a moduli space of $\SL(2)$-Higgs bundles.

\begin{theorem}[Theorem 3.8 of \cite{Simp94I}]
There is a quasi-projective scheme $Q$ representing the moduli functor which parametrizes the isomorphism classes of triples $(E,\phi,\alpha)$ where $(E,\phi)$ is a semistable $\SL(2)$-Higgs bundle and $\alpha$ is an isomorphism $\alpha:\CC^{p}\to H^0(X,E\otimes\cO_{X}(N))$.
\end{theorem}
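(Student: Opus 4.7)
The plan is to follow the standard Geometric Invariant Theory / Quot-scheme construction carried out by Simpson in \cite{Simp94I}. First I would establish boundedness: the family of semistable $\SL(2)$-Higgs bundles $(E,\phi)$ of rank $2$ with $\det E=\cO_X$ on the fixed curve $X$ is bounded. The key point is that for any $\phi$-invariant line subbundle $F\subset E$ one has $\deg F\le 0$, and in rank two this is enough to produce a uniform upper bound on the slopes appearing in the Harder--Narasimhan filtration of the underlying bundle $E$. Given boundedness, Serre vanishing and global generation yield an integer $N\gg 0$ such that for every semistable $(E,\phi)$ the twist $E(N):=E\otimes\cO_X(N)$ satisfies $h^1(X,E(N))=0$, $h^0(X,E(N))=p$, and is generated by its global sections.

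Next, a framing $\alpha:\CC^p\xrightarrow{\sim} H^0(X,E(N))$ composed with evaluation gives a surjection $\cO_X^p\twoheadrightarrow E(N)$, i.e.\ a closed point of Grothendieck's Quot scheme $\mathrm{Quot}_{X/\CC}(\cO_X^p,P)$, where $P$ is the Hilbert polynomial of $E(N)$. Inside this projective scheme I would cut out the locally closed subscheme $Q_0$ defined by the open conditions that the universal quotient is locally free of rank $2$, that its $(-N)$-twist has trivial relative determinant, and that the induced map on pushed-forward global sections is an isomorphism; the first two are manifestly open on Quot, and the third cuts out a $\GL(p)$-torsor-like stratum. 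The result is a quasi-projective scheme $Q_0$ parametrising framed bundles $(E,\alpha)$ alone.

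To incorporate the Higgs field, consider the universal rank-$2$ bundle $\mathcal{E}$ on $Q_0\times X$ and the relative pushforward $(p_{Q_0})_*(\EEnd_0(\mathcal{E})\otimes p_X^*K_X)$. Cohomology-and-base-change together with the vanishing of the higher pushforward (after possibly passing to a flattening stratification) show this sheaf is locally free, so its geometric total space $V\to Q_0$ is a vector bundle parametrising traceless Higgs fields fibrewise. I would then take $Q$ to be the open subscheme of $V$ on which the associated pair is semistable. Representability of the framed moduli functor by $Q$ is then formal: a flat family $(\mathcal{F},\Phi,\alpha)$ on $X\times S$ produces a classifying morphism $S\to Q_0$ from $(\mathcal{F}(N),\alpha)$ and a section $S\to V\times_{Q_0}S$ from $\Phi$, and the universal family over $Q$ pulls back to the given one by construction.

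The hard part is the openness of semistability for Higgs pairs: Higgs stability only quantifies over $\phi$-invariant subbundles, so the naive argument (relative Quot of subbundles destabilising some $(E,\phi)$) does not immediately apply. Simpson's resolution is to pass through the spectral correspondence, realising a Higgs pair on $X$ as a pure one-dimensional coherent sheaf on $\mathrm{Tot}(K_X)$, where ordinary slope semistability is applicable and openness of the semistable locus follows from standard boundedness of destabilising quotients together with the valuative criterion. The same spectral-sheaf viewpoint simultaneously exhibits $Q$ as an open subscheme of a Quot scheme on $\mathrm{Tot}(K_X)$, so quasi-projectivity of $Q$ is obtained together with representability.
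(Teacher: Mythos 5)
This statement is not proved in the paper at all: it is quoted verbatim, with attribution, as Theorem 3.8 of \cite{Simp94I}, so there is no internal argument to compare against; what you have written is essentially a sketch of a Nitsure-style construction (Quot scheme for the underlying framed bundle, then a relative space of Higgs fields, then the semistable open locus), whereas Simpson's cited proof constructs $Q$ directly inside a Quot scheme of $\Lambda$-module quotients (equivalently, of pure sheaves on the spectral surface), which handles the Higgs field and the bundle simultaneously. Your boundedness step and the cut-out of the framed locus in $\mathrm{Quot}(\cO_X^{p},P)$ are fine in substance, although the fixed-determinant condition is a \emph{closed} condition (the fibre of the determinant morphism to $\Pic^{0}(X)$ over $[\cO_X]$), not an open one as you assert; this does not hurt you since you only need local closedness.

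The genuine gap is the step producing the space of Higgs fields. You claim that $(p_{Q_0})_*(\EEnd_0(\mathcal{E})\otimes p_X^{*}K_X)$ is locally free because the higher direct image vanishes, "after possibly passing to a flattening stratification." It does not vanish on the locus you actually need: by Serre duality $H^{1}(X,\End_0(E)\otimes K_X)\cong H^{0}(X,\End_0(E))^{\vee}$, which is nonzero whenever $E$ is not simple, and such $E$ (for instance $E=L\oplus L^{-1}$ underlying the polystable pairs $(L,\psi)\oplus(L^{-1},-\psi)$, or unstable $E$ with a destabilizing line subbundle of degree up to $g-1$) unavoidably occur as underlying bundles of semistable $\SL(2)$-Higgs pairs. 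So $h^{0}(\End_0(E)\otimes K_X)$ jumps, the pushforward is not locally free, and its total space is not a vector bundle over $Q_0$; moreover, stratifying to force flatness destroys exactly what you need, namely representability of the section functor for arbitrary families over bases meeting several strata. The standard repair is to represent the functor $S\mapsto H^{0}(X\times S,\EEnd_0(\mathcal{E}_S)\otimes p_X^{*}K_X)$ by a linear scheme $\mathbf{Spec}\,\mathrm{Sym}(\mathcal{Q})$ over $Q_0$ via cohomology and base change in the form of EGA III 7.7.8--7.7.9 (this is how Nitsure builds the Higgs moduli space), after which your remaining steps (openness of Higgs semistability, proved either directly via boundedness of destabilizing subobjects or via the spectral correspondence on the total space of $K_X$ with a compactification and suitable polarization, and the automorphism-freeness of framed objects coming from global generation of $E(N)$) do complete the argument; alternatively one follows Simpson's own route through Quot schemes of $\Lambda$-modules, which avoids the issue because the Higgs structure is encoded in the quotient itself.
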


\begin{theorem}[Theorem 4.10 of \cite{Simp94I}]\label{construction of bR}
Fix $x\in X$. Let $\tQ$ be the frame bundle at $x$ of the universal object restricted to $x$. Then the action of $\GL(p)$ lifts to $\tQ$ and $\SL(2)$ acts on the fibers of $\tQ\to Q$ in an obvious fashion. Every point of $\tQ$ is stable with respect to the free action of $\GL(p)$ and
$$\bR=\tQ/\GL(p)$$
represents the moduli functor which parametrizes triples $(E,\phi,\beta)$ where $(E,\phi)$ is a semistable $\SL(2)$-Higgs bundle and $\beta$ is an isomorphism $\beta:E|_{x}\to\CC^{2}$.
\end{theorem}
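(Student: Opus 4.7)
The plan is to assemble $\tQ$ as an explicit $\SL(2)$-frame bundle, check the compatibility of the two group actions, reduce freeness of the $\GL(p)$-action to an automorphism computation for semistable Higgs bundles, and deduce representability by a universal-property argument. Let $(\mathcal{E},\Phi,A)$ denote the universal family on $X\times Q$ produced by the previous theorem, where $A$ universally trivializes the rank-$p$ locally free sheaf $(\pi_Q)_*(\mathcal{E}\otimes\pi_X^*\cO_X(N))$. The restriction $\mathcal{E}_x:=\mathcal{E}|_{\{x\}\times Q}$ is a rank-$2$ bundle on $Q$ with trivialized determinant, and I would realize $\tQ$ as the scheme of trivializations $\beta:\cO_Q^{\oplus 2}\xrightarrow{\sim}\mathcal{E}_x$ of determinant one. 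This $\tQ$ is a principal $\SL(2)$-bundle over $Q$ with $\SL(2)$ acting on the right by changing the frame, giving the stated fibrewise $\SL(2)$-action. The $\GL(p)$-action on $Q$ only rescales $A$ and preserves the equivalence class of $(\mathcal{E},\Phi)$, so it endows $\mathcal{E}_x$ with a canonical $\GL(p)$-equivariant structure and lifts to $\tQ$ commuting with the $\SL(2)$-action.

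The delicate step is freeness of the lifted $\GL(p)$-action. A point of $\tQ$ corresponds to a quadruple $(E,\phi,\alpha,\beta)$ with $\alpha:\CC^p\xrightarrow{\sim}H^0(E\otimes\cO_X(N))$ and $\beta:E|_x\xrightarrow{\sim}\CC^2$, and any stabilizer $g\in\GL(p)$ is necessarily induced by some Higgs automorphism $f\in\Aut(E,\phi)$ subject to the framing constraint $f|_x=\id_{E|_x}$. The key observation is that for every semistable rank-$2$ bundle $E$ of degree $0$ on $X$ one has $H^0(\End(E)\otimes\cO_X(-x))=0$: indeed, semistability of $E$ in characteristic zero forces $\End(E)$ to be semistable of slope $0$, so $\End(E)\otimes\cO_X(-x)$ is semistable of slope $-1$, and a nonzero morphism from $\cO_X$ into such a sheaf is excluded by the slope inequality. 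Since $f-\id$ is a section of $\End(E)\otimes\cO_X(-x)$ by the framing constraint, we conclude $f=\id$ and hence $g=\id$. Freeness together with the linearization inherited from Simpson's construction then promotes every point of $\tQ$ to a stable point.

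For representability, given any flat family of triples $(E_T,\phi_T,\beta_T)$ with semistable fibres over a base $T$, the pushforward $(\pi_T)_*(E_T\otimes\pi_X^*\cO_X(N))$ is locally free of rank $p$ by cohomology and base change (for $N$ sufficiently large, independent of the family), and an \'etale-local trivialization $\alpha_T$ of it produces a morphism $T\to Q$ which, together with $\beta_T$, refines to a morphism into $\tQ$; two local choices of $\alpha_T$ differ by a $\GL(p)$-valued transition, so the resulting maps to $\tQ/\GL(p)$ glue unambiguously and furnish the required natural transformation, whose inverse is pullback-and-descent of the universal quadruple. The quotient $\bR=\tQ/\GL(p)$ exists as a quasi-projective scheme because the $\GL(p)$-action is free and proper, the properness being inherited from Simpson's GIT construction of $Q$.

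The principal obstacle is the freeness step. One must verify that a \emph{single} point-framing $\beta$ rigidifies every semistable Higgs bundle, not merely the stable ones (which have finite automorphism group already). This is precisely what the slope inequality above delivers, and it also explains why stability in $\tQ$ can be phrased uniformly as a free action rather than through the usual stable/semistable dichotomy.
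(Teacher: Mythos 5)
The paper offers no proof of this statement: it is quoted directly from Simpson (Theorem 4.10 of \cite{Simp94I}), so your argument has to stand on its own, and its central step has a genuine gap. The vanishing you call the key observation, $H^0(\End(E)\otimes\cO_X(-x))=0$, is proved under the hypothesis that $E$ is a semistable \emph{vector bundle}; but the bundles you apply it to are only the underlying bundles of semistable \emph{Higgs} pairs $(E,\phi)$, and these need not be semistable. For instance, $E=L\oplus L^{-1}$ with $\deg L=1$ and $L^{2}\cong\cO_X(x+y)$ carries a nonzero traceless Higgs field $\phi$ given by a section of $L^{-2}K_X$ placed off-diagonally (such data exist on every curve of genus $g\ge2$); then $(E,\phi)$ is stable, yet $\End(E)\supset\Hom(L^{-1},L)=L^{2}$ and $H^0(L^{2}(-x))=H^0(\cO_X(y))\neq0$, so $H^0(\End(E)\otimes\cO_X(-x))\neq0$. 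Thus the slope inequality you invoke is unavailable, and your lemma, though true as stated, does not apply. The conclusion you want is still correct, but the proof must use that the stabilizing endomorphism commutes with $\phi$ (note that the section of $L^2(-x)$ above does not): set $h=f-\id$, observe that $\ker h$ and the saturation $M$ of $\im h$ are $\phi$-invariant, so Higgs semistability gives $\deg\ker h\le0$ and $\deg M\le0$; since $h$ vanishes at $x$ it factors through $E\otimes\cO_X(-x)$, so either $h$ is generically injective and $0=\deg E\le\deg E-2$ (absurd), or $\rk\ker h=1$ and $\im h\subseteq M\otimes\cO_X(-x)$ forces $\deg\im h\le\deg M-1<0$ while $\deg\im h=-\deg\ker h\ge0$ (absurd). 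This $\phi$-invariance is exactly what your write-up drops.

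A second, smaller but real gap: you assert that freeness ``together with the linearization inherited from Simpson's construction'' promotes every point of $\tQ$ to a stable point. Freeness gives trivial stabilizers, but GIT stability also requires the orbit to be closed in the semistable locus, and that does not follow formally from freeness; establishing it, and the properness of the $\GL(p)$-action that you invoke to form $\bR=\tQ/\GL(p)$ as a quasi-projective scheme, is a substantive part of Simpson's Theorem 4.10 and is missing here. The representability paragraph is fine in outline once these two points are supplied.
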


\begin{theorem}[Theorem 4.10 of \cite{Simp94I}]
Every point in $\bR$ is semistable with respect to the action of $\SL(2)$. The closed orbits in $\bR$ correspond to polystable $\SL(2)$-Higgs bundles, i.e. $(E,\phi)$ is stable or $(E,\phi)=(L,\psi)\oplus(L^{-1},-\psi)$ for $L\in\Pic^0(X)$ and $\psi\in H^0(K_{X})$. The set $\bR^{s}$ of stable points with respect to the action of $\SL(2)$ is exactly the locus of stable $\SL(2)$-Higgs bundles.
\end{theorem}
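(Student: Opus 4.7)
The plan is to verify the three assertions by applying the Hilbert-Mumford numerical criterion to the natural $\SL(2)$-linearization on $\bR$ inherited from the Simpson construction, thereby matching GIT-stability data with the intrinsic stability of the underlying $\SL(2)$-Higgs bundle $(E,\phi)$. I would fix a point of $\bR$ corresponding to a triple $(E,\phi,\beta)$ with $(E,\phi)$ semistable and $\beta:E|_x\to\CC^2$ a frame, together with an $\SL(2)$-linearized ample line bundle coming from the Pl\"ucker embedding used in the Quot-type construction and the frame factor at $x$.

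First I would analyze one-parameter subgroups of $\SL(2)$. Up to conjugation every nontrivial 1-PS is $\lambda(t)=\mathrm{diag}(t,t^{-1})$, and its two eigenlines in $\CC^2$ pull back through $\beta$ to a decomposition of $E|_x$. Extending the chosen eigenline across $X$ as the maximal saturated line subsheaf of $E$ whose fiber at $x$ is that direction produces a line subbundle $F\subset E$. A Rees-type deformation argument shows that $\lim_{t\to0}\lambda(t)\cdot(E,\phi,\beta)$ exists in $\bR$ only when $F$ is $\phi$-invariant, in which case the limit corresponds to the associated graded Higgs bundle $F\oplus(E/F)$ with induced Higgs field. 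Careful bookkeeping of the Simpson polarization and of the contribution of the frame at $x$ then identifies the Hilbert-Mumford weight $\mu(x,\lambda)$ as a positive multiple of $-\deg F$.

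With this weight formula, assertion (1) is immediate: semistability of $(E,\phi)$ gives $\deg F\le0$ for every $\phi$-invariant line subbundle, hence $\mu(x,\lambda)\ge0$ for every nontrivial 1-PS, so every point of $\bR$ is GIT-semistable. For the stability statement, if $(E,\phi)$ is intrinsically stable then $\deg F<0$ strictly for every $\phi$-invariant $F$, making $\mu(x,\lambda)>0$ for every $\lambda$; moreover the $\SL(2)$-stabilizer of $(E,\phi,\beta)$ is identified with the group of Higgs automorphisms of $(E,\phi)$ acting on the frame, which is scalar by simplicity and reduces to $\{\pm I\}$ after $\det E\cong\cO_X$. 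Combined with the positivity of all Hilbert-Mumford weights, this yields finite stabilizer and closed orbit, i.e.\ GIT-stability. Conversely a strictly semistable intrinsic Higgs bundle admits a $\phi$-invariant $F$ with $\deg F=0$, producing a 1-PS with $\mu=0$ whose limit strictly destabilizes the orbit, contradicting GIT-stability.

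For the closed-orbit assertion I would invoke the standard GIT principle that each fiber of the quotient $\bR\to\bR/\!/\SL(2)$ contains a unique closed orbit, obtained from any representative by iteratively taking associated gradeds along $\phi$-invariant subbundles of degree zero. Either the iteration terminates immediately (the stable case), or one obtains $(E,\phi)\cong F\oplus(E/F)$ with $\deg F=0$, and the conditions $\det E\cong\cO_X$ and $\Tr\phi=0$ force $E/F\cong F^{-1}$ and the induced Higgs fields on the summands to differ by a sign, yielding the stated polystable form with $L=F\in\Pic^0(X)$ and $\psi\in H^0(K_X)$. The main technical obstacle is the explicit identification of the Hilbert-Mumford weight with a positive multiple of $-\deg F$: this requires tracking how the Pl\"ucker embedding of $\bR$ and the frame factor at $x$ transform under $\lambda(t)$, and rests on the boundedness and properness of the Quot-type parameter space established earlier in the Simpson construction. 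Once that matching is in place, every remaining step reduces to standard GIT together with the elementary structure of rank $2$ bundles with trivial determinant.
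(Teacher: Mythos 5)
The paper itself gives no argument here: the statement is quoted verbatim as Theorem 4.10 of \cite{Simp94I}, so there is no internal proof to match your proposal against; judged on its own terms, your reconstruction has genuine gaps. The central one is that you apply the Hilbert--Mumford numerical criterion to the $\SL(2)$-action on $\bR$ as if $\bR$ were projective with an ample linearization. But $\bR$ is only quasi-projective, and ``semistable'' in the statement means semistable in the sense of Simpson's good-quotient construction (the two-step quotient of $\tQ$ by $\GL(p)\times\SL(2)$ inside a projective Quot-type ambient scheme); on a non-complete variety the condition $\mu\ge 0$ for all one-parameter subgroups neither follows from nor implies this without a properness input, and any honest weight computation has to be carried out in the ambient projective setting where the $\GL(p)$-factor also contributes. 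Your asserted identity ``$\mu(x,\lambda)=c\cdot(-\deg F)$ with $c>0$'' is precisely the step that would require this bookkeeping, and it is not established; moreover it does not even assign a meaning to $\mu$ for those $\lambda$ whose eigenline at $x$ is not the fiber of any $\phi$-invariant subbundle, which is exactly the case you must also control to conclude GIT-stability.

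The geometric core of the limit analysis is also incorrect as stated. Since the $\SL(2)$-action only moves the frame $\beta$, a limit of $\lambda(t)\cdot(E,\phi,\beta)$ in $\bR$, if it exists, is a framed \emph{semistable} Higgs bundle S-equivalent to $(E,\phi)$. Your claim that the limit exists whenever the chosen eigenline extends to a $\phi$-invariant line subbundle $F$, with limit the associated graded, fails when $\deg F<0$: then $F\oplus(E/F)$ is unstable, hence not a point of $\bR$, and indeed for stable $(E,\phi)$ (which always admits $\phi$-invariant line subbundles of negative degree) the orbit must be closed, so no such limit can exist. The relevant objects are $\phi$-invariant subbundles of degree zero whose fiber at $x$ is the prescribed eigenline, and ``the maximal saturated line subsheaf whose fiber at $x$ is that direction'' is neither canonical nor the right notion. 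Finally, for the closed-orbit assertion you only argue uniqueness of the closed orbit in each fiber of $\bR\to\bR/\!/\SL(2)$ together with iterated associated gradeds; the substantive direction, that the orbit of a polystable point $(L,\psi)\oplus(L^{-1},-\psi)$ (and of a stable point) is actually closed, is not proved and requires a properness or Langton-type degeneration argument, or Simpson's own analysis of orbit closures in the Quot-scheme picture. As written, the proposal is a plausible heuristic but not a proof of the quoted theorem.
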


\begin{theorem}[Theorem 4.10 of \cite{Simp94I}]\label{M is a good quotient}
The good quotient $\bR/\!/\SL(2)$ is $\bM$.
\end{theorem}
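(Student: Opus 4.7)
My plan is to identify $\bM$ with the good quotient by combining the preceding GIT input with the universal property of the coarse moduli space. First, because $\SL(2)$ is reductive and every point of $\bR$ is semistable by the preceding theorem, Mumford's geometric invariant theory furnishes a good quotient $\pi:\bR\to\bR/\!/\SL(2)$ onto a quasi-projective scheme. By the standard properties of good quotients, its closed points are in bijection with closed $\SL(2)$-orbits of $\bR$, and by the preceding theorem these orbits correspond exactly to isomorphism classes of polystable $\SL(2)$-Higgs bundles, matching the set-theoretic description of $\bM$ up to S-equivalence.

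Next, I would construct a canonical morphism $\bR/\!/\SL(2)\to\bM$. The universal framed family $(\mathcal{E},\Phi,\beta)$ over $\bR$, when we forget the framing $\beta$, becomes a family of semistable $\SL(2)$-Higgs bundles and is classified by a morphism $\bR\to\bM$. Since $\SL(2)$ acts only on the frame while the classifying morphism ignores frames, this morphism is $\SL(2)$-invariant; by the universal property of the good quotient it factors uniquely through $\bR/\!/\SL(2)\to\bM$.

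To upgrade this from a bijection on closed points to a scheme isomorphism, I would build an inverse via the defining universal property of $\bM$. Given any family $(E_S,\phi_S)$ of semistable $\SL(2)$-Higgs bundles on $X\times S$, the trivialization of $\det E_S$ makes the frame bundle $P_S\to S$ of $E_S|_{\{x\}\times S}$ a principal $\SL(2)$-bundle. The pull-back of $(E_S,\phi_S)$ to $X\times P_S$ carries a tautological determinant-preserving frame at $x$, and so defines an $\SL(2)$-equivariant morphism $P_S\to\bR$. Composing with $\pi$ and descending along $P_S\to S$ (which is $\SL(2)$-invariant because replacing the frame moves the point in $\bR$ by the same $\SL(2)$-translation) yields a morphism $S\to\bR/\!/\SL(2)$ depending only on the original family. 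This exhibits $\bR/\!/\SL(2)$ as a scheme satisfying the universal property of the coarse moduli space of semistable $\SL(2)$-Higgs bundles, forcing the two morphisms to be mutually inverse.

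The main subtlety lies at the strictly polystable locus, where orbits have positive-dimensional stabilizers and $\pi$ fails to be a geometric quotient, so the quotient is only categorical and only separates closed orbits. However, this is precisely what matches the S-equivalence relation built into $\bM$: since $\bM$ is itself only a coarse (not fine) moduli space along the polystable locus, the categorical universal property of $\pi$ is exactly the right structure, and no further reducedness or normalization arguments are needed. Thus the whole identification is clean once one has set up the framed parameter space $\bR$ correctly in the preceding theorems.
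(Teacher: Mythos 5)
This statement is not proved in the paper at all: it is quoted verbatim as Theorem 4.10 of Simpson's paper \cite{Simp94I}, so there is no internal argument to compare against. Your sketch is essentially the standard quotient-in-stages argument that underlies Simpson's construction: identify closed points of the good quotient with closed orbits (polystable objects), use the fact that $\bR$ represents the framed moduli functor to produce, for any family over $S$, an $\SL(2)$-equivariant map from the determinant-preserving frame bundle $P_S$ to $\bR$, descend, and then invoke the categorical-quotient property of $\bR/\!/\SL(2)$ together with uniqueness of the co-representing object. Two points deserve to be made explicit if you wanted this to be a complete proof rather than an outline: first, the frames $\beta$ must indeed be taken compatible with the fixed trivialization of $\det E$ (otherwise the fibre of $\bR\to\bM$ is a $\GL(2)$-orbit and quotienting by $\SL(2)$ alone would leave a residual $\CC^{*}$), which is the reading you adopted; second, the assertion that $\bR/\!/\SL(2)$ satisfies the coarse moduli (co-representing) property requires checking initiality, i.e.\ that any natural transformation from the moduli functor to $\Hom(-,N)$ induces an $\SL(2)$-invariant morphism $\bR\to N$ and hence factors through the good quotient --- the same mechanism you used for the forward map, but it should be stated rather than left implicit in the phrase ``exhibits the universal property.'' With those caveats your route is the correct one and coincides with Simpson's, which this paper simply cites.
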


\begin{theorem}[Theorem 11.1 of \cite{Simp94II}]\label{R and M are quasi-projective}
$\bR$ and $\bM$ are both irreducible normal quasi-projective varieties.
\end{theorem}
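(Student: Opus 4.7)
The plan is to deduce both assertions from the structural theorems already recorded above, combined with standard GIT and deformation theory.

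For quasi-projectivity, since $\bR=\tQ/\GL(p)$ with $\GL(p)$ acting freely on $\tQ$, descent of a relatively ample line bundle (equivalently, GIT with a free action) endows $\bR$ with a quasi-projective structure inherited from $\tQ$. Then $\bM=\bR/\!/\SL(2)$ is the good quotient of a quasi-projective variety by a reductive group, and is therefore quasi-projective by GIT.

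For irreducibility, the first hypercohomology of the two-term complex $[\End_{0}(E)\to\End_{0}(E)\otimes K_{X}]$ with differential $[\phi,\cdot]$ computes the Zariski tangent space to the moduli of stable Higgs bundles at $(E,\phi)$, and Riemann--Roch shows this dimension is constant, so $\bR^{s}$ is smooth. Connectedness can be extracted from the Hitchin map to the Hitchin base $H^{0}(X,K_{X}^{2})$, whose generic fibre is a Prym variety of the spectral curve and hence connected. Since the strictly semistable locus, parametrised by pairs $(L,\psi)\in T^{*}J$ modulo $\ZZ_{2}$ together with framings, has strictly smaller dimension than $\bR$ for $g\ge 2$, $\bR^{s}$ is open and dense in $\bR$, and irreducibility of $\bR$ --- and therefore of the image $\bM$ --- follows.

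For normality, I would argue locally via Luna's \'etale slice theorem. At a polystable point of $\bR$, the local model is, up to the isotropy action, the zero fibre of the holomorphic moment map for the isotropy representation; the explicit slices arising are essentially the quadratic cones $\Upsilon^{-1}(0)$ and $\Psi^{-1}(0)$ recalled in the introduction, which are reduced complete intersections, hence Cohen--Macaulay, and a direct Jacobian rank computation shows they are regular in codimension one, so Serre's criterion gives normality. Normality of $\bR$ then passes to $\bM=\bR/\!/\SL(2)$ because $\SL(2)$ is reductive. The main obstacle is the normality assertion: quasi-projectivity and irreducibility follow rather formally from the construction, whereas normality requires explicit control of the local model at each polystable orbit type --- which is precisely what the Luna slices combined with the quadratic-cone descriptions supply.
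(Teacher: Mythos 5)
This statement is not proved in the paper at all: it is quoted verbatim from Simpson (\cite{Simp94II}, Theorem~11.1), so the only ``paper proof'' is the citation, and your sketch has to be measured against Simpson's actual argument rather than anything in the text. Your quasi-projectivity step is essentially fine and is indeed built into the GIT construction of $\bR=\tQ/\GL(p)$ and $\bM=\bR/\!/\SL(2)$ recalled in Section~2. The irreducibility and normality steps, however, have genuine gaps.

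For irreducibility: knowing that the fibre of the Hitchin map over a point with smooth spectral curve is a connected Prym does not give connectedness of $\bR^{s}$ or of $\bM$. A priori $\bR$ could have irreducible components lying entirely over the discriminant locus (nilpotent-cone-type components) or entirely inside the strictly semistable locus; your dimension count excludes the latter only if you already know that every irreducible component of $\bR$ has at least the expected dimension, which is a deformation-theoretic input you never supply, and it says nothing about the former. The standard proofs (Hitchin \cite{Hit87} in the smooth case, Simpson in general) obtain connectedness from the $\CC^{*}$-action, i.e.\ Morse theory for $\|\phi\|^{2}$ and the flow to the fixed-point locus, not from the Prym description. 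Also, ``the tangent space has constant dimension, hence $\bR^{s}$ is smooth'' is not a valid implication (constant embedding dimension does not imply smoothness); smoothness of the stable locus comes from unobstructedness, namely $T^{2}\cong(T^{0})^{\vee}=0$ at stable points by duality and simplicity. For normality: the slice description you invoke is exactly Proposition~\ref{normal slice is the quadratic cone}, and it applies only at points $y\in\bR$ with \emph{closed} $\SL(2)$-orbit, i.e.\ at polystable Higgs bundles. At semistable points that are not polystable the orbit is not closed and no quadratic-cone model $\Upsilon^{-1}(0)$ or $\Psi^{-1}(0)$ is available, so Serre's criterion applied to those two cones does not establish normality of $\bR$ at every point; handling those points is precisely the part of Simpson's Theorem~11.1 that requires real work. (The final reductions are fine: normality of $\bM=\bR/\!/\SL(2)$ does follow from normality and irreducibility of $\bR$, since the invariant subring of a normal domain is normal, and irreducibility of $\bM$ follows from that of $\bR$.)
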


\subsection{Hitchin's construction}

Let $E$ be a complex Hermitian vector bundle of rank $2$ and degree $0$ on $X$. Let $\cA$ be the space of traceless connections on $E$ compatible with the Hermitian metric. $\cA$ can be identified with the space of holomorphic structures on $E$ with trivial determinant. Let
$$\cB=\{(A,\phi)\in\cA\times\Om^0(\End_0(E)\otimes K_{X}):d''_{A}\phi=0\}.$$
Let $\cG$ (respectively, $\cG_{\CC}$) be the gauge group of $E$ with structure group $SU(2)$ (respectively, $SL(2)$). These groups act on $\cB$ by
$$g\cdot(A,\phi)=(g^{-1}A'' g+g^{*}A'(g^{*})^{-1}+g^{-1}d'' g-(d' g^{*})(g^{*})^{-1},g^{-1}\phi g),$$
where $A''$ and $A'$ denote the $(0,1)$ and $(1,0)$ parts of $A$ respectively.

The cotangent bundle $T^{*}\cA\cong\cA\times\Om^0(\End_0(E)\otimes K_{X})$ admits a hyperk\"{a}hler structure preserved by the action of $\cG$ with the moment maps for this action
$$\begin{matrix}\mu_{1}=F_{A}+[\phi,\phi^{*}]\\
\mu_{2}=-i(d''_{A}\phi+d'_{A}\phi^{*})\\
\mu_{3}=-d''_{A}\phi+d'_{A}\phi^{*}.\end{matrix}$$
$\mu_{\CC}=\mu_{2}+i\mu_{3}=-2id''_{A}\phi$ is the complex moment map. Then
$$\cB=\mu_{2}^{-1}(0)\cap\mu_{3}^{-1}(0)=\mu_{\CC}^{-1}(0).$$
Consider the hyperk\"{a}hler quotient
$$\cM:=T^{*}\cA/\!/\!/\cG=\mu_{1}^{-1}(0)\cap\mu_{2}^{-1}(0)\cap\mu_{3}^{-1}(0)/\cG=\mu_{1}^{-1}(0)\cap\cB/\cG.$$

Let $\cB^{ss}=\{(A,\phi)\in\cB:((E,d''_{A}),\phi)\text{ is semistable}\}$.

\begin{theorem}[Theorem 2.1 and Theorem 4.3 of \cite{Hit87}, Theorem 1 and Proposition 3.3 of \cite{Simp88}]\label{Hitchin construction}
$$\cM\cong\cB^{ss}/\!/\cG_{\CC}\cong\bM.$$
\end{theorem}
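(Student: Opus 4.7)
The plan is to prove the homeomorphism via the Hitchin--Kobayashi correspondence for Higgs bundles. Because $\cB=\mu_{\CC}^{-1}(0)$ by construction, the hyperk\"ahler quotient simplifies to $\cM=(\mu_{1}^{-1}(0)\cap\cB)/\cG$, and the task is to identify this with the GIT quotient $\cB^{ss}/\!/\cG^{\CC}$. The natural candidate map sends the $\cG$-orbit of a solution $(A,\phi)$ of the three Hitchin equations to the $S$-equivalence class of the Higgs bundle $((E,d''_{A}),\phi)$; the bulk of the work is showing this is a well-defined bijection and a homeomorphism.

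First I would verify the map is well defined, i.e.\ that a pair $(A,\phi)\in\cB$ with $\mu_{1}(A,\phi)=F_{A}+[\phi,\phi^{*}]=0$ gives a polystable Higgs bundle. For any $\phi$-invariant holomorphic line subbundle $F\subset E$, projecting the curvature equation onto $F$ via Chern--Weil and integrating against the K\"ahler form yields $\deg F\le0$, with equality forcing a smooth $d''_{A}$- and $\phi$-invariant orthogonal splitting. Hence $(A,\phi)\in\cB^{ss}$ and in fact the $\cG^{\CC}$-orbit of $(A,\phi)$ is closed in $\cB^{ss}$, so the image is a single point of $\cB^{ss}/\!/\cG^{\CC}$.

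Next I would establish surjectivity using the existence theorem of Hitchin (for rank $2$) and Simpson (in general): every polystable Higgs bundle admits a Hermitian metric $h$ satisfying $F_{A_{h}}+[\phi,\phi^{*_{h}}]=0$. Transporting $h$ back to the fixed Hermitian bundle $E$ by a complex gauge transformation, one sees that every polystable $\cG^{\CC}$-orbit in $\cB^{ss}$ meets $\mu_{1}^{-1}(0)$; since each point of $\cB^{ss}/\!/\cG^{\CC}$ has a unique polystable representative, surjectivity follows. Injectivity then reduces to the uniqueness statement: if $(A_{1},\phi_{1})$ and $(A_{2},\phi_{2})$ both solve $\mu_{1}=0$ and lie in the same $\cG^{\CC}$-orbit, they differ by an element of $\cG$ modulo the stabilizer. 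The standard argument writes the complex gauge transformation as $g=u\exp(s)$ with $u\in\cG$ and $s$ self-adjoint, then uses convexity of $t\mapsto\|\mu_{1}(\exp(ts)\cdot(A_{1},\phi_{1}))\|^{2}$ along the geodesic through $(A_{1},\phi_{1})$: vanishing at both endpoints forces $s$ to lie in the infinitesimal stabilizer.

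The main obstacle is the existence step, a nontrivial analytic theorem on the Donaldson--type functional for Higgs bundles requiring careful elliptic estimates, which I would simply quote from the cited papers of Hitchin and Simpson rather than reprove. Once the bijection is in hand, continuity of the map is routine since the $\cG^{\CC}$-action is continuous and the quotient topologies match on closed orbits; continuity of the inverse follows from properness of the moment map $\mu_{1}$ on the zero set of $\mu_{\CC}$, yielding the desired analytic isomorphism $\cM\cong\cB^{ss}/\!/\cG^{\CC}$.
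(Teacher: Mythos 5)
The paper offers no proof of this statement at all—it is imported verbatim from Hitchin (Theorems 2.1 and 4.3) and Simpson (Theorem 1 and Proposition 3.3)—and your sketch is exactly the standard Hitchin--Kobayashi argument contained in those references (polystability of solutions via Chern--Weil, surjectivity from the existence of harmonic metrics, injectivity from uniqueness via a Kempf--Ness convexity argument, and the topological identification of the two quotients), with the hard analytic existence step correctly deferred to the same sources, so you are taking essentially the same route as the paper. One small correction: the convex quantity in the uniqueness step is the Donaldson/Kempf--Ness functional, whose derivative along the path $\exp(ts)\cdot(A_{1},\phi_{1})$ is $\langle\mu_{1},s\rangle$, rather than $\|\mu_{1}\|^{2}$ itself, which need not be convex along such paths.
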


\section{Intersection cohomology theory}

In this section, we introduce some basics on the intersection cohomology (\cite{GM80}, \cite{GM83}) and the equivariant intersection cohomology (\cite{BL94}, \cite{GKM98}) of a quasi-projective complex variety. Let $V$ be a quasi-projective complex variety of pure dimension $n$ throughout this section.

\subsection{Intersection cohomology}

It is well-known that $V$ has a Whitney stratification
$$V=V_{n}\supseteq V_{n-1}\supseteq\cdots\supseteq V_{0}$$
which is embedded into a topological pseudomanifold of dimension $2n$ with filtration
$$W_{2n}\supseteq W_{2n-1}\supseteq\cdots\supseteq W_{0},$$
where $V_{j}$ are closed subvarieties such that $V_{j}-V_{j-1}$ is either empty or a nonsingular quasi-projective variety of pure dimension $j$ and $W_{2k}=W_{2k+1}=V_{k}$.

Let $\bar{p}=(p_{2},p_{3},\cdots,p_{2n})$ be a perversity. For a triangulation $T$ of $V$, $(C_{\bullet}^{T}(V),\partial)$ denotes the chain complex of chains with respect to $T$ with coefficients in $\QQ$. We define $I^{\bar{p}}C_{i}^{T}(V)$ to be the subspace of $C_{i}^{T}(V)$ consisting of those chains $\xi$ such that
$$\dim_{\RR}(|\xi|\cap V_{n-c})\le i-2c+p_{2c}$$
and
$$\dim_{\RR}(|\partial\xi|\cap V_{n-c})\le i-1-2c+p_{2c}.$$
Let $IC_{i}^{\bar{p}}(V)=\dss\varinjlim_{T}I^{\bar{p}}C_{i}^{T}(V)$. Then $(IC_{\bullet}^{\bar{p}}(V),\partial)$ is a chain complex. The $i$-th \textbf{intersection homology} of $V$ of perversity $\bar{p}$, denoted by $IH_{i}^{\bar{p}}(V)$, is the $i$-th homology group of the chain complex $(IC_{\bullet}^{\bar{p}}(V),\partial)$. The $i$-th \textbf{intersection cohomology} of $V$ of perversity $\bar{p}$, denoted by $IH_{\bar{p}}^{i}(V)$, is the $i$-th homology group of the chain complex $(IC_{\bullet}^{\bar{p}}(V)^{\vee},\partial^{\vee})$.

When we consider a chain complex $(IC_{\bullet}^{cl,\bar{p}}(V),\partial)$ of chains with closed support instead of usual chains, we can define the $i$-th \textbf{intersection homology with closed support} (respectively, \textbf{intersection cohomology with closed support}) of $V$ of perversity $\bar{p}$, denoted by $IH_{i}^{cl,\bar{p}}(V)$ (respectively, $IH_{cl,\bar{p}}^{i}(V)$)

There is an alternative way to define the intersection homology and cohomology with closed support. Let $\ic_{\bar{p}}^{-i}(V)$ be the sheaf given by $U\mapsto IC_{i}^{cl,\bar{p}}(U)$ for each open subset $U$ of $V$. Then $\ic_{\bar{p}}^{\bullet}(V)$ is a complex of sheaves as an object in the bounded derived category $D^{b}(V)$.  Then we have $IH_{i}^{cl,\bar{p}}(V)=\cH^{-i}(\ic_{\bar{p}}^{\bullet}(V))$ and $IH_{cl,\bar{p}}^{i}(V)=\cH^{i-2\dim(V)}(\ic_{\bar{p}}^{\bullet}(V))$, where $\cH^{i}(\bA^{\bullet})$ is the $i$-th hypercohomology of a complex of sheaves $\bA^{\bullet}$.

When $\bar{p}$ is the middle perversity $\bar{m}$, $IH_{i}^{\bar{m}}(V)$, $IH_{\bar{m}}^{i}(V)$, $IH_{i}^{cl,\bar{m}}(V)$, $IH_{cl,\bar{m}}^{i}(V)$ and $\ic_{\bar{m}}^{\bullet}(V)$ are denoted by $IH_{i}(V)$, $IH^{i}(V)$, $IH_{i}^{cl}(V)$, $IH_{cl}^{i}(V)$ and $\ic^{\bullet}(V)$ respectively.

\subsection{Equivariant intersection cohomology}

Assume that a compact connected algebraic group $G$ acts on $V$ algebraically. For the universal principal bundle $\E G\to \B G$, we have the quotient $V\times_{G}\E G$ of $V\times \E G$ by the diagonal action of $G$. Let us consider the following diagram
$$\xymatrix{V&V\times \E G\ar[l]_{p}\ar[r]^{q}&V\times_{G}\E G}.$$

\begin{definition}[2.1.3 and 2.7.2 in \cite{BL94}]
The \textbf{equivariant derived category} of $V$, denoted by $D_{G}^{b}(V)$, is defined as follows:
\begin{enumerate}
\item An object is a triple $(F_{V},\bar{F},\beta)$, where $F_{V}\in D^{b}(V)$, $\bar{F}\in D^{b}(V\times_{G}\E G)$ and $\beta:p^{*}(F_{V})\to q^{*}(\bar{F})$ is an isomorphism in $D^{b}(V\times \E G)$.

\item A morphism $\alpha:(F_{V},\bar{F},\beta)\to(G_{V},\bar{G},\gamma)$ is a pair $\alpha=(\alpha_{V},\bar{\alpha})$, where $\alpha_{V}:F_{V}\to G_{V}$ and $\bar{\alpha}:\bar{F}\to\bar{G}$ such that $\beta\circ p^{*}(\alpha_{V})=q^{*}(\bar{\alpha})\circ\beta$.
\end{enumerate}
\end{definition}

$\ic_{G,\bar{p}}^{\bullet}(V)$ (respectively, $\QQ_{V}^{G}$) denotes $(\ic_{\bar{p}}^{\bullet}(V),\ic_{\bar{p}}^{\bullet}(V\times_{G}\E G),\beta)$ (respectively, $(\QQ_{V},\QQ_{V\times_{G}\E G},\id)$) as an object of $D_{G}^{b}(V)$. The $i$-th equivariant cohomology of $V$ can be obtained by $H_{G}^{i}(V)=\cH^{-i}(\QQ_{V\times_{G}\E G})$. The \textbf{equivariant intersection cohomology} of $V$ of perversity $\bar{p}$, denoted by $IH_{G,\bar{p}}^{*}(V)$, is defined by $IH_{G,\bar{p}}^{i}(V):=\cH^{-i}(\ic_{\bar{p}}^{\bullet}(V\times_{G}\E G))$.

When $\bar{p}$ is the middle perversity $\bar{m}$, $IH_{G,\bar{m}}^{i}(V)$ and $\ic_{G,\bar{m}}^{\bullet}(V)$ are denoted by $IH_{G}^{i}(V)$ and $\ic_{G}^{\bullet}(V)$ respectively.

The equivariant cohomology and the equivariant intersection cohomology can be described as a limit of a projective limit system coming from a sequence of finite dimensional submanifolds of $\E G$. Let us consider a sequence of finite dimensional submanifolds $\E G_{0}\subset \E G_{1}\subset\cdots\subset \E G_{n}\subset\cdots$ of $\E G$, where $G$ acts on all of $\E G_{n}$ freely, $\E G_{n}$ are $n$-acyclic, $\E G_{n}\subset \E G_{n+1}$ is an embedding of a submanifold, $\dim \E G_{n}<\dim \E G_{n+1}$ and $\dss \E G=\bigcup_{n}\E G_{n}$. Since $G$ is connected, such a sequence exists by \cite[Lemma 12.4.2]{BL94}. Then we have a sequence of finite dimensional subvarieties $V\times_{G}\E G_{0}\subset V\times_{G}\E G_{1}\subset\cdots\subset V\times_{G}\E G_{n}\subset\cdots$ of $V\times_{G}\E G$. Hence we have $\dss H_{G}^{*}(V)=\varprojlim_{n}H^{*}(V\times_{G}\E G_{n})$ and $\dss IH_{G,\bar{p}}^{*}(V)=\varprojlim_{n}IH_{\bar{p}}^{*}(V\times_{G}\E G_{n})$.

\subsection{The generalized Poincar\'{e} duality and the decomposition theorem}

In this subsection, we state two important theorems. One is the generalized Poincar\'{e} duality and the other is the decomposition theorem.

\begin{theorem}[The generalized Poincar\'{e} duality]\label{GPD}
If $\bar{p}+\bar{q}=\bar{t}$, then there is a non-degenerate bilinear form
$$IH_{i}^{\bar{p}}(V)\times IH_{2\dim(V)-i}^{cl,\bar{q}}(V)\to\QQ.$$
\end{theorem}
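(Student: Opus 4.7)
The plan is to follow Goresky--MacPherson, defining the pairing geometrically by transverse intersection of chains and then deducing non-degeneracy via Verdier duality. The hypothesis $\bar p+\bar q=\bar t$ (equivalently, $p_{2c}+q_{2c}=2c-2$ for each $c\geq 1$) is precisely what forces generically positioned chains in complementary dimensions and complementary perversities to avoid the singular strata.

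First, construct the pairing at the chain level. Given representatives $\xi\in IC_i^{\bar p}(V)$ and $\eta\in IC_{2n-i}^{cl,\bar q}(V)$, invoke a stratified general-position theorem (the PL moving lemma of McCrory, adapted to allowable chains) to replace $\xi$ by a homologous chain whose support meets $|\eta|$ transversely relative to the Whitney stratification $V=V_n\supseteq\cdots\supseteq V_0$. For such transverse representatives, on each singular stratum of real codimension $2c\geq 2$ one obtains
$$\dim_\RR(|\xi|\cap|\eta|\cap V_{n-c})\leq (i-2c+p_{2c})+(2n-i-2c+q_{2c})-2(n-c)=p_{2c}+q_{2c}-2c=-2,$$
so the intersection is supported entirely in the smooth locus $V-V_{n-1}$, where it consists of a finite set of transverse zero-dimensional points. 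Counting these with signs yields a number $\xi\cdot\eta\in\QQ$. A Stokes-type identity $\partial\xi\cdot\eta=\pm\,\xi\cdot\partial\eta$ (with the standard boundary-product sign) then descends this to a well-defined bilinear pairing $IH_i^{\bar p}(V)\times IH_{2n-i}^{cl,\bar q}(V)\to\QQ$.

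Non-degeneracy is established sheaf-theoretically. The intersection complex $\ic_{\bar p}^\bullet(V)$ is characterized by Deligne's local axioms (truncate and push, stratum by stratum), and under the involution $\bar p\leftrightarrow\bar q$ the truncation functors in Deligne's construction are exchanged with their Verdier duals. This yields a canonical duality isomorphism between $\mathbb D_V(\ic_{\bar p}^\bullet)$ and a shift of $\ic_{\bar q}^\bullet$ by the real dimension $2n$. Taking hypercohomology and applying Verdier duality on the bounded constructible derived category produces a perfect pairing between $IH_i^{\bar p}(V)$ and $IH_{2n-i}^{cl,\bar q}(V)$; a final compatibility check shows this agrees with the geometric pairing constructed above, so the latter is non-degenerate.

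The main obstacle is the moving lemma itself: one must perturb $\xi$ to a transverse position \emph{while preserving} both the chain allowability $\dim(|\xi|\cap V_{n-c})\leq i-2c+p_{2c}$ and the analogous bound for $\partial\xi$. Goresky--MacPherson handle this by combining a PL general-position argument with a careful subdivision subordinate to the stratification, which requires genuine care because allowability for the boundary is not automatic. Once the moving lemma is in hand, the dimension calculation above and the Verdier-duality identification of intersection complexes with complementary perversities are essentially formal.
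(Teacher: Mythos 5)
Your outline is correct and is essentially the classical Goresky--MacPherson argument (chain-level pairing via the general-position/moving lemma, with the count $p_{2c}+q_{2c}-2c=-2$ forcing the intersection into the smooth locus, and non-degeneracy over $\QQ$ from the Verdier-duality identification $\mathbb{D}\,\ic_{\bar p}^{\bullet}(V)\cong\ic_{\bar q}^{\bullet}(V)$ up to shift); the paper itself offers no proof of Theorem \ref{GPD}, stating it as background and citing \cite{GM80}, \cite{GM83}, which is exactly the route you reproduce. No gaps beyond the technical points (moving lemma, compatibility of the geometric and sheaf-theoretic pairings) that you explicitly flag and correctly attribute to those references.
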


\begin{theorem}[The decomposition theorem]\label{DT}

Suppose that $\varphi:W\to V$ is a projective morphism of quasi-projective varieties. Then there exist closed subvarieties $V_{\alpha}$ of $V$, local systems $L_{\alpha}$ on the non-singular parts $(V_{\alpha})_{nonsing}$ of $V_{\alpha}$ and integers $l_{\alpha}$ such that there is an isomorphism
$$R\varphi_{*}\ic^{\bullet}(W)\cong\bigoplus_{\alpha}\ic^{\bullet}(V_{\alpha},L_{\alpha})[l_{\alpha}]$$
in the derived category $D^{b}(V)$, where $\ic^{\bullet}(V_{\alpha},L_{\alpha})$ is the complex of sheaves of intersection chains with coefficients in $L_{\alpha}$.

\end{theorem}

There are three special important consequences of the decomposition theorem.

\begin{proposition}\label{3 consequences}
\begin{enumerate}
\item Suppose that $\varphi:W\to V$ is a resolution of singularities. Then $\ic^{\bullet}(V)$ (respectively, $IH^{*}(V)$) is a direct summand of $R\varphi_{*}\ic^{\bullet}(W)$ (respectively, $IH^{*}(W)$).

\item Suppose that $\varphi:W\to V$ is a projective morphism which is topologically a fibration whose fiber is a projective variety $F$. Then there is a Leray spectral sequence $E_{r}^{ij}$ converging to $IH^{i+j}(W)$ with $E_2$ term $E_{2}^{ij}=IH^{i}(V,IH^{j}(F))$, where $IH^{j}(F)$ denotes the local system $\cL$ on $V$ with stalk $\cL_{x}=IH^{j}(\varphi^{-1}(x))\cong IH^{j}(F)$. The decomposition theorem for $\varphi$ is equivalent to the degeneration of $E_{r}^{ij}$ at the $E_{2}$ term.

\item Suppose that $\varphi:W\to V$ is a $G$-equivariant resolution of singularities. Then $\ic_{G}^{\bullet}(V)$ (respectively, $IH_{G}^{*}(V)$) is a direct summand of $R\varphi_{*}\ic_{G}^{\bullet}(W)$ (respectively, $IH_{G}^{*}(W)$).
\end{enumerate}
\end{proposition}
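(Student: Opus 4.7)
The plan is to deduce all three statements directly from Theorem \ref{DT} together with standard properties of the perverse $t$-structure.

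For part (1), I would begin by noting that since $W$ is smooth, $\ic^{\bullet}(W)\cong\QQ_{W}[\dim W]$ (up to the chosen shift convention), so Theorem \ref{DT}(1) gives a decomposition
$$R\varphi_{*}\ic^{\bullet}(W)\cong\bigoplus_{i,\alpha}\ic^{\bullet}(V_{i,\alpha},L_{i,\alpha})[-i].$$
The crucial point is to identify $\ic^{\bullet}(V)$ as one of the summands. Because $\varphi$ is a resolution, it is a proper birational morphism, so there is a dense Zariski open $U\subseteq V_{nonsing}$ over which $\varphi$ is an isomorphism; hence the restriction of $R\varphi_{*}\ic^{\bullet}(W)$ to $U$ is $\ic^{\bullet}(V)|_{U}$ (up to the normalization shift). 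By the defining support and costandard conditions of IC sheaves, the only summand $\ic^{\bullet}(V_{i,\alpha},L_{i,\alpha})$ whose restriction to $U$ is nonzero and matches the trivial local system is $\ic^{\bullet}(V)$ itself, sitting in perverse degree $0$. Therefore $\ic^{\bullet}(V)$ is a direct summand of $R\varphi_{*}\ic^{\bullet}(W)$. Taking hypercohomology on $V$ and using $\mathbb{H}^{*}(V,R\varphi_{*}\ic^{\bullet}(W))=\mathbb{H}^{*}(W,\ic^{\bullet}(W))$ yields the corresponding statement for $IH^{*}$.

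For part (2), the existence of the perverse Leray spectral sequence is formal: for any object $F\in D^{b}(V)$ there is a spectral sequence $E_{2}^{ij}=\mathbb{H}^{i}(V,\leftidx{^p}\cH^{j}F)\Rightarrow\mathbb{H}^{i+j}(V,F)$ obtained from the perverse truncation filtration $\leftidx{^p}\tau_{\le k}F$. Applying this to $F=R\varphi_{*}\ic^{\bullet}(W)$ and using $\mathbb{H}^{*}(V,R\varphi_{*}\ic^{\bullet}(W))=IH^{*}(W)$ gives the stated spectral sequence. For the equivalence with $E_{2}$-degeneration, one direction is immediate: the decomposition of Theorem \ref{DT}(1) splits the filtration $\leftidx{^p}\tau_{\le k}$, so all differentials $d_{r}$ for $r\ge 2$ vanish. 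Conversely, if $E_{2}$-degeneration holds, then each $\leftidx{^p}\tau_{\le k}$ is split off, and since the perverse cohomology sheaves are semisimple on a quasi-projective variety (by the decomposition theorem applied to each $\leftidx{^p}\cH^{j}$, or equivalently by Beilinson--Bernstein--Deligne--Gabber), one concludes the decomposition.

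For part (3), the argument is an equivariant repetition of (1), using Theorem \ref{DT}(2) in place of \ref{DT}(1). The equivariant IC sheaf $\ic_{G}^{\bullet}(W)=\QQ_{W}^{G}[\dim W]$ (up to the shift), and the restriction to the open set $U\subseteq V$ where $\varphi$ is a $G$-equivariant isomorphism again forces $\ic_{G}^{\bullet}(V)$ to appear as a summand. Passing to $G$-equivariant hypercohomology produces the statement for $IH_{G}^{*}$. The main conceptual obstacle is the identification of the distinguished summand, which in both (1) and (3) reduces to the uniqueness of the IC extension from a dense open set—a standard property of the perverse $t$-structure and the one point where the hypothesis that $\varphi$ is a resolution (not merely proper) is essential.
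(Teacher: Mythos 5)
Your parts (1) and (3) are correct. Part (1) is essentially the argument the paper delegates to \cite[Corollary 5.4.11]{Dim04}: apply Theorem \ref{DT}-(1) to $\QQ_{W}[\dim W]$ and identify $\ic^{\bullet}(V)$ as the unique summand with dense support, trivial rank-one generic local system and zero shift, via restriction to an open set over which $\varphi$ is an isomorphism; your write-up just makes that identification explicit. Part (3) is where you genuinely diverge from the paper: you invoke the equivariant decomposition theorem (Theorem \ref{DT}-(2)) in $D^{b}_{G}(V)$ and repeat the support argument there, whereas the paper never uses \ref{DT}-(2) for this; it unwinds the definition of $\ic_{G}^{\bullet}$ as a triple, applies part (1) both to $V$ and to the finite-dimensional approximations $V\times_{G}\E G_{n}$, passes to the limit, and then checks a commutative diagram to see that the two splittings are compatible with the structure isomorphism $\beta$. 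Your route is shorter and is fine given that \ref{DT}-(2) is stated as a theorem and that equivariant hypercohomology (computed from the $\bar F$-component) sends direct summands to direct summands; the paper's route buys an argument that only relies on the non-equivariant statement.

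The genuine gap is in your converse direction of part (2). The step ``if $E_{2}$-degeneration holds, then each $\leftidx{^p}\tau_{\le k}$ is split off'' is not justified and is false in general: degeneration of the global perverse Leray spectral sequence only says that the induced filtration on $IH^{i+j}(W)=\cH^{i+j-2\dim W}(R\varphi_{*}\ic^{\bullet}(W))$ has graded pieces $IH^{i}(V,\leftidx{^p}\cH^{j}R\varphi_{*}\ic^{\bullet}(W))$; it is a statement about hypercohomology over $V$ and does not produce splittings of the truncation triangles in $D^{b}(V)$. (The correct statement in this direction, due to Deligne, requires degeneration universally, e.g.\ over every open subset or after arbitrary base change, or an input such as relative hard Lefschetz; also, citing semisimplicity of the $\leftidx{^p}\cH^{j}$ here is circular, since that is part of the decomposition theorem you are trying to recover.) Note that the paper's own proof of (2) only establishes the implication ``decomposition $\Rightarrow$ degeneration'', which is all that is used later; so either restrict to that direction, as the paper does, or replace your converse argument by one of the genuine criteria just mentioned.
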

\begin{proof}
\begin{enumerate}
\item Applying Theorem \ref{DT} to $\varphi$ and to the shifted constant sheaf $\QQ_{W}[\dim W]$, we get the result. The details of the proof can be found in \cite[Corollary 5.4.11]{Dim04}.

\item The statement is from \cite[Proposition 8.4.5]{KW06}.

\item We know that $\ic_{G}^{\bullet}(V)=(\ic^{\bullet}(V),\ic^{\bullet}(V\times_{G}\E G),\alpha)$ and $\ic_{G}^{\bullet}(W)=(\ic^{\bullet}(W),\ic^{\bullet}(W\times_{G}\E G),\beta)$. It follows from item (1) that $\ic^{\bullet}(V)$ is a direct summand of $R\varphi_{*}\ic^{\bullet}(W)$ and that $\ic^{\bullet}(V\times_{G}\E G_{n})$ is a direct summand of $R\varphi_{*}\ic^{\bullet}(W\times_{G}\E G_{n})$ for all $n$. Since $\ic^{\bullet}(V\times_{G}\E G)=\dss\varprojlim_{n}\ic^{\bullet}(V\times_{G}\E G_{n})$ and $\ic^{\bullet}(W\times_{G}\E G)=\dss\varprojlim_{n}\ic^{\bullet}(W\times_{G}\E G_{n})$, $\ic^{\bullet}(V\times_{G}\E G)$ is a direct summand of $R\varphi_{*}\ic^{\bullet}(W\times_{G}\E G)$.

Let $i:\ic^{\bullet}(V)\hookrightarrow R\varphi_{*}\ic^{\bullet}(W)$ and $\bar{i}:\ic^{\bullet}(V\times_{G}\E G)\hookrightarrow R\varphi_{*}\ic^{\bullet}(W\times_{G}\E G)$ be the inclusions from the decomposition theorem. It is easy to see that the following diagram
$$\xymatrix@C-=0.5cm{&p_{V}^{*}\ic^{\bullet}(V)\ar[r]^(0.4){\alpha}\ar[d]_{p^{*}(i)}&q_{V}^{*}\ic^{\bullet}(V\times_{G}\E G)\ar[d]^{q^{*}(\bar{i})}&\\
R\varphi_{*}p_{W}^{*}\ic^{\bullet}(W)\ar@{=}[r]&p_{V}^{*}R\varphi_{*}\ic^{\bullet}(W)\ar[r]_{R\varphi_{*}(\beta)\quad\quad}&q_{V}^{*}R\varphi_{*}\ic^{\bullet}(W\times_{G}\E G)\ar@{=}[r]&R\varphi_{*}q_{W}^{*}\ic^{\bullet}(W\times_{G}\E G)}$$
commutes, where $p_{V}:V\times \E G\to V$ (respectively, $p_{W}:W\times \E G\to W$) is the projection onto $V$ (respectively, $W$) and $q_{V}:V\times \E G\to V\times_{G}\E G$ (respectively, $q_{W}:W\times \E G\to W\times_{G}\E G$) is the quotient.
\end{enumerate}
\end{proof}

\section{Kirwan's desingularization of $\bM$}\label{Kirwan desing}

In this section, we briefly explain how $\bM$ can be desingularized by three blowing-ups by the Kirwan's algorithm introduced in \cite{K85-2}. For details, see \cite{KY08} and \cite{O99}.

We first consider the loci of type (i) of $(L,0)\oplus(L,0)$ with $L\cong L^{-1}$ in $\bM\setminus\bM^{s}$ and in $\bR\setminus\bR^{s}$, where $\bR^{s}$ is the stable locus of $\bR$. The loci of type (i) in $\bM$ and in $\bR$ are both isomorphic to the set of $\ZZ_2$-fixed points $\ZZ_{2}^{2g}$ in $J:=\Pic^0(X)$ by the involution $L\mapsto L^{-1}$. The singularity of the locus $\ZZ_{2}^{2g}$ of type (i) in $\bM$ is the quotient
$$\Upsilon^{-1}(0)/\!/\SL(2)$$
where $\Upsilon:[H^0(K_{X})\oplus H^1(\cO_{X})]\otimes sl(2)\to H^1(K_{X})\otimes sl(2)$ is the quadratic map given by the Lie bracket of $sl(2)$ coupled with the perfect pairing $H^0(K_{X})\oplus H^1(\cO_{X})\to H^1(K_{X})$ and the $\SL(2)$-action on $\Upsilon^{-1}(0)$ is induced from the adjoint representation $\SL(2)\to\Aut(sl(2))$.

Next we consider the loci of type (iii) of $(L,\psi)\oplus (L^{-1},-\psi)$ with $(L,\psi)\ncong (L^{-1},-\psi)$ in $\bM\setminus\bM^{s}$ and in $\bR\setminus\bR^s$. It is clear that the
locus of type (iii) in $\bM$ is isomorphic to
$$J\times_{\ZZ_2}H^0(K_X)-\ZZ_2^{2g}\cong T^*J/\ZZ_2-\ZZ_2^{2g}$$
where $\ZZ_2$ acts on $J$ by $L\mapsto L^{-1}$ and on $H^0(K_X)$
by $\psi\mapsto -\psi$. The locus of type (iii) in $\bR$ is a $\PP
\SL(2)/\CC^*$-bundle over $T^*J/\ZZ_2-\ZZ_2^{2g}$ and in particular
it is smooth.  The singularity along the
locus of type (iii) in $\bM$ is the quotient
\[ \Psi^{-1}(0)/\!/\CC^*,\]
where $\Psi:[H^0(L^{-2}K_X)\oplus H^1(L^2)]\oplus[H^0(L^{2}K_X)\oplus H^1(L^{-2})]\to H^1(K_X)$ is the quadratic map given by the sum of perfect pairings $H^0(L^{-2}K_X)\oplus H^1(L^2)\to H^1(K_X)$ and $H^0(L^{2}K_X)\oplus H^1(L^{-2})\to H^1(K_X)$ over $(L,\psi)\oplus (L^{-1},-\psi)\in T^*J/\ZZ_2-\ZZ_2^{2g}$ and the $\CC^{*}$-action on $\Psi^{-1}(0)$ is induced from the $\CC^{*}$-action on $[H^0(L^{-2}K_X)\oplus H^1(L^2)]\oplus[H^0(L^{2}K_X)\oplus H^1(L^{-2})]$ given by
$$\lambda\cdot(a,b,c,d)=(\lambda^{-2}a,\lambda^{2}b,\lambda^{2}c,\lambda^{-2}d).$$

Since we have identical singularities as in \cite{O99}, we can follow K.G. O'Grady's arguments to construct the Kirwan's desingularization $\bK$ of $\bM$. Let $\bR_{1}$ be the blowing-up of $\bR$ along the locus $\ZZ_{2}^{2g}$ of type (i). Let $\bR_{2}$ be the blowing-up of $\bR_{1}^{ss}$ along the strict transform $\Sigma$ of the locus of type (iii), where $\bR_{1}^{ss}$ is the locus of semistable points in $\bR_{1}$. Let $\bR_{2}^{ss}$ (respectively, $\bR_{2}^{s}$) be the locus of semistable (respectively, stable) points in $\bR_{2}$. Then it follows from the same argument as in \cite[Claim 1.8.10]{O99} that

\begin{enumerate}
\item[(a)] $\bR_{2}^{ss}=\bR_{2}^{s}$,
\item[(b)] $\bR_{2}^{s}$ is smooth.
\end{enumerate}

In particular, $\bR_{2}^{s}/\SL(2)$ has at worst orbifold singularities. When $g=2$, this is smooth. When $g\ge3$, we blow up $\bR_{2}^{s}$ along the locus of points with stabilizers larger than the center $\ZZ_{2}$ of $\SL(2)$ to obtain a variety $\bR_{3}$ such that the orbit space $\bK:=\bR_{3}^{s}/\SL(2)$ is a smooth variety obtained by blowing up $\bM$ along $\ZZ_{2}^{2g}$, along the strict transform of $T^*J/\ZZ_2-\ZZ_2^{2g}$ and along a nonsingular subvariety contained in the strict transform of the exceptional divisor of the first blowing-up. $\bK$ is called the \textbf{Kirwan's desingularization} of $\bM$.

Throughout this paper, $\pi_{\bR_{1}}:\bR_{1}\to\bR$ (respectively, $\pi_{\bR_{2}}:\bR_{2}\to\bR_{1}^{ss}$) denotes the first blowing-up map (respectively, the second blowing-up map). $\overline{\pi}_{\bR_{1}}:\bR_{1}^{ss}/\!/\SL(2)\to\bR/\!/\SL(2)$ and $\overline{\pi}_{\bR_{2}}:\bR_{2}^{ss}/\!/\SL(2)\to\bR_{1}^{ss}/\!/\SL(2)$ denote maps induced from $\pi_{\bR_{1}}$ and $\pi_{\bR_{2}}$ respectively.

\section{Local pictures in Kirwan's algorithm on $\bR$}\label{local picture}

In this section, we list local pictures that appear in Kirwan's algorithm on $\bR$ for later use. For details, see \cite[1.6 and 1.7]{O99}.

We first observe that $\pi_{\bR_{1}}^{-1}(x)=\PP\Upsilon^{-1}(0)$ for any $x\in\ZZ_{2}^{2g}$. We identify $\HH^{g}$ with $T_{x}(T^{*}J)=H^{1}(\cO_X)\oplus H^{0}(K_X)$ for any $x\in T^{*}J$, where $\HH$ is the division algebra of quaternions. Since the adjoint representation gives an identification $\PGL(2)\cong \SO(sl(2))$, $\PGL(2)$ acts on both $\Upsilon^{-1}(0)$ and $\PP\Upsilon^{-1}(0)$. Since $\PGL(2)=\SL(2)/\{\pm\id\}$ and the action of $\{\pm\id\}$ on $\Upsilon^{-1}(0)$ and $\PP\Upsilon^{-1}(0)$ are trivial,
$$\Upsilon^{-1}(0)/\!/\SL(2)=\Upsilon^{-1}(0)/\!/\PGL(2)\text{ and }\PP\Upsilon^{-1}(0)^{ss}/\!/\SL(2)=\PP\Upsilon^{-1}(0)^{ss}/\!/\PGL(2).$$

We have an explicit description of semistable points of $\PP\Upsilon^{-1}(0)$ with respect to the $\PGL(2)$-action as following.

\begin{proposition}[Proposition 1.6.2 of \cite{O99}]\label{ss-local-first-blowup}
A point $[\varphi]\in\PP\Upsilon^{-1}(0)$ is $\PGL(2)$-semistable if and only
if:
$$\rk\varphi\begin{cases}\geq 2,&\text{or}\\
=1&\text{and }[\varphi]\in\PGL(2)\cdot\PP\{\left(\begin{array}{cc}\lambda&0\\0&-\lambda\end{array}\right)|\lambda\in\HH^{g}\setminus\{O\}\}.
\end{cases}$$
\end{proposition}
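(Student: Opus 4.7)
The plan is to apply the Hilbert--Mumford numerical criterion to the $\PGL(2)$-action on $\PP(\CC^{2g}\otimes sl(2))$, whose restriction to $\PP\Upsilon^{-1}(0)$ governs the semistability in question. Every non-trivial one-parameter subgroup of $\PGL(2)$ is $\PGL(2)$-conjugate to a power of the standard diagonal one $\lambda(t)=\mathrm{diag}(t,t^{-1})$, so by absorbing the conjugator into $\varphi$ it suffices to analyze limits of the form $\lim_{t\to 0}\lambda(t)\cdot(g\cdot\varphi)$ for $g\in\PGL(2)$. Decomposing $sl(2)=\CC E\oplus\CC H\oplus\CC F$ into $\lambda$-weight spaces of weights $+2,0,-2$, every $\varphi\in\CC^{2g}\otimes sl(2)$ is written uniquely as $\varphi=e\otimes E+h\otimes H+f\otimes F$ with $e,h,f\in\CC^{2g}$, and the criterion reads: $[\varphi]$ is unstable if and only if some $\PGL(2)$-translate of $\varphi$ has $h=f=0$ and $e\neq 0$, i.e.\ is of the form $v\otimes E$ with $v\neq 0$.

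The next step is to translate this exclusion into intrinsic rank-theoretic terms by viewing $\varphi$ as a linear map $\hat{\varphi}:(\CC^{2g})^{*}\to sl(2)$, whose rank equals $\rk\varphi$. If $\rk\varphi\ge 2$ then the image of $\hat{\varphi}$ is at least two-dimensional in $sl(2)$, so no $\PGL(2)$-translate of $\varphi$ can be supported on the one-dimensional line $\CC E$, and therefore $[\varphi]$ is semistable. If $\rk\varphi=1$ the image of $\hat{\varphi}$ is a single line $L\subset sl(2)$, and the adjoint $\PGL(2)$-orbits on $\PP(sl(2))$ consist of only two pieces, the nilpotent conic $\{\det=0\}$ and its complement, so $L$ is either nilpotent (hence $\PGL(2)$-conjugate to $\CC E$) or semisimple (hence $\PGL(2)$-conjugate to $\CC H$). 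In the first case $\varphi$ is $\PGL(2)$-conjugate to a non-zero $v\otimes E$ and is unstable; in the second case it is $\PGL(2)$-conjugate to an element of the form $\lambda\otimes H$, which in the matrix presentation is precisely a traceless diagonal matrix with quaternionic entries $\pm\lambda$, as in the statement, and is semistable.

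One point requiring care is that we work on the subvariety $\PP\Upsilon^{-1}(0)$ rather than on the ambient $\PP(\CC^{2g}\otimes sl(2))$. Since the linearization restricts and $\PGL(2)$ preserves $\Upsilon^{-1}(0)$, numerical semistability in the ambient projective space restricts to numerical semistability in $\PP\Upsilon^{-1}(0)$, so it only remains to verify that the distinguished representatives $v\otimes E$ and $\lambda\otimes H$ actually lie in $\Upsilon^{-1}(0)$; this is immediate from $[E,E]=0$ and $[H,H]=0$ together with the fact that $\Upsilon$ is built from the Lie bracket of $sl(2)$. The main technical obstacle I anticipate is the clean translation between the three viewpoints on $\varphi$ (an element of $\CC^{2g}\otimes sl(2)$, a linear map $sl(2)^{*}\to\CC^{2g}$, and a $2\times 2$ traceless matrix with quaternionic entries under the identification $\HH^{g}\cong H^{0}(K_{X})\oplus H^{1}(\cO_{X})$), and in particular matching the quaternionic matrix form used in the statement with the weight decomposition used in the Hilbert--Mumford analysis.
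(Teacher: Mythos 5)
Your proof is correct. The paper offers no argument of its own here—the statement is quoted directly from O'Grady (Proposition 1.6.2 of \cite{O99})—and your Hilbert--Mumford analysis (conjugate any one-parameter subgroup into the standard torus, read off the weights $+2,0,-2$ on the $E,H,F$ decomposition of $sl(2)$, conclude that the unstable points are exactly the pure tensors $v\otimes n$ with $n$ nilpotent, and then split the rank-one case according to whether the image line in $sl(2)$ is nilpotent or semisimple, noting that all rank $\le 1$ tensors automatically lie in $\Upsilon^{-1}(0)$ and that semistability on this closed invariant subvariety is computed ambiently) is essentially the argument of the cited source.
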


Let $\Hom^{\omega}(sl(2),\HH^{g}):=\{\varphi:sl(2)\rightarrow\HH^{g}|\varphi^{*}\omega=0\}$, where $\omega$ is the Serre duality pairing on $\HH^{g}$. Let $(m,n)=4\Tr(mn)$ be the killing form on $sl(2)$. The killing form gives isomorphisms
$$\HH^{g}\otimes sl(2)\cong\Hom(sl(2),\HH^{g})\text{ and }sl(2)\cong\wedge^{2}sl(2)^{\vee}.$$
By the above identification, $\Upsilon:\Hom(sl(2),\HH^{g})\to\wedge^{2}sl(2)^{\vee}$ is given by $\varphi\mapsto\varphi^{*}\omega$. Then we have
$$\Upsilon^{-1}(0)=\Hom^{\omega}(sl(2),\HH^{g}).$$
Let
$$\Hom_{k}(sl(2),\HH^{g}):=\{\varphi\in\Hom(sl(2),\HH^{g})|\rk\varphi\leq k\}$$
and
$$\Hom_{k}^{\omega}(sl(2),\HH^{g}):=\Hom_{k}(sl(2),\HH^{g})\cap\Hom^{\omega}(sl(2),\HH^{g}).$$

We have a description of
points of $E_{1}\cap\Sigma$ as following.

\begin{proposition}[Lemma 1.7.5 of \cite{O99}]\label{intersection-of-1st-exc-and-2nd-center}
Let $x\in\ZZ_2^{2g}$. Then
$$\pi_{\bR_{1}}^{-1}(x)\cap\Sigma=\PP\Hom_{1}(sl(2),\HH^{g})^{ss},$$
where $\PP\Hom_{1}(sl(2),\HH^{g})^{ss}$ denotes the set of semistable points of $\PP\Hom_{1}(sl(2),\HH^{g})$ with respect to the $\PGL(2)$-action.
\end{proposition}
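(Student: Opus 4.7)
My plan is to work in a Luna slice of $\bR$ at $x$, describe the type (iii) locus there as an explicit cone of rank-one homomorphisms, and then use the tangent-cone characterization of the strict transform.

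First I would invoke the slice description. The point $x$ corresponds to the polystable framed Higgs bundle $(L,0)\oplus(L,0)$ with $L^{2}\cong\cO_{X}$, whose stabilizer under $\SL(2)$ is all of $\SL(2)$. The Zariski tangent space of $\bR$ at $x$ is
$$H^{1}(\End_{0}(L\otimes\CC^{2}))\oplus H^{0}(\End_{0}(L\otimes\CC^{2})\otimes K_{X})\cong (H^{1}(\cO_{X})\oplus H^{0}(K_{X}))\otimes sl(2)=\HH^{g}\otimes sl(2),$$
which via the Killing form is $\Hom(sl(2),\HH^{g})$, and the second-order obstruction is the bracket of $sl(2)$ coupled with Serre duality, i.e.\ exactly $\Upsilon$. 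Hence an \'etale neighborhood of $x$ in $\bR$ is $\SL(2)$-equivariantly isomorphic to a neighborhood of the origin in $\Upsilon^{-1}(0)=\Hom^{\omega}(sl(2),\HH^{g})$, on which $\SL(2)$ acts adjointly.

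Second, I would identify the type (iii) locus in this slice. A nearby type (iii) polystable Higgs bundle $(M,\psi)\oplus(M^{-1},-\psi)$ preserves a splitting $\CC^{2}=\CC\oplus\CC$ of the framing, so its deformation class in $\End_{0}(L\otimes\CC^{2})\cong\cO_{X}\otimes sl(2)$ is diagonal, i.e.\ lies in $\HH^{g}\otimes\CC H$, where $H\in sl(2)$ spans the standard Cartan. Dually, the associated $\varphi\in\Hom(sl(2),\HH^{g})$ vanishes on the off-diagonal nilpotents $E_{\pm}$, so $\rk\varphi\le 1$ and $\varphi$ factors through $\CC H$. Conjugating the splitting by $\PGL(2)$ replaces $\CC H$ by an arbitrary non-nilpotent one-dimensional subalgebra, so the type (iii) locus inside the slice is
$$\Sigma_{\mathrm{loc}}=\PGL(2)\cdot\{\varphi\in\Hom(sl(2),\HH^{g}):\varphi(E_{+})=\varphi(E_{-})=0\},$$
a $\PGL(2)$-invariant cone contained in $\Hom_{1}(sl(2),\HH^{g})$ and automatically in $\Upsilon^{-1}(0)$ since rank-one maps trivially satisfy $\varphi^{*}\omega=0$.

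Third, because $\Sigma_{\mathrm{loc}}$ is already conical, its tangent cone at the origin coincides with itself, and so the strict transform $\Sigma$ meets the exceptional fiber $\pi_{\bR_{1}}^{-1}(x)=\PP\Upsilon^{-1}(0)$ precisely in $\PP\Sigma_{\mathrm{loc}}$. Since $\Sigma\subset\bR_{1}^{ss}$, this intersection is further constrained to the $\SL(2)$-semistable locus of $\PP\Upsilon^{-1}(0)$, which (as the center $\{\pm I\}$ acts trivially) equals the $\PGL(2)$-semistable locus. By Proposition \ref{ss-local-first-blowup}, $\PP\Sigma_{\mathrm{loc}}$ is then exactly $\PP\Hom_{1}(sl(2),\HH^{g})^{ss}$, proving the claim.

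The main obstacle is the second step: matching the adjoint $\PGL(2)$-action on $sl(2)$ in the slice with the change-of-framing action on $\CC^{2}=\CC\oplus\CC$, and verifying that \emph{all} orders of a deformation preserving a splitting remain in $\HH^{g}\otimes\CC H$ inside $\Upsilon^{-1}(0)$ (not just the first-order approximation). The first step is Luna's slice theorem combined with standard Higgs-bundle deformation theory, and the third step reduces to the general projectivized-tangent-cone description of strict transforms together with the already-cited Proposition \ref{ss-local-first-blowup}.
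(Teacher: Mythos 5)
Your local model in steps 1 and 2 is the right one (and the all-orders matching you flag is the comparatively harmless point: it is absorbed by the equivariance of the formal/analytic isomorphism of Proposition \ref{normal slice is the quadratic cone} together with Luna-type slice arguments, since inside the slice the type (iii) points are exactly the points with closed orbit and stabilizer a conjugate of $\CC^{*}$, i.e. $\PGL(2)\cdot\{\varphi\,:\,\varphi(E_{+})=\varphi(E_{-})=0\}$ minus the origin). The genuine gap is in step 3. The strict transform $\Sigma$ is formed by taking a closure, and the closure of your $\Sigma_{\mathrm{loc}}$ is strictly larger than $\Sigma_{\mathrm{loc}}$: every nilpotent element of $sl(2)$ is a limit of semisimple ones, so the closure of the cone of semisimple rank-one tensors is the whole rank-one locus $\Hom_{1}(sl(2),\HH^{g})$. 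Consequently the projectivized tangent cone at $x$ of the closure of the type (iii) locus is $\PP\Hom_{1}(sl(2),\HH^{g})$, not $\PP\Sigma_{\mathrm{loc}}$, and the strict transform of that closure meets $\pi_{\bR_{1}}^{-1}(x)$ in all of $\PP\Hom_{1}(sl(2),\HH^{g})$, including the unstable points $[\lambda\otimes v]$ with $v$ nilpotent. So ``$\Sigma_{\mathrm{loc}}$ is conical, hence the strict transform meets the fiber in $\PP\Sigma_{\mathrm{loc}}$'' is not a valid deduction.

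To remove the nilpotent rank-one directions you must know which points of the exceptional divisor are semistable in $\bR_{1}$, i.e. that $\pi_{\bR_{1}}^{-1}(x)\cap\bR_{1}^{ss}$ is exactly $\PP\Upsilon^{-1}(0)^{ss}$ (at least along the rank-one locus). Proposition \ref{ss-local-first-blowup} only determines the $\PGL(2)$-semistable points inside $\PP\Upsilon^{-1}(0)$; it says nothing about semistability for the lifted linearization on the blow-up $\bR_{1}$, which is the substance of Kirwan's analysis of semistable sets after blowing up (\cite{K85-2}) and of the computations in \cite[1.7]{O99}. This is precisely the content the paper imports by quoting \cite[Lemma 1.7.5]{O99} rather than proving the proposition, so your argument, as written, assumes the key point rather than establishing it. A correct completion of your route would be: identify $\Sigma\cap\pi_{\bR_{1}}^{-1}(x)$ with $\PP\Hom_{1}(sl(2),\HH^{g})\cap\bR_{1}^{ss}$ via the tangent-cone description of the strict transform of the \emph{closure}, and then prove (or cite) the Kirwan--O'Grady statement identifying $\bR_{1}^{ss}$ over $x$ with the $\PGL(2)$-semistable locus of $\PP\Upsilon^{-1}(0)$, after which Proposition \ref{ss-local-first-blowup} gives $\PP\Hom_{1}(sl(2),\HH^{g})^{ss}$.
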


Assume that $\varphi\in\Hom_{1}(sl(2),\HH^{g})$. Since the Serre duality pairing is skew-symmetric, we can choose bases $\{e_{1},\cdots,e_{2g}\}$ of $\HH^{g}$ and $\{v_{1},v_{2},v_{3}\}$ of $sl(2)$ such that $\varphi=e_{1}\otimes v_{1}$ and so that
$$<e_{i},e_{j}>=\begin{cases}1 &\text{if }i=2q-1,j=2q,q=1,\cdots,g,\\
-1&\text{if }i=2q,j=2q-1,q=1,\cdots,g,\\
0&\text{otherwise}.\end{cases}$$
Every element in $\Hom(sl(2),\HH^{g})$ can be written as
$\sum_{i,j}Z_{ij}e_{i}\otimes  v_{j}$. Then we have a description of the normal cone $C_{\PP\Hom_1(sl(2),\HH^{g})}\PP\Upsilon^{-1}(0)$ to $\PP\Hom_{1}(sl(2),\HH^{g})$ in $\PP\Upsilon^{-1}(0)$.

\begin{proposition}\label{normalcone-localmodel}
Let $[\varphi]\in\PP\Hom_1(sl(2),\HH^{g})$ and let
$\omega^{\varphi}$ be the bilinear form induced by $\omega$ on
$\im\varphi^{\perp}/\im\varphi$. There is a
$\st([\varphi])$-equivariant isomorphism
$$(C_{\PP\Hom_1(sl(2),\HH^{g})}\PP\Upsilon^{-1}(0))|_{[\varphi]}\cong\Hom^{\omega_{\varphi}}(\ker\varphi,\im\varphi^{\perp}/\im\varphi)$$
where
$$\Hom^{\omega_{\varphi}}(\ker\varphi,\im\varphi^{\perp}/\im\varphi)=\{\chi\in\Hom(\ker\varphi,\im\varphi^{\perp}/\im\varphi)|\chi^*\omega^{\varphi}=0\}$$
\end{proposition}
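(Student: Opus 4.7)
The plan is to reduce the computation to an affine transverse slice at $\varphi$ and then read off the initial forms of the defining equations $\psi^{*}\omega=0$. First, since both $\Hom_1(sl(2),\HH^g)$ and $\Upsilon^{-1}(0)=\Hom^\omega(sl(2),\HH^g)$ are $\CC^{*}$-stable cones in $\Hom(sl(2),\HH^g)$, and since the radial vector $\varphi$ already lies in $T_\varphi\Hom_1$, passage through the Hopf bundle $\Hom(sl(2),\HH^g)\setminus\{0\}\to\PP\Hom(sl(2),\HH^g)$ gives a $\st([\varphi])$-equivariant identification
$$\bigl(C_{\PP\Hom_1(sl(2),\HH^g)}\PP\Upsilon^{-1}(0)\bigr)\big|_{[\varphi]}\ \cong\ \bigl(C_{\Hom_1(sl(2),\HH^g)}\Hom^\omega(sl(2),\HH^g)\bigr)\big|_{\varphi},$$
so I work in the affine setting from here on.

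To build the slice, I use that a rank-one element $\varphi\neq 0$ is a smooth point of the determinantal variety $\Hom_1$, with tangent space $T_\varphi\Hom_1=\{\psi\in\Hom(sl(2),\HH^g):\psi(\ker\varphi)\subset\im\varphi\}$. Because $\PGL(2)$ acts on $sl(2)$ but trivially on $\HH^g$, I can pick a $\st([\varphi])$-equivariant linear complement $V\subset\HH^g$ to the line $\im\varphi$, and then
$$S_\varphi\ :=\ \varphi+\Hom(\ker\varphi,V)$$
is a $\st([\varphi])$-invariant affine transversal to $\Hom_1$ at $\varphi$ — a quick dimension count gives $T_\varphi S_\varphi\oplus T_\varphi\Hom_1=T_\varphi\Hom(sl(2),\HH^g)$. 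Since $\Hom_1$ is smooth at $\varphi$, the embedding $\Hom_1\hookrightarrow\Hom(sl(2),\HH^g)$ is regular there, and this transversality reduces the normal cone fiber to
$$\bigl(C_{\Hom_1}\Hom^\omega\bigr)\big|_{\varphi}\ \cong\ C_\varphi\bigl(S_\varphi\cap\Hom^\omega\bigr),$$
the tangent cone at $\varphi$ of the slice intersection.

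To finish, I expand the defining equations on the slice. Writing a point of $S_\varphi$ as $\varphi+\psi$ with $\psi\in\Hom(\ker\varphi,V)$ and using the basis $\{v_1,v_2,v_3\}$ of $sl(2)$ with $\ker\varphi=\langle v_2,v_3\rangle$, the three scalar components of $(\varphi+\psi)^{*}\omega\in\wedge^{2}sl(2)^\vee$ become
\begin{align*}
\omega\bigl(\varphi(v_1),\psi(v_2)\bigr)&=0,\\
\omega\bigl(\varphi(v_1),\psi(v_3)\bigr)&=0,\\
\omega\bigl(\psi(v_2),\psi(v_3)\bigr)&=0.
\end{align*}
The first two equations are linear in $\psi$ and cut out the linear subspace $\Hom(\ker\varphi,V\cap\im\varphi^\perp)$, which under the canonical isomorphism $V\cap\im\varphi^\perp\cong\im\varphi^\perp/\im\varphi$ (bijective because $V$ complements $\im\varphi$) is $\Hom(\ker\varphi,\im\varphi^\perp/\im\varphi)$; the third is purely quadratic and on that linear subspace descends to $\omega^\varphi(\psi(v_2),\psi(v_3))=0$, the defining quadric of $\Hom^{\omega^\varphi}$. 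As these three generators are already homogeneous of pure degree, they generate the initial ideal at $\varphi$, and the resulting tangent cone is exactly $\Hom^{\omega^\varphi}(\ker\varphi,\im\varphi^\perp/\im\varphi)$ as an affine scheme. Equivariance is automatic, since $\st([\varphi])$ preserves $\ker\varphi$ and $\im\varphi$, the $\PGL(2)$-invariance of $\omega$ gives $\st([\varphi])$-invariance of $\omega^\varphi$, and $V$ was chosen equivariantly.

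The main obstacle is the passage to the transverse slice and the verification that, in these coordinates, the three scalar equations split cleanly as two linear plus one quadratic with no hidden cross-terms mixing orders; both points rest on the smoothness of $\Hom_1$ at $\varphi$ together with $S_\varphi$ being genuinely transverse to $T_\varphi\Hom_1$. Once this is in place, the remainder is direct multilinear algebra.
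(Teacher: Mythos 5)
Your reduction from the projective to the affine cones and your multilinear algebra on the slice are fine, and the equations you obtain (two linear forms cutting out $\Hom(\ker\varphi,\im\varphi^{\perp}/\im\varphi)$ plus the single quadric $\omega^{\varphi}(\chi(v_2),\chi(v_3))=0$) are exactly the content of the paper's proof, which, following O'Grady's Lemma 1.7.13, identifies both sides with the quadric $\sum(Z_{2q-1,2}Z_{2q,3}-Z_{2q,2}Z_{2q-1,3})=0$. The genuine gap is the step that makes your slice relevant: you assert $(C_{\Hom_1}\Hom^{\omega})|_{\varphi}\cong C_{\varphi}(S_\varphi\cap\Hom^{\omega})$ and justify it only by smoothness of $\Hom_1$ at $\varphi$ and transversality of $S_\varphi$. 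That implication is false in general. For a smooth center $Z\subset X$ and a slice $S$ transverse to $Z$ at $z$, one only gets a closed embedding $C_{z}(X\cap S)\hookrightarrow (C_{Z}X)|_{z}$, and it can be strict: take $X=V(ty-x^{2})\subset\CC^{3}$ and $Z$ the $t$-axis; the initial ideal of $(ty-x^{2})$ along $(x,y)$ is $(ty)$, so the fiber of $C_{Z}X$ at the origin is the whole plane $\CC^{2}$, while the transverse slice $\{t=0\}\cap X=V(x^{2})$ has tangent cone a double line. So smoothness of the center plus transversality cannot by themselves convert your (correct) computation of the slice tangent cone into a computation of the normal cone fiber; some equisingularity of $\Upsilon^{-1}(0)$ along $\Hom_1$ at $\varphi$ has to enter.

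The gap can be closed in two natural ways, the second being essentially the paper's route. (i) Use homogeneity: $\GL(sl(2))\times \mathrm{Sp}(\HH^{g},\omega)$ preserves both $\Upsilon^{-1}(0)=\Hom^{\omega}(sl(2),\HH^{g})$ and $\Hom_1(sl(2),\HH^{g})$ and acts transitively on rank-one homomorphisms; a local section of the orbit map through $\varphi$, combined with your transversality statement and the fact that $S_\varphi\cap\Hom_1=\{\varphi\}$, gives a local analytic product structure of the pair $(\Upsilon^{-1}(0),\Hom_1)$ along $\Hom_1\setminus\{0\}$, after which the normal cone fiber really is the tangent cone of the slice intersection. (ii) Bypass the slice and compute the normal cone directly: in the coordinates $Z_{ij}$ adapted to $\varphi=e_{1}\otimes v_{1}$, take initial forms of the three quadrics defining $\Upsilon^{-1}(0)$ with respect to the ideal of $\Hom_1$; this is the computation the paper imports from O'Grady and it lands on the same quadric you found. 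A minor further point: $\tau\in\st([\varphi])$ sends $\varphi$ to $-\varphi$, so your affine slice $S_\varphi=\varphi+\Hom(\ker\varphi,V)$ is not literally $\st([\varphi])$-stable; the final isomorphism is still equivariant (one can argue on the projectivized statement, where $[\varphi]=[-\varphi]$, or check that the sign ambiguity acts compatibly), but this deserves a sentence rather than being called automatic.
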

\begin{proof}
Following the idea of proof of \cite[Lemma 1.7.13]{O99}, both sides are defined by the equation
$$\sum_{2\leq q\leq g}(Z_{2q-1,2}Z_{2q,3}-Z_{2q,2}Z_{2q-1,3})=0.$$
under the choice of basis as above.
\end{proof}

We now explain how $\st([\varphi])$ acts on $(C_{\PP\Hom_{1}(sl(2),\HH^{g})}\PP\Upsilon^{-1}(0))_{[\varphi]}$. If we add the condition that
$$\left.\begin{array}{ccc}(v_{1},v_{i})=-\delta_{1i}\\(v_{j},v_{j})=0,&j=2,3 \\(v_{2},v_{3})=1, \end{array}\right.$$
and $v_{1}\wedge v_{2}\wedge v_{3}$ is the volume form, where
$\wedge$ corresponds to the Lie bracket in $sl(2)$, then $\st([\varphi])=\O(\ker\varphi)=\O(2)$ is generated by
$$\{\theta_{\lambda}:=\left(\begin{array}{ccc}1&0&0\\0&\lambda&0\\0&0&\lambda^{-1}\end{array}\right)|\lambda\in\CC^{*}\}\text{ and }\tau:=\left(\begin{array}{ccc}-1&0&0\\0&0&1\\0&1&0\end{array}\right)$$
as a subgroup of $\SO(sl(2))$. $\O(2)$ can be also realized as the
subgroup of $\PGL(2)$ generated by
$$\SO(2)=\big\{\theta_{\lambda}:=\left(\begin{array}{cc}\lambda&0\\0&\lambda^{-1}\end{array}\right)|\lambda\in\CC^{*}\big\}/\{\pm\id\},\quad \tau=\left(\begin{array}{cc}0&1\\1&0\end{array}\right).$$
The action of $\st([\varphi])$ on
$(C_{\PP\Hom_{1}(sl(2),\HH^{g})}\PP\Upsilon^{-1}(0))_{[\varphi]}$
is given by
$$\theta_{\lambda}(\sum_{i=3}^{2g}(Z_{i,2}e_{i}\otimes v_{2}+Z_{i,3}e_{i}\otimes v_{3}))=\sum_{i=3}^{2g}(\lambda Z_{i,2}e_{i}\otimes v_{2}+\lambda^{-1}Z_{i,3}e_{i}\otimes v_{3}),$$
$$\tau(\sum_{i=3}^{2g}(Z_{i,2}e_{i}\otimes v_{2}+Z_{i,3}e_{i}\otimes v_{3}))=\sum_{i=3}^{2g}(-Z_{i,3}e_{i}\otimes v_{2}-Z_{i,2}e_{i}\otimes v_{3}).$$

Let us consider the blowing-up
$$\pi:Bl_{\PP\Hom_{1}}\PP\Upsilon^{-1}(0)^{ss}\to\PP\Upsilon^{-1}(0)^{ss}$$
of $\PP\Upsilon^{-1}(0)^{ss}$ along $\PP\Hom_{1}(sl(2),\HH^{g})^{ss}$ with the exceptional divisor $E_{\pi}$, where $\PP\Upsilon^{-1}(0)^{ss}$ is the locus of semistable points of $\PP\Upsilon^{-1}(0)$ with respect to the $\PGL(2)$-action. It is obvious that $(\pi_{\bR_{1}}\circ\pi_{\bR_{2}})^{-1}(x)=Bl_{\PP\Hom_{1}}\PP\Upsilon^{-1}(0)^{ss}$ for any $x\in\ZZ_{2}^{2g}$.

\begin{proposition}[Lemma 1.8.5 of \cite{O99}]\label{smoothness of 2nd local blowing-up}
$Bl_{\PP\Hom_{1}}\PP\Upsilon^{-1}(0)^{ss}$ is smooth.
\end{proposition}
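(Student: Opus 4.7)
The plan is to verify smoothness via a local analysis along the center $\PP\Hom_{1}(sl(2),\HH^{g})^{ss}$, since the blowing-up is an isomorphism off the center, so I can reduce to two checks: (a) $\PP\Upsilon^{-1}(0)^{ss}$ is smooth at every point not lying in $\PP\Hom_{1}^{ss}$, and (b) the blowing-up is smooth in an analytic neighborhood of each point of the exceptional divisor. For (a), I would apply the Jacobian criterion to the three quadratic equations cutting out $\Upsilon^{-1}(0) = \Hom^{\omega}(sl(2),\HH^{g})$ inside $\Hom(sl(2),\HH^{g})$, checking in the natural coordinates from Section 5 that at any $\varphi$ with $\rk\varphi\ge 2$ these three differentials are linearly independent; the rank drop happens precisely on the locus $\Hom_{1}$ (together with the origin, which is projected away).

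For (b), fix $[\varphi_0]\in\PP\Hom_1^{ss}$ with $\rk\varphi_0=1$. By Proposition~\ref{normalcone-localmodel} the normal cone $C_{\PP\Hom_1}\PP\Upsilon^{-1}(0)|_{[\varphi_0]}$ is identified with the affine variety
\[
\Hom^{\omega^{\varphi_0}}(\ker\varphi_0,\im\varphi_0^{\perp}/\im\varphi_0)\;\subset\;\Hom(\ker\varphi_0,\im\varphi_0^{\perp}/\im\varphi_0)\cong\CC^{4g-4}
\]
defined by the single quadratic equation displayed in the proof of that proposition. I would verify that this is a non-degenerate quadric cone: writing $\chi = \sum c_{ij}\,f_i\otimes a_j^{*}$ in a basis $\{a_1,a_2\}$ of $\ker\varphi_0$ and a basis of $\im\varphi_0^{\perp}/\im\varphi_0$, the defining equation becomes $\omega^{\varphi_0}(\chi(a_1),\chi(a_2)) = 0$, whose Gram matrix is the block matrix
\[
\tfrac{1}{2}\begin{pmatrix} 0 & M \\ M^{T} & 0 \end{pmatrix}
\]
where $M$ is the Gram matrix of the non-degenerate pairing $\omega^{\varphi_0}$. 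This matrix is invertible, so the projectivized quadric $\widetilde{Q}\subset\PP^{4g-5}$ is smooth.

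Next, I would invoke a local product decomposition: analytic-locally near $[\varphi_0]$,
\[
(\PP\Upsilon^{-1}(0)^{ss},\,\PP\Hom_1^{ss})\;\cong\;(U\times C,\;U\times\{0\}),
\]
where $U$ is a smooth analytic neighborhood inside $\PP\Hom_1^{ss}$ and $C$ is the cone over $\widetilde{Q}$. Granting this, the blowing-up along the center locally equals $U\times Bl_{0}C$, and $Bl_{0}C$ is the total space of $\cO_{\widetilde{Q}}(-1)$, a line bundle over the smooth quadric $\widetilde{Q}$, hence smooth; this finishes the argument.

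The main obstacle is precisely this last step, the justification of the analytic product structure. The infinitesimal information from Proposition~\ref{normalcone-localmodel} gives the normal cone but not automatically the local model. I would handle this by one of two routes: either write down the defining ideals of $\Upsilon^{-1}(0)$ and of $\Hom_1$ in the explicit coordinates $Z_{ij}$ from Section 5 and check the required trivialization directly (this is how \cite[Lemma 1.8.5]{O99} proceeds in the analogous K3 setting), or use Luna's étale slice theorem for the $\PGL(2)$-action at $[\varphi_0]$ (whose stabilizer $\O(2)$ is reductive) to obtain an étale-local product decomposition, then combine with the non-degeneracy established above. In either implementation the quadratic nature of the defining equations keeps the computation tractable.
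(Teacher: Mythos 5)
The paper offers no argument for this proposition: it is quoted directly from O'Grady (Lemma 1.8.5 of \cite{O99}), so the only meaningful comparison is with that source, and your outline essentially reconstructs its proof. The substance checks out: at any semistable $[\varphi]$ with $\rk\varphi\ge 2$ the differentials of the three quadrics cutting out $\Upsilon^{-1}(0)$ are independent, so $\PP\Upsilon^{-1}(0)^{ss}$ is smooth off the center; at a rank-one semistable point two of the three quadrics have independent linear parts, so after eliminating two variables the variety is locally a hypersurface whose quadratic part is the nondegenerate form of Proposition \ref{normalcone-localmodel} (your block Gram-matrix computation of nondegeneracy is correct), and a holomorphic Morse lemma with parameters --- equivalently O'Grady's explicit coordinate computation --- yields the analytic product decomposition, after which $Bl_{0}C$ is the total space of $\cO_{\widetilde{Q}}(-1)$ over a smooth quadric and hence smooth. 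One caveat: of your two proposed justifications of the product structure, the Luna-slice route does not suffice as stated, because the slice at $[\varphi_{0}]$ is transverse only to the two-dimensional $\PGL(2)$-orbit, while the blow-up center $\PP\Hom_{1}(sl(2),\HH^{g})^{ss}\cong\PGL(2)\times_{\O(2)}\PP^{2g-1}$ has dimension $2g+1$; inside the slice you would still need to split off the remaining $\PP^{2g-1}$-directions, which is exactly what the explicit-coordinate (parametrized Morse lemma) argument accomplishes, so that is the route to carry out.
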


\begin{proposition}[Lemma 1.8.6 of \cite{O99}]\label{ss of 2nd local blowing-up}
All semistable points of $Bl_{\PP\Hom_{1}}\PP\Upsilon^{-1}(0)^{ss}$ is stable. Explicitely:
\begin{enumerate}
\item Semistable points in $E_{\pi}$ are given by
$$\{([\varphi],[\alpha])\,|\,[\varphi]\in\PP\Hom_{1}(sl(2),\HH^{g})^{ss},[\alpha]\in\PP\Hom^{\omega_{\varphi}}(\ker\varphi,\im\varphi^{\perp}/\im\varphi),\alpha(v_{2})\ne0\ne\alpha(v_{3})\}.$$
\item Semistable points not in $E_{\pi}$ are given by
$$\{[\varphi]\in\PP\Hom^{\om}(sl(2),\HH^{g})\,|\,\rk\varphi=3\text{ or }\rk\varphi=2\text{ and }\ker\varphi\text{ non-isotropic}\}.$$
\end{enumerate}
\end{proposition}

\section{Blowing-up formula for intersection cohomology}\label{blowing-up formula for intersection cohomology}

In this section, we prove that a blowing-up formula for the intersection cohomology holds in Kirwan's algorithm introduced in Section \ref{Kirwan desing}.

Let $E_{1}$ (respectively, $E_{2}$) be the exceptional divisor of $\pi_{\bR_{1}}$ (respectively, $\pi_{\bR_{2}}$). Let $\cC_{1}$ be the normal cone to $\ZZ_{2}^{2g}$ in $\bR$, $\tcC_{1}$ the normal cone to $E_{1}^{ss}:=E_{1}\cap\bR_{1}^{ss}$ in $\bR_{1}^{ss}$, $\cC_{2}$ the normal cone to $\Sigma$ in $\bR_{1}$, $\tcC_{2}$ the normal cone to $E_{2}^{ss}:=E_{2}\cap\bR_{2}^{ss}$ in $\bR_{2}^{ss}$, $\cC$ the normal cone to $\PP\Hom_{1}(sl(2),\HH^{g})^{ss}$ in $\PP\Upsilon^{-1}(0)^{ss}$ and $\tcC$ the normal cone to $E_{\pi}^{ss}:=E_{\pi}\cap(Bl_{\PP\Hom_{1}}\PP\Upsilon^{-1}(0)^{ss})^{ss}$ in $(Bl_{\PP\Hom_{1}}\PP\Upsilon^{-1}(0)^{ss})^{ss}$, where $(Bl_{\PP\Hom_{1}}\PP\Upsilon^{-1}(0)^{ss})^{ss}$ is the locus of semistable points of $Bl_{\PP\Hom_{1}}\PP\Upsilon^{-1}(0)^{ss}$ with respect to the lifted $\PGL(2)$-action. Note that
$$(Bl_{\PP\Hom_{1}}\PP\Upsilon^{-1}(0)^{ss})^{ss}=(Bl_{\PP\Hom_{1}}\PP\Upsilon^{-1}(0)^{ss})^{s},$$
where $(Bl_{\PP\Hom_{1}}\PP\Upsilon^{-1}(0)^{ss})^{s}$ is the locus of stable points of $Bl_{\PP\Hom_{1}}\PP\Upsilon^{-1}(0)^{ss}$ with respect to the $\PGL(2)$-action (\cite[Lemma 1.8.6]{O99}). Then we have the following formulas.

\begin{lemma}\label{intersection blowing-up formula}
(1) $\dim IH^{i}(\bR_{1}^{ss}/\!/\SL(2))=\dim IH^{i}(\bR/\!/\SL(2))$
$$+\dim IH^{i}(\tcC_{1}/\!/\SL(2))-\dim IH^{i}(\cC_1/\!/\SL(2))$$
$$=\dim IH^{i}(\bR/\!/\SL(2))+2^{2g}\dim IH^{i}(Bl_{0}\Upsilon^{-1}(0)/\!/\PGL(2))-2^{2g}\dim IH^{i}(\Upsilon^{-1}(0)/\!/\PGL(2))$$
for all $i\ge0$, where $Bl_{0}\Upsilon^{-1}(0)$ is the blowing-up of $\Upsilon^{-1}(0)$ at the vertex.

(2) $\dim IH^{i}(\bR_{2}^{s}/\SL(2))=\dim IH^{i}(\bR_{1}^{ss}/\!/\SL(2))$
$$+\dim IH^{i}(\tcC_{2}/\!/\SL(2))-\dim IH^{i}(\cC_2/\!/\SL(2))$$
for all $i\ge0$.

(3)\label{local second intersection blowing-up formula} $\dim IH^{i}(Bl_{\PP\Hom_{1}}\PP\Upsilon^{-1}(0)^{ss}/\!/\SL(2))=\dim IH^{i}(\PP\Upsilon^{-1}(0)^{ss}/\!/\SL(2))$
$$+\dim IH^{i}(\tcC/\!/\SL(2))-\dim IH^{i}(\cC/\!/\SL(2))$$
for all $i\ge0$.
\end{lemma}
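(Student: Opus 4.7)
The strategy is to prove all three items in parallel via the same mechanism, following the template of \cite[Lemma 2.8]{K86} but upgraded from ordinary cohomology to intersection cohomology through systematic use of the decomposition theorem (Proposition \ref{3 consequences}-(1)) together with a Luna-slice identification of neighborhoods of blowing-up centers with normal cones (Proposition \ref{normal slice is the quadratic cone}). In each of the three situations one has a projective birational morphism $\bar\pi: Y \to X$ between quotient varieties --- namely $\bar\pi_{\bR_1}$, the quotient of $\pi_{\bR_2}$ restricted to stable loci, and the induced map $Bl_{\PP\Hom_1}\PP\Upsilon^{-1}(0)^{ss}/\!/\PGL(2) \to \PP\Upsilon^{-1}(0)/\!/\PGL(2)$ --- which is an isomorphism outside the image $Z$ of the blowing-up center.

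The first step, which I would carry out uniformly for all three cases, extracts the cone formula from the decomposition theorem. By Proposition \ref{3 consequences}-(1) there is a splitting
\begin{equation*}
R\bar\pi_* \ic^\bullet(Y) \cong \ic^\bullet(X) \oplus \cG^\bullet
\end{equation*}
in $D^b(X)$, with $\cG^\bullet$ supported on $Z$ since $\bar\pi$ is an isomorphism off $Z$. Choose a sufficiently small $\SL(2)$-invariant Stein open $\hat U$ in the relevant total space containing the preimage of $Z$, and set $U = \hat U /\!/ \SL(2)$ and $\tilde U = \bar\pi^{-1}(U)$. Taking hypercohomology on $X$ and on $U$, and using that $\cG^\bullet$ is supported on $Z \subset U$ (so $\HH^i(X, \cG^\bullet) = \HH^i(U, \cG^\bullet|_U)$), one obtains
\begin{equation*}
\dim IH^i(Y) - \dim IH^i(X) = \dim IH^i(\tilde U) - \dim IH^i(U),
\end{equation*}
which is the first equality in each of (1), (2), and (3).

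The second equality in (1) requires identifying $U_1$ and $\tilde U_1$ with the normal-cone quotients. By Luna's slice theorem, applied in the form of Proposition \ref{normal slice is the quadratic cone}, a small invariant Stein neighborhood of any point of $Z$ in the total space is $\SL(2)$-equivariantly analytically isomorphic to a neighborhood of the vertex in the corresponding normal cone; passing to the GIT quotient and using that $\ic^\bullet$ is a local analytic invariant and is invariant under retraction onto the zero section of a cone, we get $IH^i(U) \cong IH^i(\cC/\!/\SL(2))$ and $IH^i(\tilde U) \cong IH^i(\tcC/\!/\SL(2))$. For item (1), $\ZZ_2^{2g}$ is a set of $2^{2g}$ isolated points, so $U_1$ decomposes as a disjoint union indexed by $\ZZ_2^{2g}$, with each piece modeled on $\Upsilon^{-1}(0)/\!/\PGL(2)$ and its blow-up preimage on $Bl_0\Upsilon^{-1}(0)/\!/\PGL(2)$ (using that the central $\ZZ_2$ in $\SL(2)$ acts trivially so $\SL(2)$-quotients agree with $\PGL(2)$-quotients here). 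This produces the $2^{2g}$-factor formula claimed in (1).

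The main obstacle will be to set up carefully the compatibility of the decomposition-theorem splitting with restriction to analytic opens, and to justify that intersection cohomology of an analytic Stein neighborhood of $Z$ agrees with the algebraic normal-cone model --- the latter being a retraction-of-cone statement that needs some attention in items (2) and (3), where the quotient is only a geometric quotient on the stable locus rather than a good quotient. Once these local-analytic compatibilities are in hand, the arguments for (2) and (3) run in precise parallel to (1): item (2) uses Luna slices transverse to $\Sigma$ (yielding $\cC_2$ and $\tcC_2$), and item (3) uses slices transverse to $\PP\Hom_1(sl(2),\HH^g)^{ss}$ inside $\PP\Upsilon^{-1}(0)^{ss}$ (yielding $\cC$ and $\tcC$), with no further essentially new ingredient.
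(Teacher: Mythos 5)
Your proposal follows essentially the same route as the paper: the first equality in each item is obtained by the argument of \cite[Lemma 2.8]{K86} (the decomposition theorem splits off a summand supported on the blowing-up center, compared over the whole quotient and over a neighborhood $U$), and the identification of $U$ and $\tilde U$ with the normal-cone quotients is done via Proposition \ref{normal slice is the quadratic cone} together with the analytic slice result of Goldman--Millson (the paper additionally invokes Hartogs's extension in item (2), which addresses the compatibility issue you flag). So the proof is correct and matches the paper's argument in all essential respects.
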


For the proof, we need to review a useful result by C.T. Simpson. Let $A^i$ (respectively, $A^{i,j}$) be the sheaf of smooth $i$-forms
(respectively, $(i,j)$-forms) on $X$. For a polystable Higgs bundle
$(E,\phi)$, consider the complex
\begin{equation}\label{e4.1}
0\to \End_{0}(E)\otimes A^0\to \End_{0}(E)\otimes A^1\to \End_{0}(E)\otimes
A^2\to 0\end{equation} whose differential is given by
$D''=\overline{\partial}+\phi$. Because $A^1=A^{1,0}\oplus
A^{0,1}$ and $\phi$ is of type $(1,0)$, we have an exact sequence
of complexes with \eqref{e4.1} in the middle
$$\xymatrix{
& 0\ar[d] & 0\ar[d] &0\ar[d] &    \\
0\ar[r] & 0\ar[r]\ar[d] &\End_{0}(E)\otimes
A^{1,0}\ar[r]^{\overline{\partial}}\ar[d] &\End_{0}(E)\otimes
A^{1,1}\ar[r]\ar[d]^= &0\\
0\ar[r]& \End_{0}(E)\otimes A^0\ar[r]^{D''}\ar[d]_= & \End_{0}(E)\otimes
A^1\ar[r]^{D''}\ar[d] & \End_{0}(E)\otimes A^2\ar[r]\ar[d] & 0\\
0\ar[r] & \End_{0}(E)\otimes A^{0,0}\ar[r]^{\overline{\partial}}\ar[d]
&
\End_{0}(E)\otimes A^{0,1}\ar[r]\ar[d] & 0\ar[r]\ar[d] & 0 \\
& 0 & 0  &0 }$$
This gives us a long exact sequence
\[\xymatrix{ 0\ar[r] & T^0\ar[r] &H^0(\End_{0}(E))\ar[r]^(.42){[\phi,-]}
& H^0(\End_{0}(E)\otimes K_X)\ar[r]& }\]
\[\xymatrix{ \ar[r]& T^1\ar[r] &H^1(\End_{0}(E))\ar[r]^(.42){[\phi,-]}
& H^1(\End_{0}(E)\otimes K_X)\ar[r]  &T^2\ar[r] &0 }
\]
where $T^i$ is the $i$-th cohomology of \eqref{e4.1}. The Zariski
tangent space of $\bM$ at polystable $(E,\phi)$ is isomorphic to
$T^1$.

\begin{proposition}[Theorem 10.4 and 10.5 of \cite{Simp94II}]\label{normal slice is the quadratic cone}
Let $C$ be the quadratic cone in $T^1$
defined by the map $T^1\to T^2$ which sends an $\End_{0}(E)$-valued 1-form $\eta$ to
$[\eta,\eta]$. Let $y=(E,\phi,\beta:E|_{x}\to\CC^{2})\in \bR$ be a point with
closed orbit and $\bar{y}\in\bM$ the image of $y$. Then the formal completion ${(\bR,y)}^\wedge$ is
isomorphic to the formal completion ${(C\times
\mathfrak{h}^\perp,0)}^\wedge$ where $\mathfrak{h}^\perp$ is the
perpendicular space to the image of $T^0\to H^0(\End_{0}(E))\to
sl(2)$. Furthermore, if we let $Y$ be the \'etale slice at $y$ of
the $\SL(2)$-orbit in $\bR$, then
$(Y,y)^\wedge \cong (C,0)^\wedge$ and $(\bM,\bar{y})^\wedge=(Y/\!/\st(y),\bar{y})^\wedge\cong(C/\!/\st(y),v)^\wedge$ where $\st(y)$ is the stabilizer of $y$ and $v$ is the cone point of $C$.
\end{proposition}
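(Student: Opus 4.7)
The plan is to obtain the local analytic model by combining Goldman--Millson deformation theory with the formality of the DGLA controlling deformations of Higgs bundles, and then to translate the result from the deformation functor to $\bR$ and $\bM$ via Luna's \'etale slice theorem.

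First, I would set up the deformation theory. The infinitesimal deformations of a polystable Higgs bundle $(E,\phi)$ are governed by the DGLA
\[
L^\bullet = \bigoplus_i \End_0(E)\otimes A^i, \qquad D'' = \bar\partial_E + [\phi,-],
\]
whose cohomology is precisely the spaces $T^\bullet$ recalled just before the statement. By the Goldman--Millson principle, the formal moduli space of deformations of $(E,\phi)$ is the formal germ at $0$ of the Kuranishi space cut out of $T^1$ by the obstruction map $\kappa\colon T^1 \to T^2$, whose leading (quadratic) term is exactly $\eta\mapsto[\eta,\eta]$.

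The decisive step is to show that $L^\bullet$ is \emph{formal} as a DGLA, i.e.\ quasi-isomorphic to $(T^\bullet,0)$ with the induced bracket. This is where the harmonic-bundle input enters: because $(E,\phi)$ is polystable, it admits a harmonic metric (Hitchin--Donaldson--Corlette--Simpson), producing a $D''$-adjoint $(D'')^*$ and a Laplacian whose kernel computes $T^\bullet$. The principle of two types for harmonic bundles then yields a $dd^c$-lemma analogue, so that the sub-DGLA of harmonic representatives embeds quasi-isomorphically in $L^\bullet$ and has zero differential. Under this formality, the higher terms of $\kappa$ vanish up to formal change of coordinates, and the formal versal deformation space of $(E,\phi)$ becomes the quadratic cone $(C,0)^\wedge$ with $C = \{\eta\in T^1 : [\eta,\eta]=0\}$.

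Next, to pass from the deformation functor of $(E,\phi)$ to the framed space $\bR$, I would decompose $T_y\bR$ as $T^1 \oplus \mathfrak{h}^\perp$. The framing $\beta$ rigidifies the Higgs bundle up to the stabilizer action, so infinitesimal deformations of the triple $(E,\phi,\beta)$ split as a deformation of $(E,\phi)$ together with an orbit direction under $\SL(2)$. The orbit $\SL(2)\cdot y$ has tangent space $sl(2)/\mathfrak{h}$, canonically identified with $\mathfrak{h}^\perp$, where $\mathfrak{h}$ is the image of $T^0 \to sl(2)$ (equivalently the Lie algebra of $\st(y)$). Combined with the previous paragraph this yields $(\bR,y)^\wedge \cong (C\times\mathfrak{h}^\perp,0)^\wedge$.

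Finally, Luna's \'etale slice theorem, applied to the closed $\SL(2)$-orbit through $y$ (closedness holds because polystable Higgs bundles correspond to closed orbits), produces a $\st(y)$-invariant slice $Y$ transverse to the orbit, so that the $\mathfrak{h}^\perp$ factor drops out and $(Y,y)^\wedge \cong (C,0)^\wedge$. Since $\bM=\bR/\!/\SL(2)$ is a good quotient (Theorem \ref{M is a good quotient}), Luna's theorem further identifies $(\bM,\bar y)^\wedge$ with $(Y/\!/\st(y),\bar y)^\wedge \cong (C/\!/\st(y),v)^\wedge$. The main obstacle is the formality step: once formality of $L^\bullet$ is established, the remaining ingredients (Goldman--Millson, the framing bookkeeping, and Luna's slice theorem) are formal consequences.
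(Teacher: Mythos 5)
The paper offers no proof of this proposition---it is quoted directly from Theorems 10.4 and 10.5 of Simpson's second moduli paper---so the only meaningful comparison is with Simpson's own argument, which your outline reconstructs faithfully: formality of the controlling DGLA via the harmonic metric and the principle of two types, Goldman--Millson to identify the formal deformation space with the quadratic cone $(C,0)^\wedge$, and Luna's \'etale slice theorem to descend to $\bR$ and $\bM=\bR/\!/\SL(2)$. Your sketch is correct in outline and takes essentially the same route as the cited source; the one step stated loosely is the identification $(\bR,y)^\wedge\cong(C\times\mathfrak{h}^\perp,0)^\wedge$, which requires promoting the tangent-space splitting $T_y\bR\cong T^1\oplus\mathfrak{h}^\perp$ to an isomorphism of formal completions (Simpson does this by comparing deformation functors of framed objects), but that is exactly how the quoted theorems are proved.
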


\begin{proof}[Proof of Lemma \ref{intersection blowing-up formula}]
\begin{enumerate}
\item Let $U_{x}$ be a sufficiently small open neighborhood of $x\in\ZZ_{2}^{2g}$ in $\bR/\!/\SL(2)$, let $\displaystyle U_{1}=\sqcup_{x\in\ZZ_{2}^{2g}}U_{x}$ and $\tU_{1}=\overline{\pi}_{\bR_{1}}^{-1}(U_{1})$. By the same argument as in the proof of \cite[Lemma 2.8]{K86}, we have
    $$\dim IH^{i}(\bR_{1}^{ss}/\!/\SL(2))=\dim IH^{i}(\bR/\!/\SL(2))+\dim IH^{i}(\tU_{1})-\dim IH^{i}(U_{1})$$
    for all $i\ge0$. By \cite[Theorem 3.1]{GM88} and Proposition \ref{normal slice is the quadratic cone}, there is an analytic isomorphism $U_{1}\cong\cC_1/\!/\SL(2)$. Since $\tcC_1/\!/\SL(2)$ is naturally isomorphic to the blowing-up of $\cC_1/\!/\SL(2)$ along $\ZZ_{2}^{2g}$, we also have an analytic isomorphism $\tU_{1}\cong\tcC_1/\!/\SL(2)$. Since $\cC_1/\!/\SL(2)$ (respectively, $\tcC_1/\!/\SL(2)$) is the $2^{2g}$ copy of
    \begin{center}$\Upsilon^{-1}(0)/\!/\PGL(2)$ (respectively, of $Bl_{0}\Upsilon^{-1}(0)/\!/\PGL(2)$),\end{center}
    we get the formula.

\item Let $U_{2}$ be a sufficiently small open neighborhood of the strict transform of $T^*J/\ZZ_2$ in $\bR_{1}^{ss}/\!/\SL(2)$ and let $\tU_{2}=\overline{\pi}_{\bR_{2}}^{-1}(U_{2})$. By the same argument as in the proof of \cite[Lemma 2.8]{K86}, we have
    $$\dim IH^{i}(\bR_{2}^{s}/\SL(2))=\dim IH^{i}(\bR_{1}^{ss}/\!/\SL(2))+\dim IH^{i}(\tU_{2})-\dim IH^{i}(U_{2})$$
    for all $i\ge0$. By \cite[Theorem 3.1]{GM88} and Proposition \ref{normal slice is the quadratic cone} and Luna's \'{e}tale slice theorem, there is an analytic isomorphism $U_{2}\cong\cC_2/\!/\SL(2)$. Since $\tcC_2/\!/\SL(2)$ is naturally isomorphic to the blowing-up of $\cC_2/\!/\SL(2)$ along the strict transform of $T^*J/\ZZ_2$ in $\bR_{1}^{ss}/\!/\SL(2)$, we also have an analytic isomorphism $\tU_{2}\cong\tcC_2/\!/\SL(2)$. Hence we get the formula.

\item Let $\overline{\pi}:Bl_{\PP\Hom_{1}}\PP\Upsilon^{-1}(0)^{ss}/\!/\PGL(2)\to\PP\Upsilon^{-1}(0)^{ss}/\!/\PGL(2)$ be the map induced from $\pi$. Since $\cC=\cC_{2}|_{\Sigma\cap\PP\Upsilon^{-1}(0)^{ss}}$ and $\tcC=\tcC_{2}|_{E_{2}\cap Bl_{\PP\Hom_{1}}\PP\Upsilon^{-1}(0)^{ss}}$, it follow from the argument of the proof of item (2) that $\cC/\!/\SL(2)$ (respectively, $\tcC/\!/\SL(2)$) can be identified with an open neighborhood $U$ of $\PP\Hom_{1}(sl(2),\HH^{g})^{ss}/\!/\PGL(2)$ (respectively, with $\overline{\pi}^{-1}(U)$). Again by the same argument as in the proof of \cite[Lemma 2.8]{K86}, we get the formula.
\end{enumerate}
\end{proof}

We give a computable fomula from Lemma \ref{intersection blowing-up formula} by more analysis on $Bl_{0}\Upsilon^{-1}(0)/\!/\PGL(2)$, $\cC_{2}/\!/\SL(2)$, $\tcC_{2}/\!/\SL(2)$, $\cC/\!/\SL(2)$ and $\tcC/\!/\SL(2)$.

We first give explicit geometric descriptions for $\cC_{2}/\!/\SL(2)$, $\tcC_{2}/\!/\SL(2)$, $\cC/\!/\SL(2)$ and $\tcC/\!/\SL(2)$. Let $\alpha:\widetilde{T^{*}J}\to T^{*}J$ be the blowing-up along $\ZZ_{2}^{2g}$. Let $(\cL,\psi_{\cL})$ be the pull-back to $\widetilde{T^{*}J}\times X$ of the universal pair on $T^{*}J\times X$ by $\alpha\times 1$ and let $p:\widetilde{T^{*}J}\times X\to\widetilde{T^{*}J}$ the projection onto the first factor.

\begin{lemma}\label{geometric descriptions of second cones}
(1) $\cC_{2}|_{\Sigma\setminus E_{1}}/\!/\SL(2)$ is a free $\ZZ_{2}$-quotient of $\Psi^{-1}(0)/\!/\CC^{*}$-bundle over $\widetilde{T^{*}J}\setminus\alpha^{-1}(\ZZ_{2}^{2g})$.

(2) $\tcC_{2}|_{\Sigma\setminus E_{1}}/\!/\SL(2)$ is a free $\ZZ_{2}$-quotient of $Bl_{0}\Psi^{-1}(0)/\!/\CC^{*}$-bundle over $\widetilde{T^{*}J}\setminus\alpha^{-1}(\ZZ_{2}^{2g})$, where $Bl_{0}\Psi^{-1}(0)$ is the blowing-up of $\Psi^{-1}(0)$ at the vertex.

(3) $\cC_{2}|_{\Sigma\cap E_{1}}/\!/\SL(2)$ is a free $\ZZ_{2}$-quotient of $\Hom^{\om_{\varphi}}(\ker\varphi,\im\varphi^{\perp}/\im\varphi)/\!/\CC^{*}$-bundle over $\alpha^{-1}(\ZZ_{2}^{2g})$, where $[\varphi]\in\Sigma\cap E_{1}$.

(4) $\tcC_{2}|_{\Sigma\cap E_{1}}/\!/\SL(2)$ is a free $\ZZ_{2}$-quotient of $Bl_{0}\Hom^{\om_{\varphi}}(\ker\varphi,\im\varphi^{\perp}/\im\varphi)/\!/\CC^{*}$-bundle over $\alpha^{-1}(\ZZ_{2}^{2g})$, where $[\varphi]\in\Sigma\cap E_{1}$ and $Bl_{0}\Hom^{\om_{\varphi}}(\ker\varphi,\im\varphi^{\perp}/\im\varphi)$ is the blowing-up of $\Hom^{\om_{\varphi}}(\ker\varphi,\im\varphi^{\perp}/\im\varphi)$ at the vertex.

(5) $\cC/\!/\SL(2)$ is a free $\ZZ_{2}$-quotient of $\Hom^{\om_{\varphi}}(\ker\varphi,\im\varphi^{\perp}/\im\varphi)/\!/\CC^{*}$-bundle over $\PP^{2g-1}$, where $[\varphi]\in\PP\Hom_{1}(sl(2),\HH^{g})^{ss}$.

(6) $\tcC/\!/\SL(2)$ is a free $\ZZ_{2}$-quotient of $Bl_{0}\Hom^{\om_{\varphi}}(\ker\varphi,\im\varphi^{\perp}/\im\varphi)/\!/\CC^{*}$-bundle over $\PP^{2g-1}$, where $[\varphi]\in\PP\Hom_{1}(sl(2),\HH^{g})^{ss}$.
\end{lemma}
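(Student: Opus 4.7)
The plan is to analyze each item étale-locally via Luna's slice theorem as encoded in Proposition \ref{normal slice is the quadratic cone}, to identify the reductive stabilizer acting on the relevant normal cone, and then to let the quotient by the identity component give the fiber of the claimed bundle while the component group provides the free $\ZZ_{2}$-quotient on the total space. In each case the projection onto the base is just the projection from the normal cone to the smooth locus along which it is taken; the two things that must be checked are (a) uniformity of the slice description in families, which follows from constancy of the stabilizer conjugacy type along each stratum combined with Luna's theorem, and (b) freeness of the component group action, which is automatic on the relevant open strata.

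For parts (1) and (2), a point of $\Sigma\setminus E_{1}$ represents a strictly polystable Higgs bundle $y=(L,\psi)\oplus(L^{-1},-\psi)$ with $(L,\psi)\ncong(L^{-1},-\psi)$; its stabilizer in $\SL(2)$ is the normalizer $N$ of the diagonal torus $T\cong\CC^{*}$, and $N/T\cong\ZZ_{2}$ is generated by the swap of the two summands. Proposition \ref{normal slice is the quadratic cone} together with the singularity description in Section \ref{Kirwan desing} identifies the normal slice at $y$ with the quadratic cone $\Psi^{-1}(0)$ carrying its $\CC^{*}$-action. The universal pair $(\cL,\psi_{\cL})$ on $\widetilde{T^{*}J}\times X$ assembles these slices into a family over $\widetilde{T^{*}J}\setminus\alpha^{-1}(\ZZ_{2}^{2g})$; taking the GIT quotient by $\SL(2)$ is then the same as quotienting each fiber by the identity component $T\cong\CC^{*}$ and then taking the quotient by $N/T\cong\ZZ_{2}$, which is free because the involution $(L,\psi)\mapsto(L^{-1},-\psi)$ is fixed-point-free on $T^{*}J\setminus\ZZ_{2}^{2g}$. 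Item (2) follows since $\tcC_{2}$ is obtained from $\cC_{2}$ by blowing up the zero section, which fiberwise is $Bl_{0}\Psi^{-1}(0)$.

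For parts (3) and (4), at a point $[\varphi]\in\Sigma\cap E_{1}=\PP\Hom_{1}(sl(2),\HH^{g})^{ss}$ Proposition \ref{normalcone-localmodel} identifies the fiber of the normal cone with $\Hom^{\omega^{\varphi}}(\ker\varphi,\im\varphi^{\perp}/\im\varphi)$, equivariant for $\st([\varphi])=\O(2)\subset\PGL(2)$. The explicit action recorded after Proposition \ref{normalcone-localmodel} shows $\SO(2)\cong\CC^{*}$ acts with weights $\pm 1$ and the extra involution $\tau$ exchanges the two weight subspaces, so taking the GIT quotient by $\O(2)$ is the composition of the $\CC^{*}$-quotient (producing the fiber of the claimed bundle) with a free $\ZZ_{2}$-quotient on the total space over $\alpha^{-1}(\ZZ_{2}^{2g})$. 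For (5) and (6) the same analysis is applied in a single fiber $\pi_{\bR_{1}}^{-1}(x)\cap\Sigma$: a rank-one map $e\otimes v$ is determined up to $\PGL(2)$ and rescaling by its image line in $\HH^{g}$, so $\PP\Hom_{1}(sl(2),\HH^{g})^{ss}/\!/\PGL(2)\cong\PP(\HH^{g})=\PP^{2g-1}$, and the bundle, its $\CC^{*}$-quotient, and the $\ZZ_{2}$-quotient all pull back correspondingly. Parts (4) and (6) then follow by fiberwise blow-up as in (2).

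The principal obstacle I anticipate is (a): upgrading the pointwise descriptions from Propositions \ref{normal slice is the quadratic cone} and \ref{normalcone-localmodel} to genuine analytically (and in fact algebraically) locally trivial bundle structures, rather than merely fiberwise isomorphisms. This rests on the constancy of the stabilizer conjugacy class along each of the three strata, which makes Luna's slice theorem uniform in families, and on the existence of the universal pair $(\cL,\psi_{\cL})$ on $\widetilde{T^{*}J}\times X$, which supplies the gluing data needed to realize the family of normal slices as an honest bundle.
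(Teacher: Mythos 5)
Your overall skeleton matches the paper's proof: for parts (3)--(6) you correctly take $\st([\varphi])=\O(2)$, use Proposition \ref{normalcone-localmodel} for the fibers, split the $\O(2)$-quotient into an $\SO(2)\cong\CC^{*}$-quotient followed by a residual $\ZZ_{2}$, and identify the base with $\PP^{2g-1}$; this is exactly what the paper does via $\PP\Hom_{1}(sl(2),\HH^{g})^{ss}\cong\PGL(2)\times_{\O(2)}Z^{ss}$ with $Z^{ss}=\PP\{v_{1}\otimes\HH^{g}\}$, and the tilde cases by fiberwise blow-up are handled the same way.

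For parts (1)--(2), however, your stated mechanism rests on a wrong assertion: the stabilizer in $\SL(2)$ of a point of $\Sigma\setminus E_{1}$ (equivalently of a type (iii) point of $\bR$) is a conjugate of the maximal torus $\CC^{*}$, not its normalizer $N$. Since $(L,\psi)\ncong(L^{-1},-\psi)$ on this stratum, the swap of the two summands is not an automorphism of the polystable Higgs bundle; had the stabilizer been $N$, the local model of the singularity would be $\Psi^{-1}(0)/\!/N$ rather than the $\Psi^{-1}(0)/\!/\CC^{*}$ recorded in Section \ref{Kirwan desing}. So the $\ZZ_{2}$ in (1)--(2) is not the component group of a point stabilizer acting on a single slice; it is global, coming from the fact that the family of slices is naturally parametrized by the double cover $\widetilde{T^{*}J}\setminus\alpha^{-1}(\ZZ_{2}^{2g})\cong T^{*}J\setminus\ZZ_{2}^{2g}$ of the stratum, the $\ZZ_{2}$ being the deck transformation (your appeal to fixed-point-freeness of the involution on the base is, in effect, this correction). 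The paper implements this, and at the same time settles the local-triviality issue you flag as obstacle (a), not by ``Luna in families'' but by an explicit construction: it identifies $\Sigma\cong\PP\Isom(\cO_{\widetilde{T^{*}J}}^{2},\cL|_{x}\oplus\cL^{-1}|_{x})/\!/\O(2)$ (O'Grady's argument), builds the relative quadratic map $\Psi_{\cL_{III}}$ between the direct-image bundles $p_{*}(\cL_{III}^{\pm2}K_{X})$ and $R^{1}p_{*}(\cL_{III}^{\pm2})$, and then uses commutativity of the $\PGL(2)$- and $\O(2)$-actions to get $\cC_{2}|_{\Sigma\setminus E_{1}}/\!/\SL(2)=\Psi_{\cL_{III}}^{-1}(0)/\!/\O(2)=(\Psi_{\cL_{III}}^{-1}(0)/\!/\CC^{*})/\ZZ_{2}$, the reflection in $\O(2)$ covering the involution of the base. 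Constancy of the stabilizer type plus Luna's slice theorem gives you étale-local models at each point, but it does not by itself produce the asserted bundle structure together with its $\ZZ_{2}$-symmetry; some version of the paper's relative construction (or an equivalent descent argument through the framing bundle) is needed to close this step.
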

\begin{proof}
Let $x$ be a point of $X$.
\begin{enumerate}
\item Consider the principal $\PGL(2)$-bundle
$$q:\PP\Isom(\cO_{\widetilde{T^{*}J}}^{2},\cL|_{x}\oplus\cL^{-1}|_{x})\to\widetilde{T^{*}J}.$$
$\PGL(2)$ acts on $\cO_{\widetilde{T^{*}J}}^{2}$ and $\O(2)$ acts on $\cL|_{x}\oplus\cL^{-1}|_{x}$. By the same argument as in the proof of \cite[Proposition 1.7.10]{O99},
$$\Sigma\cong\PP\Isom(\cO_{\widetilde{T^{*}J}}^{2},\cL|_{x}\oplus\cL^{-1}|_{x})/\!/\O(2).$$
$\cC_{2}|_{\Sigma\setminus E_{1}}/\!/\SL(2)$ is the quotient of $q^{*}\Psi_{\cL_{III}}^{-1}(0)/\!/\O(2)$ by the $\PGL(2)$-action, where $\cL_{III}=\cL|_{\widetilde{T^{*}J}\setminus\alpha^{-1}(\ZZ_{2}^{2g})}$ and
$$\Psi_{\cL_{III}}:[p_{*}(\cL_{III}^{-2}K_X)\oplus R^{1}p_{*}(\cL_{III}^2)]\oplus[p_{*}(\cL_{III}^{2}K_X)\oplus R^{1}p_{*}(\cL_{III}^{-2})]\to R^{1}p_{*}(K_X)$$
is the sum of perfect pairings $p_{*}(\cL_{III}^{-2}K_X)\oplus R^{1}p_{*}(\cL_{III}^2)\to R^{1}p_{*}(K_X)$ and $p_{*}(\cL_{III}^{2}K_X)\oplus R^{1}p_{*}(\cL_{III}^{-2})\to R^{1}p_{*}(K_X)$. Since the actions of $\PGL(2)$ and $\O(2)$ commute and $q$ is the principal $\PGL(2)$-bundle, $$\cC_{2}|_{\Sigma\setminus E_{1}}/\!/\SL(2)=\Psi_{\cL_{III}}^{-1}(0)/\!/\O(2)=\frac{\Psi_{\cL_{III}}^{-1}(0)/\!/\SO(2)}{\O(2)/\SO(2)}=\frac{\Psi_{\cL_{III}}^{-1}(0)/\!/\CC^{*}}{\ZZ_{2}}.$$
Hence we get the description.

\item Since $\tcC_{2}|_{\Sigma\setminus E_{1}}/\!/\SL(2)$ is isomorphic to the blowing-up of $\cC_{2}|_{\Sigma\setminus E_{1}}/\!/\SL(2)$ along $T^{*}J/\ZZ_{2}\setminus\ZZ_{2}^{2g}$, it is isomorphic to $\displaystyle\frac{\widetilde{\Psi_{\cL_{III}}^{-1}(0)}/\!/\CC^{*}}{\ZZ_{2}}$, where $\widetilde{\Psi_{\cL_{III}}^{-1}(0)}$ is the blowing-up of $\Psi_{\cL_{III}}^{-1}(0)$ along $\widetilde{T^{*}J}\setminus\alpha^{-1}(\ZZ_{2}^{2g})\cong T^{*}J\setminus\ZZ_{2}^{2g}$.

\item Note that $E_{1}$ is a $2^{2g}$ disjoint union of $\PP\Upsilon^{-1}(0)$. It follows from Proposition \ref{intersection-of-1st-exc-and-2nd-center} that $\Sigma\cap E_{1}$ is a $2^{2g}$ disjoint union of $\PP\Hom_{1}(sl(2),\HH^{g})^{ss}$. By Proposition \ref{ss-local-first-blowup}, we have
$$\PP\Hom_{1}(sl(2),\HH^{g})^{ss}=\PGL(2)Z^{ss}\cong\PGL(2)\times_{\O(2)}Z^{ss}$$
and
$$\PP\Hom_{1}(sl(2),\HH^{g})^{ss}/\!/\PGL(2)\cong Z/\!/\O(2)=Z_{1}=\PP^{2g-1},$$
where $Z=Z_{1}\cup Z_{2}\cup Z_{3}$, $Z^{ss}$ is the set of
semistable points of $Z$ for the action of $\O(2)$,
$Z_{1}=\PP\{v_{1}\otimes\HH^{g}\}=Z^{ss}$,
$Z_{2}=\PP\{v_{2}\otimes\HH^{g}\}$ and
$Z_{3}=\PP\{v_{3}\otimes\HH^{g}\}$ for the basis $\{v_{1},v_{2},v_{3}\}$ of $sl(2)$ chosen in Section \ref{local picture}. Then we have
$$\cC_{2}|_{\Sigma\cap\PP\Upsilon^{-1}(0)}=\PGL(2)\times_{\O(2)}\cC_{2}|_{Z^{ss}}$$
and
$$\cC_{2}|_{\Sigma\cap\PP\Upsilon^{-1}(0)}/\!/\SL(2)=\cC_{2}|_{\Sigma\cap\PP\Upsilon^{-1}(0)}/\!/\PGL(2)=\cC_{2}|_{Z^{ss}}/\!/\O(2)=\frac{\cC_{2}|_{Z^{ss}}/\!/\SO(2)}{\ZZ_{2}}.$$
Since $\cC_{2}|_{Z^{ss}}$ is a $\Hom^{\om_{\varphi}}(\ker\varphi,\im\varphi^{\perp}/\im\varphi)$-bundle over $Z^{ss}$ by Proposition \ref{normalcone-localmodel},
$$\cC_{2}|_{Z^{ss}}/\!/\SO(2)$$
is a $\Hom^{\om_{\varphi}}(\ker\varphi,\im\varphi^{\perp}/\im\varphi)/\!/\CC^{*}$-bundle over $Z/\!/\SO(2)=Z_{1}=\PP^{2g-1}$. Since $\alpha^{-1}(\ZZ_{2}^{2g})$ is a $2^{2g}$ disjoint union of $\PP^{2g-1}$, we get the description.

\item Since $\tcC_{2}|_{\Sigma\cap E_{1}}/\!/\SL(2)$ is isomorphic to the blowing-up of $\cC_{2}|_{\Sigma\cap E_{1}}/\!/\SL(2)$ along $2^{2g}$ disjoint union of $\PP\Hom_{1}(sl(2),\HH^{g})^{ss}/\!/\PGL(2)\cong\PP^{2g-1}$, it is isomorphic to $2^{2g}$ disjoint union of $\displaystyle\frac{\widetilde{\cC_{2}|_{Z^{ss}}}/\!/\SO(2)}{\ZZ_{2}}$, where $\widetilde{\cC_{2}|_{Z^{ss}}}$ is the blowing-up of $\cC_{2}|_{Z^{ss}}$ along $Z^{ss}$.

\item Since $\cC=\cC_{2}|_{\Sigma\cap\PP\Upsilon^{-1}(0)^{ss}}$, we get the description from (3).

\item Since $\tcC=\tcC_{2}|_{E_{2}\cap Bl_{\PP\Hom_{1}}\PP\Upsilon^{-1}(0)^{ss}}$, we get the description from (4).
\end{enumerate}
\end{proof}

We next explain how to compute the terms
$$\dim IH^{i}(Bl_{0}\Upsilon^{-1}(0)/\!/\PGL(2))-\dim IH^{i}(\Upsilon^{-1}(0)/\!/\PGL(2)),$$
$$\dim IH^{i}(\tcC_{2}/\!/\SL(2))-\dim IH^{i}(\cC_2/\!/\SL(2))$$
and
$$\dim IH^{i}(\tcC/\!/\SL(2))-\dim IH^{i}(\cC/\!/\SL(2))$$
that appear in Lemma \ref{intersection blowing-up formula}. We start with the following technical lemma.

\begin{lemma}[Lemma 2.12 in \cite{K86}]\label{int coh of quotient by finite group}
Let $V$ be a complex variety on which a finite group $F$ acts. Then
$$IH^{*}(V/F)\cong IH^{*}(V)^{F}$$ where $IH^{*}(V)^{F}$ denotes the
invariant part of $IH^{*}(V)$ under the action of $F$.
\end{lemma}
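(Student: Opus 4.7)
The plan is sheaf-theoretic, working with the pushforward of $\ic^{\bullet}(V)$ along the quotient map $\pi : V \to V/F$ and exploiting that $|F|$ is invertible in $\QQ$.

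First, I would observe that $\pi$ is a finite surjective morphism (in particular proper), so $R\pi_{\ast} = \pi_{\ast}$, and moreover $\pi$ is small since its fibres are zero-dimensional. In particular, by the decomposition theorem (Theorem \ref{DT}-(1)) or by the direct characterization of IC sheaves via vanishing conditions, $\pi_{\ast}\ic^{\bullet}(V)$ is a perverse sheaf on $V/F$, equal to the intermediate extension from the open dense smooth locus $U \subseteq V/F$ over which $\pi$ is étale.

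Second, the key step is to identify the $F$-invariant summand of $\pi_{\ast}\ic^{\bullet}(V)$ with $\ic^{\bullet}(V/F)$. The group $F$ acts on $V$ and hence on $\pi_{\ast}\ic^{\bullet}(V)$ in a way compatible with the trivial action on $V/F$. Over $U$, the pushforward $\pi_{\ast}\QQ_{\pi^{-1}(U)}$ is the local system associated to the $F$-torsor $\pi^{-1}(U) \to U$; its $F$-invariants form the constant sheaf $\QQ_{U}$ because $|F|$ is a unit in $\QQ$ and one can average. Since intermediate extension is characterized by support and cosupport vanishing along deeper strata, and since taking $F$-invariants is an exact functor on $\QQ[F]$-modules, it commutes with intermediate extension. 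This yields
$$\bigl(\pi_{\ast}\ic^{\bullet}(V)\bigr)^{F} \cong \ic^{\bullet}(V/F)$$
in $D^{b}(V/F)$.

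Third, I would take hypercohomology. Again using that $F$-invariants of a complex of $\QQ$-vector spaces with $F$-action commute with the cohomology functor (by the averaging projector $\frac{1}{|F|}\sum_{f \in F} f$), one gets
$$IH^{\ast}(V/F) \;=\; \mathbb{H}^{\ast}\bigl(V/F,\ic^{\bullet}(V/F)\bigr) \;=\; \mathbb{H}^{\ast}\bigl(V/F,\pi_{\ast}\ic^{\bullet}(V)\bigr)^{F} \;=\; \mathbb{H}^{\ast}\bigl(V,\ic^{\bullet}(V)\bigr)^{F} \;=\; IH^{\ast}(V)^{F},$$
where the middle equality uses the previous step together with exactness of $(-)^{F}$, and the next equality uses $R\pi_{\ast} = \pi_{\ast}$.

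The main obstacle is the second step: establishing the identification $(\pi_{\ast}\ic^{\bullet}(V))^{F} \cong \ic^{\bullet}(V/F)$ at the level of the derived category, since one must make precise what the $F$-action on $\ic^{\bullet}(V)$ means and check that intermediate extension is compatible with it. Once the smallness of $\pi$ and the invertibility of $|F|$ in $\QQ$ are in hand, everything else — finiteness, properness, commutation of invariants with hypercohomology — is formal.
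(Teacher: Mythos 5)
Your argument is correct in substance, but note that the paper does not prove this lemma at all: it is quoted verbatim from Kirwan \cite{K86} (Lemma 2.12), where the statement is obtained by comparing intersection chains on $V$ and on $V/F$ directly and averaging over $F$ with rational coefficients, rather than in the derived category. Your sheaf-theoretic route --- $\pi\colon V\to V/F$ finite, hence small and perverse $t$-exact, so that $\pi_{*}\ic^{\bullet}(V)$ is the intermediate extension of the local system $L=(\pi_{*}\QQ_{V})|_{U}$, then extracting the invariant summand and taking hypercohomology --- is the standard modern proof, and it has the advantage of producing a canonical sheaf-level isomorphism compatible with further functorial manipulations. Two points deserve more care than you give them, though neither is a genuine gap: first, what you intermediately extend is the rank-$|F|$ (generically) local system $L$, not the constant sheaf, and the identification $L^{F}\cong\QQ_{U}$ uses both that $F$ acts transitively on the fibres of $\pi$ over $U$ and that the monodromy permutation action fixes the invariant vector; second, justifying $(\pi_{*}\ic^{\bullet}(V))^{F}\cong\ic^{\bullet}(V/F)$ by saying that $(-)^{F}$ is exact and therefore ``commutes with intermediate extension'' is not quite right, since $j_{!*}$ is not an exact functor --- what you actually need is that $j_{!*}$ is additive and functorial, so it commutes with splitting the averaging idempotent $\frac{1}{|F|}\sum_{f\in F}f$, and that the isomorphism $\pi_{*}\ic^{\bullet}(V)\cong\ic^{\bullet}(V/F,L)$ can be chosen $F$-equivariantly (it is unique once fixed over $U$). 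Finally, one should reduce to a faithful, hence generically free, action by dividing out the kernel of $F\to\Aut(V)$, and choose $U$ inside the locus where $V$ is smooth and $\pi$ is \'etale; with these routine adjustments your proof is complete and is essentially the sheaf-theoretic counterpart of Kirwan's chain-level argument.
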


Now recall that $\Upsilon^{-1}(0)/\!/\SL(2)=\Upsilon^{-1}(0)/\!/\PGL(2)$ and $\PP\Upsilon^{-1}(0)^{ss}/\!/\SL(2)=\PP\Upsilon^{-1}(0)^{ss}/\!/\PGL(2)$ from section \ref{local picture}.

We can compute $IH^{*}(\Upsilon^{-1}(0)/\!/\SL(2))$ (respectively, $IH^{*}(\Psi^{-1}(0)/\!/\CC^*)$ and $$IH^{*}(\Hom^{\om_{\varphi}}(\ker\varphi,\im\varphi^{\perp}/\im\varphi)/\!/\CC^*)\text{)}$$
in terms of $IH^{*}(\PP\Upsilon^{-1}(0)^{ss}/\!/\SL(2))$ (respectively, $IH^{*}(\PP\Psi^{-1}(0)^{ss}/\!/\CC^*)$ and $$IH^{*}(\PP\Hom^{\om_{\varphi}}(\ker\varphi,\im\varphi^{\perp}/\im\varphi)^{ss}/\!/\CC^*)\text{)}.$$
In order to explain this, we need the following lemmas. The first lemma shows the surjectivities of the Kirwan maps on the fibers of normal cones and exceptional divisors.

\begin{lemma}\label{Kir-Map}

(1) The Kirwan map
$$IH_{\SL(2)}^{*}(\PP\Upsilon^{-1}(0)^{ss})\rightarrow IH^{*}(\PP\Upsilon^{-1}(0)^{ss}/\!/\SL(2))$$
is surjective.

(2) The Kirwan map
$$IH_{\SL(2)}^{*}(\Upsilon^{-1}(0))\rightarrow IH^{*}(\Upsilon^{-1}(0)/\!/\SL(2))$$
is surjective.

(3) The Kirwan map
$$H_{\CC^*}^{*}(\PP\Psi^{-1}(0)^{ss})\rightarrow IH^{*}(\PP\Psi^{-1}(0)^{ss}/\!/\CC^*)$$
is surjective.

(4) The Kirwan map
$$IH_{\CC^*}^{*}(\Psi^{-1}(0))\rightarrow IH^{*}(\Psi^{-1}(0)/\!/\CC^*)$$
is surjective.

(5) The Kirwan map
$$H_{\CC^*}^{*}(\PP\Hom^{\om_{\varphi}}(\ker\varphi,\im\varphi^{\perp}/\im\varphi)^{ss})\rightarrow IH^{*}(\PP\Hom^{\om_{\varphi}}(\ker\varphi,\im\varphi^{\perp}/\im\varphi)^{ss}/\!/\CC^*)$$
is surjective.

(6) The Kirwan map
$$IH_{\CC^*}^{*}(\Hom^{\om_{\varphi}}(\ker\varphi,\im\varphi^{\perp}/\im\varphi))\rightarrow IH^{*}(\Hom^{\om_{\varphi}}(\ker\varphi,\im\varphi^{\perp}/\im\varphi)/\!/\CC^*)$$
is surjective.
\end{lemma}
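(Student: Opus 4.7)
The plan is to deduce the six surjectivities from Kirwan's surjectivity theorem for intersection cohomology, handling the projective cases (1), (3), (5) directly and bootstrapping the affine cases (2), (4), (6) via the cone structure.

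For the projective cases, each of $\PP\Upsilon^{-1}(0)$, $\PP\Psi^{-1}(0)$, and $\PP\Hom^{\om_{\varphi}}(\ker\varphi,\im\varphi^{\perp}/\im\varphi)$ is a projective complete intersection of quadrics equipped with a linear action of a reductive group ($\SL(2)$ acting through $\PGL(2)\cong\SO(sl(2))$ in (1), and $\CC^{*}$ in (3) and (5)). I would invoke Kirwan's surjectivity theorem for the intersection cohomology of singular projective GIT quotients \cite{K86}. The argument combines the equivariantly perfect Morse stratification of $\PP V\setminus\PP V^{ss}$ (obtained from the norm-square of the moment map) with Kirwan's partial desingularization and the equivariant decomposition theorem (Proposition \ref{3 consequences}-(3)), yielding the composite surjection
$$IH^{*}_{G}(\PP V)\twoheadrightarrow IH^{*}_{G}(\PP V^{ss})\twoheadrightarrow IH^{*}(\PP V/\!/G).$$

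For the affine cases, let $\pi:\tilde{V}\to V$ denote the $G$-equivariant blow-up of the affine cone $V$ at its (necessarily $G$-fixed) vertex. Then $\pi$ is proper, birational, and $G$-equivariant, and $\tilde{V}$ is canonically the total space of a line bundle over $\PP V$ (the restriction of the tautological bundle $\cO(-1)$ on the ambient projective space). The line-bundle projection $q:\tilde{V}\to\PP V$ is smooth with contractible fibres, so $IH^{*}_{G}(\tilde{V})\cong IH^{*}_{G}(\PP V)$ by pullback of the $G$-equivariant IC sheaf. Applying the equivariant decomposition theorem (Theorem \ref{DT}-(2)) to $\pi$ exhibits $\ic^{\bullet}_{G}(V)$ as a direct summand of $R\pi_{*}\ic^{\bullet}_{G}(\tilde{V})$, and hence $IH^{*}_{G}(V)$ as a direct summand of $IH^{*}_{G}(\tilde{V})=IH^{*}_{G}(\PP V)$. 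A parallel construction on the GIT side---using the induced blow-up $\widetilde{V/\!/G}\to V/\!/G$ of the image of the vertex---exhibits $IH^{*}(V/\!/G)$ as a direct summand of $IH^{*}(\PP V/\!/G)$. A diagram chase using these compatible splittings, together with the surjectivity established above on the projective side, then yields the surjectivity of the affine Kirwan map.

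The main obstacle will be verifying the compatibility between the two summand decompositions and the Kirwan maps. This should follow from the fact that blow-up at a $G$-fixed centre commutes with the GIT-quotient operation, so that a single coherent decomposition in the $G$-equivariant derived category over $V$ induces both the equivariant splitting $IH^{*}_{G}(\PP V)\twoheadrightarrow IH^{*}_{G}(V)$ and, after GIT reduction, the splitting $IH^{*}(\PP V/\!/G)\twoheadrightarrow IH^{*}(V/\!/G)$, intertwining the Kirwan maps. Alternatively, one can bypass the bootstrapping entirely by observing that the norm-square of the moment map is proper on the affine cone $V$ (since $V$ is a cone with positive weights for the scaling action), so Kirwan's Morse-theoretic argument applies verbatim in the affine setting, yielding the affine surjectivity directly by the same proof as for projective varieties.
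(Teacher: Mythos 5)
Your treatment of cases (3) and (5) is fine: there the projectivized fibres are smooth quadrics with a linear $\CC^{*}$-action, so Kirwan's theorem from \cite{K86} applies and matches the $H^{*}_{\CC^{*}}$ appearing in the statement. The genuine gap is case (1) (and it infects everything you build on it): $\PP\Upsilon^{-1}(0)$ is a \emph{singular} projective variety (a complete intersection of three quadrics, singular along the rank-one locus $\PP\Hom_{1}$ -- this is why O'Grady blows it up), and Kirwan's machinery that you invoke -- the equivariantly perfect Morse stratification and the induction through the partial desingularization -- is proved only for a \emph{nonsingular} projective variety with linearized action; this is precisely why the statement is formulated with $IH^{*}_{\SL(2)}(\PP\Upsilon^{-1}(0)^{ss})$ rather than ordinary equivariant cohomology, and the paper itself remarks (after Conjecture \ref{equivariant blowing up formula conjecture}) that Kirwan's Morse theory cannot be applied once smoothness or projectivity fail. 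Note also that the arrow you actually need is the second one, $IH^{*}_{G}(\PP V^{ss})\twoheadrightarrow IH^{*}(\PP V/\!/G)$; the decomposition theorem applied to the partial desingularization only exhibits $IH^{*}(\PP V/\!/G)$ as a summand of the cohomology of the desingularized quotient, and does not by itself show that this summand is hit by classes coming from $IH^{*}_{G}(\PP V^{ss})$ -- that lifting statement is the substantive content, which Kirwan proves in the smooth case and which in the singular case is exactly Woolf's theorem. The paper's proof uses the Bernstein--Lunts functor $Qf_{*}$ for the quotient map $f$ and Woolf's construction \cite{Woo03} of morphisms $\lambda$ and $\kappa$ with $\kappa\circ\lambda=\id$, giving a split surjection $IH^{*}_{\SL(2)}(\PP\Upsilon^{-1}(0)^{ss})\to IH^{*}(\PP\Upsilon^{-1}(0)/\!/\SL(2))$ with no smoothness hypothesis; nothing in your proposal replaces this input. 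The same objection defeats your closing alternative for the affine cases: $\Upsilon^{-1}(0)$, $\Psi^{-1}(0)$ and $\Hom^{\om_{\varphi}}(\ker\varphi,\im\varphi^{\perp}/\im\varphi)$ are singular cones, so Kirwan's Morse-theoretic argument does not apply ``verbatim'' there either.

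On the affine bootstrap itself: exhibiting $IH^{*}_{G}(V)$ as a summand of $IH^{*}_{G}(Bl_{0}V)\cong IH^{*}_{G}(\PP V)$, and $IH^{*}(V/\!/G)$ as a summand of $IH^{*}(\PP V/\!/G)$, is plausible, but your surjectivity conclusion rests entirely on the asserted compatibility of these two splittings with the Kirwan maps, and that is the hard point, not a formality: the splittings furnished by the decomposition theorem are not canonical, and the slogan ``blow-up at a $G$-fixed centre commutes with the GIT quotient'' is not literally true here -- the affine quotient $V/\!/G$ differs from the affine cone over $\PP V/\!/G$ (and hence $Bl_{0}V/\!/G$ from the blow-up of that cone) by a finite group $F$ of roots of unity, and identifying their intersection cohomologies requires showing $F$ acts trivially, which is a separate argument in the paper (Lemma \ref{int coh of affine cone of git quotient}). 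The paper sidesteps all of this in the proof of (2), (4), (6) by compactifying the affine cone to the projective cone $\overline{\Upsilon^{-1}(0)}\subset\PP^{6g}$, observing $\overline{\Upsilon^{-1}(0)}\setminus\Upsilon^{-1}(0)=\PP\Upsilon^{-1}(0)$ compatibly on quotients, applying the Woolf splitting to both the boundary and the compactification, and deducing surjectivity of the affine Kirwan map from the induced map of Gysin sequences. Either that concrete comparison, or a sheaf-level construction of the splitting over $V/\!/G$ in the style of \cite{Woo03}, is what your diagram chase still needs to be a proof.
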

\begin{proof}
\begin{enumerate}
\item Consider the quotient map
$$f:\PP\Upsilon^{-1}(0)^{ss}\to\PP\Upsilon^{-1}(0)^{ss}/\!/\SL(2).$$
In \cite[section 6]{BL94}, Bernstein and Lunts define a functor
$$Qf_{*}:\bD_{\SL(2)}(\PP\Upsilon^{-1}(0)^{ss})\to\bD(\PP\Upsilon^{-1}(0)^{ss}/\!/\SL(2))$$
that extends the pushforward of sheaves $f_{*}$.

By the same arguments as those of \cite[$\S2$ and
$\S3$]{Woo03}, we can obtain morphisms
$$\lambda_{\PP\Upsilon^{-1}(0)}:\ic^{\bullet}(\PP\Upsilon^{-1}(0)^{ss}/\!/\SL(2))[3]\to Qf_{*}\ic^{\bullet}_{\SL(2)}(\PP\Upsilon^{-1}(0)^{ss})$$
and
$$\kappa_{\PP\Upsilon^{-1}(0)}:Qf_{*}\ic^{\bullet}_{\SL(2)}(\PP\Upsilon^{-1}(0)^{ss})\to\ic^{\bullet}(\PP\Upsilon^{-1}(0)^{ss}/\!/\SL(2))[3]$$
such that $\kappa_{\PP\Upsilon^{-1}(0)}\circ\lambda_{\PP\Upsilon^{-1}(0)}=\id$. $\lambda_{\PP\Upsilon^{-1}(0)}$ induces a map
$$IH^{*}(\PP\Upsilon^{-1}(0)^{ss}/\!/\SL(2))\to IH_{\SL(2)}^{*}(\PP\Upsilon^{-1}(0)^{ss})$$
which is an inclusion.

Hence $\kappa_{\PP\Upsilon^{-1}(0)}$ induces a map
$$\tilde{\kappa}_{\PP\Upsilon^{-1}(0)}:IH_{\SL(2)}^{*}(\PP\Upsilon^{-1}(0)^{ss})\to IH^{*}(\PP\Upsilon^{-1}(0)^{ss}/\!/\SL(2))$$
which is split by the inclusion
$$IH^{*}(\PP\Upsilon^{-1}(0)^{ss}/\!/\SL(2))\to IH_{\SL(2)}^{*}(\PP\Upsilon^{-1}(0)^{ss}).$$

\item Let $R:=\CC[T_0,T_1,\cdots,T_{6g-1}]$. For an
$\SL(2)$-invariant ideal $I\subset R$ generated by three quadratic
homogeneous polynomials in $R$ defining $\Upsilon^{-1}(0)$, we can
write
$$\Upsilon^{-1}(0)=\spec(R/I).$$
Let $\overline{\Upsilon^{-1}(0)}$ be the Zariski closure of
$\Upsilon^{-1}(0)$ in $\PP^{6g}$. Since the homogenization of
$I$ equals to $I$, we can write
$$\overline{\Upsilon^{-1}(0)}=\proj(R[T]/I\cdot R[T])$$
where $\SL(2)$ acts trivially on the variable $T$. Thus
$$\Upsilon^{-1}(0)/\!/\SL(2)=\spec(R^{\SL(2)}/I\cap R^{\SL(2)})$$
and
$$\overline{\Upsilon^{-1}(0)}^{ss}/\!/\SL(2)=\proj(R[T]/I\cdot R[T])^{\SL(2)}.$$
Since $\SL(2)$ acts trivially on the variable $T$,
$$\overline{\Upsilon^{-1}(0)}^{ss}/\!/\SL(2)=\proj(R^{\SL(2)}[T]/(I\cap R^{\SL(2)})\cdot R^{\SL(2)}[T]).$$
Hence we have an open immersion
$\Upsilon^{-1}(0)/\!/\SL(2)\hookrightarrow\overline{\Upsilon^{-1}(0)}^{ss}/\!/\SL(2)$
given by $\mathfrak{p}\mapsto\mathfrak{p}^{hom}$ where
$\mathfrak{p}^{hom}$ is the homogenization of $\mathfrak{p}$.

Note that
$$\overline{\Upsilon^{-1}(0)}\setminus\Upsilon^{-1}(0)=\proj(R[T]/I\cdot
R[T]+(T))\cong\proj(R/I)=\PP\Upsilon^{-1}(0)$$ and
$$[\overline{\Upsilon^{-1}(0)}^{ss}/\!/\SL(2)]\setminus[\Upsilon^{-1}(0)/\!/\SL(2)]$$
$$=\proj(R^{\SL(2)}[T]/(I\cap R^{\SL(2)})\cdot R^{\SL(2)}[T]+((T)\cap R^{\SL(2)}[T]))$$
$$\cong\proj(R^{\SL(2)}/I\cap R^{\SL(2)})=\PP\Upsilon^{-1}(0)^{ss}/\!/\SL(2)$$
where $(T)$ is the ideal of $R[T]$ generated by $T$.

Consider the quotient map
$$f:\overline{\Upsilon^{-1}(0)}^{ss}\to\overline{\Upsilon^{-1}(0)}^{ss}/\!/\SL(2).$$
In \cite[section 6]{BL94}, Bernstein and Lunts define a functor
$$Qf_{*}:\bD_{\SL(2)}(\overline{\Upsilon^{-1}(0)}^{ss})\to\bD(\overline{\Upsilon^{-1}(0)}^{ss}/\!/\SL(2))$$
that extends the pushforward of sheaves $f_{*}$.

By the same arguments as those of \cite[$\S2$ and
$\S3$]{Woo03}, we can obtain morphisms
$$\lambda_{\overline{\Upsilon^{-1}(0)}}:\ic^{\bullet}(\overline{\Upsilon^{-1}(0)}^{ss}/\!/\SL(2))[3]\to Qf_{*}\ic^{\bullet}_{\SL(2)}(\overline{\Upsilon^{-1}(0)}^{ss})$$
and
$$\kappa_{\overline{\Upsilon^{-1}(0)}}:Qf_{*}\ic^{\bullet}_{\SL(2)}(\overline{\Upsilon^{-1}(0)}^{ss})\to\lambda_{\overline{\Upsilon^{-1}(0)}}:\ic^{\bullet}(\overline{\Upsilon^{-1}(0)}^{ss}/\!/\SL(2))[3]$$
such that $\kappa_{\overline{\Upsilon^{-1}(0)}}\circ\lambda_{\overline{\Upsilon^{-1}(0)}}=\id$. $\lambda_{\overline{\Upsilon^{-1}(0)}}$ induces a map
$$IH^{*}(\overline{\Upsilon^{-1}(0)}^{ss}/\!/\SL(2))\to IH_{\SL(2)}^{*}(\overline{\Upsilon^{-1}(0)}^{ss})$$
which is an inclusion.

Hence $\kappa_{\overline{\Upsilon^{-1}(0)}}$ induces a map
$$\tilde{\kappa}_{\overline{\Upsilon^{-1}(0)}}:IH_{\SL(2)}^{*}(\overline{\Upsilon^{-1}(0)}^{ss})\to IH^{*}(\overline{\Upsilon^{-1}(0)}^{ss}/\!/\SL(2))$$
which is split by the inclusion $IH^{*}(\overline{\Upsilon^{-1}(0)}^{ss}/\!/\SL(2))\to IH_{\SL(2)}^{*}(\overline{\Upsilon^{-1}(0)}^{ss})$.

Consider the following commutative diagram:

\begin{equation}\label{Seq-Cpx}\xymatrix{\vdots\ar[d]&\vdots\ar[d]\\IH_{\SL(2)}^{i-2}(\PP\Upsilon^{-1}(0)^{ss})\ar[r]^{\tilde{\kappa}_{\PP\Upsilon^{-1}(0)}}\ar[d]&IH^{i-2}(\PP\Upsilon^{-1}(0)^{ss}/\!/\SL(2))\ar[d]\\
IH_{\SL(2)}^i(\overline{\Upsilon^{-1}(0)}^{ss})\ar[r]^{\tilde{\kappa}_{\overline{\Upsilon^{-1}(0)}}}\ar[d]&IH^{i}(\overline{\Upsilon^{-1}(0)}^{ss}/\!/\SL(2))\ar[d]\\
IH_{\SL(2)}^i(\Upsilon^{-1}(0))\ar[r]^{\tilde{\kappa}_{\Upsilon^{-1}(0)}}\ar[d]&IH^{i}(\Upsilon^{-1}(0)/\!/\SL(2))\ar[d]\\
\vdots&\vdots}\end{equation}\\
Vertical sequences are Gysin sequences and $\tilde{\kappa}_{\Upsilon^{-1}(0)}$ is induced from $\tilde{\kappa}_{\PP\Upsilon^{-1}(0)}$ and $\tilde{\kappa}_{\overline{\Upsilon^{-1}(0)}}$. Since
$\tilde{\kappa}_{\PP\Upsilon^{-1}(0)}$ and
$\tilde{\kappa}_{\overline{\Upsilon^{-1}(0)}}$ are surjective, $\tilde{\kappa}_{\Upsilon^{-1}(0)}$ is surjective.

\item Following the idea of the proof of item (1), we get the result.

\item Following the idea of the proof of item (2), we get the result.

\item Following the idea of the proof of item (1), we get the result.

\item Following the idea of the proof of item (2), we get the result.
\end{enumerate}
\end{proof}

The second lemma shows how to compute the intersection cohomologies of the fibers of the normal cones of the singularities of $\bM$ via those of the projectivizations of the fibers.

It is well known that there is a very ample line
bundle $\cL$ (respectively, $\cM_{1}$ and $\cM_{2}$) on
\begin{center}$\PP\Upsilon^{-1}(0)^{ss}/\!/\SL(2)$ (respectively, $\PP\Psi^{-1}(0)^{ss}/\!/\CC^*$ and $\PP\Hom^{\om_{\varphi}}(\ker\varphi,\im\varphi^{\perp}/\im\varphi)^{ss}/\!/\CC^*$),\end{center}
whose
pullback to $\PP\Upsilon^{-1}(0)^{ss}$ (respectively, $\PP\Psi^{-1}(0)^{ss}$ and $\PP\Hom^{\om_{\varphi}}(\ker\varphi,\im\varphi^{\perp}/\im\varphi)^{ss}$) is the $M$th (respectively, $N_{1}$th and $N_{2}$th) tensor
power of the hyperplane line bundle on $\PP\Upsilon^{-1}(0)$ (respectively, $\PP\Psi^{-1}(0)$ and $\PP\Hom^{\om_{\varphi}}(\ker\varphi,\im\varphi^{\perp}/\im\varphi)$) for some $M$ (respectively, $N_{1}$ and $N_{2}$).

Let $C_{\cL}(\PP\Upsilon^{-1}(0)^{ss}/\!/\SL(2))$ (respectively, $C_{\cM_{1}}(\PP\Psi^{-1}(0)^{ss}/\!/\CC^*)$ and
$$C_{\cM_{2}}(\PP\Hom^{\om_{\varphi}}(\ker\varphi,\im\varphi^{\perp}/\im\varphi)^{ss}/\!/\CC^*)\text{)}$$
be the affine cone on $\PP\Upsilon^{-1}(0)^{ss}/\!/\SL(2)$ (respectively, $\PP\Psi^{-1}(0)^{ss}/\!/\CC^*$ and
$$\PP\Hom^{\om_{\varphi}}(\ker\varphi,\im\varphi^{\perp}/\im\varphi)^{ss}/\!/\CC^*\text{)}$$
with respect to the projective embedding induced by the sections of $\cL$ (respectively, $\cM_{1}$ and $\cM_{2}$).

\begin{lemma}\label{int coh of affine cone of git quotient}
(1) $IH^{*}(\Upsilon^{-1}(0)/\!/\SL(2))=IH^{*}(C_{\cL}(\PP\Upsilon^{-1}(0)^{ss}/\!/\SL(2)))$ and $$IH^{*}(Bl_{0}\Upsilon^{-1}(0)/\!/\SL(2))=IH^{*}(\PP\Upsilon^{-1}(0)^{ss}/\!/\SL(2)),$$

(2) $IH^{*}(\Psi^{-1}(0)/\!/\CC^{*})=IH^{*}(C_{\cM_{1}}(\PP\Psi^{-1}(0)^{ss}/\!/\CC^{*}))$ and $$IH^{*}(Bl_{0}\Psi^{-1}(0)/\!/\CC^{*})=IH^{*}(\PP\Psi^{-1}(0)^{ss}/\!/\CC^{*}).$$

(3) $IH^{*}(\Hom^{\om_{\varphi}}(\ker\varphi,\im\varphi^{\perp}/\im\varphi)/\!/\CC^{*})=IH^{*}(C_{\cM_{2}}(\PP\Hom^{\om_{\varphi}}(\ker\varphi,\im\varphi^{\perp}/\im\varphi)^{ss}/\!/\CC^{*}))$ and $$IH^{*}(Bl_{0}\Hom^{\om_{\varphi}}(\ker\varphi,\im\varphi^{\perp}/\im\varphi)/\!/\CC^{*})=IH^{*}(\PP\Hom^{\om_{\varphi}}(\ker\varphi,\im\varphi^{\perp}/\im\varphi)^{ss}/\!/\CC^{*}).$$
\end{lemma}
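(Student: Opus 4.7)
The plan is to prove all three parts by the same strategy; the arguments for (2) and (3) are structurally identical to (1) after replacing the pair $(\SL(2),\Upsilon^{-1}(0))$ by $(\CC^{*},\Psi^{-1}(0))$ or $(\CC^{*},\Hom^{\om_{\varphi}}(\ker\varphi,\im\varphi^{\perp}/\im\varphi))$, and $\cL$ by $\cM_{1}$ or $\cM_{2}$. So I focus on part (1).

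For the first equality $IH^{*}(\Upsilon^{-1}(0)/\!/\SL(2))=IH^{*}(C_{\cL}(\PP\Upsilon^{-1}(0)/\!/\SL(2)))$, the idea is to realize $C_{\cL}$ as a $\mu_{M}$-quotient of the affine GIT cone $\Upsilon^{-1}(0)/\!/\SL(2)$ via Veronese subrings. Writing $\Upsilon^{-1}(0)/\!/\SL(2)=\spec R$ with $R=\bigoplus_{n\ge0}H^{0}(\PP\Upsilon^{-1}(0),\cO(n))^{\SL(2)}$, the construction of $\cL$ from the $M$-th power of $\cO(1)$ gives $C_{\cL}(\PP\Upsilon^{-1}(0)/\!/\SL(2))=\spec R^{(M)}$ where $R^{(M)}=\bigoplus_{n\ge0}R_{Mn}$, and $R^{(M)}$ is visibly the fixed subring of the $\mu_{M}$-action $\zeta\cdot r_{n}=\zeta^{n}r_{n}$ on $R$. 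Hence $C_{\cL}(\PP\Upsilon^{-1}(0)/\!/\SL(2))\cong(\Upsilon^{-1}(0)/\!/\SL(2))/\mu_{M}$, and by Lemma \ref{int coh of quotient by finite group} one obtains $IH^{*}(C_{\cL})=IH^{*}(\Upsilon^{-1}(0)/\!/\SL(2))^{\mu_{M}}$. To conclude, I would observe that $\mu_{M}$ sits inside the connected scaling group $\CC^{*}$ acting on $\Upsilon^{-1}(0)/\!/\SL(2)$; since a connected algebraic group acts trivially on intersection cohomology (every element is path-homotopic to the identity through stratification-preserving automorphisms), the $\mu_{M}$-invariants equal all of $IH^{*}(\Upsilon^{-1}(0)/\!/\SL(2))$.

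For the second equality $IH^{*}(Bl_{0}\Upsilon^{-1}(0)/\!/\SL(2))=IH^{*}(\PP\Upsilon^{-1}(0)/\!/\SL(2))$, I would use that $Bl_{0}\Upsilon^{-1}(0)$ is the total space of the $\SL(2)$-equivariant line bundle $\cO(-1)|_{\PP\Upsilon^{-1}(0)}$. The fiberwise $\CC^{*}$-scaling commutes with $\SL(2)$, hence descends to $(Bl_{0}\Upsilon^{-1}(0))/\!/\SL(2)$; its fixed locus coincides with the exceptional divisor, canonically identified with $\PP\Upsilon^{-1}(0)/\!/\SL(2)$. The flow of $\RR_{>0}\subset\CC^{*}$ then provides a strong deformation retraction of $(Bl_{0}\Upsilon^{-1}(0))/\!/\SL(2)$ onto this fixed locus. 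Because the scaling preserves $\SL(2)$-orbits and therefore stabilizer types, the retraction is stratum-preserving and hence a stratified homotopy equivalence, inducing an isomorphism on intersection cohomology.

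The main obstacle I anticipate is the stratification-compatibility step of the second equality. One must verify that near a closed semistable orbit with stabilizer $H\subset\SL(2)$ the fiber of the induced map $(Bl_{0}\Upsilon^{-1}(0))/\!/\SL(2)\to\PP\Upsilon^{-1}(0)/\!/\SL(2)$ is the finite quotient $\CC/\!/H\cong\CC$ and that these fibers assemble into a stratified-trivial family, so that the $\CC^{*}$-scaling is a genuinely stratum-preserving retraction in the analytic topology of the GIT quotient. This should follow from the Luna slice theorem together with the explicit local models in Section \ref{local picture}, but it is the step requiring the most care; by comparison the first equality reduces to bookkeeping once the Veronese identification is in place.
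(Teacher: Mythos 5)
Your handling of the first equality is essentially the paper's argument: you identify $C_{\cL}(\PP\Upsilon^{-1}(0)/\!/\SL(2))$ with the quotient of $\Upsilon^{-1}(0)/\!/\SL(2)$ by the finite group $F\cong\mu_{M}$ of scalar roots of unity (the Veronese bookkeeping is exactly the paper's computation with $(\CC[Y_{0},\dots,Y_{6g-1}]/I)^{\SL(2)\times F}$) and then apply Lemma \ref{int coh of quotient by finite group}. Your shortcut for the triviality of the $F$-action --- $F$ lies in the connected scaling group $\CC^{*}$ acting on the cone, and a connected algebraic group acts trivially on $IH^{*}$ --- is legitimate and is simpler than the paper's route, which instead uses the surjectivity of the Kirwan map (Lemma \ref{Kir-Map}-(2)) together with the smooth equivariant resolution $Bl_{\Hom_1}Bl_0\Upsilon^{-1}(0)$; the same remark applies to parts (2) and (3), which in the paper are proved by repeating the argument of (1).

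For the second equality, however, your $\CC^{*}$-flow argument leaves unproved precisely the load-bearing step. First, the equivariant projection from the total space of $\cO(-1)|_{\PP\Upsilon^{-1}(0)}$ to $\PP\Upsilon^{-1}(0)$ does not descend naively to the GIT quotients: points of the punctured cone lying over $\PGL(2)$-unstable points of $\PP\Upsilon^{-1}(0)$ (e.g.\ rank-one $\varphi$ outside the locus described in Proposition \ref{ss-local-first-blowup}) still contribute points of $Bl_{0}\Upsilon^{-1}(0)/\!/\SL(2)$, so the asserted canonical identification of the fixed locus with $\PP\Upsilon^{-1}(0)/\!/\SL(2)$, and indeed the existence of the map onto it, already require an argument. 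Second, since $IH^{*}$ is only invariant under stratum-preserving homotopy equivalences, you must exhibit stratifications of $Bl_{0}\Upsilon^{-1}(0)/\!/\SL(2)$ and of $\PP\Upsilon^{-1}(0)/\!/\SL(2)$ for which the inclusion, the retraction and the scaling homotopy preserve strata and codimensions; ``the scaling preserves stabilizer types upstairs'' is not yet such a statement about the quotient. The paper supplies exactly this missing structure algebraically: using invariant Rees algebras it shows $(Bl_{0}\Upsilon^{-1}(0)/\!/\SL(2))/F\cong Bl_{v}(C_{\cL}(\PP\Upsilon^{-1}(0)/\!/\SL(2)))$, which is homeomorphic to the total space of $\cL^{\vee}$ over $\PP\Upsilon^{-1}(0)/\!/\SL(2)$, removes $F$ by the triviality argument, and concludes with a Leray spectral sequence whose fibre $\CC$ is contractible. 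If you carry out your announced Luna-slice verification you will in effect be reproving this fibration description, at which point your retraction becomes unnecessary; as written, the second equality (and its counterparts in (2) and (3)) is not yet established.
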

\begin{proof}
\begin{enumerate}
\item We first follow the idea of the proof of \cite[Lemma 2.15]{K86} to see that
$$C_{\cL}(\PP\Upsilon^{-1}(0)^{ss}/\!/\SL(2))\cong\Upsilon^{-1}(0)/\!/(\SL(2)\times
F)\cong(\Upsilon^{-1}(0)/\!/\SL(2))/F,$$
where $F$ is the finite subgroup of $\GL(6g)$ consisting of all diagonal matrices
$diag(\eta,\cdots,\eta)$ such that $\eta$ is an $M$th root of unity.

The coordinate ring of $C_{\mathcal{L}}(\PP\Upsilon^{-1}(0)^{ss}/\!/\SL(2))$ is the subring $(\CC[Y_{0},\cdots,Y_{6g-1}]/I)_{M}^{\SL(2)}$ of the coordinate ring $\CC[Y_{0},\cdots,Y_{6g-1}]/I$ of $\Upsilon^{-1}(0)$ which is generated by homogeneous polynomials fixed by the natural action of $\SL(2)$ and of degree $M$. Since
$$(\CC[Y_{0},\cdots,Y_{6g-1}]/I)_{M}=\CC[Y_{0},\cdots,Y_{6g-1}]_{M}/I\cap\CC[Y_{0},\cdots,Y_{6g-1}]_{M},$$
we have
$$(\CC[Y_{0},\cdots,Y_{6g-1}]/I)_{M}^{\SL(2)}=\CC[Y_{0},\cdots,Y_{6g-1}]_{M}^{\SL(2)}/I\cap\CC[Y_{0},\cdots,Y_{6g-1}]_{M}^{\SL(2)}$$
$$=\CC[Y_{0},\cdots,Y_{6g-1}]^{\SL(2)\times F}/I\cap\CC[Y_{0},\cdots,Y_{6g-1}]^{\SL(2)\times F}=(\CC[Y_{0},\cdots,Y_{6g-1}]/I)^{\SL(2)\times F}.$$
Thus we get
$$C_{\cL}(\PP\Upsilon^{-1}(0)^{ss}/\!/\SL(2))\cong\Upsilon^{-1}(0)/\!/(\SL(2)\times
F)\cong(\Upsilon^{-1}(0)/\!/\SL(2))/F$$
and then
$$IH^{*}(\Upsilon^{-1}(0)/\!/\SL(2))^{F}=IH^{*}(C_{\cL}(\PP\Upsilon^{-1}(0)^{ss}/\!/\SL(2)))$$
by Lemma \ref{int coh of quotient by finite group}.

It remains to show that the action of $F$ on
$IH^{*}(\Upsilon^{-1}(0)/\!/\SL(2))$ is trivial. Since the Kirwan map
$$IH_{\SL(2)}^{*}(\Upsilon^{-1}(0))\rightarrow IH^{*}(\Upsilon^{-1}(0)/\!/\SL(2))$$
is surjective by Lemma \ref{Kir-Map}-(2), it suffices to show that
the action of $F$ on $IH_{\SL(2)}^{*}(\Upsilon^{-1}(0))$ is trivial.
Let
$$\pi_1:Bl_0\Upsilon^{-1}(0)\to \Upsilon^{-1}(0)$$
be the blowing-up of $\Upsilon^{-1}(0)$ at the vertex and let
$$\pi_2:Bl_{\Hom_1}Bl_0\Upsilon^{-1}(0)\to Bl_0\Upsilon^{-1}(0)$$
be the blowing-up of $Bl_0\Upsilon^{-1}(0)$ along
$\widetilde{\Hom_1(sl(2),\HH^g)}$, where $\widetilde{\Hom_1(sl(2),\HH^g)}$ is the strict transform of $\Hom_1(sl(2),\HH^g)$. Since the centers of the blowing-ups are $F$-invariant, the action of $F$ on $\Upsilon^{-1}(0)$ lifts to an action of $F$ on $Bl_{\Hom_1}Bl_0\Upsilon^{-1}(0)$. Since $\pi_1\circ\pi_2$ is proper and $Bl_{\Hom_1}Bl_0\Upsilon^{-1}(0)$ is smooth (See the proof of Lemma 1.8.5 in \cite{O99}), by Proposition \ref{3 consequences}-(3), $IH_{\SL(2)}^{*}(\Upsilon^{-1}(0))$ is a direct summand
of
$$IH_{\SL(2)}^{*}(Bl_{\Hom_1}Bl_0\Upsilon^{-1}(0))=H_{\SL(2)}^{*}(Bl_{\Hom_1}Bl_0\Upsilon^{-1}(0)).$$
Since $Bl_{\Hom_1}Bl_0\Upsilon^{-1}(0)$ is homotopically equivalent
to $Bl_{\PP\Hom_1}\PP\Upsilon^{-1}(0)$, it suffices to
show that the action of $F$ on
$H_{\SL(2)}^{*}(Bl_{\PP\Hom_1}\PP\Upsilon^{-1}(0))$ is
trivial. But this is true because the action of $F$ on
$\PP\Upsilon^{-1}(0)$ is trivial and it lifts to the trivial action of $F$ on $Bl_{\PP\Hom_1}\PP\Upsilon^{-1}(0)$. Hence $F$ acts trivially on $IH^{*}(\Upsilon^{-1}(0)/\!/\SL(2))$.

Similarly, we next see that $Bl_{v}(C_{\cL}(\PP\Upsilon^{-1}(0)^{ss}/\!/\SL(2)))$ is naturally isomorphic to
$$(Bl_{0}\Upsilon^{-1}(0)/\!/\SL(2))/F,$$
where $v$ is the vertex of $C_{\cL}(\PP\Upsilon^{-1}(0)^{ss}/\!/\SL(2))$.

Let $J$ be the ideal of $\CC[Y_{0},\cdots,Y_{6g-1}]/I$ corresponding to the vertex $O$ of $\Upsilon^{-1}(0)$. Then we have $\dss Bl_{0}\Upsilon^{-1}(0)=\bproj\Bigg(\bigoplus_{m\ge 0}J^{m}\Bigg)$. Then
$$(Bl_{0}\Upsilon^{-1}(0)/\!/\SL(2))/F=Bl_{0}\Upsilon^{-1}(0)/\!/(\SL(2)\times F)=\bproj\Bigg(\bigoplus_{m\ge 0}(J^{m})^{\SL(2)\times F}\Bigg).$$
Since $(J^{m})^{\SL(2)\times F}=J^{m}\cap(\CC[Y_{0},\cdots,Y_{6g-1}]/I)^{\SL(2)\times F}=(J\cap(\CC[Y_{0},\cdots,Y_{6g-1}]/I)^{\SL(2)\times F})^{m}$
$$=(J^{\SL(2)\times F})^{m}$$
and $J^{\SL(2)\times F}$ is the ideal corresponding to $v=O/\!/(\SL(2)\times F)$,
we have
$$\bproj\Bigg(\bigoplus_{m\ge 0}(J^{m})^{\SL(2)\times F}\Bigg)=\bproj\Bigg(\bigoplus_{m\ge 0}(J^{\SL(2)\times F})^{m}\Bigg)=Bl_{v}(\Upsilon^{-1}(0)/\!/(\SL(2)\times
F))$$
$$=Bl_{v}(C_{\cL}(\PP\Upsilon^{-1}(0)^{ss}/\!/\SL(2))).$$

Thus
$$IH^{*}(Bl_{0}\Upsilon^{-1}(0)/\!/\SL(2))^{F}=IH^{*}(Bl_{v}(C_{\cL}(\PP\Upsilon^{-1}(0)^{ss}/\!/\SL(2)))).$$
By the same idea of the proof of the first statement, $F$ acts trivially on $IH^{*}(Bl_{0}\Upsilon^{-1}(0)/\!/\SL(2))$ and then
$$IH^{*}(Bl_{0}\Upsilon^{-1}(0)/\!/\SL(2))^{F}=IH^{*}(Bl_{0}\Upsilon^{-1}(0)/\!/\SL(2)).$$
Since $Bl_{v}(C_{\cL}(\PP\Upsilon^{-1}(0)^{ss}/\!/\SL(2)))$ is homeomorphic to the line bundle $\cL^{\vee}$ over $\PP\Upsilon^{-1}(0)^{ss}/\!/\SL(2)$, there is a Leray spectral sequence $E_{r}^{pq}$ converging to
$$IH^{*}(Bl_{v}(C_{\cL}(\PP\Upsilon^{-1}(0)^{ss}/\!/\SL(2))))$$
with
$$E_{2}^{pq}=IH^{p}(\PP\Upsilon^{-1}(0)^{ss}/\!/\SL(2),IH^{q}(\CC))=\begin{cases}IH^{p}(\PP\Upsilon^{-1}(0)^{ss}/\!/\SL(2))&\text{if }q=0\\
0&\text{otherwise}.\end{cases}$$
Hence we get
$$IH^{*}(Bl_{0}\Upsilon^{-1}(0)/\!/\SL(2))=IH^{*}(\PP\Upsilon^{-1}(0)^{ss}/\!/\SL(2)).$$

\item Following the idea of the proof of item (1) and using Lemma \ref{Kir-Map}-(4), we get the result.

\item Following the idea of the proof of item (1) and using Lemma \ref{Kir-Map}-(6), we get the result.
\end{enumerate}
\end{proof}

By the standard argument of \cite[Proposition 4.7.2]{KW06}, we get the third lemma as follows. It gives a way to compute the intersection cohomology of affine cones of projective GIT quotients.

\begin{lemma}\label{int coh of affine cone of git quotient II}

(1) Let $n=\dim_{\CC}C_{\cL}(\PP\Upsilon^{-1}(0)^{ss}/\!/\SL(2))$. Then
$$IH^{i}(C_{\cL}(\PP\Upsilon^{-1}(0)^{ss}/\!/\SL(2)))\cong\begin{cases}0&\text{for }i\ge n\\
IH^{i}(C_{\cL}(\PP\Upsilon^{-1}(0)^{ss}/\!/\SL(2))-\{0\})&\text{for }i<n.\end{cases}$$

(2) Let $n=\dim_{\CC}C_{\cM_{1}}(\PP\Psi^{-1}(0)^{ss}/\!/\CC^{*})$. Then
$$IH^{i}(C_{\cM_{1}}(\PP\Psi^{-1}(0)^{ss}/\!/\CC^{*}))\cong\begin{cases}0&\text{for }i\ge n\\
IH^{i}(C_{\cM_{1}}(\PP\Psi^{-1}(0)^{ss}/\!/\CC^{*})-\{0\})&\text{for }i<n.\end{cases}$$

(3) Let $n=\dim_{\CC}C_{\cM_{2}}(\PP\Hom^{\om_{\varphi}}(\ker\varphi,\im\varphi^{\perp}/\im\varphi)^{ss}/\!/\CC^{*})$. Then
$$IH^{i}(C_{\cM_{2}}(\PP\Hom^{\om_{\varphi}}(\ker\varphi,\im\varphi^{\perp}/\im\varphi)^{ss}/\!/\CC^{*}))$$
$$\cong\begin{cases}0&\text{for }i\ge n\\
IH^{i}(C_{\cM_{2}}(\PP\Hom^{\om_{\varphi}}(\ker\varphi,\im\varphi^{\perp}/\im\varphi)^{ss}/\!/\CC^{*})-\{0\})&\text{for }i<n.\end{cases}$$
\end{lemma}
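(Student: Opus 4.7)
The plan is to apply the standard cone formula for middle-perversity intersection cohomology uniformly to the three cases. In each case the variety $C$ in question is an affine cone embedded in some $\CC^{N+1}$, with vertex $0$, over a projective variety $Y$ of complex dimension $n-1$ (namely $Y=\PP\Upsilon^{-1}(0)/\!/\SL(2)$ in case (1), $Y=\PP\Psi^{-1}(0)/\!/\CC^{*}$ in case (2), and $Y=\PP\Hom^{\om_{\varphi}}(\ker\varphi,\im\varphi^{\perp}/\im\varphi)/\!/\CC^{*}$ in case (3)). The diagonal $\CC^{*}$-scaling on the ambient $\CC^{N+1}$ preserves $C$ and contracts it to $0$, exhibiting $C$ as a topological cone on its link.

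First I would identify $L:=C\cap S^{2N+1}_{\epsilon}$ (the intersection of $C$ with a small sphere of radius $\epsilon$ about $0$) as a compact pseudomanifold of real dimension $2n-1$, equipped with the Whitney stratification induced from that of $Y$ via the $S^{1}$-bundle $L\to Y$ coming from the circle action. The $\RR_{>0}$-part of the scaling then yields a stratified homeomorphism
$$C\setminus\{0\}\;\cong\;L\times\RR_{>0},$$
from which stratified homotopy invariance (equivalently, the product formula for $\ic^{\bullet}$) gives $IH^{i}(C\setminus\{0\})\cong IH^{i}(L)$ for every $i$. Second, I would invoke the standard cone formula for middle-perversity intersection cohomology on an open cone (\cite[\S 4.2]{GM83}),
$$IH^{i}(cL)=\begin{cases}IH^{i}(L) & \text{if } i<n,\\ 0 & \text{if } i\ge n,\end{cases}$$
applied with $C$ itself playing the role of the open cone $cL$. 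Combining the two displayed facts immediately gives $IH^{i}(C)\cong IH^{i}(C\setminus\{0\})$ for $i<n$ and $IH^{i}(C)=0$ for $i\ge n$, which is exactly the content of (1), (2) and (3).

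I do not anticipate a serious obstacle here: the heavy geometric input (the descriptions of the affine cones over the projective GIT quotients and the explicit $\CC^{*}$-scaling structures) is already in place, and both the cone formula and the stratified product isomorphism are classical tools that apply to any normal affine cone over a projective variety. The only bookkeeping required is checking that the Whitney stratification on $C$ restricts to a product stratification on $C\setminus\{0\}\cong L\times\RR_{>0}$ compatible with the projection to $L$, and that $L$ is a compact pseudomanifold in a sufficiently strong sense to apply the cone formula; both follow immediately from the normality of $C$ (as a good quotient of a normal variety by a reductive group) together with the fact that $L\to Y$ is a locally trivial $S^{1}$-bundle.
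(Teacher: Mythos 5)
Your proposal is correct and takes essentially the same route as the paper: the paper disposes of this lemma in one line by invoking the standard cone formula, citing \cite[Proposition 4.7.2]{KW06}, and your argument (identify the link $L$ of the affine cone, use the $\CC^{*}$-scaling to get the stratified product $C\setminus\{0\}\cong L\times\RR_{>0}$, then apply the middle-perversity cone formula of \cite{GM83}) is precisely the standard argument behind that citation.
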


The following lemma explains how $IH^{*}(\Upsilon^{-1}(0)/\!/\SL(2))$ (respectively, $IH^{*}(\Psi^{-1}(0)/\!/\CC^*)$ and $IH^{*}(\Hom^{\om_{\varphi}}(\ker\varphi,\im\varphi^{\perp}/\im\varphi)/\!/\CC^*)$) can be computed in terms of $IH^{*}(\PP\Upsilon^{-1}(0)^{ss}/\!/\SL(2))$ (respectively, $IH^{*}(\PP\Psi^{-1}(0)^{ss}/\!/\CC^*)$ and $IH^{*}(\PP\Hom^{\om_{\varphi}}(\ker\varphi,\im\varphi^{\perp}/\im\varphi)^{ss}/\!/\CC^*)$) as desired.

\begin{lemma}\label{suffices-to-show-on-projectivized-git}
(1) $\begin{cases}
IH^{i}(\Upsilon^{-1}(0)/\!/\SL(2))=0&\text{for }i\geq\dim \Upsilon^{-1}(0)/\!/\SL(2)\\
IH^{i}(\Upsilon^{-1}(0)/\!/\SL(2))\cong\coker\lambda&\text{for }i<\dim \Upsilon^{-1}(0)/\!/\SL(2),
\end{cases}$

where $\lambda:IH^{i-2}(\PP\Upsilon^{-1}(0)^{ss}/\!/\SL(2))\rightarrow IH^{i}(\PP\Upsilon^{-1}(0)^{ss}/\!/\SL(2))$ is an injection.

(2) $\begin{cases}
IH^{i}(\Psi^{-1}(0)/\!/\CC^{*})=0&\text{for }i\geq\dim \Psi^{-1}(0)/\!/\CC^{*}\\
IH^{i}(\Psi^{-1}(0)/\!/\CC^{*})\cong\coker\lambda&\text{for }i<\dim \Psi^{-1}(0)/\!/\CC^{*},
\end{cases}$

where $\lambda:IH^{i-2}(\PP\Psi^{-1}(0)^{ss}/\!/\CC^{*})\rightarrow IH^{i}(\PP\Psi^{-1}(0)^{ss}/\!/\CC^{*})$ is an injection.

(3) $\begin{cases}
IH^{i}(\Hom^{\om_{\varphi}}(\ker\varphi,\im\varphi^{\perp}/\im\varphi)/\!/\CC^{*})=0&\text{for }i\geq\dim \Hom^{\om_{\varphi}}(\ker\varphi,\im\varphi^{\perp}/\im\varphi)/\!/\CC^{*}\\
IH^{i}(\Hom^{\om_{\varphi}}(\ker\varphi,\im\varphi^{\perp}/\im\varphi)/\!/\CC^{*})\cong\coker\lambda&\text{for }i<\dim \Hom^{\om_{\varphi}}(\ker\varphi,\im\varphi^{\perp}/\im\varphi)/\!/\CC^{*},
\end{cases}$

where
$$\lambda:IH^{i-2}(\PP\Hom^{\om_{\varphi}}(\ker\varphi,\im\varphi^{\perp}/\im\varphi)^{ss}/\!/\CC^{*})\rightarrow IH^{i}(\PP\Hom^{\om_{\varphi}}(\ker\varphi,\im\varphi^{\perp}/\im\varphi)^{ss}/\!/\CC^{*})$$
is an injection.
\end{lemma}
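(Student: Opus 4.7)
The plan is to reduce each of the three items to a single statement: a Gysin-type long exact sequence for the $\CC^{*}$-bundle obtained by puncturing the affine cone at its vertex, combined with the Hard Lefschetz theorem for intersection cohomology of the base. The three items run in perfect parallel, so I outline item~(1) in full and indicate the obvious substitutions at the end. Write $Y:=\PP\Upsilon^{-1}(0)/\!/\SL(2)$, $C:=C_{\cL}(Y)$, and $n:=\dim_{\CC}Y=\dim_{\CC}C-1$.

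First I invoke Lemma~\ref{int coh of affine cone of git quotient}(1) to pass from $\Upsilon^{-1}(0)/\!/\SL(2)$ to $C$, so the assertion becomes a statement about $IH^{*}(C)$. Lemma~\ref{int coh of affine cone of git quotient II}(1) then yields the vanishing $IH^{i}(C)=0$ for $i\ge\dim C$ directly, giving the first half of the conclusion, and for $i<\dim C$ it supplies the isomorphism $IH^{i}(C)\cong IH^{i}(C-\{0\})$. Thus the remaining task is to compute the intersection cohomology of the punctured cone below its complex dimension.

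Next I view $C-\{0\}$ as the total space of the $\CC^{*}$-bundle $\pi:C-\{0\}\to Y$ associated to $\cL^{\vee}$. Because $\pi$ is a locally trivial topological fibration with smooth fibre $\CC^{*}$, its IC sheaf pulls back (up to shift) from $\ic^{\bullet}(Y)$, so the perverse Leray spectral sequence for $\pi$ degenerates to the ordinary Leray spectral sequence of the associated circle bundle. Unwinding this gives a Gysin-type long exact sequence
\begin{equation*}
\cdots\to IH^{i-2}(Y)\xrightarrow{\lambda}IH^{i}(Y)\to IH^{i}(C-\{0\})\to IH^{i-1}(Y)\xrightarrow{\lambda}IH^{i+1}(Y)\to\cdots,
\end{equation*}
in which $\lambda$ is cup product with the first Chern class of $\cL$ (up to sign). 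Now I apply Hard Lefschetz for intersection cohomology of the projective variety $Y$ with the ample class $c_{1}(\cL)$: the operator $\lambda:IH^{j}(Y)\to IH^{j+2}(Y)$ is injective for every $j\le n-1$. For $i<\dim C=n+1$ both $i-2\le n-1$ and $i-1\le n-1$ hold, so both occurrences of $\lambda$ in the Gysin sequence are injective. The sequence collapses to $0\to\coker\bigl(\lambda:IH^{i-2}(Y)\to IH^{i}(Y)\bigr)\to IH^{i}(C-\{0\})\to 0$, which identifies $IH^{i}(\Upsilon^{-1}(0)/\!/\SL(2))$ with $\coker\lambda$ and simultaneously records that $\lambda$ is injective, exactly as claimed. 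Items~(2) and~(3) follow verbatim using Lemmas~\ref{int coh of affine cone of git quotient}(2)--(3) and~\ref{int coh of affine cone of git quotient II}(2)--(3), with $(Y,\cL)$ replaced by $(\PP\Psi^{-1}(0)/\!/\CC^{*},\cM_{1})$ and by $(\PP\Hom^{\om_{\varphi}}(\ker\varphi,\im\varphi^{\perp}/\im\varphi)/\!/\CC^{*},\cM_{2})$ respectively.

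The main obstacle I anticipate is the rigorous setup of the Gysin sequence in the intersection cohomology of the $\CC^{*}$-bundle over the singular base $Y$; this rests on the fact that for a locally trivial fibration with smooth fibres the IC complex of the total space is the pullback, up to shift, of the IC complex of the base, so that the perverse Leray spectral sequence reduces to the ordinary Leray spectral sequence of an $S^{1}$-bundle and its transgression is the Lefschetz operator. Once this reduction is in hand, Hard Lefschetz for intersection cohomology of projective varieties, applied with the ample class coming from the very ample $\cL$ (respectively $\cM_{1}$, $\cM_{2}$), does all the remaining work.
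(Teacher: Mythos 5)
Your proposal is correct and follows essentially the same route as the paper: reduce to the affine cone via Lemma \ref{int coh of affine cone of git quotient}, puncture it via Lemma \ref{int coh of affine cone of git quotient II}, and then analyze the $\CC^{*}$-fibration over the projective GIT quotient, identifying the connecting map with cup product by $c_{1}(\cL)$ (resp. $c_{1}(\cM_{1})$, $c_{1}(\cM_{2})$) and concluding by Hard Lefschetz for intersection cohomology. The only cosmetic difference is that you phrase the fibration step as a Gysin long exact sequence while the paper runs the two-row Leray spectral sequence with $E_{2}^{pq}=IH^{p}(Y,IH^{q}(\CC^{*}))$, citing \cite{CGM82} and \cite{GH78} for the identification of the differential; these are equivalent formulations of the same argument.
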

\begin{proof}
We follow the idea of the proof of \cite[Corollary 2.17]{K86}. We only prove item (1) because the proofs of item (2) and item (3) are similar to that of item (1).

By Lemma \ref{int coh of affine cone of git quotient}-(1),
$$IH^{*}(\Upsilon^{-1}(0)/\!/\SL(2))=IH^{*}(C_{\cL}(\PP\Upsilon^{-1}(0)^{ss}/\!/\SL(2))).$$

Let $n=\dim_{\CC}\Upsilon^{-1}(0)/\!/\SL(2)$. By Lemma \ref{int coh of affine cone of git quotient II}-(1), $$IH^{i}(C_{\cL}(\PP\Upsilon^{-1}(0)^{ss}/\!/\SL(2)))\cong\begin{cases}0&\text{if }i\ge n\\
IH^{i}(C_{\cL}(\PP\Upsilon^{-1}(0)^{ss}/\!/\SL(2))-\{0\})&\text{if }i<n.\end{cases}$$
Since $C_{\cL}(\PP\Upsilon^{-1}(0)^{ss}/\!/\SL(2))-\{0\}$ fibers over
$\PP\Upsilon^{-1}(0)^{ss}/\!/\SL(2)$ with fiber $\CC^{*}$, there is a Leray spectral sequence
$E_{r}^{pq}$ converging to
$$IH^{*}(C_{\cL}(\PP\Upsilon^{-1}(0)^{ss}/\!/\SL(2))-\{0\})$$
with
$$E_{2}^{pq}=IH^{p}(\PP\Upsilon^{-1}(0)^{ss}/\!/\SL(2),IH^{q}(\CC^{*}))=\begin{cases}IH^{p}(\PP\Upsilon^{-1}(0)^{ss}/\!/\SL(2))&\text{if }q=0,1\\
0&\text{otherwise}.\end{cases}$$
It follows from \cite[5.1]{CGM82} and \cite[p.462--p.468]{GH78} that the differential
$$\lambda:IH^{i-2}(\PP\Upsilon^{-1}(0)^{ss}/\!/\SL(2))\rightarrow IH^{i}(\PP\Upsilon^{-1}(0)^{ss}/\!/\SL(2))$$
is given by the multiplication by $c_{1}(\cL)$. By the Hard
Lefschetz theorem for intersection cohomology, $\lambda$ is injective for $i<n$. Hence we get the result.
\end{proof}

The quotients $\PP\Psi^{-1}(0)^{ss}/\!/\CC^{*}$ and $\PP\Hom^{\om_{\varphi}}(\ker\varphi,\im\varphi^{\perp}/\im\varphi)^{ss}/\!/\CC^{*}$ can be identified with some incidence variety.

\begin{lemma}\label{isomorphic to incidence variety}
Let $I_{2g-3}$ be the incidence variety given by
$$I_{2g-3}=\{(p,H)\in\PP^{2g-3}\times\breve{\PP}^{2g-3}|p\in H\}.$$
\begin{enumerate}
\item\label{description of projectivized middle singularity} $\PP\Psi^{-1}(0)^{ss}/\!/\CC^{*}\cong I_{2g-3}$,

\item $\PP\Hom^{\om_{\varphi}}(\ker\varphi,\im\varphi^{\perp}/\im\varphi)^{ss}/\!/\CC^{*}\cong I_{2g-3}$.
\end{enumerate}
\end{lemma}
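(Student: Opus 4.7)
My plan is to treat (1) and (2) as two instances of a single GIT template and then solve the template once. The template is: a vector space $W = V_{-}\oplus V_{+}$ with $\dim V_{\pm}=2g-2$, a $\CC^{*}$-action of opposite nonzero weights on $V_{-}$ and $V_{+}$, and a quadratic form $Q$ on $W$ which vanishes on $V_{-}$ and on $V_{+}$ separately but restricts to a \emph{perfect} bilinear pairing $B\colon V_{-}\otimes V_{+}\to\CC$. Granting this shape, the target conclusion is $\PP Q^{-1}(0)/\!/\CC^{*}\cong I_{2g-3}$.

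For part (1), I would use Riemann-Roch together with Serre duality: when $(L,\psi)\oplus(L^{-1},-\psi)\in T^{*}J/\ZZ_{2}-\ZZ_{2}^{2g}$ the line bundles $L^{\pm 2}$ are non-trivial of degree zero, so each of $H^{0}(L^{\pm 2}K_{X})$ and $H^{1}(L^{\pm 2})$ has dimension $g-1$. Grouping the four summands by weight gives
$$V_{-}=H^{0}(L^{-2}K_{X})\oplus H^{1}(L^{-2}),\qquad V_{+}=H^{1}(L^{2})\oplus H^{0}(L^{2}K_{X}),$$
where the stated weights are $\pm 2$ but the action factors through the squaring map $\CC^{*}\to\CC^{*}$, so without loss of generality the weights are $\pm 1$. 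The form $\Psi(a,b,c,d)=\langle a,b\rangle+\langle c,d\rangle$ is the sum of the two Serre-duality pairings, hence bilinear between $V_{-}$ and $V_{+}$, and perfect because each constituent pairing is. For part (2), I would use the coordinates set up before Proposition \ref{normalcone-localmodel}: the $\CC^{*}$ realized as $\SO(2)\subset\st([\varphi])$ has weight $+1$ on the coordinates $Z_{i,2}$ (collect these into $V_{+}$, of dimension $2g-2$) and weight $-1$ on $Z_{i,3}$ (giving $V_{-}$), and the defining equation
$$\sum_{2\le q\le g}(Z_{2q-1,2}Z_{2q,3}-Z_{2q,2}Z_{2q-1,3})=0$$
is exactly $\om^{\varphi}(\chi(v_{2}),\chi(v_{3}))=0$, a perfect pairing because $\om^{\varphi}$ is a symplectic form on the $(2g-2)$-dimensional space $\im\varphi^{\perp}/\im\varphi$.

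With the template in place, the computation is quick. A point $[v_{-}+v_{+}]\in\PP W$ is $\CC^{*}$-semistable iff both $v_{-}\ne 0$ and $v_{+}\ne 0$, since otherwise a one-parameter subgroup degenerates it to $0\in W$. The $\CC^{*}$-invariant morphism $[v_{-}+v_{+}]\mapsto([v_{-}],[v_{+}])\in\PP V_{-}\times\PP V_{+}$ accounts for all invariants: the projective scaling and the $\CC^{*}$-action together form a $(\CC^{*})^{2}$ acting transitively on the independent rescalings of $(v_{-},v_{+})$, so the fibers are single orbits. This gives $\PP W^{ss}/\!/\CC^{*}\cong\PP V_{-}\times\PP V_{+}$. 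Restricting to $\PP Q^{-1}(0)^{ss}$ imposes the single equation $B(v_{-},v_{+})=0$, and the perfect pairing $B$ induces an isomorphism $\PP V_{-}\cong\breve{\PP V_{+}}$ under which this locus becomes $I_{\dim V_{+}-1}=I_{2g-3}$. The main thing to verify carefully in each part is perfection of $B$: for (1) this is immediate from Serre duality, while for (2) it is the non-degeneracy of $\om^{\varphi}$; everything else in the plan is formal GIT bookkeeping.
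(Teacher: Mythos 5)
Your proposal is correct and is in substance the paper's own argument: the paper likewise exploits the decomposition into the two opposite-weight subspaces and the perfect pairing between them, writing the invariant map explicitly as $(a,b,c,d)\mapsto((b,c),(-a,d))$ into $I_{2g-3}$ (and its analogue in the coordinates $Z_{i,2},Z_{i,3}$ for part (2)), then identifying the induced map on the quotient as an isomorphism by checking injectivity on closed orbits and invoking normality, equal dimension and irreducibility of the target. Your semistability and orbit-fiber analysis is the same computation packaged as a single GIT template for both cases, so the two proofs differ only in bookkeeping.
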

\begin{proof}
\begin{enumerate}
\item Consider the map $f:\PP\Psi^{-1}(0)\to I_{2g-3}$ given by
$$(a,b,c,d)\mapsto((b,c),(-a,d)).$$
Since $f$ is $\CC^{*}$-invariant, we have the induced map
$$\bar{f}:\PP\Psi^{-1}(0)^{ss}/\!/\CC^{*}\to I_{2g-3}.$$
We claim that $\bar{f}$ is injective. Assume that $\bar{f}([a_{1},b_{1},c_{1},d_{1}])=\bar{f}([a_{2},b_{2},c_{2},d_{2}])$ where $[a,b,c,d]$ denotes the closed orbit of $(a,b,c,d)$. Then there are nonzero complex numbers $\lambda$ and $\mu$ such that $(b_{1},c_{1})=\lambda(b_{2},c_{2})$ and $(-a_{1},d_{1})=\mu(-a_{2},d_{2})$. Then
$$[a_{1},b_{1},c_{1},d_{1}]=[\mu a_{2},\lambda b_{2},\lambda c_{2},\mu d_{2}]=[(\lambda\mu)^{1/2}a_{2},(\lambda\mu)^{1/2}b_{2},(\lambda\mu)^{1/2}c_{2},(\lambda\mu)^{1/2}d_{2}]$$
$$=[a_{2},b_{2},c_{2},d_{2}].$$
Thus $\bar{f}$ is injective.

Since the domain and the range of $\bar{f}$ are normal varieties with the same dimension and the range $I_{2g-3}$ is irreducible, $\bar{f}$ is an isomorphism.

\item Consider the map $g:\PP\Hom^{\om_{\varphi}}(\ker\varphi,\im\varphi^{\perp}/\im\varphi)\to I_{2g-3}$ given by $$(Z_{12},\cdots,Z_{2g,2},Z_{13},\cdots,Z_{2g,3})$$
$$\mapsto((Z_{12},Z_{22},\cdots,Z_{2g-1,2},Z_{2g,2}),(Z_{23},-Z_{13}\cdots,Z_{2g,3},-Z_{2g-1,3})).$$
Since $g$ is $\CC^{*}$-invariant, we have the induced map
$$\bar{g}:\PP\Hom^{\om_{\varphi}}(\ker\varphi,\im\varphi^{\perp}/\im\varphi)^{ss}/\!/\CC^{*}\to I_{2g-3}.$$
We can see that $\bar{g}$ is injective by the similar way as in the proof of (\ref{description of projectivized middle singularity}). Since the domain and the range of $\bar{g}$ are normal varieties with the same dimension and the range $I_{2g-3}$ is irreducible, $\bar{g}$ is an isomorphism.
\end{enumerate}
\end{proof}

By the proof of Lemma \ref{geometric descriptions of second cones},
\begin{center}$\cC_{2}/\!/\SL(2)=(Y/\!/\CC^{*})/\ZZ_{2}$ and $\tcC_{2}/\!/\SL(2)=(Bl_{\widetilde{T^{*}J}}Y/\!/\CC^{*})/\ZZ_{2},$\end{center}
where $Y$ is either a $\Psi^{-1}(0)$-bundle or a $\Hom^{\om_{\varphi}}(\ker\varphi,\im\varphi^{\perp}/\im\varphi)$-bundle over $\widetilde{T^{*}J}$.

To give computable fomulas from Lemma \ref{intersection blowing-up formula}, we need the following technical statements for $Y/\!/\CC^{*}$ and $Bl_{\widetilde{T^{*}J}}Y/\!/\CC^{*}$.

\begin{lemma}\label{second cone gives a constant sheaf}
Let $g:Y/\!/\CC^{*}\to\widetilde{T^{*}J}$ be the map induced by the projection $Y\to\widetilde{T^{*}J}$ and let $h:Bl_{\widetilde{T^{*}J}}Y/\!/\CC^{*}\to\widetilde{T^{*}J}$ be the map induced by the composition of maps $Bl_{\widetilde{T^{*}J}}Y\to Y\to\widetilde{T^{*}J}$. Then $R^{i}g_{*}\ic^{\bullet}(Y/\!/\CC^{*})$ and $R^{i}h_{*}\ic^{\bullet}(Bl_{\widetilde{T^{*}J}}Y/\!/\CC^{*})$ are constant sheaves for each $i\ge0$.
\end{lemma}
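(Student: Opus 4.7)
The plan is to realize $g$ and $h$ as locally trivial fiber bundles over $\widetilde{T^{*}J}$ (restricted to the appropriate strata on which $Y$ has a uniform description) and then to show that the resulting local systems $R^{i}g_{*}\ic^{\bullet}$ and $R^{i}h_{*}\ic^{\bullet}$ have trivial monodromy.

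First, I would establish local triviality. In the $\Psi^{-1}(0)$-bundle case, $Y$ is cut out by the family of quadratic maps $\Psi$ inside the vector bundle
\[\tilde{V}:=[p_{*}(\cL^{-2}K_{X})\oplus R^{1}p_{*}(\cL^{2})]\oplus[p_{*}(\cL^{2}K_{X})\oplus R^{1}p_{*}(\cL^{-2})]\]
on $\widetilde{T^{*}J}$, each summand of which is locally free of constant rank on the locus where $Y$ is a $\Psi^{-1}(0)$-bundle. Trivializing the line bundle $\cL$ over a small open set $U$ identifies $\tilde{V}|_{U}$ with $U\times V_{0}$ and $\Psi$ with a fixed quadratic form $\Psi_{0}$, so $Y|_{U}\cong U\times\Psi_{0}^{-1}(0)$. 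Because the $\CC^{*}$-action and the blowup along the zero section are fiberwise constructions, the trivialization descends to $Y/\!/\CC^{*}$ and to $Bl_{\widetilde{T^{*}J}}Y/\!/\CC^{*}$. The $\Hom^{\om_{\varphi}}$-bundle case is analogous, and moreover its base is a disjoint union of copies of $\PP^{2g-1}$, hence simply connected, which already settles that case.

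Second, I would deduce local constancy. Over any trivializing open $U$ with fiber $F$, we have $\ic^{\bullet}(Y/\!/\CC^{*})|_{U\times F}\cong\QQ_{U}[\dim U]\boxtimes\ic^{\bullet}(F)$ because $U$ is smooth; Künneth then yields that $R^{i}g_{*}\ic^{\bullet}(Y/\!/\CC^{*})|_{U}$ is the constant sheaf with stalk $IH^{i}(F)$ (up to shift). Hence $R^{i}g_{*}\ic^{\bullet}(Y/\!/\CC^{*})$ is a local system on $\widetilde{T^{*}J}$ with stalk $IH^{i}$ of the fiber, and similarly for $h$.

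Third, and this is the decisive step, I would compute the monodromy in the $\Psi^{-1}(0)$-bundle case and show it is trivial. The transition functions of $\tilde{V}$ over an overlap $U\cap U'$ are determined by the transition $\mu$ of $\cL^{2}$: they scale the four summands of $\tilde{V}$ by $(\mu^{-1},\mu,\mu,\mu^{-1})$ (the transitions of $K_{X}$ enter as a fixed factor absorbed into the trivialization of $V_{0}$). Comparing with the $\CC^{*}$-weights $(-2,2,2,-2)$ of the action used in the GIT quotient, this scaling coincides with the action of any $\lambda\in\CC^{*}$ with $\lambda^{2}=\mu$. After passing to $Y/\!/\CC^{*}$, such a $\CC^{*}$-element acts as the identity, and the square-root ambiguity $\lambda\leftrightarrow-\lambda$ is harmless because $-1\in\CC^{*}$ also acts trivially on the quotient. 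Therefore the local system has trivial monodromy, so it is constant. For $h$ the blowup along the zero section $\widetilde{T^{*}J}\subset Y$ is $\CC^{*}$-equivariant, so the same cocycle computation applies verbatim to $Bl_{\widetilde{T^{*}J}}Y/\!/\CC^{*}$.

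The main technical obstacle is to verify carefully that the transition functions of $\tilde{V}$ really do factor through the specific $\CC^{*}$-subgroup of $\GL(\tilde{V})$ that we quotient by. This requires tracking the Serre duality isomorphisms $p_{*}(\cL^{-2}K_{X})\cong (R^{1}p_{*}(\cL^{2}))^{\vee}$ (and the analogous one for $\cL^{2}$) to confirm that the four summands really are scaled by $(\mu^{-1},\mu,\mu,\mu^{-1})$ with the signs and powers indicated, and to check that the $K_{X}$-twist contributes only a $\CC^{*}$-invariant factor. Once this bookkeeping is done, the triviality of monodromy, and hence the constancy of the local system, is essentially formal.
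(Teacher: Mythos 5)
Your overall strategy --- exhibit $g$ and $h$ as locally trivial fibrations, deduce local constancy by a K\"unneth argument, then kill the monodromy --- is reasonable in outline and close in spirit to the paper's appeal to \cite[Proposition 2.13]{K86}. But the step you yourself call decisive fails as written. The four summands of your $\tilde{V}$, namely $p_{*}(\cL^{-2}K_{X})$, $R^{1}p_{*}(\cL^{2})$, $p_{*}(\cL^{2}K_{X})$, $R^{1}p_{*}(\cL^{-2})$, are rank $g-1$ direct-image (Picard-type) bundles on the base; their transition matrices are $\GL(g-1)$-valued and are not scalars ``determined by the transition $\mu$ of $\cL^{2}$'' ($\cL$ lives on $\widetilde{T^{*}J}\times X$, so it has no transition functions on the base, and $p_{*}(\cL^{2}K_{X})$ is not of the form (fixed vector space) tensor (line bundle)). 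Hence the structure group does not reduce to the weight $(-2,2,2,-2)$ copy of $\CC^{*}$, and the verification you defer to your last paragraph would not go through. The conclusion is salvageable by a weaker observation: choosing local frames adapted to the Serre-duality pairings, the structure group lies in the connected group $\{(A,(A^{t})^{-1},B,(B^{t})^{-1})\mid A,B\in\GL(g-1)\}$, which preserves the standard quadratic form, commutes with the $\CC^{*}$-action, and therefore acts on the quotient and blown-up-quotient fibers; being connected, it acts trivially on the intersection cohomology of the fiber, so the monodromy is trivial. That repaired argument is fine, but it is not the argument you gave.

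There is a second gap: you treat the strata $\widetilde{T^{*}J}\setminus\alpha^{-1}(\ZZ_{2}^{2g})$ and $\alpha^{-1}(\ZZ_{2}^{2g})$ separately and never glue. The lemma, as it is used in the proof of Theorem \ref{computable intersection blowing-up formula}-(2) (where one computes $IH^{p}(\widetilde{T^{*}J})$ with these sheaves as coefficients), asserts constancy on all of $\widetilde{T^{*}J}$, in particular across the exceptional locus where the fiber description changes from $\Psi^{-1}(0)/\!/\CC^{*}$ to $\Hom^{\om_{\varphi}}(\ker\varphi,\im\varphi^{\perp}/\im\varphi)/\!/\CC^{*}$. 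Stratum-wise constancy plus simple connectivity of $\PP^{2g-1}$ does not yield this; one must compare the stalks over $\alpha^{-1}(\ZZ_{2}^{2g})$ with nearby stalks. This is precisely where the paper's proof differs: it obtains local constancy by the argument of \cite[Proposition 2.13]{K86} and then concludes constancy from Lemma \ref{int coh of affine cone of git quotient}-(2),(3), Lemma \ref{suffices-to-show-on-projectivized-git}-(2),(3) and Lemma \ref{isomorphic to incidence variety}, which identify the intersection cohomology of both kinds of fibers canonically through $I_{2g-3}$. Your proposal is silent on this gluing, so even with the monodromy step repaired it proves less than the statement requires.
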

\begin{proof}
Following the idea of proof of \cite[Proposition 2.13]{K86} in the case that $G=\SL(2)$, $K=\SU(2)$, $R=S=\CC^{*}$, $N=N_{0}=G$ and $Z_{R}^{ss}/\!/N_{0}=\widetilde{T^{*}J}$ and using Lemma \ref{int coh of affine cone of git quotient}-(2), (3) and Lemma \ref{suffices-to-show-on-projectivized-git}-(2), (3), we can see that $R^{i}g_{*}\ic^{\bullet}(Y/\!/\CC^{*})$ and $R^{i}h_{*}\ic^{\bullet}(Bl_{\widetilde{T^{*}J}}Y/\!/\CC^{*})$ are locally constant sheaves for each $i\ge0$. Since $\widetilde{T^{*}J}$ is irreducible, we get the conclusion.
\end{proof}

Then we have the following computable blowing-up formula.

\begin{theorem}\label{computable intersection blowing-up formula}
(1) $\dim IH^{i}(\bR_{1}^{ss}/\!/\SL(2))=\dim IH^{i}(\bR/\!/\SL(2))$
$$+2^{2g}\dim IH^{i}(\PP\Upsilon^{-1}(0)^{ss}/\!/\PGL(2))-2^{2g}\dim IH^{i}(\Upsilon^{-1}(0)/\!/\PGL(2))$$
for all $i\ge0$.

(2) $\dim IH^{i}(\bR_{2}^{s}/\SL(2))=\dim IH^{i}(\bR_{1}^{ss}/\!/\SL(2))$
$$+\sum_{p+q=i}\dim[H^{p}(\widetilde{T^{*}J})\otimes H^{t(q)}(I_{2g-3})]^{\ZZ_{2}}$$
for all $i\ge0$, where $t(q)=q-2$ for $q\le\dim I_{2g-3}=4g-7$ and $t(q)=q$ otherwise.

(3) $\dim IH^{i}(Bl_{\PP\Hom_{1}}\PP\Upsilon^{-1}(0)^{ss}/\!/\SL(2))=\dim IH^{i}(\PP\Upsilon^{-1}(0)^{ss}/\!/\SL(2))$
$$+\sum_{p+q=i}\dim[H^{p}(\PP^{2g-1})\otimes H^{t(q)}(I_{2g-3})]^{\ZZ_{2}}$$
for all $i\ge0$, where $t(q)=q-2$ for $q\le\dim I_{2g-3}=4g-7$ and $t(q)=q$ otherwise.
\end{theorem}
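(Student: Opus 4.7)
The plan is to take Lemma \ref{intersection blowing-up formula} as the starting point, which already writes each $\dim IH^{i}$ on the blown-up side as the sum of the corresponding term before blowing-up and a difference $\dim IH^{i}(\tcC/\!/\SL(2))-\dim IH^{i}(\cC/\!/\SL(2))$ between the blown-up and the original normal cone. The remaining task in all three parts is to rewrite this difference in the concrete form demanded by the theorem.

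For part (1) the translation is essentially immediate: Lemma \ref{intersection blowing-up formula}-(1) already isolates the correction as $2^{2g}(\dim IH^{i}(Bl_{0}\Upsilon^{-1}(0)/\!/\PGL(2))-\dim IH^{i}(\Upsilon^{-1}(0)/\!/\PGL(2)))$, and Lemma \ref{int coh of affine cone of git quotient}-(1) identifies $IH^{*}(Bl_{0}\Upsilon^{-1}(0)/\!/\PGL(2))$ with $IH^{*}(\PP\Upsilon^{-1}(0)/\!/\PGL(2))$, which is exactly the shape of the first formula.

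For parts (2) and (3) I would proceed in four steps. First, invoke Lemma \ref{geometric descriptions of second cones} to present $\cC_{2}/\!/\SL(2)$ and $\tcC_{2}/\!/\SL(2)$ (respectively $\cC/\!/\SL(2)$ and $\tcC/\!/\SL(2)$) as free $\ZZ_{2}$-quotients of fibrations $g\colon Y/\!/\CC^{*}\to\widetilde{T^{*}J}$ and $h\colon Bl_{\widetilde{T^{*}J}}Y/\!/\CC^{*}\to\widetilde{T^{*}J}$ (respectively over $\PP^{2g-1}$), whose fibers are $\Psi^{-1}(0)/\!/\CC^{*}$ or $\Hom^{\om_{\varphi}}(\ker\varphi,\im\varphi^{\perp}/\im\varphi)/\!/\CC^{*}$ and their blowing-ups at the vertex; by Lemma \ref{int coh of quotient by finite group}, the intersection cohomology of a quotient equals the $\ZZ_{2}$-invariant part of the intersection cohomology upstairs. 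Second, apply the perverse Leray spectral sequence of Proposition \ref{3 consequences}-(2), which by Remark \ref{Leray ss for intersection cohomology with smooth target} simplifies on the smooth bases $\widetilde{T^{*}J}$ and $\PP^{2g-1}$; Lemma \ref{second cone gives a constant sheaf} guarantees that every $R^{j}g_{*}\ic^{\bullet}$ and $R^{j}h_{*}\ic^{\bullet}$ is a constant sheaf, so the $E_{2}$-page collapses and yields $\bigoplus_{p+q=i}H^{p}(\text{base})\otimes IH^{q}(\text{fiber})$. Third, compute the fiberwise difference: by Lemma \ref{int coh of affine cone of git quotient}-(2),(3) the blown-up fiber has intersection cohomology equal to that of the projectivized fiber, while by Lemma \ref{suffices-to-show-on-projectivized-git} the affine-cone fiber equals the cokernel of Lefschetz multiplication $\lambda\colon IH^{q-2}\to IH^{q}$ below the complex dimension $n=4g-6$ and vanishes for $q\ge n$; combined with the identification of the projectivized fiber with $I_{2g-3}$ from Lemma \ref{isomorphic to incidence variety}, the fiberwise difference is $H^{q-2}(I_{2g-3})$ for $q\le 4g-7$ and $H^{q}(I_{2g-3})$ for $q\ge 4g-6$, which is $H^{t(q)}(I_{2g-3})$ in both ranges. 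Fourth, assemble this through the $\ZZ_{2}$-invariants of the Leray decomposition to obtain the stated $[H^{p}\otimes H^{t(q)}(I_{2g-3})]^{\ZZ_{2}}$.

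The step that will demand most attention is the second one just outlined: one must check that the perverse Leray spectral sequence respects both the stratification of Lemma \ref{geometric descriptions of second cones} (the open stratum over $\widetilde{T^{*}J}\setminus\alpha^{-1}(\ZZ_{2}^{2g})$ with fiber $\Psi^{-1}(0)/\!/\CC^{*}$ and the exceptional stratum over $\alpha^{-1}(\ZZ_{2}^{2g})$ with fiber $\Hom^{\om_{\varphi}}(\ker\varphi,\im\varphi^{\perp}/\im\varphi)/\!/\CC^{*}$) and the free $\ZZ_{2}$-action, so that the two different fiber types assemble into a single constant higher direct image sheaf on $\widetilde{T^{*}J}$. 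This is precisely the content of Lemma \ref{second cone gives a constant sheaf}, whose applicability rests on the common identification of both projectivized fibers with $I_{2g-3}$ via Lemma \ref{isomorphic to incidence variety}; once the constancy and the spectral sequence degeneration are in place, the remaining combinatorics is routine bookkeeping.
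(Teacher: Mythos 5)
Your route is the same as the paper's: start from Lemma \ref{intersection blowing-up formula}, use Lemma \ref{int coh of affine cone of git quotient}-(1) for part (1), and for parts (2) and (3) use the fibration descriptions of Lemma \ref{geometric descriptions of second cones}, the perverse Leray spectral sequences of Proposition \ref{3 consequences}-(2) with Remark \ref{Leray ss for intersection cohomology with smooth target}, constancy of the higher direct images (Lemma \ref{second cone gives a constant sheaf}), the identification of the projectivized fibers with $I_{2g-3}$ (Lemma \ref{isomorphic to incidence variety}), the cone computations (Lemma \ref{int coh of affine cone of git quotient}, Lemma \ref{suffices-to-show-on-projectivized-git}), and finally the $\ZZ_{2}$-invariants via Lemma \ref{int coh of quotient by finite group}. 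The fiberwise bookkeeping giving $t(q)$ is also the paper's.

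There is, however, one genuine gap, at the step you yourself flag as routine: you assert that because $R^{q}g_{*}\ic^{\bullet}$ and $R^{q}h_{*}\ic^{\bullet}$ are constant sheaves, ``the $E_{2}$-page collapses'' and the total intersection cohomology is $\bigoplus_{p+q=i}H^{p}(\mathrm{base})\otimes IH^{q}(\mathrm{fiber})$. Constancy only pins down the shape of the $E_{2}$ term; it does not make the differentials vanish (the Leray spectral sequence of the Hopf fibration has constant coefficients and a nonzero $d_{2}$). Degeneration has to be argued, and this is exactly where the paper does extra work: for the blown-up family $h$, degeneration at $E_{2}$ is obtained from the decomposition theorem via Proposition \ref{3 consequences}-(2) (degeneration being equivalent to the decomposition theorem for $h$); for the affine-cone family $g$, whose fibers are the cones $\widehat{I_{2g-3}}$ so that this argument is not directly available, degeneration is deduced from that of the $h$-sequence by using the embedding $IH^{q}(\widehat{I_{2g-3}})\hookrightarrow IH^{q}(I_{2g-3})$ coming from Lemma \ref{suffices-to-show-on-projectivized-git}-(2),(3) and Lemma \ref{isomorphic to incidence variety}. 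In your proposal Lemma \ref{suffices-to-show-on-projectivized-git} enters only later, in the fiberwise-difference computation, so nothing in your argument forces the vanishing of the differentials for $g$ (or for $h$). Once these degeneration arguments are inserted, the remaining steps — the $\ZZ_{2}$-invariant assembly and the shift $t(q)$ read off from the cokernel description — match the paper's proof.
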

\begin{proof}
\begin{enumerate}
\item Since it follows from Lemma \ref{int coh of affine cone of git quotient}-(1) that $$IH^{i}(Bl_{0}\Upsilon^{-1}(0)/\!/\PGL(2))=IH^{i}(\PP\Upsilon^{-1}(0)^{ss}/\!/\PGL(2)),$$
    we get the formula.

\item Let $g:Y/\!/\CC^{*}\to\widetilde{T^{*}J}$ and $h:Bl_{\widetilde{T^{*}J}}Y/\!/\CC^{*}\to\widetilde{T^{*}J}$ be the maps induced by the projections $Y\to\widetilde{T^{*}J}$ and $Bl_{\widetilde{T^{*}J}}Y\to\widetilde{T^{*}J}$. By Proposition \ref{3 consequences}-(2), the Leray spectral sequences of intersection cohomology associated to $g$ and $h$ have $E_2$ terms given by
    $$E_{2}^{pq}=H^{p}(\widetilde{T^{*}J})\otimes IH^{q}(\widehat{I_{2g-3}})$$
    and
    $$E_{2}^{pq}=H^{p}(\widetilde{T^{*}J})\otimes H^{q}(I_{2g-3})$$
    by Lemma \ref{int coh of affine cone of git quotient}, Lemma \ref{isomorphic to incidence variety} and Lemma \ref{second cone gives a constant sheaf}, where $\widehat{I_{2g-3}}$ is the affine cone of $I_{2g-3}$. Since the spectral sequence associated to $h$ is identical to that associated to the projection $E_{h}/\!/\CC^{*}\to\widetilde{T^{*}J}$, where $E_{h}$ is the exceptional divisor of the blowing-up map $Bl_{\widetilde{T^{*}J}}Y\to Y$, it follows from Proposition \ref{3 consequences}-(2) that the decomposition theorem for the projection $E_{h}/\!/\CC^{*}\to\widetilde{T^{*}J}$ implies that the Leray spectral sequence of intersection cohomology associated to $h$ degenerates at the $E_2$ term. Since $IH^{q}(\widehat{I_{2g-3}})$ embeds in $IH^{q}(I_{2g-3})$ by Lemma \ref{suffices-to-show-on-projectivized-git}-(2), (3) and Lemma \ref{isomorphic to incidence variety}, the Leray spectral sequence of intersection cohomology associated to $g$ also degenerates at the $E_2$ term. Since $\cC_{2}/\!/\SL(2)=(Y/\!/\CC^{*})/\ZZ_{2}$ and $\tcC_{2}/\!/\SL(2)=(Bl_{\widetilde{T^{*}J}}Y/\!/\CC^{*})/\ZZ_{2}$, we have
    $$IH^{i}(\cC_{2}/\!/\SL(2))=\bigoplus_{p+q=i}[H^{p}(\widetilde{T^{*}J})\otimes IH^{q}(\widehat{I_{2g-3}})]^{\ZZ_{2}}$$
    and
    $$H^{i}(\tcC_{2}/\!/\SL(2))=\bigoplus_{p+q=i}[H^{p}(\widetilde{T^{*}J})\otimes H^{q}(I_{2g-3})]^{\ZZ_{2}}$$
    by Lemma \ref{int coh of quotient by finite group}. Applying Lemma \ref{suffices-to-show-on-projectivized-git}-(2), (3) again, we get the formula.

\item Note that $\cC/\!/\SL(2)=(Y|_{\PP^{2g-1}}/\!/\CC^{*})/\ZZ_{2}$ and $\tcC_{2}/\!/\SL(2)=(Bl_{\PP^{2g-1}}Y|_{\PP^{2g-1}}/\!/\CC^{*})/\ZZ_{2}$. Let $g':Y|_{\PP^{2g-1}}/\!/\CC^{*}\to\PP^{2g-1}$ and $h':Bl_{\PP^{2g-1}}Y|_{\PP^{2g-1}}/\!/\CC^{*}\to\PP^{2g-1}$ be the maps induced by the projections $Y|_{\PP^{2g-1}}\to\PP^{2g-1}$ and $Bl_{\PP^{2g-1}}Y|_{\PP^{2g-1}}\to\PP^{2g-1}$. Since $\PP^{2g-1}$ is simply connected, $R^{i}g'_{*}\ic^{\bullet}(Y|_{\PP^{2g-1}}/\!/\CC^{*})$ and $R^{i}h'_{*}\ic^{\bullet}(Bl_{\PP^{2g-1}}Y|_{\PP^{2g-1}}/\!/\CC^{*})$ are constant sheaves for each $i\ge0$ by the same argument as in the proof of Lemma \ref{second cone gives a constant sheaf} and then the Leray spectral sequences of intersection cohomology associated to $g'$ and $h'$ have $E_2$ terms given by
    $$E_{2}^{pq}=H^{p}(\PP^{2g-1})\otimes IH^{q}(\widehat{I_{2g-3}})$$
    and
    $$E_{2}^{pq}=H^{p}(\PP^{2g-1})\otimes H^{q}(I_{2g-3})$$
by Lemma \ref{int coh of affine cone of git quotient} and Lemma \ref{isomorphic to incidence variety}. By the same argument as in the remaining part of the proof of item (2), we get the formula.
\end{enumerate}
\end{proof}

\section{A strategy to get a formula for the Poincar\'{e} polynomial of $IH^{*}(\bM)$}\label{strategy}

Since $\bR_{2}^{s}/\SL(2)$ has an orbifold singularity, we have $H^{i}(\bR_{2}^{s}/\SL(2))\cong H_{\SL(2)}^{i}(\bR_{2}^{s})$ for each $i\ge0$. If we have a blowing-up formula for the equivariant cohomology that can be applied to get $\dim H_{\SL(2)}^{i}(\bR_{2}^{s})$ from $\dim H_{\SL(2)}^{i}(\bR)$ for each $i\ge0$, Theorem \ref{computable intersection blowing-up formula} can be used to calculate $\dim IH^{i}(\bM)$ from $\dim H^{i}(\bR_{2}^{s}/\SL(2))$ for each $i$.

\subsection{Towards a blowing-up formula for the equivariant cohomology}

In this subsection, we give a strategy to get a blowing-up formula for the equivariant cohomology in Kirwan's algorithm starting with $\bR$ and prove that a blowing-up formula for the equivariant cohomology in the blowing-up $\pi:Bl_{\PP\Hom_{1}}\PP\Upsilon^{-1}(0)^{ss}\to\PP\Upsilon^{-1}(0)^{ss}$ holds.

There are the $\CC^{*}$-actions on $\bM$ by $\lambda\cdot(E,\phi)=(E,\lambda\phi)$ and on $\bR$ by $\lambda\cdot(E,\phi,\beta)=(E,\lambda\phi,\beta)$. Then the $\SL(2)$-action on $\bR$ commutes with the $\CC^{*}$-action on $\bR$. Since $\ZZ_{2}^{2g}$ is invariant under the $\CC^{*}$-action, the $\CC^{*}$-action on $\bR$ lifts to $\bR_{1}$ and the $\CC^{*}$-action on $\bM$ lifts to $\bR_{1}^{ss}/\!/\SL(2)$. Since $\Sig$ is invariant under the $\CC^{*}$-action on $\bR_{1}^{ss}$, the $\CC^{*}$-action on $\bR_{1}^{ss}$ lifts to $\bR_{2}$. The fixed loci of the $\CC^{*}$-action on $\bM$ and $\bR_{1}^{ss}/\!/\SL(2)$ can be described as in the following two Propositions.

\begin{proposition}[(4.1) of \cite{HT03}]\label{fixed locus of bM}
The fixed locus of the $\CC^{*}$-action on $\bM$ is
$$\bM^{\CC^{*}}=\bN\sqcup\bigsqcup_{d=1}^{g-1}F_{d}$$
where
\begin{enumerate}
\item[1.] $\bN$ is the moduli space of semistable rank $2$ vector bundles with trivial determinant on $X$, which parametrizes $\SL(2)$-Higgs bundles of the form $(E,0)$;
\item[2.] $F_{d}=S^{2g-2-2d}X\times_{\Pic^{2d}(X)}\Pic^{d}(X)$, which parametrizes $\SL(2)$-Higgs bundles $(E,\phi)$ of the form $E=L\oplus L^{-1}$, $\phi=\left(\begin{matrix}0&0\\s&0\end{matrix}\right)$, where $S^{m}X$ is the $m$-fold symmetric product of $X$, $F_{d}=S^{2g-2-2d}X\times_{\Pic^{2d}(X)}\Pic^{d}(X)$, the map $S^{2g-2-2d}X\to\Pic^{2d}(X)$ is given by $D\mapsto K_{X}(-D)$, the map $\Pic^{d}(X)\to\Pic^{2d}(X)$ is given by $L\mapsto L^{2}$ and $K_{X}L^{-2}=\cO_{X}(\dv(s))$.
\end{enumerate}
\end{proposition}

\begin{proposition}\label{fixed locus of the quotient of bR1ss}
The fixed locus of the $\CC^{*}$-action on $\bR_{1}^{ss}/\!/\SL(2)$ is
$$(\bR_{1}^{ss}/\!/\SL(2))^{\CC^{*}}=\bN_{1}\sqcup\bigsqcup_{d=1}^{g-1}\overline{\pi}_{\bR_{1}}^{-1}(F_{d})\sqcup\bigsqcup_{j=1}^{2^{2g}}t_{j},$$
where $\bN_{1}$ is the strict transform of $\bN$ under $\overline{\pi}_{\bR_{1}}$ and $t_{j}=\PP(H^{0}(K_{X})\otimes sl(2))^{ss}/\!/\SL(2)$.
\end{proposition}
\begin{proof}
The component of the fixed locus $(\bR_{1}^{ss}/\!/\SL(2))^{\CC^{*}}$ not contained in $\dss\overline{\pi}_{\bR_{1}}^{-1}(\bN-\ZZ_{2}^{2g})\sqcup\bigsqcup_{d=1}^{g-1}\overline{\pi}_{\bR_{1}}^{-1}(F_{d})$ lies over $\ZZ_{2}^{2g}$.

The fiber of $\overline{\pi}_{\bR_{2}}$ over the points of $\ZZ_{2}^{2g}$ is $\PP\Upsilon^{-1}(0)^{ss}/\!/\SL(2)$. The $\CC^*$-action on $\PP\Upsilon^{-1}(0)^{ss}/\!/\SL(2)$ is given by
$$\lambda\cdot\left(\left(\begin{matrix}d&e\\f&-d\end{matrix}\right),\left(\begin{matrix}a&b\\c&-a\end{matrix}\right)\right)=\left(\left(\begin{matrix}\lambda d&\lambda e\\\lambda f&-\lambda d\end{matrix}\right),\left(\begin{matrix}a&b\\c&-a\end{matrix}\right)\right).$$
Thus
$$(\PP\Upsilon^{-1}(0)^{ss}/\!/\SL(2))^{\CC^{*}}=\{\left(\left(\begin{matrix}d&e\\f&-d\end{matrix}\right),\left(\begin{matrix}a&b\\c&-a\end{matrix}\right)\right)\in\PP\Upsilon^{-1}(0)^{ss}/\!/\SL(2)\,|\,d=e=f=0\}$$
$$\sqcup\{\left(\left(\begin{matrix}d&e\\f&-d\end{matrix}\right),\left(\begin{matrix}a&b\\c&-a\end{matrix}\right)\right)\in\PP\Upsilon^{-1}(0)^{ss}/\!/\SL(2)\,|\,a=b=c=0\}$$
$$\cong\PP(H^{1}(\cO_{X})\otimes sl(2))^{ss}/\!/\SL(2)\sqcup\PP(H^{0}(K_{X})\otimes sl(2))^{ss}/\!/\SL(2).$$

Since the disjoint union of $\overline{\pi}_{\bR_{1}}^{-1}(\bN-\ZZ_{2}^{2g})$ and $2^{2g}$ copies of $\PP(H^{1}(\cO_{X})\otimes sl(2))^{ss}/\!/\SL(2)$ is isomorphic to the strict transform $\bN_{1}$ of $\bN$, we get the conclusion.
\end{proof}

We consider special kinds of normal complex quasi-projective varieties. Let $R$ is a normal complex quasi-projective variety with a $\CC^{*}$-action. If $\dss\lim_{\lambda\to 0}\lambda\cdot x$ uniquely exists in $R^{\CC^{*}}$ for every $x\in R$, we can give a decomposition of Bia{\l}ynicki–Birula type on $R$.

\begin{proposition}[Bia{\l}ynicki–Birula decomposition, \cite{BB73}, \cite{BB76}, 4.4 of \cite{Will20}, Theorem 5.5 of \cite{FM20}]\label{BB decomposition on R}
Let $R$ be a normal complex quasi-projective variety with a $\CC^{*}$-action. $R^{\text{sm}}$ denotes the smooth locus of $R$. Assume that $\dss\lim_{\lambda\to 0}\lambda\cdot x$ uniquely exists in $R^{\CC^{*}}$ for every $x\in R$. Then
\begin{enumerate}
\item[1.] $R$ decomposes into $\CC^{*}$-invariant locally closed subsets
$$R=\bigsqcup_{C\in\pi_{0}(R^{\CC^{*}})}R_{C}^{+},$$
where $R_{C}^{+}$ is the attracting set $\dss\{x\in R\,|\,\lim_{\lambda\to0}\lambda\cdot x\in C\}$ for each connected component $C\subset R^{\CC^{*}}$.

\item[2.] The limit map
$$R_{C}^{+}\to C,\quad x\mapsto\lim_{\lambda\to0}\lambda\cdot x$$
is a morphism of algebraic varieties, and it is a bundle of affine spaces if $C\subset R^{\text{sm}}$.
\item[3.] Each connected component of $(R^{\text{sm}})^{\CC^{*}}$ is smooth.
\item[4.] There exists an ordering $C_{0},C_{1},\cdots$ of the components of $R^{\CC^{*}}$ such
that
\begin{itemize}
\item $i<j$ $\Leftrightarrow$ $\dim R_{C_{i}}^{+}\ge\dim R_{C_{j}}^{+}$;

\item the sets
$$R_{i}=\bigcup_{i\le j}R_{C_{j}}^{+}$$
yield a filtration of $R$ by closed subvarieties.
\end{itemize}
\end{enumerate}
\end{proposition}

Assume that the $i$-th connected component $C_{i}$ of $R^{\CC^{*}}$ is smooth. Applying the local-global spectral sequence (\cite[Proof of Proposition 3.4.4]{Soe01}, \cite[\S4.4]{Will20}) associated to the filtration $R_{i}\times_{\SL(2)}\E\SL(2)$ of $R\times_{\SL(2)}\E\SL(2)$
$$E_{1}^{ij}=H^{i+j}((R_{i}\setminus R_{i+1})\times_{\SL(2)}\E\SL(2),u_{i}^{!}\QQ_{R\times_{\SL(2)}\E\SL(2)})\Rightarrow H^{i+j}(R\times_{\SL(2)}\E\SL(2),\QQ)$$
where $u_{i}:R_{C_{i}}^{+}\times_{\SL(2)}\E\SL(2)\hookrightarrow R\times_{\SL(2)}\E\SL(2)$ is the inclusion.

If $R$ is smooth and $R_{C_{i}}^{+}$ is smooth of codimension $c_{i}$ in $R$, we have $$u_{i}^{!}\QQ_{R\times_{\SL(2)}\E\SL(2)}=\QQ_{R_{C_{i}}^{+}\times_{\SL(2)}\E\SL(2)}[2c_{i}]$$
and we can rewrite the terms of our spectral sequence as
$$E_{1}^{ij}=H^{i+j}(R_{C_{i}}^{+}\times_{\SL(2)}\E\SL(2),u_{i}^{!}\QQ_{R\times_{\SL(2)}\E\SL(2)})$$
$$=H^{i+j-2c_{i}}(R_{C_{i}}^{+}\times_{\SL(2)}\E\SL(2),\QQ)=H^{i+j-2c_{i}}(C_{i}\times_{\SL(2)}\E\SL(2),\QQ).$$

\begin{proposition}[Proof of Proposition 3.4.4 of \cite{Soe01}, \S4.4 of \cite{Will20}]\label{lgss}
Let $R$ be a normal complex quasi-projective variety with a $\CC^{*}$-action. $R^{\text{sm}}$ denotes the smooth locus of $R$. Assume that $\dss\lim_{\lambda\to 0}\lambda\cdot x$ uniquely exists in $R^{\CC^{*}}$ for every $x\in R$. Then
\begin{enumerate}
\item[1.] The decomposition of $R$ in Proposition \ref{BB decomposition on R} yields the spectral sequence

\begin{equation}\label{lgss1}E_{1}^{ij}=H^{i+j}(R_{C_{i}}^{+}\times_{\SL(2)}\E\SL(2),u_{i}^{!}\QQ_{R\times_{\SL(2)}\E\SL(2)})\Rightarrow H_{\SL(2)}^{i+j}(R,\QQ)\end{equation}

where $u_{i}:R_{C_{i}}^{+}\times_{\SL(2)}\E\SL(2)\hookrightarrow R\times_{\SL(2)}\E\SL(2)$ is the inclusion.
\item[2.] If $R$ is smooth and $R_{C_{i}}^{+}$ is smooth of codimension $c_{i}$ in $R$, then we can
rewrite the spectral sequence (\ref{lgss1}) as
\begin{equation}\label{lgss2}E_{1}^{ij}=H_{\SL(2)}^{i+j-2c_{i}}(C_{i},\QQ)\Rightarrow H_{\SL(2)}^{i+j}(R,\QQ)\end{equation}
\item[3.] The spectral sequence (\ref{lgss2}) degenerates at the first page $E_{1}$.
\end{enumerate}
\end{proposition}

Let $r_{\bR}:\bR\to\bR/\!/\SL(2)=\bM$, $r_{\bR_{1}}:\bR_{1}^{ss}\to\bR_{1}^{ss}/\!/\SL(2)$ and $r_{\bR_{2}}:\bR_{2}^{s}\to\bR_{2}^{s}/\!/\SL(2)$ be the good quotient maps which are $\CC^{*}$-equivariant. We have the following lemma useful later.

\begin{lemma}\label{1st assumption}
\begin{enumerate}
\item\label{1st assumption1} $\dss\bR^{\CC^{*}}=r_{\bR}^{-1}(\bN)\sqcup\bigsqcup_{d=1}^{g-1}r_{\bR}^{-1}(F_{d})$;

\item\label{1st assumption2} $\dss\bR_{1}^{\CC^{*}}=\widetilde{r_{\bR}^{-1}(\bN)}\sqcup\bigsqcup_{d=1}^{g-1}(\overline{\pi}_{\bR_{1}}\circ r_{\bR_{1}})^{-1}(F_{d})\sqcup\bigsqcup_{j=1}^{2^{2g}}\PP(H^{0}(K_{X})\otimes sl(2))$, where $\widetilde{r_{\bR}^{-1}(\bN)}$ is the strict transform of $r_{\bR}^{-1}(\bN)$ under $\pi_{\bR_{1}}$;

\item\label{1st assumption3} $\dss\bR_{2}^{\CC^{*}}=\widetilde{r_{\bR_{1}}^{-1}(\bN_{1})}\sqcup\bigsqcup_{d=1}^{g-1}(\overline{\pi}_{\bR_{1}}\circ\overline{\pi}_{\bR_{2}}\circ r_{\bR_{2}})^{-1}(F_{d})\sqcup\bigsqcup_{j=1}^{2^{2g}}\pi_{\bR_{2}}^{-1}(\PP(H^{0}(K_{X})\otimes sl(2))^{ss})\sqcup T^{+}$, where $\widetilde{r_{\bR_{1}}^{-1}(\bN_{1})}$ is the strict transform of $r_{\bR_{1}}^{-1}(\bN_{1})$ under $\pi_{\bR_{2}}$ and $T^{+}$ is a $\PP^{2g-3}$-bundle over $\bR_{1}^{\CC^{*}}\cap\Sig$.
\end{enumerate}
\end{lemma}
\begin{proof}
\begin{enumerate}
\item It is an immediate consequence from Proposition \ref{fixed locus of bM} the fact that $(E,\phi)$ is fixed under the $\CC^{*}$-action on $\bM$ if and only if $(E,\phi,\beta)$ is fixed under the $\CC^{*}$-action on $\bR$ for all $\beta$.

\item The component of the fixed locus $\bR_{1}^{\CC^{*}}$ not contained in
$$\pi_{\bR_{1}}^{-1}(r_{\bR}^{-1}(\bN)-\ZZ_{2}^{2g})\sqcup\bigsqcup_{d=1}^{g-1}\pi_{\bR_{1}}^{-1}(r_{\bR}^{-1}(F_{d}))$$
lies over $\ZZ_{2}^{2g}$.
 
The fiber of $\pi_{\bR_{1}}$ over the points of $\ZZ_{2}^{2g}$ is $\PP\Upsilon^{-1}(0)$. The $\CC^*$-action on $\PP\Upsilon^{-1}(0)$ is given by
$$\lambda\cdot\left(\left(\begin{matrix}d&e\\f&-d\end{matrix}\right),\left(\begin{matrix}a&b\\c&-a\end{matrix}\right)\right)=\left(\left(\begin{matrix}\lambda d&\lambda e\\\lambda f&-\lambda d\end{matrix}\right),\left(\begin{matrix}a&b\\c&-a\end{matrix}\right)\right).$$
Thus
$$(\PP\Upsilon^{-1}(0))^{\CC^{*}}=\{\left(\left(\begin{matrix}d&e\\f&-d\end{matrix}\right),\left(\begin{matrix}a&b\\c&-a\end{matrix}\right)\right)\in\PP\Upsilon^{-1}(0)\,|\,d=e=f=0\}$$
$$\sqcup\{\left(\left(\begin{matrix}d&e\\f&-d\end{matrix}\right),\left(\begin{matrix}a&b\\c&-a\end{matrix}\right)\right)\in\PP\Upsilon^{-1}(0)\,|\,a=b=c=0\}$$
$$\cong\PP(H^{1}(\cO_{X})\otimes sl(2))\sqcup\PP(H^{0}(K_{X})\otimes sl(2)).$$

Since the disjoint union of $\pi_{\bR_{1}}^{-1}(r_{\bR}^{-1}(\bN)-\ZZ_{2}^{2g})$ and $2^{2g}$ copies of $\PP(H^{1}(\cO_{X})\otimes sl(2))$ is isomorphic to the strict transform $\widetilde{r_{\bR}^{-1}(\bN)}$, we get the conclusion.

\item Let $T:=\bR_{1}^{\CC^{*}}\cap\Sig$. The component of the fixed locus $\bR_{2}^{\CC^{*}}$ not contained in
$$\pi_{\bR_{2}}^{-1}(\widetilde{r_{\bR}^{-1}(\bN)}-T)\sqcup\bigsqcup_{d=1}^{g-1}\pi_{\bR_{2}}^{-1}((\overline{\pi}_{\bR_{1}}\circ r_{\bR_{1}})^{-1}(F_{d}))\sqcup\bigsqcup_{j=1}^{2^{2g}}\pi_{\bR_{2}}^{-1}(\PP(H^{0}(K_{X})\otimes sl(2))^{ss})$$
lies over $T$.

The fiber of $\pi_{\bR_{2}}$ over the points of $T\setminus\pi_{\bR_{1}}^{-1}(\ZZ_{2}^{2g})$ is $\PP\Psi^{-1}(0)$. Since $\CC^{*}$ acts on $\PP\Psi^{-1}(0)$ by $\mu\cdot[a,b,c,d]=[\mu a,b,\mu c,d]$, we have
$$(\PP\Psi^{-1}(0))^{\CC^{*}}=\{[a,b,c,d]\in\PP\Psi^{-1}(0)\,|\,b=d=0\}\sqcup\{[a,b,c,d]\in\PP\Psi^{-1}(0)\,|\,a=c=0\}$$
$$\cong P_{1}\sqcup P_{2},$$
where $P_{1}=\PP(H^{0}(L^{-2}K_{X})\oplus H^{0}(L^{2}K_{X}))\cong\PP^{2g-3}$ and $P_{2}=\PP(H^{1}(L^{2})\oplus H^{1}(L^{-2}))\cong\PP^{2g-3}$.

The fiber of $\pi_{\bR_{2}}$ over the points $[\varphi]$ of $T\cap\pi_{\bR_{1}}^{-1}(\ZZ_{2}^{2g})$ is $\PP\Hom^{\om_{\varphi}}(\ker\varphi,\im\varphi^{\perp}/\im\varphi)$. The $\CC^{*}$-action on $\PP\Hom^{\om_{\varphi}}(\ker\varphi,\im\varphi^{\perp}/\im\varphi)$ is given by
$$\mu\cdot[Z_{32},\cdots,Z_{2g,2},Z_{33},\cdots,Z_{2g,3}]$$
$$=[Z_{32},\mu Z_{42},\cdots, Z_{2g-1,2}, \mu Z_{2g,2}, Z_{33}, \mu Z_{43}, \cdots, Z_{2g-1,3}, \mu Z_{2g,3}].$$
Thus
$$(\PP\Hom^{\om_{\varphi}}(\ker\varphi,\im\varphi^{\perp}/\im\varphi))^{\CC^{*}}$$
$$=\{[Z_{32},\cdots,Z_{2g,2},Z_{33},\cdots,Z_{2g,3}]\in\PP\Hom^{\om_{\varphi}}(\ker\varphi,\im\varphi^{\perp}/\im\varphi)\,|$$
$$Z_{32}=Z_{52}=\cdots=Z_{2g-1,2}=Z_{33}=Z_{53}=\cdots=Z_{2g-1,3}=0\}$$
$$\sqcup\{[Z_{32},\cdots,Z_{2g,2},Z_{33},\cdots,Z_{2g,3}]\in\PP\Hom^{\om_{\varphi}}(\ker\varphi,\im\varphi^{\perp}/\im\varphi)\,|$$
$$Z_{42}=Z_{62}=\cdots=Z_{2g,2}=Z_{43}=Z_{63}=\cdots=Z_{2g,3}=0\}$$
$$=P_{1}\sqcup P_{2},$$
where
$$P_{1}=\{[Z_{32},\cdots,Z_{2g,2},Z_{33},\cdots,Z_{2g,3}]\in\PP\Hom^{\om_{\varphi}}(\ker\varphi,\im\varphi^{\perp}/\im\varphi)\,|$$
$$Z_{32}=Z_{52}=\cdots=Z_{2g-1,2}=Z_{33}=Z_{53}=\cdots=Z_{2g-1,3}=0\}\cong\PP^{2g-3}$$
and
$$P_{2}=\{[Z_{32},\cdots,Z_{2g,2},Z_{33},\cdots,Z_{2g,3}]\in\PP\Hom^{\om_{\varphi}}(\ker\varphi,\im\varphi^{\perp}/\im\varphi)^{ss}\,|$$
$$Z_{42}=Z_{62}=\cdots=Z_{2g,2}=Z_{43}=Z_{63}=\cdots=Z_{2g,3}=0\}\cong\PP^{2g-3}.$$

Let $T^{+}$ is the $P_{1}$-bundle over $T$. Since the $P_{2}$-bundle over $T$ is contained in the strict transform $\widetilde{r_{\bR_{1}}^{-1}(\bN_{1})}$, we get the conclusion.
\end{enumerate}
\end{proof}

\begin{lemma}\label{codim 0 stratum}
\begin{enumerate}
\item $\bR_{r_{\bR}^{-1}(\bN)}^{+}$ is open in $\bR$.

\item $(\bR_{1})_{\widetilde{r_{\bR}^{-1}(\bN)}}^{+}$ is open in $\bR_{1}$.
\end{enumerate}
\end{lemma}
\begin{proof}
\begin{enumerate}
\item We claim that $\bR_{r_{\bR}^{-1}(\bN)}^{+}$ has codimension $0$ in $\bR$. Let $\bN^{s}$ denote the stable locus of $\bN$. Since $r_{\bR}^{-1}(T^{*}\bN^{s})$ has codimension $0$ in $\bR$, it suffices to show that $r_{\bR}^{-1}(T^{*}\bN^{s})\subset\bR_{r_{\bR}^{-1}(\bN)}^{+}$. If $(E,\phi,\beta)\in r_{\bR}^{-1}(T^{*}\bN^{s})$ and $(E,\phi,\beta)$ is fixed under the $\CC^{*}$-action on $\bR$, then there exists $g\in\Aut(E)$ such that $(g\otimes \id_{K_{X}})^{-1}\phi g=\lambda\phi$ for all $\lambda\in\CC^{*}$. Since $E$ is stable, $g$ should be of the form $k\cdot\id$ for some $k\in\CC^{*}$. Then we have $\phi=\lambda\phi$ for all $\lambda\in\CC^{*}$. Thus $\phi=0$ and $(E,\phi,\beta)\in r_{\bR}^{-1}(\bN)$. Hence $r_{\bR}^{-1}(T^{*}\bN^{s})\subset\bR_{r_{\bR}^{-1}(\bN)}^{+}$.

\item We claim that $(\bR_{1})_{\widetilde{r_{\bR}^{-1}(\bN)}}^{+}$ has codimension $0$ in $\bR_{1}$. Since $r_{\bR}^{-1}(T^{*}\bN^{s})$ doesn't change under $\pi_{\bR_{1}}$, we have $(r_{\bR}\circ\pi_{\bR_{1}})^{-1}(T^{*}\bN^{s})\subset(\bR_{1})_{\widetilde{r_{\bR}^{-1}(\bN)}}^{+}$. Since $r_{\bR}^{-1}(T^{*}\bN^{s})$ has codimension $0$ in $\bR$, we get the conclusion.
\end{enumerate}
\end{proof}

\begin{lemma}\label{codim g-1+2d strata}
$\bR_{r_{\bR}^{-1}(F_{d})}^{+}$ has codimension $g-1+2d$ in $\bR$.
\end{lemma}
\begin{proof}
We claim that $\bR_{r_{\bR}^{-1}(F_{d})}^{+}$ is equal to the locus of stable triples $(E,\phi,\beta)\in\bR$ with unstable $E$ that fits into an exact sequence
$$0\to L\to E\to L^{-1}\to0$$
with $\deg L=d$ and $1\le d\le g-1$. Note that the locus of such triples has dimension $5g-5-2d$ from \cite[3.3]{KY08}.
\begin{itemize}
\item $\subset$ : Assume that $(E,\phi,\beta)\in\bR$ is semistable with respect to $\SL(2)$-action with semistable $E$. Then $\dss\lim_{\lambda\to 0}\lambda\cdot(E,\phi,\beta)=(E,0,\beta)$. Hence $(E,\phi,\beta)\in\bR_{r_{\bR}^{-1}(\bN)}^{+}$ which means that $(E,\phi,\beta)\not\in\bR_{r_{\bR}^{-1}(F_{d})}^{+}$.

\item $\supset$ : Assume that $(E,\phi,\beta)\in\bR$ is stable with respect to $\SL(2)$-action with unstable $E$ that fits into an exact sequence
$$0\to L\to E\to L^{-1}\to0$$
with $\deg L=d$ and $1\le d\le g-1$ and that $(E,\phi,\beta)$ is fixed under the $\CC^{*}$-action on $\bR$. Note that $(E,\phi)$ is also stable.

Let $\{U_{i}\}$ be an open cover of $X$ so that both $E$ and $K_{X}$ are trivialized on each $U_{i}$. Let $\left(\begin{matrix}a_{ij}&b_{ij}\\0&a_{ij}^{-1}\end{matrix}\right)$ be the transition matrices of $E$ and $\left(\begin{matrix}p_{i}&q_{i}\\r_{i}&-p_{i}\end{matrix}\right)$ the local Higgs fields, where $a_{ij}$ are the transition functions of $L$. Since $L$ is not $\phi$-invariant, $r_{i}\ne0$ for some $i$. Then for each $\lambda\in\CC^{*}$ there exists $A_{\lambda}\in\Aut(E)$ such that
$$A_{\lambda}|_{U_{i}}=\left(\begin{matrix}e(\lambda)&f_{i}(\lambda)\\g_{i}(\lambda)&h(\lambda)\end{matrix}\right)\in\Aut(E|_{U_{i}}),$$
\begin{equation}\label{fixed4}\left(\begin{matrix}e(\lambda)&f_{i}(\lambda)\\g_{i}(\lambda)&h(\lambda)\end{matrix}\right)\left(\begin{matrix}p_{i}&q_{i}\\r_{i}&-p_{i}\end{matrix}\right)=\lambda\left(\begin{matrix}p_{i}&q_{i}\\r_{i}&-p_{i}\end{matrix}\right)\left(\begin{matrix}e(\lambda)&f_{i}(\lambda)\\g_{i}(\lambda)&h(\lambda)\end{matrix}\right)\end{equation}
and
\begin{equation}\label{fixed5}\left(\begin{matrix}e(\lambda)&f_{j}(\lambda)\\g_{i}(\lambda)&h(\lambda)\end{matrix}\right)\left(\begin{matrix}a_{ij}&b_{ij}\\0&a_{ij}^{-1}\end{matrix}\right)=\left(\begin{matrix}a_{ij}&b_{ij}\\0&a_{ij}^{-1}\end{matrix}\right)\left(\begin{matrix}e(\lambda)&f_{i}(\lambda)\\g_{i}(\lambda)&h(\lambda)\end{matrix}\right).\end{equation}

It follows from (\ref{fixed5}) that $b_{ij}g_{i}(\lambda)=0$ for $i,j$ and
\begin{equation}\label{fixed6}b_{ij}(e(\lambda)-h(\lambda))=a_{ij}f_{i}(\lambda)-f_{j}(\lambda)a_{ij}^{-1}.\end{equation}

If $e(\lambda)=h(\lambda)$ for all $\lambda$, then $\{f_{i}(\lambda)\}$ gives a section $f(\lambda)$ of $H^{0}(L^{2})$ by (\ref{fixed6}). We may assume that $g_{i}(\lambda)=0$ for all $i$. Then from (\ref{fixed4}), we have $p_{i}=q_{i}=r_{i}=0$, which is a contradiction to the assumption that $E$ is unstable. Thus we have $e(\lambda)\ne h(\lambda)$ for some $\lambda$. Since $r_{i}\ne0$, $\lambda\ne 1$ from (\ref{fixed4}). Now we may assume that $f(\lambda)=0$. Then $p_{i}=0$ from (\ref{fixed4}). Then $b_{ij}=0$ from (\ref{fixed5}). Thus $E=L\oplus L^{-1}$ and $\phi=\left(\begin{matrix}0&q\\r&0\end{matrix}\right)$ with $r\ne0$. Further since
$$\left(\begin{matrix}\lambda&0\\0&1\end{matrix}\right)\left(\lambda\left(\begin{matrix}0&q\\r&0\end{matrix}\right)\right)\left(\begin{matrix}\lambda^{-1}&0\\0&1\end{matrix}\right)=\left(\begin{matrix}0&\lambda^{2}q\\r&0\end{matrix}\right),$$
$\phi$ is of the form $\left(\begin{matrix}0&0\\r&0\end{matrix}\right)$ with $r\ne0$ after taking limit as $\lambda\to0$.

Hence we can conclude that $(E,\phi,\beta)\in\bR_{r_{\bR}^{-1}(F_{d})}^{+}$.
\end{itemize}
\end{proof}

Now we give a conjecture and its corollary that guarantee the assumption of Proposition \ref{BB decomposition on R} and Proposition \ref{lgss}.

\begin{conjecture}\label{conjecture}
$\dss\lim_{\lambda\to 0}\lambda\cdot x$ uniquely exists in $\bR^{\CC^{*}}$ for every $x\in \bR.$
\end{conjecture}

\begin{corollary}\label{cor of conjecture}
Assume that Conjecture \ref{conjecture} holds.
\begin{enumerate}
\item\label{cor of conjecture1} $\dss\lim_{\lambda\to 0}\lambda\cdot x$ uniquely exists in $\bR_{1}^{\CC^{*}}$ for every $x\in\bR_{1}$.

\item $\dss\lim_{\lambda\to 0}\lambda\cdot x$ uniquely exists in $(\bR_{1}^{ss})^{\CC^{*}}$ for every $x\in\bR_{1}^{ss}$.

\item $\dss\lim_{\lambda\to 0}\lambda\cdot x$ uniquely exists in $\bR_{2}^{\CC^{*}}$ for every $x\in \bR_{2}$.
\end{enumerate}
\end{corollary}
\begin{proof}
\begin{enumerate}
\item Since $\pi_{\bR_{1}}:\bR_{1}\to\bR$ is proper, it follows from Conjecture \ref{conjecture} that $\dss\lim_{\lambda\to 0}\lambda\cdot x$ uniquely exists in $(\bR_{1})^{\CC^{*}}$ for every $x\in\bR_{1}$.

\item Since $\bR_{1}^{ss}$ is invariant under the $\CC^{*}$-action on $\bR_{1}$, the statement follows from the item (\ref{cor of conjecture1}).

\item Since $\pi_{\bR_{2}}:\bR_{2}\to\bR_{1}^{ss}$ is proper, it follows from Conjecture \ref{conjecture} that $\dss\lim_{\lambda\to 0}\lambda\cdot x$ uniquely exists in $(\bR_{2})^{\CC^{*}}$ for every $x\in\bR_{2}$.
\end{enumerate}
\end{proof}

Conjecture \ref{conjecture} and Corollary \ref{cor of conjecture} guarantee the existences of the spectral sequences (\ref{lgss2}) in the cases that $R=\bR$, $R=\bR_{1}$, $R=\bR_{1}^{ss}$ and $R=\bR_{2}$. So we have the following injective pullbacks $\pi_{\bR_{1}}^{*}$ and $\pi_{\bR_{2}}^{*}$ on the equivariant cohomologies.

\begin{lemma}\label{injectivity of pullback on equivariant cohomology}
Assume that Conjecture \ref{conjecture} holds.
\begin{enumerate}
\item\label{injectivity of pullback on equivariant cohomology-(1)} $\pi_{\bR_{1}}^{*}:H_{\SL(2)}^{i}(\bR)\to H_{\SL(2)}^{i}(\bR_{1})$ is injective for each $i$.
\item\label{injectivity of pullback on equivariant cohomology-(2)} $\pi_{\bR_{2}}^{*}:H_{\SL(2)}^{i}(\bR_{1}^{ss})\to H_{\SL(2)}^{i}(\bR_{2})$ is injective for each $i$.
\end{enumerate}
\end{lemma}
\begin{proof}
\begin{enumerate}
\item By Theorem \ref{fixed locus of bM} and \cite[Corollary 6.12]{Simp94II}, we have
$$\bM^{\CC^{*}}=\bN\sqcup\bigsqcup_{d=1}^{g-1}F_{d},$$
where $F_{0}=\bN$. Let $r_{\bR}:\bR\to\bR/\!/\SL(2)=\bM$ be the good quotient map. By Lemma \ref{1st assumption}-(\ref{1st assumption1}), we have
$$\bR^{\CC^{*}}=r_{\bR}^{-1}(\bN)\sqcup\bigsqcup_{d=1}^{g-1}r_{\bR}^{-1}(F_{d})=C_{0}\sqcup\bigsqcup_{d=1}^{g-1}\bigsqcup_{k}C_{d,k},$$
where $C_{0}=r_{\bR}^{-1}(\bN)$ and $C_{d,k}$ are connected components of $r_{\bR}^{-1}(F_{d})$.

Then it follows from Proposition \ref{lgss}, Lemma \ref{codim 0 stratum}, Lemma \ref{codim g-1+2d strata} and Conjecture \ref{conjecture} that the spectral sequence
$$E_1^{0j}=H_{\SL(2)}^{j}(\bR_{C_{0}}^{+},\QQ)\text{ or }E_{1}^{ij}=H_{\SL(2)}^{i+j-2c_{i}}(\bigsqcup_{k}C_{i,k},\QQ)\Rightarrow H_{\SL(2)}^{i+j}(\bR,\QQ)$$
which degenerates at $E_{1}$, where $c_{i}=g-1+2i$. Further, since the limit map $\bR_{C_{0}}^{+}\to C_{0}$ is affine by \cite[Theorem 1.4.2]{Drin15} and its fiber is connected, the Leray spectral sequence for the limit map gives
$$H_{\SL(2)}^{j}(\bR_{C_{0}}^{+},\QQ)=H_{\SL(2)}^{j}(C_{0},\QQ)=H_{\SL(2)}^{j}(r_{\bR}^{-1}(\bN),\QQ).$$

Let $r_{\bR_{1}}:\bR_{1}^{ss}\to\bR_{1}^{ss}/\!/\SL(2)=\bM$ be the good quotient map. 
By Lemma \ref{1st assumption}-(\ref{1st assumption2}), we have
$$\bR_{1}^{\CC^{*}}=\widetilde{r_{\bR}^{-1}(\bN)}\sqcup\bigsqcup_{d=1}^{g-1}(\overline{\pi}_{\bR_{1}}\circ r_{\bR_{1}})^{-1}(F_{d})\sqcup\bigsqcup_{j=1}^{2^{2g}}\PP(H^{0}(K_{X})\otimes sl(2))$$
$$=\tC_{0}\sqcup\bigsqcup_{l=1}^{2^{2g}}\tC_{1,l}\sqcup\bigsqcup_{d=1}^{g-1}\bigsqcup_{k}\tC_{1+d,k}$$
where $\tC_{0}=\widetilde{r_{\bR}^{-1}(\bN)}$, $\tC_{1,l}=\PP(H^{0}(K_{X})\otimes sl(2))$ for $1\le l\le 2^{2g}$ and $\tC_{1+d,k}$ are connected components of $(\overline{\pi}_{\bR_{1}}\circ r_{\bR_{1}})^{-1}(F_{d})$ for $1\le d\le g-1$. Note that $(\bR_{1})_{\tC_{1,l}}^{+}\cong \tC_{1,l}$. Then it follows from Proposition \ref{lgss}, Lemma \ref{codim 0 stratum}, Lemma \ref{codim g-1+2d strata}, Conjecture \ref{conjecture} and Corollary \ref{cor of conjecture} that the spectral sequence
$$E_{1}^{0j}=H_{\SL(2)}^{j}((\bR_{1})_{\tC_{0}}^{+},\QQ), E_{1}^{1j}=H_{\SL(2)}^{1+j-2c}(\bigsqcup_{l=1}^{2^{2g}}\tC_{1,l},\QQ)$$
$$\text{ or }E_{1}^{ij}=H_{\SL(2)}^{i+j-2d_{i}}(\bigsqcup_{k}\tC_{i,k},\QQ)\text{ for }i\ge2$$
$$\Rightarrow H_{\SL(2)}^{i+j}(\bR_{1},\QQ)$$
which degenerates at $E_{1}$, where $(\bR_{1})_{\tC_{0}}^{+}$ is a subvariety of $\bR_{1}$ of codimension $0$, $(\bR_{1})_{\tC_{1,l}}^{+}$ are smooth subvarieties of $\bR_{1}$ of codimension $c=3g-2$ and $(\bR_{1})_{\tC_{i,k}}^{+}$ are smooth subvarieties of $\bR_{1}$ of codimension $d_{i}=g-1+2(i-1)$. Further, since the limit map $(\bR_{1})_{\tC_{0}}^{+}\to\tC_{0}$ is affine by \cite[Theorem 1.4.2]{Drin15} and its fiber is connected, the Leray spectral sequence for the limit map gives
$$H_{\SL(2)}^{j}((\bR_{1})_{\tC_{0}}^{+},\QQ)=H_{\SL(2)}^{j}(\tC_{0},\QQ)=H_{\SL(2)}^{j}(\widetilde{r_{\bR}^{-1}(\bN)},\QQ).$$

Fix $x\in X$. By \cite[Proposition 3.3]{BGM10}, $r_{\bR}^{-1}(\bN)$ is isomorphic to the moduli space $\cN$ of $\tau$-stable pairs $(E,\beta)$ where $\tau$ is a real number, $E$ is a rank $2$ vector bundle and $\beta$ is an isomorphism $\beta:E|_{x}\to\CC^{2}$. Further $\cN$ is a nonsingular irreducible quasi-projective variety of dimension $3g$ for some $\tau$ by \cite[Lemma 1.2]{BGM10} and it consists of semistable points with respect to the $\SL(2)$-action $g\cdot(E,\beta)=(E,g\circ\beta)$ on $\cN$ by \cite[Lemma 3.2]{BGM10}. Moreover 
we see that $\widetilde{r_{\bR}^{-1}(\bN)}$ is isomorphic to the blowing-up $\cN_{1}$ of $\cN$ along $\ZZ_{2}^{2g}$. Thus
$$\pi_{\bR_{1}}^{*}:H_{\SL(2)}^{*}(r_{\bR}^{-1}(\bN))\to H_{\SL(2)}^{*}(\widetilde{r_{\bR}^{-1}(\bN)})$$
is injective.

Moreover since $\pi_{\bR_{1}}^{*}:H_{\SL(2)}^{*}(r_{\bR}^{-1}(F_{d}))\to H_{\SL(2)}^{*}((\overline{\pi}_{\bR_{1}}\circ r_{\bR_{1}})^{-1}(F_{d}))$ is an isomorphism for each $1\le d\le g-1$, $\pi_{\bR_{1}}^{*}:H_{\SL(2)}^{i}(\bR)\to H_{\SL(2)}^{i}(\bR_{1})$ is also injective for each $i$.

\item By Lemma \ref{1st assumption}-(\ref{1st assumption2}), we have
$$(\bR_{1}^{ss})^{\CC^{*}}=\widetilde{r_{\bR}^{-1}(\bN)}^{ss}\sqcup\bigsqcup_{d=1}^{g-1}(\overline{\pi}_{\bR_{1}}\circ r_{\bR_{1}})^{-1}(F_{d})\sqcup\bigsqcup_{j=1}^{2^{2g}}\PP(H^{0}(K_{X})\otimes sl(2))^{ss}$$
$$=\tC_{0}^{ss}\sqcup\bigsqcup_{l=0}^{2^{2g}}\tC_{1,l}^{ss}\sqcup\bigsqcup_{d=1}^{g-1}\bigsqcup_{k}\tC_{1+d,k}.$$
Then it follows from Proposition \ref{lgss}, Lemma \ref{codim 0 stratum}, Lemma \ref{codim g-1+2d strata}, Conjecture \ref{conjecture} and Corollary \ref{cor of conjecture} that the spectral sequence
$$E_{1}^{0j}=H_{\SL(2)}^{j}((\bR_{1}^{ss})_{\tC_{0}^{ss}}^{+},\QQ),E_{1}^{1j}=H_{\SL(2)}^{1+j-2c}(\bigsqcup_{l=1}^{2^{2g}}\tC_{1,l}^{ss},\QQ)$$
$$\text{ or }E_{1}^{ij}=H_{\SL(2)}^{i+j-2d_{i}}(\bigsqcup_{k}\tC_{i,k},\QQ)\text{ for }i\ge2$$
$$\Rightarrow H_{\SL(2)}^{i+j}(\bR_{1}^{ss},\QQ)$$
which degenerates at $E_{1}$, where $(\bR_{1}^{ss})_{\tC_{0}^{ss}}^{+}$ is a subvariety of $\bR_{1}^{ss}$ of codimension $0$, $(\bR_{1}^{ss})_{\tC_{1,l}^{ss}}^{+}$ are smooth subvarieties of $\bR_{1}^{ss}$ of codimension $c=3g-2$ and $(\bR_{1}^{ss})_{\tC_{i,k}}^{+}$ are smooth subvarieties of $\bR_{1}^{ss}$ of codimension $d_{i}=g-1+2(i-1)$. Further, since the limit map $(\bR_{1}^{ss})_{\tC_{0}^{ss}}^{+}\to\tC_{0}^{ss}$ is affine by \cite[Theorem 1.4.2]{Drin15} and its fiber is connected, the Leray spectral sequence for the limit map gives
$$H_{\SL(2)}^{j}((\bR_{1}^{ss})_{\tC_{0}^{ss}}^{+},\QQ)=H_{\SL(2)}^{j}(\tC_{0}^{ss},\QQ)=H_{\SL(2)}^{j}(\widetilde{r_{\bR}^{-1}(\bN)}^{ss},\QQ).$$

Let $r_{\bR_{2}}:\bR_{2}^{s}\to\bR_{2}^{s}/\!/\SL(2)$ be the good quotient map. By Lemma \ref{1st assumption}-(\ref{1st assumption3}), we have
$$\bR_{2}^{\CC^{*}}=\widetilde{r_{\bR_{1}}^{-1}(\bN_{1})}\sqcup\bigsqcup_{d=1}^{g-1}(\overline{\pi}_{\bR_{1}}\circ\overline{\pi}_{\bR_{2}}\circ r_{\bR_{2}})^{-1}(F_{d})\sqcup\bigsqcup_{j=1}^{2^{2g}}\pi_{\bR_{2}}^{-1}(\PP(H^{0}(K_{X})\otimes sl(2))^{ss})\sqcup T^{+}$$
$$=\ttC_{0}\sqcup\ttC_{1}\sqcup\bigsqcup_{l=0}^{2^{2g}+1}\ttC_{2,l}\sqcup\bigsqcup_{d=1}^{g-1}\bigsqcup_{k}\ttC_{2+d,k},$$
where $\ttC_{0}=\widetilde{r_{\bR_{1}}^{-1}(\bN_{1})}$, $\ttC_{1}=T^{+}$, $\ttC_{2,l}=\pi_{\bR_{2}}^{-1}(\PP(H^{0}(K_{X})\otimes sl(2)))$ for $1\le l\le 2^{2g}$ and $\ttC_{2+d,k}$ are connected components of $(\overline{\pi}_{\bR_{1}}\circ\overline{\pi}_{\bR_{2}}\circ r_{\bR_{2}})^{-1}(F_{d})$. Note that $(\bR_{2})_{\ttC_{1}}\cong\ttC_{1}$. Then it follows from Proposition \ref{lgss}, Lemma \ref{codim 0 stratum}, Lemma \ref{codim g-1+2d strata}, Conjecture \ref{conjecture} and Corollary \ref{cor of conjecture} that the spectral sequence
$$E_{1}^{0j}=H_{\SL(2)}^{j}((\bR_{2})_{\ttC_{0}}^{+},\QQ), E_{1}^{1j}=H_{\SL(2)}^{1+j-2c}(\ttC_{1},\QQ),$$
$$E_{1}^{2j}=H_{\SL(2)}^{2+j-2d}(\bigsqcup_{l=1}^{2^{2g}}\ttC_{2,l},\QQ)\text{ or }E_{1}^{ij}=H_{\SL(2)}^{i+j-2d_{i}}(\bigsqcup_{k}\ttC_{i,k},\QQ)\text{ for }i\ge3$$
$$\Rightarrow H_{\SL(2)}^{i+j}(\bR_{2},\QQ)$$
which degenerates at $E_{1}$, where $(\bR_{2})_{\ttC_{0}}^{+}$ is a subvariety of $\bR_{2}$ of codimension $0$, $(\bR_{2})_{\ttC_{1}}^{+}$ is a smooth subvariety of $\bR_{2}$ of codimension $c=3g-3$, $(\bR_{2})_{\ttC_{2,l}}^{+}$ are smooth subvarieties of $\bR_{2}$ of codimension $d=3g-2$ and $(\bR_{2})_{\ttC_{i,k}}^{+}$ are smooth subvarieties of $\bR_{2}$ of codimension $d_{i}=g-1+2(i-2)$. Further, since the limit map $(\bR_{2})_{\ttC_{0}}^{+}\to\ttC_{0}$ is affine by \cite[Theorem 1.4.2]{Drin15} and its fiber is connected, the Leray spectral sequence for the limit map gives
$$H_{\SL(2)}^{j}((\bR_{2})_{\ttC_{0}}^{+},\QQ)=H_{\SL(2)}^{j}(\ttC_{0},\QQ)=H_{\SL(2)}^{j}(\widetilde{r_{\bR_{1}}^{-1}(\bN_{1})},\QQ).$$

We see that $\widetilde{r_{\bR_{1}}^{-1}(\bN_{1})}$ is isomorphic to the blowing-up of $\cN_{1}^{ss}$ along $\cN_{1}^{ss}\cap\Sig$. Thus $\pi_{\bR_{2}}^{*}:H_{\SL(2)}^{*}(\widetilde{r_{\bR}^{-1}(\bN)}^{ss})\to H_{\SL(2)}^{*}(\widetilde{r_{\bR_{1}}^{-1}(\bN_{1})})$ is injective.

Moreover since both
$$\pi_{\bR_{2}}^{*}:H_{\SL(2)}^{*}((\overline{\pi}_{\bR_{1}}\circ r_{\bR_{1}})^{-1}(F_{d}))\to H_{\SL(2)}^{*}((\overline{\pi}_{\bR_{1}}\circ\overline{\pi}_{\bR_{2}}\circ r_{\bR_{2}})^{-1}(F_{d}))$$
and
$$\pi_{\bR_{2}}^{*}:H_{\SL(2)}^{*}(\PP(H^{0}(K_{X})\otimes sl(2))^{ss})\to H_{\SL(2)}^{*}(\pi_{\bR_{2}}^{-1}(\PP(H^{0}(K_{X})\otimes sl(2))^{ss}))$$
are isomorphisms for each $1\le d\le g-1$ and $1\le l\le 2^{2g}$, $\pi_{\bR_{2}}^{*}:H_{\SL(2)}^{i}(\bR_{1}^{ss})\to H_{\SL(2)}^{i}(\bR_{2})$ is also injective for each $i$.
\end{enumerate}
\end{proof}


With Lemma \ref{injectivity of pullback on equivariant cohomology}, we can use a standard argument to get the following formula.

\begin{lemma}\label{equivariant cohomology blowing-up formula}
Assume that Conjecture \ref{conjecture} holds.
\begin{enumerate}
\item\label{equivariant cohomology blowing-up formula-(1)} $P_{t}^{\SL(2)}(\bR_{1})=P_{t}^{\SL(2)}(\bR)+2^{2g}(P_{t}^{\SL(2)}(\PP\Upsilon^{-1}(0))-P_{t}(\BSL(2)))$.

\item\label{equivariant cohomology blowing-up formula-(2)} $P_{t}^{\SL(2)}(\bR_{2})=P_{t}^{\SL(2)}(\bR_{1}^{ss})+P_{t}^{\SL(2)}(E_2)-P_{t}^{\SL(2)}(\Sig)$.
\end{enumerate}
\end{lemma}

\begin{proof}
\begin{enumerate}
\item Let $U_{x}$ be a sufficiently small open neighborhood of $x\in\ZZ_{2}^{2g}$ in $\bR$, let $\dss U_{1}=\sqcup_{x\in\ZZ_{2}^{2g}}U_{x}$ and let $\tU_{1}=\pi_{\bR_{1}}^{-1}(U_{1})$. Let $V_{1}=\bR\setminus\ZZ^{2g}$. We can identify $V_{1}$ with $\bR_{1}\setminus E_{1}$ under $\pi_{\bR_{1}}$. Then we have the following commutative diagram
    $$\xymatrix@C-=0.2cm{\cdots\ar[r]&H_{\SL(2)}^{i-1}(U_{1}\cap V_{1})\ar[r]^(0.55){\alpha}\ar@^{=}[d]&H_{\SL(2)}^{i}(\bR)\ar[r]\ar[d]&H_{\SL(2)}^{i}(U_{1})\ar@<-4ex>[d]\oplus H_{\SL(2)}^{i}(V_{1})\ar[r]^(0.6){\beta}\ar@<4ex>@^{=}[d]&H_{\SL(2)}^{i}(U_{1}\cup V_{1})\ar[r]\ar@{=}[d]&\cdots\\
    \cdots\ar[r]&H_{\SL(2)}^{i-1}(U_{1}\cap V_{1})\ar[r]^(0.55){\tal}&H_{\SL(2)}^{i}(\bR_{1})\ar[r]&H_{\SL(2)}^{i}(\tU_{1})\oplus H_{\SL(2)}^{i}(V_{1})\ar[r]^(0.6){\tbe}&H_{\SL(2)}^{i}(U_{1}\cup V_{1})\ar[r]&\cdots,}$$
where the horizontal sequences are Mayer-Vietoris sequences and the vertical maps are $\pi_{\bR_{1}}^{*}$. It follows from Conjecture \ref{conjecture} and Lemma \ref{injectivity of pullback on equivariant cohomology}-(\ref{injectivity of pullback on equivariant cohomology-(1)}) that the vertical maps are injective. So $\ker\alpha=\ker\tal$ and then $\im\beta=\im\tbe$. Thus we have
$$P_{t}^{\SL(2)}(\bR_{1})=P_{t}^{\SL(2)}(\bR)+P_{t}^{\SL(2)}(\tU_{1})-P_{t}^{\SL(2)}(U_{1}).$$
By \cite[Theorem 3.1]{GM88} and Proposition \ref{normal slice is the quadratic cone}, $U_{1}$ is analytically isomorphic to $2^{2g}$ copies of $\Upsilon^{-1}(0)$ and then $\tU_{1}$ is analytically isomorphic to $2^{2g}$ copies of $Bl_{0}\Upsilon^{-1}(0)$. Since $Bl_{0}\Upsilon^{-1}(0)$ is the tautological line bundle $\cO_{\PP\Upsilon^{-1}(0)}(-1)$ over $\PP\Upsilon^{-1}(0)$ and the cohomology of the fiber is trivial, the Leray spectral sequence of equivariant cohomology associated to the projection $Bl_{0}\Upsilon^{-1}(0)\to\PP\Upsilon^{-1}(0)$ has $E_{2}$-term given by
$$E_{2}^{p0}=H_{\SL(2)}^{p}(\PP\Upsilon^{-1}(0))\otimes H^{0}(\CC)=H_{\SL(2)}^{p}(\PP\Upsilon^{-1}(0)).$$
Thus the spectral sequence degenerates at the $E_{2}$-term and then we have
$$H_{\SL(2)}^{i}(Bl_{0}\Upsilon^{-1}(0))\cong H_{\SL(2)}^{i}(\PP\Upsilon^{-1}(0))$$
for each $i\ge0$.

Hence we get
$$P_{t}^{\SL(2)}(\bR_{1})=P_{t}^{\SL(2)}(\bR)+2^{2g}(P_{t}^{\SL(2)}(\PP\Upsilon^{-1}(0))-P_{t}(\BSL(2))).$$

\item Let $U_{2}$ be a sufficiently small open neighborhood of $\Sig$ and let $\tU_{2}=\pi_{\bR_{2}}^{-1}(U_{2})$. Let $V_{2}=\bR_{1}^{ss}\setminus\Sig$. We can identify $V_{2}$ with $\bR_{2}\setminus E_{2}$ under $\pi_{\bR_{2}}$. By Conjecture \ref{conjecture}, Lemma \ref{injectivity of pullback on equivariant cohomology}-(\ref{injectivity of pullback on equivariant cohomology-(2)}) and the same way as in the proof of item (1), we have
$$P_{t}^{\SL(2)}(\bR_{2})=P_{t}^{\SL(2)}(\bR_{1}^{ss})+P_{t}^{\SL(2)}(\tU_{2})-P_{t}^{\SL(2)}(U_{2}).$$
By \cite[Theorem 3.1]{GM88} and Proposition \ref{normal slice is the quadratic cone}, $U_{2}$ is analytically isomorphic to $C_{\Sig}\bR_{1}$ and then $\tU_{2}$ is analytically isomorphic to $Bl_{\Sig}(C_{\Sig}\bR_{1})$. Since $C_{\Sig}\bR_{1}$ is the fibration on $\Sig$ whose fiber is an affine cone and the cohomology of the fiber is trivial, the Leray spectral sequence of equivariant cohomology associated to the projection $C_{\Sig}\bR_{1}\to\Sig$ has $E_{2}$-term given by
$$E_{2}^{p0}=H_{\SL(2)}^{p}(\Sig)\otimes H^{0}(\CC)=H_{\SL(2)}^{p}(\Sig).$$
Thus the spectral sequence degenerates at the $E_{2}$-term and then we have
$$H_{\SL(2)}^{i}(C_{\Sig}\bR_{1})\cong H_{\SL(2)}^{i}(\Sig)$$
for each $i\ge0$. 

Since $Bl_{\Sig}(C_{\Sig}\bR_{1})$ is the tautological line bundle $\cO_{E_{2}}(-1)$ over $E_{2}$ and the cohomology of the fiber is trivial, the Leray spectral sequence of equivariant cohomology associated to the projection $Bl_{\Sig}(C_{\Sig}\bR_{1})\to E_{2}$ has $E_{2}$-term given by
$$E_{2}^{p0}=H_{\SL(2)}^{p}(E_{2})\otimes H^{0}(\CC)=H_{\SL(2)}^{p}(E_{2}).$$
Thus the spectral sequence degenerates at the $E_{2}$-term and then we have
$$H_{\SL(2)}^{i}(Bl_{\Sig}(C_{\Sig}\bR_{1}))\cong H_{\SL(2)}^{i}(E_{2})$$
for each $i\ge0$. 

Hence we get
$$P_{t}^{\SL(2)}(\bR_{2})=P_{t}^{\SL(2)}(\bR_{1}^{ss})+P_{t}^{\SL(2)}(E_{2})-P_{t}^{\SL(2)}(\Sig).$$
\end{enumerate}
\end{proof}

The following blowing-up formula for the first blowing-up $\pi_{\bR_{1}}:\bR_{1}\to\bR$ is what we desire.

\begin{conjecture}\label{equivariant blowing up formula conjecture}
$P_{t}^{\SL(2)}(\bR_{1}^{ss})=P_{t}^{\SL(2)}(\bR)+2^{2g}(P_{t}^{\SL(2)}(\PP\Upsilon^{-1}(0)^{ss})-P_{t}(\BSL(2)))$.
\end{conjecture}

Since $\bR_{1}$ is neither smooth nor projective, we cannot directly apply the Morse theory of \cite{K85-1} developed by F. Kirwan for a proof of Conjecture \ref{equivariant blowing up formula conjecture}.

On the other hand, since $\Sig$ is smooth (See the proof of Lemma \ref{geometric descriptions of second cones}-(1) and \cite[Proposition 1.7.10]{O99}), $\bR_{2}$ is smooth. So a blowing-up formula for the equivariant cohomology on the second blowing-up
$$\pi_{\bR_{2}}:\bR_{2}\to\bR_{1}^{ss}$$
follows from the same argument as in the proof of \cite[Proposition 7.4]{K85-2} under an assumption suggested in \cite[9.5]{K85-1}.

Consider $\bR_{2}$ as a subvariety of $\PP^{N}$ acted on linearly by $\SL(2)$. Assume that $\CC^{*}$ acts on $\PP^{N}$ by $t\cdot x=\diag(\alpha_{0}(t),\cdots,\alpha_{N}(t))x$ for $t\in\CC^{*}$, where $\alpha_{0},\cdots,\alpha_{N}$ are characters of $\CC^{*}$ identified with points of $\Lie(\CC^{*})^{*}$. For the Morse stratification $\{S_{\beta}\,|\,\beta\in\bfB\}$ of $\PP^{N}$ obtained from the norm square of the moment map $\mu:\PP^{N}\to sl(2)^{*}$, $\{\bR_{2}\cap S_{\beta}\,|\,\beta\in\bfB\}$ is the induced stratification of $\bR_{2}$ and we have
$$\bR_{2}\cap S_{\beta}\cong\SL(2)\times_{P_{\beta}}(\bR_{2}\cap Y_{\beta}^{ss})$$
with a retraction $p_{\beta}:Y_{\beta}^{ss}\to Z_{\beta}^{ss}$ of $Y_{\beta}^{ss}$, where $\bfB$ is a subset of $\Lie(\CC^{*})=\CC$ as the indexing set for the Morse stratification, $Y_{\beta}=\{(x_{0}:\cdots:x_{N})\in\PP^{N}\,|\,x_{j}=0\text{ unless }\alpha_{j}\cdot\beta=\|\beta\|^{2}\text{ and }x_{j}\ne 0\text{ for some }j\text{ with }\alpha_{j}\cdot\beta=\|\beta\|^{2}\}$, $Z_{\beta}=\{(x_{0}:\cdots:x_{N})\in\PP^{N}\,|\,x_{j}=0\text{ unless }\alpha_{j}\cdot\beta=\|\beta\|^{2}\}$, $p_{\beta}:Y_{\beta}\to Z_{\beta}$ is a retraction defined as the limit map by the path of steepest descent under the function $\mu_{\beta}:\PP^{N}\to\RR,x\mapsto\mu(x)\cdot\beta$, $Z_{\beta}^{ss}$ is the set of the semistable points of $Z_{\beta}$ under the action of a subgroup of $\SL(2)$ and $Y_{\beta}^{ss}=p_{\beta}^{-1}(Z_{\beta}^{ss})$.

\begin{conjecture}\label{assumption 9.5}
$p_{\beta}(x)\in\bR_{2}$ whenever $x\in\bR_{2}\cap Y_{\beta}^{ss}$ for each $\beta\in\bfB$.
\end{conjecture}

Conjecture \ref{assumption 9.5} implies that $\{\bR_{2}\cap S_{\beta}\,|\,\beta\in\bfB\}$ is equivariantly perfect (See \cite[Section 5 and 8]{K85-1}). If Conjecture \ref{assumption 9.5} holds, we can use the argument of the proof of \cite[Proposition 7.4]{K85-2} to get the following blowing-up formula.

\begin{proposition}\label{equivariant blowing up formula on the 2nd blowing-up}
Assume that Conjecture \ref{conjecture} and Conjecture \ref{assumption 9.5} holds. Then
$$P_{t}^{\SL(2)}(\bR_{2}^{s})=P_{t}^{\SL(2)}(\bR_{1}^{ss})+P_{t}^{\SL(2)}(E_2^{ss})-P_{t}^{\SL(2)}(\Sig).$$
\end{proposition}
\begin{proof}
Assume that Conjecture \ref{conjecture} and Conjecture \ref{assumption 9.5} holds. Then by \cite[Section 5 and 8]{K85-1}, $\{\bR_{2}\cap S_{\beta}|\beta\in\bfB\}$ and $\{\bR_{2}\cap S_{\beta}\cap E_{2}|\beta\in\bfB\}$ are equivariantly perfect. Thus we have
$$P_{t}^{\SL(2)}(\bR_{2})=P_{t}^{\SL(2)}(\bR_{2}^{s})+\sum_{\beta\ne0}t^{2d'(\beta)}P_{t}^{\SL(2)}(\bR_{2}\cap S_{\beta})$$
and
$$P_{t}^{\SL(2)}(E_{2})=P_{t}^{\SL(2)}(E_{2}^{ss})+\sum_{\beta\ne0}t^{2d(\beta)}P_{t}^{\SL(2)}(\bR_{2}\cap S_{\beta}\cap E_{2}),$$
where $d'(\beta)$ (respectively, $d(\beta)$) is the codimension of $\bR_{2}\cap S_{\beta}$ (respectively, $\bR_{2}\cap S_{\beta}\cap E_{2}$) in $\bR_{2}$ (respectively, $E_{2}$). Since $p_{\beta}:Y_{\beta}^{ss}\to Z_{\beta}^{ss}$ is a retraction, we also have
$$P_{t}^{\SL(2)}(\bR_{2}\cap S_{\beta})=P_{t}^{\SL(2)}(\SL(2)(\bR_{2}\cap Z_{\beta}^{ss}))$$
and
$$P_{t}^{\SL(2)}(\bR_{2}\cap S_{\beta}\cap E_{2})=P_{t}^{\SL(2)}(\SL(2)(\bR_{2}\cap Z_{\beta}^{ss})\cap E),$$
where $Z_{\beta}^{ss}$ denotes the set of points of $S_{\beta}$ fixed by the one-parameter subgroup generated by $\beta$. Since $\bR_{2}\cap Z_{\beta}^{ss}\subseteq E_{2}$ by \cite[Lemma 7.6]{K85-2} and $\bR_{2}\cap S_{\beta}\not\subseteq E_{2}$ for any $\beta\in\bfB$ by \cite[Lemma 7.11]{K85-2}, it follows from Lemma \ref{equivariant cohomology blowing-up formula}-(\ref{equivariant cohomology blowing-up formula-(2)}) that
$$P_{t}^{\SL(2)}(\bR_{2}^{s})=P_{t}^{\SL(2)}(\bR_{1}^{ss})+P_{t}^{\SL(2)}(E_{2}^{ss})-P_{t}^{\SL(2)}(\Sig)$$
$$+\sum_{\beta\ne0}(t^{2d'(\beta)}-t^{2d(\beta)})P_{t}^{\SL(2)}(\SL(2)(\bR_{2}\cap Z_{\beta}^{ss}))$$
$$=P_{t}^{\SL(2)}(\bR_{1}^{ss})+P_{t}^{\SL(2)}(E_{2}^{ss})-P_{t}^{\SL(2)}(\Sig).$$
\end{proof}

For the blowing-up $\pi:Bl_{\PP\Hom_{1}}\PP\Upsilon^{-1}(0)^{ss}\to\PP\Upsilon^{-1}(0)^{ss}$, the assumption of Proposition \ref{lgss} can be verified. So we have the following injective pullback $\pi^{*}$ on the equivariant cohomology.

\begin{proposition}\label{injectivity of pullback on equivariant cohomology of pi}
$$\pi^{*}:H_{\SL(2)}^{*}(\PP\Upsilon^{-1}(0)^{ss})\to H_{\SL(2)}^{*}(Bl_{\PP\Hom_{1}}\PP\Upsilon^{-1}(0)^{ss})$$
is injective.
\end{proposition}
\begin{proof}
Recall that
$$\pi:Bl_{\PP\Hom_{1}}\PP\Upsilon^{-1}(0)^{ss}\to\PP\Upsilon^{-1}(0)^{ss}$$
is the blowing-up of $\PP\Upsilon^{-1}(0)^{ss}$ along $\PP\Hom_{1}(sl(2),\HH^{g})^{ss}$. Let $\overline{\pi}:(Bl_{\PP\Hom_{1}}\PP\Upsilon^{-1}(0)^{ss})^{ss}/\!/\SL(2)\to\PP\Upsilon^{-1}(0)^{ss}/\!/\SL(2)$ be the blowing-up of $\PP\Upsilon^{-1}(0)^{ss}/\!/\SL(2)$ along $\PP\Hom_{1}(sl(2),\HH^{g})^{ss}/\!/\SL(2)$ induced from $\pi$.

We showed that
$$(\PP\Upsilon^{-1}(0)^{ss})^{\CC^{*}}\cong\PP(H^{1}(\cO_{X})\otimes sl(2))^{ss}\sqcup\PP(H^{0}(K_{X})\otimes sl(2))^{ss}$$
in the proof of Lemma \ref{1st assumption}-(\ref{1st assumption2}).

Let $s:=(\PP\Upsilon^{-1}(0)^{ss})^{\CC^{*}}\cap\PP\Hom_{1}(sl(2),\HH^{g})^{ss}$. Then we can see that
$$(Bl_{\PP\Hom_{1}}\PP\Upsilon^{-1}(0)^{ss})^{\CC^{*}}$$
$$=Bl_{\PP\Hom_{1}}\PP(H^{1}(\cO_{X})\otimes sl(2))^{ss}\sqcup\pi^{-1}(\PP(H^{0}(K_{X})\otimes sl(2))^{ss})\sqcup s^{+}$$
as in the proof of Lemma \ref{1st assumption}-(\ref{1st assumption3}), where $s^{+}$ is $\PP^{2g-3}$-bundle over $s$.

Now we need to check the assumption of Proposition \ref{lgss}. It is easy to check that $\dss\lim_{\lambda\to 0}\lambda\cdot x$ uniquely exists in $(\PP\Upsilon^{-1}(0)^{ss})^{\CC^{*}}$ for every $x\in\PP\Upsilon^{-1}(0)^{ss}$. Moreover since $\pi:Bl_{\PP\Hom_{1}}\PP\Upsilon^{-1}(0)^{ss}\to\PP\Upsilon^{-1}(0)^{ss}$ is proper, we see that $\dss\lim_{\lambda\to 0}\lambda\cdot\tx$ uniquely exists in $(Bl_{\PP\Hom_{1}}\PP\Upsilon^{-1}(0)^{ss})^{\CC^{*}}$ for every $
\tx\in Bl_{\PP\Hom_{1}}\PP\Upsilon^{-1}(0)^{ss}$. Thus the assumption of Proposition \ref{lgss} is verified.

Since the pullback $\pi^{*}:H_{\SL(2)}^{*}(\PP(H^{1}(\cO_{X})\otimes sl(2))^{ss})\to H_{\SL(2)}^{*}(Bl_{\PP\Hom_{1}}\PP(H^{1}(\cO_{X})\otimes sl(2))^{ss})$ is injective, it follows from Proposition \ref{lgss} that $\pi^{*}:H_{\SL(2)}^{*}(\PP\Upsilon^{-1}(0)^{ss})\to H_{\SL(2)}^{*}(Bl_{\PP\Hom_{1}}\PP\Upsilon^{-1}(0)^{ss})$ is injective.
\end{proof}

\begin{proposition}\label{equivariant cohomology local blowing-up formula}
$P_{t}^{\SL(2)}(Bl_{\PP\Hom_{1}}\PP\Upsilon^{-1}(0)^{ss})$
$$=P_{t}^{\SL(2)}(\PP\Upsilon^{-1}(0)^{ss})+P_{t}^{\SL(2)}(E_{\pi})-P_{t}^{\SL(2)}(\PP\Hom_{1}(sl(2),\HH^{g})^{ss}).$$
\end{proposition}
\begin{proof}
By the same way as in the proof of Lemma \ref{equivariant cohomology blowing-up formula}-(\ref{equivariant cohomology blowing-up formula-(1)}), we have
$$P_{t}^{\SL(2)}(Bl_{\PP\Hom_{1}}\PP\Upsilon^{-1}(0)^{ss})=P_{t}^{\SL(2)}(\PP\Upsilon^{-1}(0)^{ss})+P_{t}^{\SL(2)}(\tU)-P_{t}^{\SL(2)}(U)$$
for some sufficiently open neighborhood $U$ of $\PP\Hom_{1}(sl(2),\HH^{g})^{ss}$ and $\tU=\pi^{-1}(U)$.
By \cite[Theorem 3.1]{GM88} and Proposition \ref{normal slice is the quadratic cone}, $U$ is analytically isomorphic to
$$C_{\PP\Hom_{1}(sl(2),\HH^{g})^{ss}}\PP\Upsilon^{-1}(0)^{ss}$$
and then $\tU$ is analytically isomorphic to
$$Bl_{\PP\Hom_{1}(sl(2),\HH^{g})^{ss}}(C_{\PP\Hom_{1}(sl(2),\HH^{g})^{ss}}\PP\Upsilon^{-1}(0)^{ss}).$$
By using the Leray spectral sequence as in the proof of Lemma \ref{equivariant cohomology blowing-up formula}, we see that
$$H_{\SL(2)}^{*}(C_{\PP\Hom_{1}(sl(2),\HH^{g})^{ss}}\PP\Upsilon^{-1}(0)^{ss})\cong H_{\SL(2)}^{*}(\PP\Hom_{1}(sl(2),\HH^{g})^{ss})$$
and
$$H_{\SL(2)}^{*}(Bl_{\PP\Hom_{1}(sl(2),\HH^{g})^{ss}}(C_{\PP\Hom_{1}(sl(2),\HH^{g})^{ss}}\PP\Upsilon^{-1}(0)^{ss}))\cong H_{\SL(2)}^{*}(E_{\pi}).$$
Hence we get
$$P_{t}^{\SL(2)}(Bl_{\PP\Hom_{1}}\PP\Upsilon^{-1}(0)^{ss})=P_{t}^{\SL(2)}(\PP\Upsilon^{-1}(0)^{ss})+P_{t}^{\SL(2)}(E_{\pi})-P_{t}^{\SL(2)}(\PP\Hom_{1}(sl(2),\HH^{g})^{ss}).$$
\end{proof}

The blowing-up formula for the equivariant cohomology on the blowing-up
$$\pi:Bl_{\PP\Hom_{1}}\PP\Upsilon^{-1}(0)^{ss}\to\PP\Upsilon^{-1}(0)^{ss}$$
is obtained from the same argument as in the proof of \cite[Proposition 7.4]{K85-2}.

\begin{proposition}\label{equivariant cohomology local blowing-up formula}
$P_{t}^{\SL(2)}((Bl_{\PP\Hom_{1}}\PP\Upsilon^{-1}(0)^{ss})^{ss})$
$$=P_{t}^{\SL(2)}(\PP\Upsilon^{-1}(0)^{ss})+P_{t}^{\SL(2)}(E_{\pi}^{ss})-P_{t}^{\SL(2)}(\PP\Hom_{1}(sl(2),\HH^{g})^{ss}).$$
\end{proposition}
\begin{proof}
By Proposition \ref{smoothness of 2nd local blowing-up}, $Bl_{\PP\Hom_{1}}\PP\Upsilon^{-1}(0)^{ss}$ is a smooth projective variety. Thus the same argument as in the proof of \cite[Proposition 7.4]{K85-2} can be applied.

There is a Morse stratification $\{S_{\beta}|\beta\in\bfB\}$ of $Bl_{\PP\Hom_{1}}\PP\Upsilon^{-1}(0)^{ss}$ associated to the lifted action of $\SL(2)$. Then $\{S_{\beta}\cap E_{\pi}|\beta\in\bfB\}$ is the Morse stratification of $E_{\pi}$. By \cite[Section 5 and 8]{K85-1}, $\{S_{\beta}|\beta\in\bfB\}$ and $\{S_{\beta}\cap E_{\pi}|\beta\in\bfB\}$ are equivariantly perfect. Thus we have
$$P_{t}^{\SL(2)}(Bl_{\PP\Hom_{1}}\PP\Upsilon^{-1}(0)^{ss})=P_{t}^{\SL(2)}((Bl_{\PP\Hom_{1}}\PP\Upsilon^{-1}(0)^{ss})^{ss})+\sum_{\beta\ne0}t^{2d'(\beta)}P_{t}^{\SL(2)}(S_{\beta})$$
and
$$P_{t}^{\SL(2)}(E_{\pi})=P_{t}^{\SL(2)}(E_{\pi}^{ss})+\sum_{\beta\ne0}t^{2d(\beta)}P_{t}^{\SL(2)}(S_{\beta}\cap E_{\pi}),$$
where $d'(\beta)$ (respectively, $d(\beta)$) is the codimension of $S_{\beta}$ (respectively, $S_{\beta}\cap E_{\pi}$) in $Bl_{\PP\Hom_{1}}\PP\Upsilon^{-1}(0)^{ss}$ (respectively, $E_{\pi}$). We also have
$$P_{t}^{\SL(2)}(S_{\beta})=P_{t}^{\SL(2)}(\SL(2)Z_{\beta}^{ss})$$
and
$$P_{t}^{\SL(2)}(S_{\beta}\cap E_{\pi})=P_{t}^{\SL(2)}(\SL(2)Z_{\beta}^{ss}\cap E_{\pi}),$$
where $Z_{\beta}^{ss}$ denotes the set of points of $S_{\beta}$ fixed by the one-parameter subgroup generated by $\beta$. Since $Z_{\beta}^{ss}\subseteq E_{\pi}$ by \cite[Lemma 7.6]{K85-2} and $S_{\beta}\not\subseteq E_{\pi}$ for any $\beta\in\bfB$ by \cite[Lemma 7.11]{K85-2}, we have
$$P_{t}^{\SL(2)}((Bl_{\PP\Hom_{1}}\PP\Upsilon^{-1}(0)^{ss})^{ss})=P_{t}^{\SL(2)}(\PP\Upsilon^{-1}(0)^{ss})+P_{t}^{\SL(2)}(E_{\pi}^{ss})$$
$$-P_{t}^{\SL(2)}(\PP\Hom_{1}(sl(2),\HH^{g})^{ss})+\sum_{\beta\ne0}(t^{2d'(\beta)}-t^{2d(\beta)})P_{t}^{\SL(2)}(\SL(2)Z_{\beta}^{ss})$$
$$=P_{t}^{\SL(2)}(\PP\Upsilon^{-1}(0)^{ss})+P_{t}^{\SL(2)}(E_{\pi}^{ss})-P_{t}^{\SL(2)}(\PP\Hom_{1}(sl(2),\HH^{g})^{ss}).$$
\end{proof}

\subsection{Intersection Poincar\'{e} polynomial of the deepest singularity of $\bM$}\label{intersection poincare polynomial of the deepest singularity}
In order to use Theorem \ref{computable intersection blowing-up formula}-(1), we must calculate $IP_{t}(\PP\Upsilon^{-1}(0)^{ss}/\!/\PGL(2))$ and $IP_{t}(\Upsilon^{-1}(0)/\!/\PGL(2))$.

Recall that it follows from Proposition \ref{ss-local-first-blowup} that
$$\PP\Hom_{1}(sl(2),\HH^{g})^{ss}=\PGL(2)Z^{ss}\cong\PGL(2)\times_{\O(2)}Z^{ss}$$
and
$$\PP\Hom_{1}(sl(2),\HH^{g})^{ss}/\!/\PGL(2)\cong Z/\!/\O(2)=Z/\!/\SO(2)=Z_{1}=\PP^{2g-1},$$
where $Z=Z_{1}\cup Z_{2}\cup Z_{3}$, $Z^{ss}$ is the set of
semistable points of $Z$ for the action of $\O(2)$,
$Z_{1}=\PP\{v_{1}\otimes\HH^{g}\}=Z^{ss}$,
$Z_{2}=\PP\{v_{2}\otimes\HH^{g}\}$ and
$Z_{3}=\PP\{v_{3}\otimes\HH^{g}\}$. Here $\{v_{1},v_{2},v_{3}\}$ is the basis of $sl(2)$ chosen in Section \ref{local picture}.

We see that
$$P_{t}^{+}(Z/\!/\SO(2))=P_{t}(\PP^{2g-1})=1+t^{2}+\cdots+t^{2(2g-1)}=\frac{1-t^{4g}}{1-t^{2}}$$
and
$$P_{t}^{-}(Z/\!/\SO(2))=0,$$
where $P_{t}^{+}(Z/\!/\SO(2))$ and $P_{t}^{-}(Z/\!/\SO(2))$ are Poincar\'{e} polynomials of the invariant part and variant part of $H^{*}(Z/\!/\SO(2))$ with respect to the action of $\ZZ_{2}=\O(2)/\SO(2)$ on $Z/\!/\SO(2)$.

By \cite[Proposition 3.10]{Ma21} and Theorem \ref{computable intersection blowing-up formula}-(3),
$$IP_{t}(Bl_{\PP\Hom_{1}}\PP\Upsilon^{-1}(0)^{ss}/\!/\PGL(2))=IP_{t}(\PP\Upsilon^{-1}(0)^{ss}/\!/\PGL(2))$$
$$+\sum_{p+q=i}\dim[H^{p}(\PP^{2g-1})\otimes H^{t(q)}(I_{2g-3})]^{\ZZ_{2}}t^{i}$$
$$=IP_{t}(\PP\Upsilon^{-1}(0)^{ss}/\!/\PGL(2))+\sum_{p+q=i}\dim[H^{p}(\PP^{2g-1})^{\ZZ_{2}}\otimes H^{t(q)}(I_{2g-3})^{\ZZ_{2}}]t^{i}$$
\begin{equation}\label{2nd local blowing-up}=IP_{t}(\PP\Upsilon^{-1}(0)^{ss}/\!/\PGL(2))+\frac{1-t^{4g}}{1-t^{2}}\cdot\frac{t^2(1-t^{4g-6})(1-t^{4g-4})}{(1-t^2)(1-t^4)}.\end{equation}

Let
$\widetilde{\PP\Hom_{2}^{\omega}(sl(2),\HH^{g})}=Bl_{\PP\Hom_{1}}\PP\Hom_{2}^{\omega}(sl(2),\HH^{g})$
be the blowing-up of
$\PP\Hom_{2}^{\omega}(sl(2),\HH^{g})$ along
$\PP\Hom_{1}(sl(2),\HH^{g})^{ss}$ and let
$$Bl_{\widetilde{\PP\Hom_{2}^{\omega}}}Bl_{\PP\Hom_{1}}\PP\Upsilon^{-1}(0)^{ss}$$
be the blowing-up of
$Bl_{\PP\Hom_{1}}\PP\Upsilon^{-1}(0)^{ss}$ along
$\widetilde{\PP\Hom_{2}^{\omega}(sl(2),\HH^{g})}^{ss}$.

Assume that $g\ge3$. Denote
$D_{1}=Bl_{\widetilde{\PP\Hom_{2}^{\omega}}}Bl_{\PP\Hom_{1}}\PP\Upsilon^{-1}(0)^{ss}/\!/\PGL(2)$.
By \cite[Proposition 4.2]{KY08}, $D_{1}$ is a
$\widehat{\PP}^5$-bundle over $Gr^{\omega}(3,2g)$ where
$\widehat{\PP}^5$ is the blowing-up of $\PP^5$
(projectivization of the space of $3\times 3$ symmetric matrices)
along $\PP^2$ (the locus of rank $1$ matrices). Since $D_1$
is a nonsingular projective variety over $\CC$,
$$\dim_{\CC}H^{k}(D_{1};\CC)=\sum_{p+q=k}h^{p,q}(H^{k}(D_{1};\CC)).$$
Thus it follows from \cite[Proposition 5.1]{KY08} that
$$IP_{t}(D_{1})=P_{t}(D_{1})=E(D_{1};-t,-t)=(\frac{1-t^{12}}{1-t^{2}}-\frac{1-t^{6}}{1-t^{2}}+(\frac{1-t^{6}}{1-t^{2}})^{2})\cdot\prod_{1\leq i\leq 3}\frac{1-t^{4g-12+4i}}{1-t^{2i}}.$$
Moreover by the proof of \cite[Proposition 3.5.1]{O97}
$$\widetilde{\PP\Hom_{2}^{\omega}(sl(2),\HH^{g})}/\!/\PGL(2)\cong
\PP(S^{2}\cA)$$ where $\cA$ is the
tautological rank $2$ bundle over $Gr^{\omega}(2,2g)$. Following the proof of \cite[Lemma 3.5.4]{O97}, we can see that the exceptional divisor of $D_1$ is a $\PP^{2g-5}$-bundle over $\PP(S^{2}\cA)$.

By the usual blowing-up formula mentioned in \cite[p.605]{GH78}, we have
$$\dim H^{i}(D_1)=\dim H^{i}(Bl_{\PP\Hom_{1}}\PP\Upsilon^{-1}(0)^{ss}/\!/\PGL(2))$$
\begin{equation}\label{3rd local blowing-up}+\big(\sum_{p+q=i}\dim[H^{p}(\PP(S^{2}\cA))\otimes H^{q}(\PP^{2g-5})]-\dim H^{i}(\PP(S^{2}\cA))\big).\end{equation}

Since
$\PP(S^{2}\cA)$ is the $\PP^2$-bundle over
$Gr^{\omega}(2,2g)$,
$$P_{t}(\PP(S^{2}\cA))=P_{t}(\PP^2)P_{t}(Gr^{\omega}(2,2g))=\frac{1-t^{6}}{1-t^{2}}\cdot\prod_{1\leq i\leq 2}\frac{1-t^{4g-8+4i}}{1-t^{2i}}$$
by Deligne's criterion (see \cite{D68}).

Therefore it follows from (\ref{3rd local blowing-up}) that
$$IP_{t}(Bl_{\PP\Hom_{1}}\PP\Upsilon^{-1}(0)^{ss}/\!/\PGL(2))$$
$$=(\frac{1-t^{12}}{1-t^{2}}-\frac{1-t^{6}}{1-t^{2}}+(\frac{1-t^{6}}{1-t^{2}})^{2})\cdot\prod_{1\leq
i\leq
3}\frac{1-t^{4g-12+4i}}{1-t^{2i}}-\frac{1-t^{6}}{1-t^{2}}\cdot\prod_{1\leq
i\leq 2}\frac{1-t^{4g-8+4i}}{1-t^{2i}}$$
$$\times\frac{t^{2}(1-t^{2(2g-5)})}{1-t^{2}}.$$

Assume that $g=2$. In this case, we know from \cite[Proposition 2.0.1]{O97} that
$$Bl_{\PP\Hom_{1}}\PP\Upsilon^{-1}(0)^{ss}/\!/\PGL(2)$$
is already nonsingular and that it is a $\PP^2$-bundle over
$Gr^{\omega}(2,4)$. Then by Deligne's criterion (See \cite{D68}),
$$IP_{t}(Bl_{\PP\Hom_{1}}\PP\Upsilon^{-1}(0)^{ss}/\!/\PGL(2))=P_{t}(Bl_{\PP\Hom_{1}}\PP\Upsilon^{-1}(0)^{ss}/\!/\PGL(2))$$
$$=P_{t}(\PP^2)P_{t}(Gr^{\omega}(2,4))=\frac{1-t^{6}}{1-t^{2}}\cdot\prod_{1\leq i\leq 2}\frac{1-t^{4i}}{1-t^{2i}}=\frac{(1-t^{6})(1-t^{8})}{(1-t^{2})^2}.$$

Combining these with (\ref{2nd local blowing-up}), we obtain

\begin{proposition}
$$IP_{t}(\PP\Upsilon^{-1}(0)^{ss}/\!/\PGL(2))=\frac{(1-t^{8g-8})(1-t^{4g})}{(1-t^{2})(1-t^{4})}.$$
\end{proposition}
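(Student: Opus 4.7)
The plan is to solve for $IP_{t}(\PP\Upsilon^{-1}(0)/\!/\PGL(2))$ by rearranging the local blowing-up formula in (\ref{2nd local blowing-up}):
$$IP_{t}(\PP\Upsilon^{-1}(0)/\!/\PGL(2))=IP_{t}(Bl_{\PP\Hom_{1}}\PP\Upsilon^{-1}(0)^{ss}/\!/\PGL(2))-\frac{(1-t^{4g})\,t^{2}(1-t^{4g-6})(1-t^{4g-4})}{(1-t^{2})^{2}(1-t^{4})},$$
so the entire task is to compute $IP_{t}$ of the blown-up space by another route, then subtract. I will handle the two cases $g=2$ and $g\ge 3$ separately and then verify they agree with the claimed closed form.

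For $g=2$, I would invoke \cite[Proposition 2.0.1]{O97} which says that $Bl_{\PP\Hom_{1}}\PP\Upsilon^{-1}(0)^{ss}/\!/\PGL(2)$ is already smooth and realized as a $\PP^{2}$-bundle over the symplectic Grassmannian $Gr^{\omega}(2,4)$. Smoothness lets me replace $IP_{t}$ by the ordinary $P_{t}$, and Deligne's criterion for smooth projective fibrations gives $P_{t}=P_{t}(\PP^{2})\,P_{t}(Gr^{\omega}(2,4))=(1-t^{6})(1-t^{8})/(1-t^{2})^{2}$.

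For $g\ge 3$, I would pass to the smooth projective model $D_{1}:=Bl_{\widetilde{\PP\Hom_{2}^{\omega}}}Bl_{\PP\Hom_{1}}\PP\Upsilon^{-1}(0)^{ss}/\!/\PGL(2)$, which by \cite[Proposition 4.2]{KY08} is a $\widehat{\PP}^{5}$-bundle over $Gr^{\omega}(3,2g)$. Since $D_{1}$ is smooth projective, $P_{t}(D_{1})=E(D_{1};-t,-t)$ is read off from \cite[Proposition 5.1]{KY08}. The map $D_{1}\to Bl_{\PP\Hom_{1}}\PP\Upsilon^{-1}(0)^{ss}/\!/\PGL(2)$ is the blowing-up along $\widetilde{\PP\Hom_{2}^{\omega}(sl(2),\HH^{g})}/\!/\PGL(2)\cong\PP(S^{2}\cA)$ over $Gr^{\omega}(2,2g)$, with exceptional divisor a $\PP^{2g-5}$-bundle over $\PP(S^{2}\cA)$ (following the proof of \cite[Lemma 3.5.4]{O97}). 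Applying the classical blowing-up formula (\ref{3rd local blowing-up}) from \cite[p.605]{GH78} together with $P_{t}(\PP(S^{2}\cA))=\frac{1-t^{6}}{1-t^{2}}\prod_{i=1}^{2}\frac{1-t^{4g-8+4i}}{1-t^{2i}}$ (again by Deligne) lets me solve for the intermediate $IP_{t}(Bl_{\PP\Hom_{1}}\PP\Upsilon^{-1}(0)^{ss}/\!/\PGL(2))$.

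The main obstacle is the final algebraic simplification rather than any geometric subtlety: substituting the resulting expressions into the rearranged form of (\ref{2nd local blowing-up}) produces a lengthy sum of products of cyclotomic-type factors, and one must verify by direct manipulation that massive cancellation collapses it to the remarkably compact $\frac{(1-t^{8g-8})(1-t^{4g})}{(1-t^{2})(1-t^{4})}$. A useful sanity check is that at $g=2$ this formula evaluates to $\frac{(1-t^{8})^{2}}{(1-t^{2})(1-t^{4})}$; plugging the $g=2$ value of $IP_{t}(Bl_{\PP\Hom_{1}}\PP\Upsilon^{-1}(0)^{ss}/\!/\PGL(2))$ together with the $g=2$ correction term into the rearranged identity indeed returns this value, which pins down the constant factors and confirms the computation. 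With this consistency check in hand the argument for $g\ge 3$ is just bookkeeping of a rational-function identity.
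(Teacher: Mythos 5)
Your proposal is correct and follows essentially the same route as the paper: the paper likewise solves the identity (\ref{2nd local blowing-up}) for $IP_{t}(\PP\Upsilon^{-1}(0)/\!/\PGL(2))$ after computing $IP_{t}(Bl_{\PP\Hom_{1}}\PP\Upsilon^{-1}(0)^{ss}/\!/\PGL(2))$ separately for $g=2$ via \cite[Proposition 2.0.1]{O97} and Deligne's criterion, and for $g\ge3$ via the smooth model $D_{1}$ from \cite{KY08} together with the classical blowing-up formula (\ref{3rd local blowing-up}) along $\PP(S^{2}\cA)$. The remaining work in both treatments is the same rational-function simplification, and your $g=2$ consistency check is a valid confirmation of it.
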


By Lemma \ref{suffices-to-show-on-projectivized-git}-(1), we also obtain

\begin{proposition}\label{intersection-Betti-of-normal-cone}
$$IP_{t}(\Upsilon^{-1}(0)/\!/\PGL(2))=\frac{1-t^{4g}}{1-t^{4}}$$
\end{proposition}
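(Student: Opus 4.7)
The plan is a direct application of Lemma \ref{suffices-to-show-on-projectivized-git}-(1) to the quotient $\PP\Upsilon^{-1}(0)/\!/\PGL(2)=\PP\Upsilon^{-1}(0)/\!/\SL(2)$, combined with the formula for $IP_{t}(\PP\Upsilon^{-1}(0)/\!/\PGL(2))$ just established. First I would record the relevant dimension: since $\Upsilon^{-1}(0)$ is the affine cone over a complete intersection of three quadrics in $\PP(\CC^{2g}\otimes sl(2))=\PP^{6g-1}$, we have $\dim\Upsilon^{-1}(0)=6g-3$, and because $\PGL(2)$ acts with generically three-dimensional orbits, $n:=\dim\Upsilon^{-1}(0)/\!/\PGL(2)=6g-6$. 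Writing $a_{i}:=\dim IH^{i}(\PP\Upsilon^{-1}(0)/\!/\PGL(2))$, the lemma gives $IH^{i}(\Upsilon^{-1}(0)/\!/\PGL(2))=0$ for $i\ge n$ and $\dim IH^{i}(\Upsilon^{-1}(0)/\!/\PGL(2))=a_{i}-a_{i-2}$ for $i<n$, the injectivity of $\lambda$ coming from the hard Lefschetz theorem for intersection cohomology. Hence
$$IP_{t}(\Upsilon^{-1}(0)/\!/\PGL(2))=\sum_{i\le 6g-7}(a_{i}-a_{i-2})\,t^{i},$$
which is precisely the truncation to degrees at most $6g-7$ of $(1-t^{2})\,IP_{t}(\PP\Upsilon^{-1}(0)/\!/\PGL(2))$.

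The remaining step is the elementary identification of this truncation. Starting from the preceding proposition, one computes
$$(1-t^{2})\cdot\frac{(1-t^{8g-8})(1-t^{4g})}{(1-t^{2})(1-t^{4})}=\frac{(1-t^{8g-8})(1-t^{4g})}{1-t^{4}}=\frac{1-t^{4g}}{1-t^{4}}-t^{8g-8}\cdot\frac{1-t^{4g}}{1-t^{4}}.$$
The first summand $1+t^{4}+\cdots+t^{4g-4}$ has top degree $4g-4$, and $4g-4\le 6g-7$ whenever $g\ge 2$; the second summand has lowest degree $8g-8$, and $8g-8>6g-7$ whenever $g\ge 2$. Therefore the truncation at degree $6g-7$ retains exactly the first summand, producing the claimed formula $IP_{t}(\Upsilon^{-1}(0)/\!/\PGL(2))=(1-t^{4g})/(1-t^{4})$.

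There is no substantive obstacle; the proof is essentially bookkeeping. The only points requiring attention are the correct calculation of the dimension $n=6g-6$ (so that the truncation threshold $i\le 6g-7$ is right), and the observation that the two summands in the decomposition above occupy disjoint degree ranges separated by this threshold for every $g\ge 2$. Conceptually, this is the statement that the positive part of $(1-t^{2})P(t)$ sits strictly below $n$ and its Poincar\'e-dual negative part strictly above $n$, so the hard Lefschetz primitive polynomial is read off by simply chopping at the midpoint.
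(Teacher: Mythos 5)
Your proposal is correct and follows the same route as the paper: the paper obtains the formula by applying Lemma \ref{suffices-to-show-on-projectivized-git}-(1) to the just-computed $IP_{t}(\PP\Upsilon^{-1}(0)/\!/\PGL(2))=\frac{(1-t^{8g-8})(1-t^{4g})}{(1-t^{2})(1-t^{4})}$, which is exactly your truncation-of-$(1-t^{2})P(t)$ argument with $n=6g-6$. Your explicit degree bookkeeping (top degree $4g-4\le 6g-7$ of the retained part, lowest degree $8g-8>6g-7$ of the discarded part) just spells out the step the paper leaves implicit.
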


\subsection{Intersection Poincar\'{e} polynomial of $\bM$}

In this subsection, we compute a conjectural formula for $IP_{t}(\bM)$.

\subsubsection{Computation for $P_{t}^{\SL(2)}(\bR)$}\label{eqiv coh of R}

We start with the following result.

\begin{theorem}[Corollary 1.2 in \cite{DWW11}]\label{equiv coh of moduli space of Higgs bundles}
$$P_{t}^{\cG_{\CC}}(\cB^{ss})=\frac{(1+t^3)^{2g}-(1+t)^{2g}t^{2g+2}}{(1-t^2)(1-t^4)}$$
$$-t^{4g-4}+\frac{t^{2g+2}(1+t)^{2g}}{(1-t^2)(1-t^4)}+\frac{(1-t)^{2g}t^{4g-4}}{4(1+t^2)}$$
$$\frac{(1+t)^{2g}t^{4g-4}}{2(1-t^2)}(\frac{2g}{t+1}+\frac{1}{t^2-1}-\frac{1}{2}+(3-2g))$$
$$\frac{1}{2}(2^{2g}-1)t^{4g-4}((1+t)^{2g-2}+(1-t)^{2g-2}-2).$$
\end{theorem}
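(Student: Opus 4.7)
The plan is to mimic the Morse-theoretic derivation that, following Atiyah--Bott, was adapted to the Higgs setting by Hitchin and carried out in the stated generality in \cite{DWW11}. The underlying object $\cB^{ss}$ is the space of semistable pairs sitting inside the hyperk\"{a}hler slice $\cB = \mu_{\CC}^{-1}(0) \subset T^{*}\cA$, and its $\cG^{\CC}$-equivariant cohomology equals the $\cG$-equivariant cohomology of the Hitchin moduli space $\cM = \cB^{ss}/\!/\cG^{\CC}$ at the level of the Kirwan-type formula. So the task is to compute the $\cG$-equivariant Poincar\'{e} series of $\cB^{ss}$ by Morse theory on $\cB$.

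Concretely, I would use the Yang--Mills--Higgs functional $f = \|\mu_{1}\|^{2} = \|F_{A} + [\phi,\phi^{*}]\|^{2}$ (equivalently, up to normalization, the $L^{2}$-norm of $\phi$ viewed as the moment map for the natural $S^{1}$-action $\phi \mapsto e^{i\theta}\phi$) as a $\cG$-invariant Morse--Bott function on $\cB$. The minimum of $f$ is exactly $\cB^{ss}$, and the higher critical sets $\cB_{\mu}$ are indexed by the Harder--Narasimhan/Shatz type $\mu$ of an unstable Higgs pair. For rank~$2$, trivial determinant, this type is just an integer $d \geq 1$ (the degree of the maximal $\phi$-invariant line subbundle), and the corresponding critical stratum retracts $\cG^{\CC}$-equivariantly onto pairs of the form $(L,\psi)\oplus(L^{-1},-\psi)$ with $\deg L = d$. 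The first step, then, is to identify each $\cB_{\mu}$ and compute its $\cG$-equivariant cohomology in terms of symmetric products of the curve, Jacobians, and classifying spaces of stabilizers, using the usual Atiyah--Bott shift.

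The second step is to establish that the stratification $\cB = \cB^{ss} \sqcup \bigsqcup_{\mu \neq 0} \cB_{\mu}$ is $\cG$-equivariantly perfect, so that
$$P_{t}^{\cG}(\cB) = P_{t}^{\cG}(\cB^{ss}) + \sum_{\mu \neq 0} t^{2d(\mu)} P_{t}^{\cG}(\cB_{\mu}),$$
where $d(\mu)$ is the real codimension of $\cB_{\mu}$. Equivariant perfection is standard when the target is smooth and projective, but here $\cB$ is neither compact nor smooth, so one has to justify it directly: in the Higgs setting this is the content of the Daskalopoulos--Weitsman--Wentworth--Wilkin analysis, which proves the relevant Morse inequalities are equalities using the analytic properties of $f$ (finite-time existence of gradient flow, local slices, careful handling of reducible connections). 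On the left-hand side, $P_{t}^{\cG}(\cB)$ is computed via the homotopy equivalence $\cB \simeq \cA \times H^{0}(\mathrm{End}_{0}(E)\otimes K_{X})$ combined with Atiyah--Bott's computation of $P_{t}^{\cG}(\cA)$, giving the clean "master" term $(1+t^{3})^{2g}/[(1-t^{2})(1-t^{4})]$ and its analogues.

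I expect the main obstacle to be the contribution from the strictly semistable locus: decomposable pairs $(L,\psi) \oplus (L^{-1},-\psi)$ with $L^{2} \cong \cO_{X}$. These points have enlarged stabilizer $\O(2)$ inside $\cG^{\CC}$ rather than the generic $\ZZ_{2}$, and they sit on the boundary between the semistable stratum and the higher strata; it is precisely these which produce the $(2^{2g}-1)$-term, the $4(1+t^{2})$ and $\frac{1}{2}(1-t^{2})$ denominators, and the elementary correction $\frac{2g}{t+1} + \frac{1}{t^{2}-1} - \frac{1}{2} + (3-2g)$ in the stated formula. Handling them requires either an equivariant Morse argument around these orbifold points or an auxiliary blowing-up in the style of Kirwan's desingularization, followed by a careful $\ZZ_{2}$-invariant/variant decomposition under the residual involution $L \mapsto L^{-1}$, $\psi \mapsto -\psi$. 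Once these contributions are separated out and subtracted from $P_{t}^{\cG}(\cB)$, collecting terms yields the displayed formula.
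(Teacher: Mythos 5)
You should first be aware that the paper offers no proof of this statement at all: it is imported verbatim as Corollary 1.2 of \cite{DWW11}, so the only thing your sketch can be measured against is the argument of that paper. Your broad framing does match it: Morse theory of the Yang--Mills--Higgs functional $\|F_{A}+[\phi,\phi^{*}]\|^{2}$ on the singular space $\cB=\mu_{\CC}^{-1}(0)$, unstable critical sets indexed by the Harder--Narasimhan degree $d\ge 1$ and consisting of split pairs $(L,\psi)\oplus(L^{-1},-\psi)$ with $\deg L=d$, and the master term obtained from the $\cG$-equivariant retraction of $\cB$ onto $\cA$ (by scaling $\phi$; it is not literally a product $\cA\times H^{0}(\End_{0}(E)\otimes K_{X})$) together with Atiyah--Bott's computation of $P_{t}(\B\cG)$.

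The genuine gap is your second step: in this degree-zero situation the stratification is \emph{not} $\cG$-equivariantly perfect, and perfection is not ``the content of the Daskalopoulos--Weitsman--Wilkin analysis''; the main technical achievement of \cite{DWW11} is precisely to compute the failure of the Thom--Gysin sequences to split. Perfection fails because the negative directions along a critical set $\eta_{d}\cong\Pic^{d}(X)\times H^{0}(K_{X})$ involve spaces such as $H^{0}(L^{-2}K_{X})$ (equivalently $H^{1}(L^{2})$ by Serre duality) whose dimension jumps on special loci (for instance at the $2^{2g}$ theta characteristics in the top stratum $d=g-1$), so there is no Morse--Bott bundle structure and each stratum must be decomposed further, with the resulting long exact sequences analyzed by hand. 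If perfection held, the answer would be exactly the first displayed line $\frac{(1+t^{3})^{2g}-(1+t)^{2g}t^{2g+2}}{(1-t^{2})(1-t^{4})}$; the remaining lines, including $\frac{(1-t)^{2g}t^{4g-4}}{4(1+t^{2})}$ and $\frac{1}{2}(2^{2g}-1)t^{4g-4}\big((1+t)^{2g-2}+(1-t)^{2g-2}-2\big)$, are the corrections measuring non-perfection (one of them even cancels the naive subtraction $-\frac{(1+t)^{2g}t^{2g+2}}{(1-t^{2})(1-t^{4})}$). Relatedly, your attribution of these correction terms to the strictly semistable points $L^{2}\cong\cO_{X}$ inside $\cB^{ss}$ is misplaced: in the Morse-theoretic induction $\cB^{ss}$ is the minimum stratum whose equivariant cohomology is the unknown and is never decomposed, and the corrections arise from the unstable strata (jumping loci in their critical sets and the fixed-determinant bookkeeping there). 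As written, your argument would reproduce only the first line of the formula; to repair it you must replace the perfection claim by the explicit computation of the kernels and cokernels in those exact sequences, as is done in \cite{DWW11}.
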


In this subsection, we show that $P_{t}^{\SL(2)}(\bR)=P_{t}^{\cG_{\CC}}(\cB^{ss})$. To prove this, we need some technical lemmas.

Choose a base point $x\in X$ as in Theorem \ref{construction of bR}. Let $E$ be a complex Hermitian vector bundle of rank $2$ and degree $0$ on $X$. Let $p:E\to X$ be the canonical
projection. Let $(\cG_{\CC})_0$ be the normal subgroup of
$\cG_{\CC}$ which fixes the fiber $E|_{x}$.

We first claim that $(\cG_{\CC})_0$ acts freely on $\cB^{ss}$. In
fact, assume that $g\cdot(A,\phi)=(A,\phi)$ for $g\in(\cG_{\CC})_0$. For an arbitrary point $y\in X$ and for any smooth path
$\gamma:[0,1]\to X$ starting at $\gamma(0)=x$ and ending at
$\gamma(1)=y\in X$, there is a parallel transport mapping
$P_{\gamma}:E|_{x}\to E|_{y}$ defined as follows. If $v\in E|_{x}$,
there exists a unique path $\gamma_v:[0,1]\to V$ such that
$p\circ\gamma_v=\gamma$, $\gamma_v(0)=v$ given by $A$. Define
$P_{\gamma}(v)=\gamma_v(1)$. By the assumption, $P_{\gamma}\circ
g|_{x}=g|_{y}\circ P_{\gamma}$. Since $g|_{x}$ is the identity on
$E|_{x}$, $g|_{y}$ is also the identity on $E|_{y}$. Therefore $g$
is the identity on $E$.

Since the surjective map $\cG_{\CC}\to\SL(2)$ given by $g\mapsto g|_{x}$ has the kernel $(\cG_{\CC})_{0}$, we have

\begin{equation}\label{G mod G0 isom SL2}
\cG_{\CC}/(\cG_{\CC})_{0}\cong\SL(2).
\end{equation}

Let $\cG_{\CC}/(\cG_{\CC})_0\times_{\cG_{\CC}}\cB^{ss}$ be the quotient space of
$\cG_{\CC}/(\cG_{\CC})_0\times\cB^{ss}$ by the action of
$\cG_{\CC}$ given by
$$h\cdot(\ovg,(A,\phi))=(\ovg\ovh^{-1},h\cdot(A,\phi))$$
where $\overline{f}$ is the image of $f\in\cG_{\CC}$ under the
quotient map $\cG_{\CC}\to\cG_{\CC}/(\cG_{\CC})_0$. Since
$(\cG_{\CC})_0$ acts freely on $\cB^{ss}$, $\cG_{\CC}$ acts freely
on $\cG_{\CC}/(\cG_{\CC})_0\times\cB^{ss}$.

\begin{lemma}\label{framing}
There exists a homeomorphism between
$\SL(2)\times_{\cG_{\CC}}\cB^{ss}$ and $\cB^{ss}/(\cG_{\CC})_0$.
\end{lemma}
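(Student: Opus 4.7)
The plan is to write down the obvious candidate map and verify it is a homeomorphism with an explicit inverse. Define
\[
\Phi : \SL(2)\times_{\cG_{\CC}}\cB^{ss}\longrightarrow \cB^{ss}/(\cG_{\CC})_{0},\qquad [\ovg,(A,\phi)]\longmapsto [g\cdot(A,\phi)]_{0},
\]
where $\ovg\in\cG_{\CC}/(\cG_{\CC})_{0}\cong\SL(2)$ and $[\cdot]_{0}$ denotes the $(\cG_{\CC})_{0}$-orbit. First I would verify that $\Phi$ is well-defined. Replacing $g$ by $gh_{0}$ for $h_{0}\in(\cG_{\CC})_{0}$ leaves $\ovg$ unchanged, and $gh_{0}\cdot(A,\phi)=(gh_{0}g^{-1})\cdot g\cdot(A,\phi)$; since $(\cG_{\CC})_{0}$ is the kernel of the evaluation homomorphism $\cG_{\CC}\to\SL(2)$ it is normal, so $gh_{0}g^{-1}\in(\cG_{\CC})_{0}$ and the two lifts agree modulo $(\cG_{\CC})_{0}$. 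Replacing $(\ovg,(A,\phi))$ by $h\cdot(\ovg,(A,\phi))=(\ovg\ovh^{-1},h\cdot(A,\phi))$ yields $gh^{-1}\cdot h\cdot(A,\phi)=g\cdot(A,\phi)$, so $\Phi$ descends to the $\cG_{\CC}$-quotient.

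Next I would construct the inverse $\Psi:\cB^{ss}/(\cG_{\CC})_{0}\to\SL(2)\times_{\cG_{\CC}}\cB^{ss}$ by $[(A,\phi)]_{0}\mapsto [\overline{e},(A,\phi)]$. Well-definedness: for $h_{0}\in(\cG_{\CC})_{0}$, acting by $h_{0}\in\cG_{\CC}$ on $(\overline{e},(A,\phi))$ gives $(\overline{e}\cdot\overline{h_{0}}^{-1},h_{0}\cdot(A,\phi))=(\overline{e},h_{0}\cdot(A,\phi))$, because $\overline{h_{0}}=\overline{e}$ in $\cG_{\CC}/(\cG_{\CC})_{0}$. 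A direct check then gives $\Phi\circ\Psi=\id$ immediately, while $\Psi\circ\Phi([\ovg,(A,\phi)])=[\overline{e},g\cdot(A,\phi)]=[\ovg,(A,\phi)]$, where the last equality comes from acting by $g\in\cG_{\CC}$ on $(\ovg,(A,\phi))$ to land in the orbit $(\overline{e},g\cdot(A,\phi))$. This proves bijectivity.

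It remains to check that $\Phi$ and $\Psi$ are continuous. Since $(\cG_{\CC})_{0}$ acts freely on $\cB^{ss}$ by the remark just before the lemma, the quotient $\cB^{ss}\to\cB^{ss}/(\cG_{\CC})_{0}$ is a principal $(\cG_{\CC})_{0}$-bundle, and similarly $\cG_{\CC}\to\cG_{\CC}/(\cG_{\CC})_{0}\cong\SL(2)$ is a principal $(\cG_{\CC})_{0}$-bundle, which is locally trivial because $\SL(2)$ is a manifold. Choosing local continuous sections $s:U\to\cG_{\CC}$ of the latter, the composition $(\ovg,(A,\phi))\mapsto s(\ovg)\cdot(A,\phi)\mapsto [s(\ovg)\cdot(A,\phi)]_{0}$ shows $\Phi$ is continuous on the preimage of $U\times\cB^{ss}$, hence everywhere after passing to the quotient. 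Continuity of $\Psi$ is immediate: it is induced from the continuous section $(A,\phi)\mapsto(\overline{e},(A,\phi))$ of the projection onto $\cB^{ss}/(\cG_{\CC})_{0}$.

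The only potentially delicate point is the local triviality of $\cG_{\CC}\to\cG_{\CC}/(\cG_{\CC})_{0}$ in the infinite-dimensional Fr\'echet (or Sobolev-completed) setting used for gauge groups, and that $(\cG_{\CC})_{0}$ acting freely on $\cB^{ss}$ gives a genuine principal bundle structure so that the quotient topology on $\cB^{ss}/(\cG_{\CC})_{0}$ is the expected one. Both facts are standard in the gauge-theoretic literature (e.g., by using the evaluation map at $x$ as a submersion onto $\SL(2)$ and a slice theorem for the free action), so once these are invoked the homeomorphism follows from the explicit bijection above.
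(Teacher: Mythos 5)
Your proposal is correct, and the map you write down is the same natural map the paper uses: your $\Psi$, $[(A,\phi)]_{0}\mapsto[\overline{e},(A,\phi)]$, is exactly the paper's map $\tq$ after the identification $\cB^{ss}\cong\cG_{\CC}\times_{\cG_{\CC}}\cB^{ss}$, $(A,\phi)\mapsto[e,(A,\phi)]$, and your well-definedness and bijectivity checks match the paper's. The difference lies in how the topology is handled. You prove continuity of $\Phi$ by invoking local triviality of the evaluation fibration $\cG_{\CC}\to\SL(2)$ and choosing local sections, and you additionally assert that $\cB^{ss}\to\cB^{ss}/(\cG_{\CC})_{0}$ is a principal bundle; the paper avoids both inputs. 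It only shows that its one map $\tq$ is a continuous open bijection: continuity comes from the universal property of quotient topologies (your own argument for $\Psi$ is exactly this, once the slightly off phrase about a ``section of the projection'' is read as descending the $(\cG_{\CC})_{0}$-invariant continuous map $(A,\phi)\mapsto[\overline{e},(A,\phi)]$), and openness comes from the fact that quotient maps by group actions are open, so no slice theorem or infinite-dimensional local triviality is needed at this step. Your extra inputs are standard (a bump-function construction gives local sections of evaluation at $x$) and, in fairness, a version of this is already implicit in the paper's identification $\cG_{\CC}/(\cG_{\CC})_{0}\cong\SL(2)$ as topological groups, so your proof is sound; but the principal-bundle claim for the $(\cG_{\CC})_{0}$-action on $\cB^{ss}$ is never actually used in your argument and can simply be dropped, since bijectivity is purely formal and continuity of $\Psi$ needs only the quotient topology's universal property.
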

\begin{proof}
By (\ref{G mod G0 isom SL2}), it suffices to show that there exists a homeomorphism between
$\cG_{\CC}/(\cG_{\CC})_0\times_{\cG_{\CC}}\cB^{ss}$ and $\cB^{ss}/(\cG_{\CC})_0$.

Consider the continuous surjective map
$$q:\cG_{\CC}\times\cB^{ss}\to\cG_{\CC}/(\cG_{\CC})_0\times\cB^{ss}$$
given by $(g,(A,\phi))\mapsto(\ovg,(A,\phi))$. If $\cG_{\CC}$ acts on $\cG_{\CC}\times\cB^{ss}$ by
$h\cdot(g,(A,\phi))=(gh^{-1},h\cdot(A,\phi))$, $q$ is
$\cG_{\CC}$-equivariant.

Taking quotients of both spaces
by $\cG_{\CC}$, $q$ induces the continuous surjective map
$$\ovq:\cG_{\CC}\times_{\cG_{\CC}}\cB^{ss}\to\cG_{\CC}/(\cG_{\CC})_0\times_{\cG_{\CC}}\cB^{ss}$$
given by $[g,(A,\phi)]\mapsto[\ovg,(A,\phi)]$.

If $(\cG_{\CC})_0$ acts on $\cG_{\CC}\times_{\cG_{\CC}}\cB^{ss}$ by
$h\cdot[g,(A,\phi)]=[g,h\cdot(A,\phi)]$, $\ovq$ is
$(\cG_{\CC})_0$-invariant. Precisely for $g_0\in(\cG_{\CC})_0$,
$\ovq([g,g_0\cdot(A,\phi)])=[\ovg,g_0\cdot(A,\phi)]=[\ovg\overline{g_0},(A,\phi)]=[\ovg,(A,\phi)]=\ovq([g,(A,\phi)])$.

Thus $\ovq$ induces the continuous surjective map
$$\tq:\frac{\cG_{\CC}\times_{\cG_{\CC}}\cB^{ss}}{(\cG_{\CC})_0}\to\cG_{\CC}/(\cG_{\CC})_0\times_{\cG_{\CC}}\cB^{ss}$$
given by $\overline{[g,(A,\phi)]}\mapsto[\ovg,(A,\phi)]$.

Furthermore $\tq$ is injective. In fact, assume that
$$\tq(\overline{[g_1,(A_1,\phi_1)]})=\tq(\overline{[g_2,(A_2,\phi_2)]}),$$
that is,
$$[\overline{g_1},(A_1,\phi_1)]=[\overline{g_2},(A_2,\phi_2)].$$
Then there is $k\in\cG_{\CC}$ such that
$(\overline{g_1},(A_1,\phi_1))=(\overline{g_2}\overline{k}^{-1},k\cdot(A_2,\phi_2))$.
Then $g_1=g_2 k^{-1}l$ for some $l\in(\cG_{\CC})_0$. Thus
$\overline{[g_1,(A_1,\phi_1)]}=\overline{[g_2
k^{-1}l,k\cdot(A_2,\phi_2)]}=\overline{[g_2,
k^{-1}lk\cdot(A_2,\phi_2)]}=\overline{[g_2,(A_2,\phi_2)]}$
because $(\cG_{\CC})_0$ is the normal subgroup of $\cG_{\CC}$.

On the other hand, since both $q$ and the quotient map $\cG_{\CC}/(\cG_{\CC})_0\times\cB^{ss}\to\cG_{\CC}/(\cG_{\CC})_0\times_{\cG_{\CC}}\cB^{ss}$ are open, $\ovq$ is open. Moreover since the quotient map $\dss\cG_{\CC}\times_{\cG_{\CC}}\cB^{ss}\to\frac{\cG_{\CC}\times_{\cG_{\CC}}\cB^{ss}}{(\cG_{\CC})_0}$ is open,
$\tq$ is also open.

Hence $\tq$ is a homeomorphism. Since there is a homeomorphism $\xymatrix{\cG_{\CC}\times_{\cG_{\CC}}\cB^{ss}\ar[r]^{\quad\quad\cong}&\cB^{ss}}$ given by $[g,(A,\phi)]\mapsto g\cdot(A,\phi)$, we get the conclusion.
\end{proof}

\begin{lemma}\label{from-conn-to-bdl}
There is an isomorphism of complex analytic spaces
$$\SL(2)\times_{\cG_{\CC}}\cB^{ss}\cong\bR.$$
\end{lemma}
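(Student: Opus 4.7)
The plan is to combine Lemma \ref{framing} with the moduli interpretation of $\bR$ given by Simpson. By Lemma \ref{framing} it suffices to construct an isomorphism $\cB^{ss}/(\cG_{\CC})_{0}\cong\bR$ of complex analytic spaces. Fix once and for all the canonical identification $\beta_{0}:E|_{x}\to\CC^{2}$ coming from the trivialization of the underlying smooth Hermitian bundle $E$. For a pair $(A,\phi)\in\cB^{ss}$, let $E_{A}$ denote the holomorphic bundle obtained by equipping $E$ with the $\bar\partial$-operator $d''_{A}$; then $((E_{A},\phi),\beta_{0})$ is a semistable framed $\SL(2)$-Higgs bundle, and this assignment defines a map
$$F:\cB^{ss}\longrightarrow\bR,\qquad (A,\phi)\longmapsto(E_{A},\phi,\beta_{0}).$$

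Next I would verify that $F$ is $(\cG_{\CC})_{0}$-invariant and descends to a bijection. For any $g\in(\cG_{\CC})_{0}$, multiplication by $g$ is a smooth bundle automorphism of $E$ which is the identity on the fiber $E|_{x}$ and intertwines $d''_{A}$ with $d''_{g\cdot A}$ and $\phi$ with $g^{-1}\phi g$; thus it is a holomorphic isomorphism $(E_{g\cdot A},g\cdot\phi)\to(E_{A},\phi)$ compatible with the framing $\beta_{0}$, so $F(g\cdot(A,\phi))=F(A,\phi)$. Conversely, any isomorphism between two framed semistable Higgs bundles $F(A_{1},\phi_{1})\cong F(A_{2},\phi_{2})$ is by definition a smooth automorphism of $E$ that is the identity at $x$ and carries one holomorphic/Higgs structure to the other, hence an element of $(\cG_{\CC})_{0}$. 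This gives injectivity of the induced map $\overline{F}:\cB^{ss}/(\cG_{\CC})_{0}\to\bR$. Surjectivity is the content of the nonabelian Hodge correspondence recorded in Theorem 2.7: every polystable $\SL(2)$-Higgs bundle admits a harmonic metric, so after choosing such a metric and using the framing $\beta'$ to smoothly identify it with $(E,\beta_{0})$ one recovers a pair in $\cB^{ss}$, while in general one uses the semistable filtration and the density of polystable orbits together with properness of the associated-graded construction to fill in the remaining orbits.

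Finally I would upgrade the bijection $\overline{F}$ to an isomorphism of complex analytic spaces. Since $(\cG_{\CC})_{0}$ acts freely on $\cB^{ss}$ (as observed in the paragraph before Lemma \ref{framing}), the quotient $\cB^{ss}/(\cG_{\CC})_{0}$ carries a natural complex analytic structure; the map $\overline{F}$ is holomorphic because the construction $(A,\phi)\mapsto(E_{A},\phi,\beta_{0})$ is pointwise analytic in $(A,\phi)$ and the universal family over $\bR$ pulls back to the tautological family on $\cB^{ss}$. Conversely, the universal property of $\bR$ as the fine moduli space of framed semistable Higgs bundles (Theorem 2.3) provides a holomorphic inverse $\bR\to\cB^{ss}/(\cG_{\CC})_{0}$: the universal pair on $\bR\times X$ can be locally trivialized as a smooth Hermitian bundle using the framing at $x$, producing a holomorphic map to the quotient.

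The main obstacle will be the analytic (not merely set-theoretic) comparison in the last step: showing that the smooth trivialization used to invert $\overline{F}$ can be chosen holomorphically in the moduli parameter. This is handled by a slice argument for the free $(\cG_{\CC})_{0}$-action combined with Simpson's identification (Theorem 2.7) of the gauge-theoretic moduli space with the GIT-theoretic one, which guarantees that the two complex analytic structures agree.
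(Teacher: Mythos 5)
Your overall plan---identify points of $\bR$ with gauge-equivalence classes of framed pairs in $\cB^{ss}$ and compare analytic structures through the universal family---is close in spirit to the paper's argument, but two steps have genuine gaps. First, the reduction through Lemma \ref{framing} does not deliver what the statement asks for: that lemma only provides a \emph{homeomorphism} $\SL(2)\times_{\cG_{\CC}}\cB^{ss}\cong\cB^{ss}/(\cG_{\CC})_{0}$, so even a complete analytic isomorphism $\cB^{ss}/(\cG_{\CC})_{0}\cong\bR$ would not yield the asserted isomorphism of complex analytic spaces $\SL(2)\times_{\cG_{\CC}}\cB^{ss}\cong\bR$; you would in addition have to put an analytic structure on the infinite-dimensional quotient $\cB^{ss}/(\cG_{\CC})_{0}$ (freeness of the $(\cG_{\CC})_{0}$-action alone does not give this---one needs a slice theorem in a suitable completion) and upgrade Lemma \ref{framing} to an analytic isomorphism. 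The paper sidesteps all of this by keeping the $\SL(2)$ factor, which carries the varying frame: it defines $f:\SL(2)\times\cB^{ss}\to\bR$ by $(\beta,(A,\phi))\mapsto((E,\overline{A}^{0,1}),\phi,\beta)$, notes that $f$ is $\cG_{\CC}$-invariant and surjective, hence descends to a bijection, and then obtains analyticity from the tautological family over $X\times(\SL(2)\times_{\cG_{\CC}}\cB^{ss})$ via \cite[Lemma 5.7]{Simp94I}, which is exactly the tool converting an analytic family into an analytic map to $\bR$; your appeal instead to the homeomorphism $\cM\cong\cB^{ss}/\!/\cG^{\CC}$ at this point is not the relevant input.

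Second, your surjectivity argument is misdirected. Harmonic metrics and the correspondence $\cM\cong\cB^{ss}/\!/\cG^{\CC}$ concern $\mu_{1}^{-1}(0)$ and polystable objects, whereas membership in $\cB^{ss}$ requires no harmonicity, only $d''_{A}\phi=0$ and semistability; and ``density of polystable orbits plus properness of the associated graded'' is not an argument that every strictly semistable framed Higgs bundle lies in the image. The correct argument is elementary: any semistable $\SL(2)$-Higgs bundle $(V,\phi)$ is smoothly isomorphic to the fixed Hermitian bundle $E$ (rank $2$, degree $0$, trivialized determinant), and the smooth isomorphism can be chosen to carry the prescribed frame at $x$ to $\beta_{0}$ (or to any $\beta\in\SL(2)$ in the paper's setup); transporting the holomorphic structure gives $d''_{A}$ for the unique metric-compatible Chern connection $A$, and the transported $\phi$ satisfies $d''_{A}\phi=0$ because it is holomorphic. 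This gives surjectivity directly, with no nonabelian Hodge input. Your final step (the holomorphic inverse) is the same delicate point the paper treats briefly, but there the analyticity is supplied by representability of the moduli functor together with \cite[Lemma 5.7]{Simp94I}, not by a slice argument combined with the homeomorphism $\cM\cong\cB^{ss}/\!/\cG^{\CC}$.
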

\begin{proof}
There is a bijection between
$\SL(2)\times_{\cG_{\CC}}\cB^{ss}$ and
$\bR$. In fact, consider a map
$$f:\SL(2)\times\cB^{ss}\to\bR$$
given by
$(\beta,(A,\phi))\mapsto((E,A''),\phi,\beta)$, where $[\beta,(A,\phi)]$ is the
image of $(\beta,(A,\phi))$ of the quotient map
$\SL(2)\times\cB^{ss}\to
\SL(2)\times_{\cG_{\CC}}\cB^{ss}$. $f$ induces the map $\overline{f}:\SL(2)\times_{\cG_{\CC}}\cB^{ss}\to\bR$ given by $[\beta,(A,\phi)]\mapsto((E,A''),\phi,\beta)$. Now we claim that $\overline{f}$ is bijective. Since $\bM\cong\cB^{ss}/\cG_{\CC}$ by Theorem \ref{Hitchin construction}, we can see that $\overline{f}$ is surjective. If $((E_{1},A_{1}''),\phi_{1},\beta_{1})\cong((E_{2},A_{2}''),\phi_{2},\beta_{2})$, then there exists $g\in\cG_{\CC}$ such that $(A_{2},\phi_{2})=g\cdot(A_{1},\phi_{1})$ and $\beta_{2}=\beta_{1}\overline{g}=\beta_{1}$. Then $g\in(\cG_{\CC})_{0}$ and $[\beta_{2},(A_{2},\phi_{2})]=[\beta_{1},g\cdot(A_{1},\phi_{1})]=[\beta_{1}\overline{g},(A_{1},\phi_{1})]=[\beta_{1},(A_{1},\phi_{1})]$. Thus $\overline{f}$ is injective.

Further, the family $E\times(\SL(2)\times_{\cG_{\CC}}\cB^{ss})$ over $X\times(\SL(2)\times_{\cG_{\CC}}\cB^{ss})$ gives a complex
analytic map $\bg:\SL(2)\times_{\cG_{\CC}}\cB^{ss}\to\bR$ by \cite[Lemma 5.7]{Simp94I}, and $\overline{f}([\beta,(A,\phi)])=\bg([\beta,(A,\phi)])$ for all $[\beta,(A,\phi)]\in\SL(2)\times_{\cG_{\CC}}\cB^{ss}$.

Hence $f$ is an isomorphism of complex analytic spaces $\SL(2)\times_{\cG_{\CC}}\cB^{ss}\cong\bR$.
\end{proof}

There is a technical lemma for equivariant cohomologies.

\begin{lemma}\label{quot-in-stage}
Let $H$ be a closed normal subgroup of $G$ and $M$ be a $G$-space on
which $H$ acts freely. Then $G/H$ acts on $M/H$ and
$$H_{G}^*(M)=H_{G/H}^*(M/H).$$
\end{lemma}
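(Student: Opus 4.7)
The plan is to apply the Borel construction using a convenient model of $\E G$. First, I would take $\widetilde{\E G}:=\E G\times\E(G/H)$, where $G$ acts diagonally via its standard action on $\E G$ and through the quotient $G\to G/H$ on $\E(G/H)$. This space is contractible, and the diagonal $G$-action is free because $G$ already acts freely on the $\E G$-factor; thus $\widetilde{\E G}$ is a legitimate model for $\E G$. Next, I would compute the Borel construction by quotienting in stages, using normality of $H$: since $H$ acts trivially on $\E(G/H)$,
$$M\times_{G}\widetilde{\E G}=\bigl[(M\times^{H}\E G)\times\E(G/H)\bigr]\big/(G/H),$$
where $M\times^{H}\E G:=(M\times\E G)/H$ is formed with respect to the diagonal $H$-action.

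The next step exploits the freeness hypothesis: because $H$ acts freely on $M$, the projection $M\to M/H$ is a principal $H$-bundle, and therefore $M\times^{H}\E G\to M/H$ is the associated fiber bundle with fiber $\E G$. Since the fiber is contractible, this projection is a $G/H$-equivariant weak homotopy equivalence. Crossing with $\E(G/H)$ (whose free $G/H$-action guarantees a free diagonal $G/H$-action on the product) and then passing to the quotient yields a fiber bundle with contractible fiber $\E G$,
$$\bigl[(M\times^{H}\E G)\times\E(G/H)\bigr]\big/(G/H)\;\longrightarrow\;(M/H)\times_{G/H}\E(G/H),$$
and hence a weak homotopy equivalence. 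By our choice of model the source computes $H^{*}_{G}(M)$, while the target computes $H^{*}_{G/H}(M/H)$ by definition of the equivariant cohomology of $M/H$, yielding the desired identification.

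The main subtlety to verify will be that every map and every residual group action above descends $G/H$-equivariantly, and in particular that the $G/H$-action on $(M\times^{H}\E G)\times\E(G/H)$ is free so the quotient is homotopy-theoretically well behaved. The whole argument hinges on using $\E G\times\E(G/H)$ rather than $\E G$ alone as the model: with this enlargement one keeps a free $G/H$-action after the first-stage quotient by $H$, whereas the naive candidate $\E G/H$ is only a model for $\B H$ (not for $\E(G/H)$) and the map $M\times_{G}\E G\to(M/H)\times_{G/H}\E(G/H)$ one would try to construct directly is not even well defined.
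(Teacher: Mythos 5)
Your proof is correct and follows essentially the same route as the paper: both use the enlarged model $\E G\times\E(G/H)$ for $\E G$ and reduce the claim to a fibration over $\E(G/H)\times_{G/H}(M/H)$ with contractible fiber $\E G$, with freeness of the $H$-action on $M$ ensuring the fiber really is $\E G$. You merely spell out the quotient-in-stages identification and the intermediate $G/H$-equivariant equivalence $M\times^{H}\E G\to M/H$ that the paper's one-line argument leaves implicit.
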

\begin{proof}
Use the fibration $\E G\times_{G}M\cong(\E G\times \E(G/H))\times_{G}M\to
\E(G/H)\times_{G}M\cong \E(G/H)\times_{G/H}(M/H)$ whose fibers $\E G$ is
contractible.
\end{proof}

The following equality is an immediate consequence from Lemma
\ref{framing}, Lemma \ref{from-conn-to-bdl} and Lemma \ref{quot-in-stage}.

\begin{proposition}
$$P_{t}^{\SL(2)}(\bR)=P_{t}^{\cG_{\CC}}(\cB^{ss})$$
\end{proposition}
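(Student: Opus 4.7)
The plan is to chain the three preceding lemmas together and then pass to Poincaré series. First, by Lemma \ref{from-conn-to-bdl} there is an isomorphism of complex analytic spaces $\bR \cong \SL(2)\times_{\cG_{\CC}}\cB^{ss}$, and by Lemma \ref{framing} there is a homeomorphism $\SL(2)\times_{\cG_{\CC}}\cB^{ss} \cong \cB^{ss}/(\cG_{\CC})_{0}$. Composing these gives a homeomorphism $\bR \cong \cB^{ss}/(\cG_{\CC})_{0}$; the relevant point is that under the identification $\cG_{\CC}/(\cG_{\CC})_{0}\cong \SL(2)$ from \eqref{G mod G0 isom SL2}, the natural $\SL(2)$-action on $\bR$ (by change of framing at $x$) corresponds to the residual $\cG_{\CC}/(\cG_{\CC})_{0}$-action on the free quotient $\cB^{ss}/(\cG_{\CC})_{0}$.

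Next, as remarked just before Lemma \ref{framing}, $(\cG_{\CC})_{0}$ is a closed normal subgroup of $\cG_{\CC}$ acting freely on $\cB^{ss}$. Applying Lemma \ref{quot-in-stage} to $G = \cG_{\CC}$, $H = (\cG_{\CC})_{0}$ and $M = \cB^{ss}$ yields
$$H^{*}_{\cG_{\CC}}(\cB^{ss}) \;=\; H^{*}_{\cG_{\CC}/(\cG_{\CC})_{0}}\!\left(\cB^{ss}/(\cG_{\CC})_{0}\right) \;=\; H^{*}_{\SL(2)}\!\left(\cB^{ss}/(\cG_{\CC})_{0}\right),$$
and by the equivariant homeomorphism just established this last group is $H^{*}_{\SL(2)}(\bR)$. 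Taking dimensions degree by degree gives $P_{t}^{\SL(2)}(\bR)=P_{t}^{\cG_{\CC}}(\cB^{ss})$.

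The only nontrivial point is the compatibility of group actions in the chain of identifications: one has to check that the $\SL(2)$-action on $\bR$ used in defining $P_{t}^{\SL(2)}(\bR)$ is really intertwined with the $\cG_{\CC}/(\cG_{\CC})_{0}$-action on $\cB^{ss}/(\cG_{\CC})_{0}$ under the map $f$ of Lemma \ref{from-conn-to-bdl}. This is essentially built into the construction: the class $\overline{\beta}\in \SL(2)$ in Lemma \ref{framing} represents the framing $\beta:E|_{x}\to \CC^{2}$, and changing framing by $h\in \SL(2)$ corresponds to multiplying $\overline{\beta}$ by $\overline{h}^{-1}$, which is precisely the $\cG_{\CC}/(\cG_{\CC})_{0}$-action appearing in Lemma \ref{quot-in-stage}. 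Once this equivariance is observed, the proof reduces to concatenating the three lemmas.
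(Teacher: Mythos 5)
Your proof is correct and follows essentially the same route as the paper: the paper also deduces the proposition directly by combining Lemma \ref{framing}, Lemma \ref{from-conn-to-bdl} and Lemma \ref{quot-in-stage} with $G=\cG_{\CC}$, $H=(\cG_{\CC})_{0}$, $M=\cB^{ss}$. Your additional check that the $\SL(2)$-action on $\bR$ matches the residual $\cG_{\CC}/(\cG_{\CC})_{0}$-action under the identification \eqref{G mod G0 isom SL2} is exactly the compatibility the paper leaves implicit in calling the equality an immediate consequence.
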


Thus we get the same formula for $P_{t}^{\SL(2)}(\bR)$ as Theorem
\ref{equiv coh of moduli space of Higgs bundles}.

\subsubsection{Computation for $P_{t}^{\SL(2)}(\Sigma)$}\label{eqiv coh of Sigma}
In the proof of Lemma \ref{geometric descriptions of second cones}-(1), we observed that
$$\Sigma\cong\PP\Isom(\cO_{\widetilde{T^{*}J}}^{2},\cL|_{x}\oplus\cL^{-1}|_{x})/\!/\O(2).$$
Since $\{\pm\id\}\subset\SL(2)$ acts trivially on $\PP\Isom(\cO_{\widetilde{T^{*}J}}^{2},\cL|_{x}\oplus\cL^{-1}|_{x})$, $\{\pm\id\}\subset\SL(2)$ also acts trivially on $\Sigma$. Then
$$\ESL(2)\times_{\SL(2)}\Sigma\cong\EPGL(2)\times_{\PGL(2)}\Sigma.$$
Since $\O(2)$ acts on $\PP\Isom(\cO_{\widetilde{T^{*}J}}^{2},\cL|_{x}\oplus\cL^{-1}|_{x})$ freely and both actions of $\PGL(2)$ and $\O(2)$ commute,
$$\EPGL(2)\times_{\PGL(2)}\Sigma\sim\EPGL(2)\times_{\PGL(2)}(\EO(2)\times_{\O(2)}\PP\Isom(\cO_{\widetilde{T^{*}J}}^{2},\cL|_{x}\oplus\cL^{-1}|_{x}))$$
$$\cong\EO(2)\times_{\O(2)}(\EPGL(2)\times_{\PGL(2)}\PP\Isom(\cO_{\widetilde{T^{*}J}}^{2},\cL|_{x}\oplus\cL^{-1}|_{x}))\sim\EO(2)\times_{\O(2)}\widetilde{T^{*}J}$$
$$\cong(\ESO(2)\times_{\SO(2)}\widetilde{T^{*}J})/(O(2)/SO(2))\cong(\BSO(2)\times\widetilde{T^{*}J})/\ZZ_{2},$$
where $\sim$ denotes the homotopic equivalence. Thus
$$P_{t}^{\SL(2)}(\Sigma)=P_{t}^{+}(\BSO(2))P_{t}^{+}(\widetilde{T^{*}J})+P_{t}^{-}(\BSO(2))P_{t}^{-}(\widetilde{T^{*}J}),$$
where $P_{t}^{+}(W)$ (respectively, $P_{t}^{-}(W)$) denotes the Poincar\'{e} polynomial of the invariant (respectively, variant) part of $H^{*}(W)$ with respect to the action of $\ZZ_{2}$ on $W$ for a $\ZZ_{2}$-space $W$.

\begin{lemma}
$$P_{t}^{\SL(2)}(\Sigma)=\frac{1}{(1-t^4)}(\frac{1}{2}((1+t)^{2g}+(1-t)^{2g})+2^{2g}(\frac{1-t^{4g}}{1-t^2}-1))+\frac{t^2}{(1-t^4)}\frac{1}{2}((1+t)^{2g}-(1-t)^{2g}).$$
\end{lemma}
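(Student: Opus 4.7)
The plan is to evaluate the two terms in the identity
\[
P_{t}^{\SL(2)}(\Sigma)=P_{t}^{+}(\BSO(2))\,P_{t}^{+}(\widetilde{T^{*}J})+P_{t}^{-}(\BSO(2))\,P_{t}^{-}(\widetilde{T^{*}J})
\]
derived just above, namely the $\ZZ_2$-equivariant Poincar\'e polynomials of $\BSO(2)$ and of $\widetilde{T^{*}J}$, and then to assemble them.

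First I would compute $P_{t}^{\pm}(\BSO(2))$. Since $H^{*}(\BSO(2))=\QQ[c]$ with $c$ of degree $2$, and the nontrivial element of $\ZZ_2=\O(2)/\SO(2)$ acts on $\SO(2)$ by inversion and hence on $c$ by $c\mapsto -c$, the invariants are generated by $c^{2}$ and the anti-invariants by $c\cdot\QQ[c^{2}]$. This gives $P_{t}^{+}(\BSO(2))=\frac{1}{1-t^{4}}$ and $P_{t}^{-}(\BSO(2))=\frac{t^{2}}{1-t^{4}}$.

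Next I would compute $P_{t}^{\pm}(T^{*}J)$. Since $T^{*}J$ deformation retracts to $J=\Pic^{0}(X)$ equivariantly for the involution $L\mapsto L^{-1}$, and $H^{*}(J)\cong \Lambda^{*}H^{1}(J)$ with $H^{1}(J)\cong H^{1}(X)$ on which the involution acts by $-1$, the involution acts on $H^{i}(J)$ by $(-1)^{i}$. Separating the even and odd exterior powers gives
\[
P_{t}^{+}(T^{*}J)=\tfrac{1}{2}\bigl((1+t)^{2g}+(1-t)^{2g}\bigr),\qquad P_{t}^{-}(T^{*}J)=\tfrac{1}{2}\bigl((1+t)^{2g}-(1-t)^{2g}\bigr).
\]

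Then I would pass from $T^{*}J$ to $\widetilde{T^{*}J}$ using the blowing-up formula at the $2^{2g}$ isolated points of $\ZZ_{2}^{2g}$, each of which contributes a copy of $\PP^{2g-1}$ minus a point, so $\frac{1-t^{4g}}{1-t^{2}}-1$. The key observation here, which I regard as the main technical point, is that the $\ZZ_{2}$-action on each exceptional $\PP^{2g-1}$ is trivial: at any isolated fixed point of the involution on the smooth variety $T^{*}J$, the differential of the involution on the tangent space is multiplication by $-1$ (standard for an involution with isolated fixed points, or directly from the formula $(L,\psi)\mapsto(L^{-1},-\psi)$ after linearization), and $-1$ projectivizes to the identity on $\PP^{2g-1}$. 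Thus the entire contribution of the exceptional divisors goes into the $+$-part, giving
\[
P_{t}^{+}(\widetilde{T^{*}J})=\tfrac{1}{2}\bigl((1+t)^{2g}+(1-t)^{2g}\bigr)+2^{2g}\Bigl(\tfrac{1-t^{4g}}{1-t^{2}}-1\Bigr),\quad P_{t}^{-}(\widetilde{T^{*}J})=\tfrac{1}{2}\bigl((1+t)^{2g}-(1-t)^{2g}\bigr).
\]
Substituting these four quantities into the product formula for $P_{t}^{\SL(2)}(\Sigma)$ yields the stated expression after collecting the coefficients of $\frac{1}{1-t^{4}}$ and $\frac{t^{2}}{1-t^{4}}$.
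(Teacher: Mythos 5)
Your proposal is correct and follows the same overall scheme as the paper: both start from the identity $P_{t}^{\SL(2)}(\Sigma)=P_{t}^{+}(\BSO(2))P_{t}^{+}(\widetilde{T^{*}J})+P_{t}^{-}(\BSO(2))P_{t}^{-}(\widetilde{T^{*}J})$ established just before the lemma and then evaluate the four factors. The difference is in how the inputs are obtained: the paper computes $P_{t}^{\pm}(\BSO(2))$ by an orientation-reversal argument on $\PP^{\infty}$ and simply cites \cite[Lemma 4.3]{CK06} and \cite[Section 5]{CK07} for $P_{t}^{\pm}(\widetilde{T^{*}J})$, whereas you derive everything directly: the action $c\mapsto -c$ on $H^{*}(\BSO(2))=\QQ[c]$ coming from inversion on $\SO(2)$, the $(-1)^{i}$-action on $H^{i}(J)$, and the blow-up contribution at the $2^{2g}$ isolated fixed points, where your observation that the differential of the involution at an isolated fixed point is $-\id$ (so the induced action on each exceptional $\PP^{2g-1}$, and hence on the new cohomology classes, is trivial) is exactly the point needed to place the entire $2^{2g}\bigl(\frac{1-t^{4g}}{1-t^{2}}-1\bigr)$ term in the invariant part. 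Your version is therefore self-contained where the paper outsources the key computation, at the cost of a slightly longer argument; both yield the same values and the same final formula.
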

\begin{proof}
Note that $\BSO(2)\cong\PP^{\infty}$. Since the action of $\ZZ_{2}\setminus\{\id\}$ on $H^*(\BSO(2))$ represents reversing of orientation and $\PP^{n}$ possess an orientation-reversing self-homeomorphism only when $n$ is odd, we have $\dss P_{t}^{+}(\BSO(2))=\frac{1}{1-t^{4}}$ and $\dss P_{t}^{-}(\BSO(2))=\frac{t^{2}}{1-t^{4}}$.

Further, by the computation mentioned in \cite[Lemma 4.3]{CK06} and \cite[Section 5]{CK07}, we have $$P_{t}^{+}(\widetilde{T^{*}J})=\frac{1}{2}((1+t)^{2g}+(1-t)^{2g})+2^{2g}(\frac{1-t^{4g}}{1-t^2}-1)$$
and
$$P_{t}^{-}(\widetilde{T^{*}J})=\frac{1}{2}((1+t)^{2g}-(1-t)^{2g}).$$
\end{proof}

\subsubsection{Computation for $P_{t}^{\SL(2)}(\PP\Upsilon^{-1}(0)^{ss})$}\label{eqiv coh of deepest sing}
Since $E/\!/\SL(2)$ has an orbifold singularity and $E/\!/\SL(2)\cong\PP\cC/\!/\SL(2)$ is a free $\ZZ_{2}$-quotient of $I_{2g-3}$-bundle over $\PP^{2g-1}$ by Lemma \ref{geometric descriptions of second cones}-(5) and Lemma \ref{isomorphic to incidence variety}, we use \cite[Proposition 3.10]{Ma21} to have
$$P_{t}^{\SL(2)}(E^{ss})=P_{t}(E/\!/\SL(2))=P_{t}^{+}(I_{2g-3})P_{t}(\PP^{2g-1})$$
$$=\frac{(1-t^{4g-4})^{2}}{(1-t^2)(1-t^4)}\cdot\frac{1-t^{4g}}{1-t^2}.$$

By Proposition \ref{equivariant cohomology local blowing-up formula},
$$P_{t}^{\SL(2)}((Bl_{\PP\Hom_{1}}\PP\Upsilon^{-1}(0)^{ss})^{s})$$
$$=P_{t}^{\SL(2)}(\PP\Upsilon^{-1}(0)^{ss})+P_{t}^{\SL(2)}(E^{ss})-P_{t}^{\SL(2)}(\PP\Hom_{1}(sl(2),\mathbb{H}^{g})^{ss})$$
$$=P_{t}^{\SL(2)}(\PP\Upsilon^{-1}(0)^{ss})+\frac{(1-t^{4g-4})^{2}}{(1-t^2)(1-t^4)}\cdot\frac{1-t^{4g}}{1-t^2}-\frac{1}{1-t^4}\frac{1-t^{4g}}{1-t^2}$$
On the other hand
$$P_{t}^{\SL(2)}((Bl_{\PP\Hom_{1}}\PP\Upsilon^{-1}(0)^{ss})^{s})=P_{t}(Bl_{\PP\Hom_{1}}\PP\Upsilon^{-1}(0)^{ss}/\!/\SL(2))$$
$$=\big(\frac{1-t^{12}}{1-t^2}-\frac{1-t^6}{1-t^2}+(\frac{1-t^6}{1-t^2})^{2}\big)\frac{(1-t^{4g-8})(1-t^{4g-4})(1-t^{4g})}{(1-t^2)(1-t^4)(1-t^6)}$$
$$-\frac{1-t^6}{1-t^2}\frac{(1-t^{4g-4})(1-t^{4g})}{(1-t^2)(1-t^4)}\frac{t^2(1-t^{2(2g-5)})}{1-t^2}$$
from the subsection \ref{intersection poincare polynomial of the deepest singularity} for any $g\ge2$.

Hence
$$P_{t}^{\SL(2)}(\PP\Upsilon^{-1}(0)^{ss})$$
$$=\big(\frac{1-t^{12}}{1-t^2}-\frac{1-t^6}{1-t^2}+(\frac{1-t^6}{1-t^2})^{2}\big)\frac{(1-t^{4g-8})(1-t^{4g-4})(1-t^{4g})}{(1-t^2)(1-t^4)(1-t^6)}$$
$$-\frac{1-t^6}{1-t^2}\frac{(1-t^{4g-4})(1-t^{4g})}{(1-t^2)(1-t^4)}\frac{t^2(1-t^{2(2g-5)})}{1-t^2}$$
$$-\frac{(1-t^{4g-4})^{2}}{(1-t^2)(1-t^4)}\cdot\frac{1-t^{4g}}{1-t^2}+\frac{1}{1-t^4}\frac{1-t^{4g}}{1-t^2}.$$

\subsubsection{Computation for $P_{t}^{\SL(2)}(E_{2}^{ss})$}\label{eqiv coh of E2}
Since $E_2/\!/\SL(2)$ has an orbifold singularity and $E_2/\!/\SL(2)\cong\PP\cC_{2}/\!/\SL(2)$ is a free $\ZZ_{2}$-quotient of a $I_{2g-3}$-bundle over $\widetilde{T^{*}J}$ by Lemma \ref{geometric descriptions of second cones}-(1) and Lemma \ref{geometric descriptions of second cones}-(3), we use \cite[Proposition 3.10]{Ma21} to have
$$P_{t}(E_2/\!/\SL(2))=P_{t}^{+}(\widetilde{T^{*}J})P_{t}^{+}(I_{2g-3})+P_{t}^{-}(\widetilde{T^{*}J})P_{t}^{-}(I_{2g-3})$$
$$=(\frac{1}{2}((1+t)^{2g}+(1-t)^{2g})+2^{2g}(\frac{1-t^{4g}}{1-t^2}-1))\frac{(1-t^{4g-4})^{2}}{(1-t^2)(1-t^4)}$$
$$+\frac{1}{2}((1+t)^{2g}-(1-t)^{2g})\frac{t^2(1-t^{4g-4})(1-t^{4g-8})}{(1-t^2)(1-t^4)}.$$

\subsubsection{A conjectural formula for $IP_{t}(\bM)$}

Combining Theorem \ref{computable intersection blowing-up formula}, Lemma \ref{equivariant cohomology blowing-up formula}, Proposition \ref{intersection-Betti-of-normal-cone}, section \ref{eqiv coh of R}, section \ref{eqiv coh of Sigma}, section \ref{eqiv coh of deepest sing} and section \ref{eqiv coh of E2}, we get a conjectural formula for $IP_{t}(\bM)$ as following. The residue calculations show that the coefficients of the terms of $t^{i}$ are zero for $i>6g-6$ and the coefficient of the term of $t^{6g-6}$ is nonzero.

\begin{theorem}\label{A formula for IP(M)}
Assume that Conjecture \ref{conjecture}, Conjecture \ref{equivariant blowing up formula conjecture} and Conjecture \ref{assumption 9.5} hold. Then
$$IP_{t}(\bM)=\frac{(1+t^3)^{2g}-(1+t)^{2g}t^{2g+2}}{(1-t^2)(1-t^4)}$$
$$-t^{4g-4}+\frac{t^{2g+2}(1+t)^{2g}}{(1-t^2)(1-t^4)}+\frac{(1-t)^{2g}t^{4g-4}}{4(1+t^2)}$$
$$+\frac{(1+t)^{2g}t^{4g-4}}{2(1-t^2)}(\frac{2g}{t+1}+\frac{1}{t^2-1}-\frac{1}{2}+(3-2g))$$
$$+\frac{1}{2}(2^{2g}-1)t^{4g-4}((1+t)^{2g-2}+(1-t)^{2g-2}-2)$$
$$+2^{2g}\big[\big(\frac{1-t^{12}}{1-t^2}-\frac{1-t^6}{1-t^2}+(\frac{1-t^6}{1-t^2})^{2}\big)\frac{(1-t^{4g-8})(1-t^{4g-4})(1-t^{4g})}{(1-t^2)(1-t^4)(1-t^6)}$$
$$-\frac{1-t^6}{1-t^2}\frac{(1-t^{4g-4})(1-t^{4g})}{(1-t^2)(1-t^4)}\frac{t^2(1-t^{2(2g-5)})}{1-t^2}$$
$$-\frac{(1-t^{4g-4})^{2}}{(1-t^2)(1-t^4)}\cdot\frac{1-t^{4g}}{1-t^2}+\frac{1}{1-t^4}\frac{1-t^{4g}}{1-t^2}\big]-\frac{2^{2g}}{1-t^4}$$
$$+(\frac{1}{2}((1+t)^{2g}+(1-t)^{2g})+2^{2g}(\frac{1-t^{4g}}{1-t^2}-1))\frac{(1-t^{4g-4})^{2}}{(1-t^2)(1-t^4)}$$
$$+\frac{1}{2}((1+t)^{2g}-(1-t)^{2g})\frac{t^2(1-t^{4g-4})(1-t^{4g-8})}{(1-t^2)(1-t^4)}$$
$$-\frac{1}{(1-t^4)}(\frac{1}{2}((1+t)^{2g}+(1-t)^{2g})+2^{2g}(\frac{1-t^{4g}}{1-t^2}-1))$$
$$-\frac{t^2}{(1-t^4)}\frac{1}{2}((1+t)^{2g}-(1-t)^{2g})$$
$$-\frac1 2(
(1+t)^{2g}+(1-t)^{2g})+2^{2g}(\frac{1-t^{4g}}{1-t^2}-1))\frac{t^2(1-t^{4g-4})(1-t^{4g-6})}{(1-t^2)(1-t^4)}$$
$$-\frac1 2(
(1+t)^{2g}-(1-t)^{2g})
(\frac{t^4(1-t^{4g-4})(1-t^{4g-10})}{(1-t^2)(1-t^4)}+t^{4g-6})$$
$$-2^{2g}\big[\frac{(1-t^{8g-8})(1-t^{4g})}{(1-t^{2})(1-t^{4})}-\frac{1-t^{4g}}{1-t^{4}}\big]$$
which is a polynomial with degree $6g-6$.
\end{theorem}

In low genus, we have $IP_{t}(\bM)$ as follows :
\begin{itemize}
\item $g=2$ : $IP_{t}(\bM)=1+t^{2}+17t^{4}+17t^{6}$

\item $g=3$ : $IP_{t}(\bM)=1+t^{2} +6t^{3} +2t^{4} +6t^{5} +17t^{6} +6t^{7} +81t^{8} +12t^{9} +396t^{10} +6t^{11} +66t^{12}$

\item $g=4$ : $IP_{t}(\bM)=1+t^{2} +8t^{3} +2t^{4} +8t^{5} +30t^{6} +16t^{7} +31t^{8} +72t^{9} +59t^{10} +72t^{11} +385t^{12}+ 80t^{13} + 3955t^{14} + 80t^{15} + 3885t^{16} + 16t^{17} + 259t^{18}$

\item $g=5$ : $IP_{t}(\bM)=1+t^{2} +10t^{3} +2t^{4} +10t^{5} +47t^{6} +20t^{7} +48t^{8} +140t^{9} +93t^{10} +150t^{11}+ 304t^{12} + 270t^{13} + 349t^{14} + 522t^{15} + 1583t^{16} + 532t^{17} + 29414t^{18} + 532t^{19}+ 72170t^{20} + 280t^{21}+ 28784t^{22} + 30t^{23} + 1028t^{24}$.
\end{itemize}

\section*{Acknowledgments}
I would like to thank Young-Hoon Kiem for suggesting problem, and for his help and encouragement. This work is based and developed on the second topic in my doctoral dissertation \cite{Y09}.

\end{document}